\newcounter{commentcounter}
\renewcommand*{\backref}[1]{}
\renewcommand*{\backrefalt}[4]
{
    \ifcase #1
        No citation in the text.
    \or
        Cited on Page #2.
    \else
        Cited on Pages #2.
    \fi
}
\newtheorem{thm}{Theorem}[section]
\newtheorem{lemma}[thm]{Lemma}
\newtheorem{corollary}[thm]{Corollary}
\newtheorem{prop}[thm]{Proposition}
\newtheorem{conjecture}[thm]{Conjecture}
\newtheorem{question}[thm]{Question}
\newtheorem{thmx}{Theorem}
\newtheorem{corx}[thmx]{Corollary}
\theoremstyle{definition}
\newtheorem{defn}[thm]{Definition}
\newtheorem{remark}[thm]{Remark}
\newtheorem*{remark*}{Remark}
\theoremstyle{plain}
\newtheoremstyle{TheoremNum}
        {8.0pt plus 2.0pt minus 4.0pt}{8.0pt plus 2.0pt minus 4.0pt} 
        {\itshape} 
        {-0.15cm} 
        {\bfseries} 
        {.} 
        { }  
        {\thmname{#1}\thmnote{ \bfseries #3}}
    \theoremstyle{TheoremNum}
    \newtheorem{duplicate}{}
\newcommand*{\claimproofname}{My proof}
\DeclareMathOperator{\Aut}{\mathrm{Aut}}
\DeclareMathOperator{\Out}{\mathrm{Out}}
\DeclareMathOperator{\Inn}{\mathrm{Inn}}
\DeclareMathOperator{\Fix}{\mathrm{Fix}}
\DeclareMathOperator{\Stab}{\mathrm{Stab}}
\DeclareMathOperator{\MCG}{\mathrm{MCG}}
\DeclareMathOperator{\FJC}{\mathbf{FJC}}
\DeclareMathOperator{\FJCX}{\mathbf{FJC}_\mathbf{X}}
\DeclareMathOperator{\AC}{\mathcal{AC}}
\DeclareMathOperator{\VNil}{\mathbf{VNil}}
\DeclareMathOperator{\Per}{Per}
\newcommand{\cala}{{\mathcal{A}}}
\newcommand{\cale}{{\mathcal{E}}}
\newcommand{\calf}{{\mathcal{F}}}
\newcommand{\calh}{{\mathcal{H}}}
\newcommand{\calp}{{\mathcal{P}}}
\newcommand{\calt}{{\mathcal{T}}}
\newcommand{\VC}{\mathcal{VC}}
\newcommand{\GL}{\mathrm{GL}}
\newcommand{\CAT}{\mathrm{CAT}}
\DeclareMathOperator{\id}{id}
\DeclareMathOperator{\ad}{ad} 
\def\Z{\mathbb{Z}}
\newcommand{\NN}{\mathbb{N}}
\newcommand{\ZZ}{\mathbb{Z}}
\newcommand{\RR}{\mathbb{R}}
\newcommand{\Tcan}{T^{\mathrm{can}}}
\newcommand{\Tper}{T^{\mathrm{Per}}}
\newcommand{\Tref}{T^{\mathrm{ref}}}
\newcommand{\per}{\mathrm{Per}_{\mathrm{NP}}(\phi)}
\tikzstyle{blackNode}=[fill=black, draw=black, shape=circle]
\title[Automorphisms and the Farrell--Jones Conjecture]{Automorphisms of relatively hyperbolic groups and the Farrell--Jones Conjecture}
\author{Naomi Andrew}
\address[Current]{Naomi Andrew, Laboratoire de Math\'ematiques d'Orsay, Universit\'e Paris-Saclay, Batiment 307, F-91405 Orsay Cedex, France}
\author{Yassine Guerch}
\address[Current]{Yassine Guerch, Université de Caen Normandie, CNRS, Laboratoire de Mathématiques Nicolas Oresme,
14032 Caen Cedex 5, France}
\author{Sam Hughes}
\address[Current]{Sam Hughes, Rheinische Friedrich-Wilhelms-Universit\"at Bonn, Mathematical Institute, Endenicher Allee 60, 53115 Bonn, Germany}
\address[Former]{Naomi Andrew and Sam Hughes, Mathematical Institute, Andrew Wiles Building, Observatory Quarter, University of Oxford, Oxford OX2 6GG, United Kingdom}
\address[Former]{Yassine Guerch, Univ. Lyon, ENS de Lyon, UMPA UMR 5669, 46 allée d'Italie,
F-69364 Lyon cedex 07, France}
\email{naomi.andrew@universite-paris-saclay.fr}
\email{yassine.guerch@cnrs.fr}
\email{sam.hughes.maths@gmail.com}
\email{hughes@math.uni-bonn.de}
\date{\today}
\subjclass[2020]{Primary 18F25; Secondary 20F28, 20F65, 20F67, 20E08}
\begin{document}
\begin{abstract}
We prove the fibred Farrell--Jones Conjecture (FJC) in $A$-, $K$-, and $L$-theory for a large class of suspensions of relatively hyperbolic groups, as well as for all suspensions of one-ended hyperbolic groups.  We deduce two applications: 

(1) FJC for the automorphism group of a one-ended group hyperbolic relative to virtually polycyclic subgroups; 

(2) FJC is closed under extensions of FJC groups with kernel in a large class of relatively hyperbolic groups.\\
Along the way we prove a number of results about JSJ decompositions of relatively hyperbolic groups which may be of independent interest.
\end{abstract}
\maketitle

\section{Introduction}

Let $G$ be a group.  The Farrell--Jones Conjecture (FJC) is one of the most prominent open conjectures in algebraic and differential topology.  In its simplest form the $K$-theoretic conjecture predicts that a certain assembly map
\[ H_n^G(\mathrm{pr})\colon H_n^G(\underline{\underline{E}}G;\mathbf{K}_R)\to K_n(RG) \]
is an isomorphism.  Here $\underline{\underline{E}}G$ is the classifying space for the family of virtually cyclic subgroups, $\mathbf{K}_R$ is the algebraic $K$-theory spectrum for the ring $R$, and $K_n(RG)$ is the algebraic $K$-theory of the group ring $RG$.  There are variants of the conjecture for Waldhausen's $A$-theory and for $L$-theory.  The conjecture for $L$-theory, as well as a detailed account of the Farrell--Jones Conjecture, and the objects involved can be found in W.~L{\"u}ck's book project \cite{LuckBookProj}.  For recent progress on $A$-theory the reader should consult \cite{EnkelmannLuckPieperUllmannWinges}.

Computing the algebraic $K$-theory of a group ring $RG$ is a very difficult problem.  In principle, knowing that FJC holds for $G$ gives a method of computing $K_n(RG)$ using equivariant algebraic topology. It also has a number of other applications, for example, to the Borel Conjecture \cite{bartels2012borel} and to computing the Whitehead group $\mathrm{Wh}(G)$. Knowledge of $\mathrm{Wh}(G)$ is a fundamental step in classifications of higher dimensional manifolds with fundamental group $G$.

From this point onward, by the Farrell--Jones Conjecture for $G$, we mean the most general setting, that is, the \emph{fibred Farrell--Jones Conjecture with respect to the family of virtually cyclic subgroups $\VC$}.  See for example \cite{EnkelmannLuckPieperUllmannWinges} and \cite{LuckBookProj} for a discussion of these terms.  We will denote the classes of groups satisfying the FJC for $X$-theory by $\FJCX $ where $X$ is $A$-, $K$-, or $L$-theory.

The class $\FJC_\mathbf{K}$ of groups satisfying FJC for algebraic $K$-theory is large: containing hyperbolic groups \cite{BartelsLuckReich2008}, many relatively hyperbolic groups \cite{bartels2017coarse}, $\CAT(0)$ groups \cite{Wegner2012} (see also \cite{bartels2012borel} and \cite{KasprowskiRuping2017}), soluble groups \cite{wegner2015farrell}, $\GL_n(\Z)$ \cite{BartelsLuckReichRuping2014} and more generally lattices in connected Lie groups \cite{BartelsFarrellLuck2014} and $S$-arithmetic groups \cite{Ruping2-16}, as well as mapping class groups \cite{BartelsBestvina2019Farrell}, normally poly-free groups \cite{BruckKielakWu2021}, and suspensions of virtually torsion free hyperbolic groups \cite{bestvina2023farrell}. The class enjoys many closure properties: it passes to arbitrary subgroups, finite index overgroups, and directed colimits.  For more information the reader is referred to the surveys \cite{BartelsLuckReich2008survey,LuckReich2005,Luck2010ICM,Bartels2016survey}.

One property that is not known is whether $\FJCX $ is closed under extensions $1 \to N \to \Gamma \to Q \to 1$. One direction of interest is to put conditions on $N$ so that $\Gamma$ is in $\FJCX $ whenever $Q$ is. By \cite[Theorem 2.7]{BartelsFarrellLuck2014} and \cite[Theorem~1.1(ii)]{EnkelmannLuckPieperUllmannWinges} this reduces to understanding cyclic extensions of $G$; which is to say the suspensions $N_\Phi=N \rtimes_\Phi \ZZ$, where $\Phi$ is some automorphism of $N$ defining this \emph{suspension}.

Intuitively, a group $G$ is \emph{hyperbolic relative to $\calp$} if its geometry is hyperbolic ``away from the subgroups $P \in \calp$.'' One (of many: see \cite{hruska2010relative} for the definitions as well as proofs of their equivalence) way to formalise this uses the notion of coning off a Cayley graph: take a vertex for every coset $gP$ of each element of $P$, and add an edge from each element of $gP$ to the new vertex. The group $G$ is hyperbolic relative to $\calp$ if the resulting graph is $\delta$-hyperbolic in the sense of Gromov, and \emph{fine}: every edge is contained in finitely many cycles of a given finite length. An automorphism of $G$ lies in the subgroup $\Aut(G,\calp)$ if it preserves the conjugacy classes of every subgroup $P \in \calp$. For more information on $\Aut(G,\calp)$ see \cite{minasyan2012fixed} and \cite{GuirardelLevitt2015}.

Recently, Bestvina, Fujiwara and Wigglesworth \cite{bestvina2023farrell} proved the suspension of a virtually torsion free hyperbolic group satisfies the Farrell--Jones conjecture.  We extend this result to a large class of relatively hyperbolic groups. 

\begin{duplicate}[\Cref{thmx:main}]
    Let $(G,\calp)$ be a virtually torsion-free or one-ended relatively hyperbolic group with $\calp$ finite and let $\Phi\in\Aut(G,\calp)$.  If for every $[P]\in \calp$ we have $P_\Phi \in \FJCX$, then $G_\Phi \in \FJCX$. 
\end{duplicate}

These hypotheses include, for instance, all suspensions of toral relatively hyperbolic groups and more generally one-ended or virtually torsion-free groups that are hyperbolic relative to virtually polycyclic or soluble subgroups. Note that this removes the assumption of virtual torsion-freeness in \cite{bestvina2023farrell} for one-ended hyperbolic groups.  This is pertinent since it is a well known question of Gromov whether every hyperbolic group is residually finite (and hence virtually torsion-free).

With infinitely ended groups more care is needed, we discuss this further in \Cref{sec.intro.oo_ends}.

\subsection{Applications}

Our first application is a result on extensions with relatively hyperbolic kernel.  A group is \emph{non-relatively hyperbolic} or NRH if it is not hyperbolic relative to a collection of proper subgroups.

\begin{duplicate}[\Cref{Corx:Extensions}]
    Let $(N,\calp)$ be a virtually torsion-free or one-ended relatively hyperbolic group such that $\calp$ consists of finitely many conjugacy classes of groups which are NRH and whose suspensions $P \rtimes_\Psi \ZZ$ are in $\FJCX$ for all automorphisms $\Psi$ of $P$.  Let $1\to N\to \Gamma\to Q\to 1 $ be a short exact sequence.  If $Q$ is in $\FJCX$, then $\Gamma$ is in $\FJCX$.
\end{duplicate}

The assumption that peripheral subgroups are NRH is needed for \cref{Corx:Extensions}, since it requires \cref{thmx:main} to hold for arbitrary automorphisms.  The key point being that $\Aut(N;\calp)$ has finite index in $\Aut(N)$ under this extra hypothesis.

It is a major open problem whether $\Out(F_N)$ satisfies $\FJCX$. Whilst we do not solve this, using \Cref{thmx:main} we are able to show automorphism groups of one-ended groups hyperbolic relative to virtually polycyclic groups satisfy $\FJCX$.  In particular, $\Aut(G)$ and $\Out(G)$ for $G$ a one-ended hyperbolic group satisfy $\FJCX$.

\begin{duplicate}[\Cref{thmx:Aut_FJC}]
    If $G$ is a one-ended group hyperbolic relative to finitely many conjugacy classes of virtually polycyclic groups, then $\Aut(G)$ and $\Out(G)$ are in $\FJCX$.
\end{duplicate}

\subsection{Remarks on the proofs}\label{sec.intro.oo_ends}
As is usual for (relatively) hyperbolic groups, there are two main flavours to our arguments, depending on the number of ends of $G$. In both cases we apply a result of Knopf \cite{knopf2019acylindrical} allowing us to deduce that a group acting acylindrically on a tree satisfies the Farrell--Jones conjecture if and only if its vertex groups do, though the source of the trees is different in each case.

For one-ended relatively hyperbolic groups, we have access to the powerful machinery of JSJ decompositions developed (in this generality) by Guirardel and Levitt \cite{guirardellevitt2017jsj}. We consider three related trees: the canonical JSJ decomposition $\Tcan$ relative to the peripheral subgroups $\calp$, a refinement $T^\phi$ of $\Tcan$ which better suits the study of an outer automorphism $\phi$ and another tree that we call $\Tper$. This tree is the canonical JSJ tree relative to the (non-elementary) periodic subgroups of the \emph{outer} automorphism $\phi$. That is, we require that the periodic subgroups of every representative $\Phi\ad_g$, are elliptic. Our main structural result about this tree is \cref{Coro:rigidvertexperiodic}: even without assuming that the periodic subgroups are finitely generated, they agree exactly with the rigid vertices of $\Tper$. We prove that this ensures that the induced action of $G \rtimes_{\Phi} \Z$ is acylindrical, and then analyse vertex groups that can appear in this new action.

We remark that the strong uniqueness properties of the JSJ decomposition imply that for a one-ended, torsion free hyperbolic group, the rigid vertex groups of the tree considered in \cite{bestvina2023farrell} agree with those in our $\Tper$.

We consider the case when $G$ is infinitely ended and has a finite index subgroup which is a free product of one-ended groups.  Being virtually torsion free is sufficient but not necessary for this to occur. 
Once we have a free product splitting we have the following combination-type theorem.

\begin{duplicate}[\Cref{Thm:combinationthm}]
        Let $G=G_1 \ast \ldots \ast G_k \ast F_N$ be a free product of finitely generated groups, let $\calf'=\{[G_1],\ldots,[G_k]\}$ and let $ \Phi \in \Aut(G,\calf')$. For every $i \in \{1,\ldots,k\}$, denote by $\Phi_i$ an element of the outer class of $\Phi$ preserving $G_i$. If for every $i \in \{1,\ldots,k\}$ the group $G_i\rtimes_{\Phi_i} \ZZ$ is in $\FJCX$, then $G \rtimes_\Phi \ZZ$ is in $\FJCX$.
\end{duplicate}


\Cref{Thm:combinationthm} is proved by induction on the Grushko rank $k+N$. There are two kinds of induction step, depending on whether the maximal periodic free factor system is \emph{sporadic} or not.  A free factor system $(G, \calf)$ is sporadic if $G \cong G_1 \ast G_2$ or $G \cong G_1 \ast \Z$. This division might seem unusual to experts; a more standard division (for instance, in \cite{bestvina2023farrell} as well as throughout the $\Out(F_n)$ literature) depends instead on if the automorphism is polynomially or exponentially growing. 
Polynomially growing automorphisms are always sporadic in this sense, but so are some exponentially growing automorphisms. The non-sporadic case uses Dahmani and Li's work on relative hyperbolicity for suspensions of free factors \cite{dahmani2022relative}, whereas in the sporadic case we use the fact that these splittings are \emph{rigid}.

These rigidity arguments hold equally well for sporadic Stallings--Dunwoody decompositions, and so we are still able to obtain some results without first passing to a finite index free product: see \Cref{prop:sporadic_stallings_dunwoody}.

\subsection{Fixed and Periodic Subgroups, the classes \texorpdfstring{$\AC(\VNil)$}{AC(VNIL)} versus \texorpdfstring{$\FJCX$}{FJC}, and localising invariants}
Some previous results of this flavour have concluded the stronger property that the suspension is in the class $\AC(\VNil)$. This class consists of groups having particularly nice actions on compact Euclidean retracts, see~\cite{BartelsBestvina2019Farrell} for a precise definition. Every group in this class satisfies the Farrell--Jones conjecture \cite{BartelsBestvina2019Farrell}. This class has similar closure properties to the class of groups satisfying the Farrell--Jones conjecture, except that $\FJCX$ is closed under directed colimits while $\AC(\VNil)$ is not known to be. However, for the majority of the paper we work directly with the class $\FJCX$.

The reason for this is that we have to understand the periodic subgroups of certain automorphisms as an ascending union of fixed subgroups, and consider the action of the automorphism on this subgroup. In general our hypotheses do not guarantee that this union stabilises --- we do not have a \emph{virtual neatness} property to rely on.
        
However, there are hypotheses that ensure virtual neatness, and if we assume these then again the suspensions will be in $\AC(\VNil)$. One set of sufficient conditions is,

\begin{duplicate}[\Cref{Coro:Slenderperiodicisfixedoneended}]
    Let $G$ be a hyperbolic group relative to a collection $\calp$ of slender groups and let $\Phi \in \Aut(G)$. There exists $N \in \NN^*$ such that $\mathrm{Per}(\Phi)=\Fix(\Phi^N)$ and $\mathrm{Per}(\Phi)$ is finitely generated.
\end{duplicate}     

If we add these hypotheses to our main theorem, we can prove the suspensions lie in $\AC(\VNil)$.
     
\begin{duplicate}[\Cref{thmx:acvnil}]
    Suppose $(G, \calp)$ is one-ended or virtually torsion free, and hyperbolic relative to finitely many conjugacy classes of virtually polycyclic subgroups. Then for every automorphism $\Phi$ of $G$, $\Gamma \coloneqq G \rtimes_{\Phi} \Z$ is in $\AC(\VNil)$.
\end{duplicate}

Except for replacing each periodic subgroup with the fixed subgroup of a power, the proof of this theorem is identical to the proof of \cref{thmx:main}. We discuss this in a little more detail after completing that proof.

\begin{remark*}
    Following work of Bunke, Kaprowski, and Winges \cite{bunke2021farrelljones} our results apply equally well to the Farrell--Jones Conjecture for localising invariants, that is, with coefficients in $H\colon \mathbf{Cat}_{\infty,\ast}^{\mathrm{Lex}}\to \mathbf{M}$ a lax monoidal finitary localising invariant with values in a stably monoidal and cocomplete stable $\infty$-category which admits countable products.  We refer the reader to the introduction of loc. cit. for more information.
\end{remark*}



\subsection{Structure of the paper}
\Cref{sec:Farrell--Jones} introduces the relevant background results on the Farrell--Jones conjecture. 

\Cref{sec:free_prod_background} contains definitions and results on free products and their automorphisms, needed for \cref{sec:infinite_ended}.
 
 \Cref{sec:JSJ} collects results on JSJ decompositions of one-ended relatively hyperbolic groups, and provides a lemma on acylindricity when passing to the action of a suspension. 
 
 The one-ended case of \Cref{thmx:main} is proved in \cref{sec:periodicJSJ} by careful analysis of a certain JSJ tree. From this analysis, we also deduce~\Cref{Coro:Slenderperiodicisfixedoneended}. 
 
 In \cref{sec:infinite_ended} we prove \cref{Thm:combinationthm} and the infinitely-ended case of \Cref{thmx:main}. Using these results we prove \Cref{thmx:main} and \Cref{thmx:acvnil}.
 
 In \cref{sec:applications}, we deduce \Cref{thmx:Aut_FJC} from \Cref{thmx:main}. 
 
 Finally, in \cref{sec:torsion} we extend \Cref{thmx:main} as far as possible with our current techniques to groups which are infinitely ended but do not split as free products. The arguments of the last three sections are almost independent of \cref{sec:JSJ,sec:periodicJSJ}, apart from requiring the background information on trees of cylinders from \cref{subsec:trees_of_cylinders}. 


\subsection*{Acknowledgements}
This work has received funding from the European Research Council (ERC) under the European Union's Horizon 2020 research and innovation programme (Grant agreement No. 850930). The second author was supported by the LABEX MILYON of Université de Lyon. The authors would like to thank Damien Gaboriau, Dominik Kirstein, Gilbert Levitt, and Ric Wade for helpful conversations.  The authors would also like to thank the anonymous referee for a number of helpful comments and clarifications and for suggesting an elegant simplification of some of our arguments.

\section{Background on the Farrell--Jones conjecture}\label{sec:Farrell--Jones}

For full context and background on the Farrell--Jones conjecture, see for instance L\"uck's book project \cite{LuckBookProj}. In this section, we recall some properties of the class $\FJCX $ of groups which satisfy the Farrell--Jones conjecture for $X$-theory where $X$ is $A$, $K$, or $L$.

\begin{thm}\label{Thm:FJCclosedsubgroups}
    The class $\FJCX $ is closed under the following operations:
    \begin{enumerate}
        \item taking subgroups;
        \item taking finite index overgroups;
        \item finite direct products;
        \item finite free products;
        \item directed colimits.
    \end{enumerate}
\end{thm}
\begin{proof}
    The cases of $K$- and $L$-theory are given in \cite[Theorem~2.1]{gandini2015farrell}.  The case of $A$-theory is \cite[Theorem~1.1(ii)]{EnkelmannLuckPieperUllmannWinges}.
\end{proof}

While $\FJCX$ is not known to be closed under extensions, there is a partial result which we make use of.

\begin{thm}\label{thm:extensions}
Let $1\to N \to \Gamma \to Q \to 1$ be a short exact sequence with $N\in \FJCX$. If for every infinite cyclic subgroup $C$ of $Q$, the preimage of $C$ in $\Gamma$ belongs to $\FJCX$, then $\Gamma$ belongs to $\FJCX$.
\end{thm}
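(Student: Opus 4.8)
The plan is to reduce the statement to the known result on fibred Farrell--Jones for the family $\VC$ via a transitivity-type argument over a classifying space for $Q$. First I would recall the standard fact (see \cite{BartelsFarrellLuck2014,EnkelmannLuckPieperUllmannWinges,LuckBookProj}) that the fibred FJC is \emph{transitive}: if $\Gamma \to Q$ is a surjection and $Q$ satisfies the fibred FJC with respect to a family $\mathcal{G}$, and every preimage in $\Gamma$ of a subgroup in $\mathcal{G}$ satisfies the fibred FJC (with respect to $\VC$), then $\Gamma$ satisfies the fibred FJC. So the real content is to produce, for the quotient $Q$, a proof of the fibred FJC relative to the family $\mathcal{G}$ generated by the image of $N$ together with cyclic subgroups — equivalently, to check the meta-conjecture hypotheses for $Q$ using the assumption $N \in \FJCX$.

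The key steps, in order, would be: (i) Since $N \in \FJCX$, the fibred FJC holds for $N$; the Fibred Farrell--Jones Conjecture is inherited by subgroups, so it holds for every subgroup of $N$, in particular for $N \cap (\text{preimage of } C)$ for each infinite cyclic $C \le Q$. (ii) Apply the transitivity principle to the extension $1 \to N \to \Gamma \to Q \to 1$: this reduces the fibred FJC for $\Gamma$ to the fibred FJC for $Q$ relative to the family $\mathcal{F}'$ of subgroups $H \le Q$ such that the preimage $\widehat{H} \le \Gamma$ lies in $\FJCX$. By hypothesis, $\mathcal{F}'$ contains all infinite cyclic (hence all virtually cyclic, by the subgroup and finite-overgroup closure of \Cref{Thm:FJCclosedsubgroups} applied inside $\Gamma$) subgroups of $Q$. (iii) Invoke the known case of FJC relative to $\VC$ for $Q$ combined with an induction / change-of-family argument: since $\mathcal{F}' \supseteq \VC$ (as families of subgroups of $Q$), the transfer reducibility / assembly-map machinery gives that the $\mathcal{F}'$-assembly map for $Q$ is an isomorphism whenever the $\VC$-assembly map is — but we are \emph{not} assuming the latter; rather, we directly conclude the $\Gamma$-assembly map is an isomorphism because every virtually cyclic subgroup of $Q$ has preimage in $\FJCX$, which is exactly the input the transitivity theorem needs.

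More carefully, the cleanest route avoids asking anything of $Q$ at all: the statement is a direct instance of the general inheritance result for the fibred FJC along extensions, namely \cite[Theorem~2.11 / Transitivity Principle]{BartelsFarrellLuck2014} (or \cite[Section~2]{LuckBookProj}), which says precisely that if $N \in \FJCX$ and for every $C \in \VC(Q)$ the preimage $p^{-1}(C)$ is in $\FJCX$, then $\Gamma \in \FJCX$. The only thing to verify is that the hypothesis as stated (preimages of \emph{infinite} cyclic subgroups) suffices to cover preimages of \emph{all} virtually cyclic subgroups of $Q$: a finite subgroup of $Q$ has preimage an extension of a finite group by $N$, hence a finite-index overgroup of $N$, hence in $\FJCX$ by \Cref{Thm:FJCclosedsubgroups}(2); a virtually cyclic subgroup $V \le Q$ of infinite order contains an infinite cyclic subgroup $C$ of finite index, whose preimage $p^{-1}(C)$ is in $\FJCX$ by hypothesis, and $p^{-1}(V)$ is a finite-index overgroup of $p^{-1}(C)$, hence again in $\FJCX$ by \Cref{Thm:FJCclosedsubgroups}(2). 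This completes the verification, and the transitivity principle then yields $\Gamma \in \FJCX$.

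The main obstacle is purely bookkeeping: making sure the statement of the transitivity principle we cite is phrased for the \emph{fibred} conjecture (the non-fibred version is not known to be transitive, which is exactly why the introduction stresses working with the fibred FJC throughout), and that the $A$-theoretic case is covered — this is handled by \cite[Theorem~1.1(ii)]{EnkelmannLuckPieperUllmannWinges}, which establishes the same transitivity and closure properties for $A$-theory as are known for $K$- and $L$-theory. Beyond that, the proof is a one-line deduction from cited results plus the elementary finite-index argument above to upgrade ``infinite cyclic'' to ``virtually cyclic'' in the hypothesis.
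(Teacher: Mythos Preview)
Your proposal is correct and takes essentially the same approach as the paper: the paper's proof simply cites \cite[Theorem~1.7]{BartelsFarrellLuck2014} for $K$- and $L$-theory and \cite[Theorem~1.1(ii)]{EnkelmannLuckPieperUllmannWinges} for $A$-theory, which are exactly the transitivity/inheritance results you invoke. Your additional paragraph explaining why ``infinite cyclic'' suffices in place of ``virtually cyclic'' (via \Cref{Thm:FJCclosedsubgroups}(2)) is a useful elaboration that the paper leaves implicit.
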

\begin{proof}
    The cases of $K$- and $L$-theory are given in \cite[Theorem 1.7]{BartelsFarrellLuck2014}.  The case of $A$-theory is given in \cite[Theorem 1.1(ii)]{EnkelmannLuckPieperUllmannWinges}.
\end{proof}

Here is an easy, mild strengthening of commensurability towards virtual isomorphism.

\begin{lemma}
    \label{lem:finite_extensions}
    Let $1\to N \to \Gamma \to Q \to 1$ be a short exact sequence with $N$ finite. Then if $Q$ is in $\FJCX$ then so is $\Gamma$, and if $\Gamma$ is residually finite and in $\FJCX$ then so is $Q$.
\end{lemma}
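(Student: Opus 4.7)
The plan is to handle the two implications separately, using the closure properties from \Cref{Thm:FJCclosedsubgroups} together with \Cref{thm:extensions}.

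For the forward direction ($Q\in\FJCX$ implies $\Gamma\in\FJCX$), I would apply \Cref{thm:extensions} directly to the given extension. The hypothesis $N\in\FJCX$ holds because $N$ is finite, hence virtually cyclic: trivially the classifying space for $\VC$ collapses to a point and the assembly map is an isomorphism. The auxiliary hypothesis requires that, for every infinite cyclic subgroup $C\leq Q$, the preimage in $\Gamma$ belongs to $\FJCX$; but that preimage is an extension of $C\cong\ZZ$ by the finite group $N$, so it is itself virtually cyclic, and hence in $\FJCX$ by the same observation.

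For the reverse direction I would use residual finiteness of $\Gamma$ to produce a finite-index subgroup that is unaffected by the kernel $N$. Concretely, since $N$ is finite, for each non-trivial $n\in N$ there is a finite-index normal subgroup of $\Gamma$ avoiding $n$; intersecting over the finitely many elements of $N\setminus\{1\}$ produces a finite-index normal subgroup $\Gamma'\trianglelefteq\Gamma$ with $\Gamma'\cap N=\{1\}$. The restriction of the quotient map $\Gamma\to Q$ to $\Gamma'$ is therefore injective, and its image $\Gamma' N/N$ has index dividing $[\Gamma:\Gamma']$ in $Q$, hence is finite-index in $Q$. Since $\FJCX$ is closed under subgroups (\Cref{Thm:FJCclosedsubgroups}(1)), $\Gamma'\in\FJCX$; and since it is closed under finite-index overgroups (\Cref{Thm:FJCclosedsubgroups}(2)), $Q\in\FJCX$.

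Neither direction has a genuinely hard step: the only subtlety worth flagging is that residual finiteness is used \emph{only} to separate the finitely many elements of $N$ from the identity, which is why the hypothesis suffices even though $N$ itself need not be assumed to intersect trivially with any pre-specified normal subgroup. In particular, if one instead assumed $\Gamma$ to be \emph{virtually} torsion-free, or even $N$-residually-finite in the weaker sense of separating elements of $N$, the same argument would go through.
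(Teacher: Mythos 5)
Your proof is correct and follows essentially the same route as the paper: apply \cref{thm:extensions} for the forward direction (noting the kernel and preimages of cyclic subgroups are virtually cyclic), and for the converse use residual finiteness to pass to a finite-index subgroup of $\Gamma$ meeting $N$ trivially, then embed it in $Q$ with finite index. You spell out the residual-finiteness step a little more carefully than the paper does, but the argument is the same.
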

\begin{proof}
    For the first statement apply \cref{thm:extensions} to the short exact sequence, noting that both finite groups and virtually cyclic groups are in $\FJCX$. For the second, observe that if $\Gamma$ is residually finite then there is a finite index subgroup $\Gamma_0$ of $\Gamma$ whose intersection with $N$ is trivial, and then $\Gamma_0 \cong Q_0$ for some finite index subgroup $Q_0$ of $Q$. The result follows from commensurability.
\end{proof}

We refer for instance to the work of Bowditch~\cite{bowditch2012relatively} for the definition of a relatively hyperbolic group.

\begin{thm}[Bartels]\label{Thm:FJCrelativelyhyp}
    Let $G$ be a group hyperbolic relative to a collection $\{[P_1],\ldots,[P_n]\}$ of conjugacy classes of subgroups. If, for every $i \in \{1,\ldots,n\}$, we have $P_i \in \FJCX $, then $G \in \FJCX $.
\end{thm}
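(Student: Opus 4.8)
The plan is to deduce this from two ingredients: the \emph{relative} Farrell--Jones conjecture for relatively hyperbolic groups, due to Bartels \cite{bartels2017coarse}, together with the transitivity principle for the fibred conjecture. First I would fix the family $\calf$ of subgroups of $G$ consisting of those that are either virtually cyclic or conjugate into one of the peripheral subgroups $P_i$. This is genuinely a family: it is closed under conjugation and under passage to subgroups, since a subgroup of a virtually cyclic group is virtually cyclic and a subgroup of something conjugate into $P_i$ is again conjugate into $P_i$; and it contains $\VC$.

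The geometric core is the statement that $G$ satisfies the fibred Farrell--Jones conjecture relative to $\calf$ (in each of the $A$-, $K$-, and $L$-theory flavours). I would prove this by the flow-space method. Starting from a model for the relatively hyperbolic structure --- the coned-off Cayley graph together with combinatorial horoballs, i.e.\ the Groves--Manning cusped space --- one builds a coarse flow space on which $G$ acts properly, cocompactly away from the subgroups in $\calf$, with flow lines running along hyperbolic geodesics in the bulk and being absorbed $\calf$-equivariantly inside the horoballs. One then verifies that $G$ is \emph{transfer reducible over $\calf$}: for every finite subset of $G$ and every scale there is a finite-dimensional $\calf$-CW-complex, a compact controlled space, and the required long-and-thin open cover, of multiplicity bounded in terms of the dimension and with all isotropy groups in $\calf$. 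The Bartels--L\"uck machinery then promotes transfer reducibility over $\calf$ to the fibred conjecture relative to $\calf$.

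Given this, the theorem follows from the transitivity principle (see \cite{LuckBookProj}): since $\VC \subseteq \calf$, it suffices to check that every $H \in \calf$ satisfies the fibred conjecture relative to $\VC$, i.e.\ that $H \in \FJCX$. If $H$ is virtually cyclic this is immediate; if $H$ is conjugate into some $P_i$, then $H$ is isomorphic to a subgroup of $P_i$, and since $P_i \in \FJCX$ by hypothesis and $\FJCX$ is closed under subgroups by \cref{Thm:FJCclosedsubgroups}, we get $H \in \FJCX$. Transitivity then yields $G \in \FJCX$.

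The main obstacle lies entirely in the middle step: constructing the coarse flow space for a relatively hyperbolic group and verifying transfer reducibility over $\calf$. Away from the peripheral parts this mimics the hyperbolic case of Bartels--L\"uck--Reich, but near the cusps one must make the covers long enough to satisfy the contracting estimate while keeping their multiplicity bounded and forcing the isotropy to land in $\calf$ rather than becoming genuinely wild; balancing these constraints is exactly what Bartels' coarse flow space is engineered to do.
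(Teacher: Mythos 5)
Your proposal is correct and follows essentially the same strategy as the paper: show $G$ satisfies the fibred conjecture relative to the family of elementary (virtually cyclic or conjugate-into-peripheral) subgroups via Bartels' coarse flow space and a transfer-reducibility-type condition, then apply transitivity plus closure of $\FJCX$ under subgroups. The paper simply delegates the geometric core to Bartels \cite[Cor.~4.6]{bartels2017coarse} for $K$- and $L$-theory, and for $A$-theory routes through finitely $\calp$-amenability of Bartels' space $\Delta$, Knopf's passage to strong transfer reducibility, and the $A$-theoretic FJC machinery of Enkelmann--L\"uck--Pieper--Ullmann--Winges.
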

\begin{proof}
    This result is due to Bartels.  The cases of $K$- and $L$-theory are \cite[Corollary~4.6]{bartels2017coarse}.  The case of $A$-theory is also ostensibly due to Bartels combined with some recent developments on the $A$-theoretic FJC.  We sketch the relevant details.  The key here is that Bartels' space $\Delta$ for a relatively hyperbolic group pair $(G,\calp)$ is finitely $\calp$-amenable (see \cite[Theorem~3.1]{bartels2017coarse}).  By \cite[Proof of Theorem~1.8(a)]{knopf2019acylindrical} this implies that $G$ is strongly transfer reducible over $\calf$.  The result now follows from \cite[Theorem~6.19]{EnkelmannLuckPieperUllmannWinges}.
\end{proof}

Let $G$ be a group acting by isometries on a tree $T$. Recall that the action is \emph{acylindrical} if there exists $K\geq 0$ such that the stabiliser of any geodesic path of length at least $K$ is of bounded size.

\begin{thm}[Knopf]\label{Thm:FJCacylindrical}
   Let $G$ be a group acting acylindrically by isometries on a tree $T$. If every vertex stabiliser belongs to $\FJCX $, then $G$ belongs to $\FJCX $.
\end{thm}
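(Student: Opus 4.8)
The plan is to reduce to a cocompact action on the tree and then to run the transfer‑reducibility argument that already underlies \Cref{Thm:FJCrelativelyhyp}.

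\emph{Reduction to a cocompact action.} By Bass--Serre theory $G$ is the fundamental group of a graph of groups $(\mathcal{G},Y)$ with Bass--Serre tree $T$, where the graph $Y=T/G$ may be infinite. Exhaust $Y$ by an increasing chain of finite connected subgraphs $Y_j$ and let $G_j\le G$ be the fundamental group of the induced sub‑graph of groups. Then $G=\colim_j G_j$ as a directed union, each $G_j$ acts cocompactly on the subtree $T_j\subseteq T$ which is the Bass--Serre tree of the induced sub‑graph of groups, this action is still acylindrical with the same constant $K$ (geodesics of $T_j$ are geodesics of $T$), and the vertex stabilisers of the $G_j$‑action on $T_j$ are subgroups of vertex stabilisers of $G$ acting on $T$, hence lie in $\FJCX$ by \Cref{Thm:FJCclosedsubgroups}(1). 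By closure under directed colimits, \Cref{Thm:FJCclosedsubgroups}(5), it suffices to treat each $G_j$, so from now on I assume the action is cocompact. (If desired one may also reduce to $G$ finitely generated beforehand, via a further directed colimit over finitely generated subgroups, using that a finitely generated group acting on a tree preserves a subtree with finite quotient.) Two degenerate cases are then immediate: if $T$ is a point, $G$ is a vertex stabiliser; if $T$ is a line, the kernel of the action is contained in the stabiliser of a geodesic of length $\ge K$, hence is finite, and the quotient is infinite cyclic or infinite dihedral, so $G$ is virtually cyclic and therefore in $\FJCX$. So we may assume $T$ is a cocompact $G$‑tree that is neither a point nor a line.

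\emph{The main case.} Let $\calf$ be the family of subgroups of $G$ that are conjugate into a vertex stabiliser of $T$; this is closed under conjugation and passing to subgroups, and by \Cref{Thm:FJCclosedsubgroups}(1) every member of $\calf$ lies in $\FJCX$. As in the proof of \Cref{Thm:FJCrelativelyhyp}, the goal is to show that $G$ is strongly transfer reducible over $\calf$; concretely, using the action on $T$ one must produce, for every $n\in\NN$, the ``wide'' open covers required by the transfer‑reducibility criterion (they should be $n$‑wide, of uniformly bounded multiplicity, $G$‑equivariant, and have all isotropy groups in $\calf$), equivalently suitable $G$‑maps to finite‑dimensional simplicial complexes. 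This is the ``finitely $\calf$‑amenable'' input used by Bartels in the relatively hyperbolic case. Once it is in place, \cite[proof of Theorem~1.8(a)]{knopf2019acylindrical} gives strong transfer reducibility over $\calf$, and since every member of $\calf$ lies in $\FJCX$, the inheritance result \cite[Theorem~6.19]{EnkelmannLuckPieperUllmannWinges} in $A$‑theory, together with its $K$‑ and $L$‑theory counterparts invoked in the proof of \Cref{Thm:FJCrelativelyhyp}, yields $G\in\FJCX$.

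\emph{The main obstacle.} The one substantial step is the construction of these covers from the tree, and this is exactly where acylindricity is needed. Because a tree is $0$‑hyperbolic, the part of the cover governed by the coarse geometry of $T$ and by the dynamics on its space of ends is much easier to control than in the general hyperbolic setting of Bartels--Lück--Reich, and can be built directly from nested families of half‑trees, whose stabilisers are transparent. Near the orbits of vertices one must interpolate between this coarse cover and covers pulled back from the vertex stabilisers, and here the constant $K$ is decisive: since the stabiliser of any geodesic of length $\ge K$ is finite, moving a bounded distance past a vertex only introduces finite ``overlap'' subgroups, so the gluing can be performed keeping the multiplicity bounded and every isotropy group inside $\calf$. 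Checking $n$‑wideness and bounded multiplicity of the glued cover simultaneously, while keeping all isotropy in $\calf$, is the heart of the matter; without acylindricity---say if some edge stabiliser were a large subgroup sitting incompatibly inside two vertex groups---there would be no reason for covers pulled back from adjacent vertices to be compatible, nor for the resulting isotropy to remain in $\calf$. Everything else is formal bookkeeping with \Cref{Thm:FJCclosedsubgroups} and the reductions above.
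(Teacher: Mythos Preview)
The paper does not give an independent proof: this is Knopf's theorem, and the paper's ``proof'' consists of citations --- \cite[Corollary~4.2]{knopf2019acylindrical} for $K$-theory, \cite[Corollary~4.3]{knopf2019acylindrical} for $L$-theory (with the observation that the extra index-$2$ overgroup hypothesis there is absorbed by \Cref{Thm:FJCclosedsubgroups}), and for $A$-theory the combination of finite $\calf$-amenability \cite[Proposition~4.1]{knopf2019acylindrical} with \cite[Theorem~6.19]{EnkelmannLuckPieperUllmannWinges} via \cite[Proof of Theorem~1.8(a)]{knopf2019acylindrical}.

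Your outline follows exactly this route --- finite $\calf$-amenability $\Rightarrow$ strong transfer reducibility $\Rightarrow$ $\FJCX$ --- and your cocompactness reduction via directed colimits is a reasonable preliminary. But your ``Main obstacle'' paragraph is not a proof: it is a heuristic narrative about half-trees, gluing, and why acylindricity ought to control overlaps, with no actual construction of the covers and no verification of wideness, multiplicity, or isotropy. That construction is the entire content of the theorem, and it is precisely Knopf's \cite[Proposition~4.1]{knopf2019acylindrical}, which the paper simply cites. So your proposal and the paper agree on the formal scaffolding, but where the paper defers to Knopf for the hard step, you gesture at it without carrying it out. If you intend a self-contained proof, the cover construction must be written down in full; if not, you should cite \cite[Proposition~4.1]{knopf2019acylindrical} as the paper does and drop the heuristic paragraph. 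You have also not addressed the $L$-theory wrinkle (the index-$2$ overgroup condition in \cite[Corollary~4.3]{knopf2019acylindrical}), which the paper handles separately.
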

\begin{proof}
    The result is due to S.~Knopf.  For $K$-theory we refer to \cite[Corollary~4.2]{knopf2019acylindrical}. The result for $L$-theory is \cite[Corollary~4.3]{knopf2019acylindrical}, note that here one has the additional hypothesis that index $2$ overgroups of the stabilisers in $G$ must satisfy $\FJC_\mathbf{L}$.  But this follows from \Cref{Thm:FJCclosedsubgroups}.  For $A$-theory, as in Bartels' result, one combines finite $\calf$-amenability \cite[Proposition~4.1]{knopf2019acylindrical} with the recent developments for $A$-theory \cite[Proof of Theorem~1.8(a)]{knopf2019acylindrical} and \cite[Theorem~6.19]{EnkelmannLuckPieperUllmannWinges}.
\end{proof}

Let $G$ be a group and let $\Phi \in \Aut(G)$. Let $\mathrm{Per}(\Phi)=\langle \Fix(\Phi^n) \rangle_{n \ge 1}$ be the periodic subgroup of $\Phi$. At several points in our arguments we will need the following lemma.

\begin{lemma}\label{Lem:PerFJC}
    Let $G$ be a group belonging to $\FJCX$ and let $\Phi \in \Aut(G)$. The group $\mathrm{Per}(\Phi) \rtimes_\Phi \ZZ$ belongs to $\FJCX$.
\end{lemma}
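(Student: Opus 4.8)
The plan is to exhibit $\mathrm{Per}(\Phi)\rtimes_\Phi\ZZ$ as a directed colimit of groups already known to lie in $\FJCX$, and then invoke the colimit closure from \Cref{Thm:FJCclosedsubgroups}(5). First I would recall the defining description $\mathrm{Per}(\Phi)=\langle\Fix(\Phi^n)\rangle_{n\in\NN}$. The key observation is that if $x\in\Fix(\Phi^n)$ and $y\in\Fix(\Phi^m)$, then both lie in $\Fix(\Phi^{\lcm(n,m)})$, so the collection $\{\Fix(\Phi^{n!})\}_{n\in\NN}$ (or any cofinal chain under divisibility) is a directed system of subgroups of $G$ under inclusion whose union is exactly $\mathrm{Per}(\Phi)$.

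Next I would upgrade this to a statement about the suspensions. For each $n$, the subgroup $\Fix(\Phi^n)$ is $\Phi$-invariant (indeed $\Phi$-invariant, not merely $\Phi^n$-invariant: if $\Phi^n(x)=x$ then $\Phi^n(\Phi(x))=\Phi(\Phi^n(x))=\Phi(x)$), so $\Phi$ restricts to an automorphism of $\Fix(\Phi^n)$ and we may form the subgroup $\Fix(\Phi^n)\rtimes_\Phi\ZZ$ of $\mathrm{Per}(\Phi)\rtimes_\Phi\ZZ$. Along a cofinal divisibility chain $n_1\mid n_2\mid\cdots$ the inclusions $\Fix(\Phi^{n_i})\hookrightarrow\Fix(\Phi^{n_{i+1}})$ are $\Phi$-equivariant, hence induce inclusions $\Fix(\Phi^{n_i})\rtimes_\Phi\ZZ\hookrightarrow\Fix(\Phi^{n_{i+1}})\rtimes_\Phi\ZZ$, and
\[
\mathrm{Per}(\Phi)\rtimes_\Phi\ZZ=\bigcup_i\big(\Fix(\Phi^{n_i})\rtimes_\Phi\ZZ\big)=\colim_i\,\Fix(\Phi^{n_i})\rtimes_\Phi\ZZ.
\]
Here one uses that every element of $\mathrm{Per}(\Phi)$, being a word in finitely many elements each periodic, already lies in some $\Fix(\Phi^{n_i})$.

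It then remains to check that each term of the colimit is in $\FJCX$. Since $G\in\FJCX$ and $\FJCX$ is closed under subgroups (\Cref{Thm:FJCclosedsubgroups}(1)), we have $\Fix(\Phi^{n_i})\in\FJCX$. Now apply \Cref{thm:extensions} to the short exact sequence $1\to\Fix(\Phi^{n_i})\to\Fix(\Phi^{n_i})\rtimes_\Phi\ZZ\to\ZZ\to1$: the only infinite cyclic subgroups of $\ZZ$ are the finite-index ones $m\ZZ$, and their preimages are of the form $\Fix(\Phi^{n_i})\rtimes_{\Phi^m}\ZZ$, which contain $\Fix(\Phi^{n_i})\in\FJCX$ with quotient $\ZZ$; since $\ZZ$ is virtually cyclic hence in $\FJCX$, and more to the point \Cref{thm:extensions} reduces precisely to this cyclic case, we conclude $\Fix(\Phi^{n_i})\rtimes_\Phi\ZZ\in\FJCX$. (Alternatively one cites directly that an extension of an $\FJCX$ group by $\ZZ$ is in $\FJCX$.) Finally, \Cref{Thm:FJCclosedsubgroups}(5) gives $\mathrm{Per}(\Phi)\rtimes_\Phi\ZZ\in\FJCX$.

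The only mild subtlety — and the step I would be most careful about — is the bookkeeping that the directed union really is a directed colimit of \emph{subgroups with compatible inclusions}, i.e.\ that the $\Phi$-action is genuinely compatible along the chain so that the maps $\Fix(\Phi^{n_i})\rtimes_\Phi\ZZ\hookrightarrow\mathrm{Per}(\Phi)\rtimes_\Phi\ZZ$ are injective homomorphisms forming a directed system; this is immediate once one notes the semidirect product structures are all restrictions of the single semidirect product $G\rtimes_\Phi\ZZ$. There is no real geometric or analytic obstacle here — the content is entirely formal, leaning on the closure properties already recorded.
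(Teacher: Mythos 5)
Your setup is sound and matches the paper's: write $\mathrm{Per}(\Phi)\rtimes_\Phi\ZZ$ as the increasing union $\bigcup_n\bigl(\Fix(\Phi^{n!})\rtimes_\Phi\ZZ\bigr)$ and invoke directed-colimit closure. The gap lies in the verification that each term $\Fix(\Phi^{n_i})\rtimes_\Phi\ZZ$ belongs to $\FJCX$. You apply \Cref{thm:extensions} to $1\to\Fix(\Phi^{n_i})\to\Fix(\Phi^{n_i})\rtimes_\Phi\ZZ\to\ZZ\to1$, note that preimages of cyclic subgroups $m\ZZ$ have the form $\Fix(\Phi^{n_i})\rtimes_{\Phi^m}\ZZ$, and then assert the conclusion --- but those preimages are extensions of exactly the same shape as the one you are trying to prove is in $\FJCX$, so nothing has been gained; the reasoning is circular. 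Your parenthetical alternative (``one cites directly that an extension of an $\FJCX$ group by $\ZZ$ is in $\FJCX$'') is not a known closure property: whether such cyclic extensions preserve $\FJCX$ is precisely the open question this paper is addressing, and if it were freely available the entire paper would be a triviality. Nowhere in your argument do you actually use the defining feature of $\Fix(\Phi^{n_i})$ as a \emph{fixed} subgroup, as opposed to an arbitrary $\Phi$-invariant subgroup of $G$; an argument that does not use this cannot succeed.

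The ingredient you are missing is the one the paper hinges on: $\Phi^{n_i}$ acts \emph{trivially} on $\Fix(\Phi^{n_i})$, so the semidirect product $\Fix(\Phi^{n_i})\rtimes_{\Phi^{n_i}}\ZZ$ is in fact the \emph{direct} product $\Fix(\Phi^{n_i})\times\ZZ$. This sits inside $\Fix(\Phi^{n_i})\rtimes_\Phi\ZZ$ with index $n_i$, and $\Fix(\Phi^{n_i})\times\ZZ\in\FJCX$ by subgroup closure and closure under finite direct products (\Cref{Thm:FJCclosedsubgroups}(1),(3)); finite-index closure (\Cref{Thm:FJCclosedsubgroups}(2)) then gives $\Fix(\Phi^{n_i})\rtimes_\Phi\ZZ\in\FJCX$. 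This bypasses \Cref{thm:extensions} entirely and is where the hypothesis that these are genuine fixed subgroups is used. (If you insist on running your \Cref{thm:extensions} route, it can be repaired: the preimage of $m\ZZ$ contains $\Fix(\Phi^{n_i})\rtimes_{\Phi^{mn_i}}\ZZ=\Fix(\Phi^{n_i})\times\ZZ$ with finite index, and this is in $\FJCX$ for the same reason --- but that is exactly the observation you omitted.)
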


\begin{proof}
     Note that $$\mathrm{Per}(\Phi) \rtimes_\Phi \ZZ=\langle \Fix(\Phi^n) \rangle_{n \ge 1} \rtimes_\Phi \ZZ=\bigcup_{n\ge 1} \left(\Fix(\Phi^{n!}) \rtimes_\Phi \ZZ\right),$$ which is an increasing union of subgroups. Therefore, by Theorem~\ref{Thm:FJCclosedsubgroups}~$(5)$, it suffices to prove that, for every $n\in \NN$, the group $\Fix(\Phi^{n!}) \rtimes_\Phi \ZZ$ belongs to $\FJCX$. 

     Let $n\in \NN$. Note that $\Fix(\Phi^{n!}) \rtimes_{\Phi^{n!}} \ZZ$ is a finite index subgroup of $\Fix(\Phi^{n!}) \rtimes_\Phi \ZZ$. By Theorem~\ref{Thm:FJCclosedsubgroups}~$(2)$, the group $\Fix(\Phi^{n!}) \rtimes_\Phi \ZZ$ belongs to $\FJCX$ if and only if the group $\Fix(\Phi^{n!}) \rtimes_{\Phi^{n!}} \ZZ$ belongs to $\FJCX$.

    The group $\Fix(\Phi^{n!}) \rtimes_{\Phi^{n!}} \ZZ$ is isomorphic to $\Fix(\Phi^{n!}) \times \ZZ$. By Theorem~\ref{Thm:FJCclosedsubgroups}, the group $\ZZ$ belongs to $\FJCX$. Since $G \in \FJCX$ and since $\FJCX$ is closed under taking subgroups by Theorem~\ref{Thm:FJCclosedsubgroups}~$(1)$, the group $\Fix(\Phi^{n!})$ belongs to $\FJCX$. Since $\FJCX$ is closed under taking direct products by Theorem~\ref{Thm:FJCclosedsubgroups}~$(3)$, the group $\Fix(\Phi^{n!}) \times \ZZ$ and hence the group $\mathrm{Per}(\Phi) \rtimes_\Phi \ZZ$ belongs to $\FJCX$.
\end{proof}

\section{Free Products of Groups and their Automorphisms}\label{sec:free_prod_background}
\subsection{Free products of groups}

Let $N \in \NN$, let $G_1,\ldots,G_k$ be countable groups and let $G=G_1 \ast \ldots \ast G_k \ast F_N$. Let $\calf=\{[G_1],\ldots,[G_k]\}$ be the set consisting of the conjugacy classes of the $G_i$. We refer to $(G,\calf)$ as a \emph{free product}. 

An element $g \in G$ is \emph{peripheral} if there exists $[A]\in \calf$ with $g\in A$. Otherwise, $g$ is \emph{nonperipheral}. A subgroup $P$ of $G$ is \emph{peripheral} if every element of $P$ is peripheral, and is \emph{nonperipheral} otherwise.

A \emph{free factor system of $(G,\calf)$} is a set $\calf'=\{[A_1],\ldots,[A_{\ell}]\}$ of conjugacy classes of proper subgroups of $G$ such that:
\begin{enumerate}
    \item for every $i \in \{1,\ldots,k\}$, there exists $[A]\in \calf'$ such that $G_i \subseteq A$;
    \item there exists a subgroup $B$ of $G$ such that $G=A_1 \ast \ldots \ast A_{\ell} \ast B$.
\end{enumerate}

The set of free factor systems of $G$ is equipped with a partial order where $\calf_1 \leq \calf_2$ if, for every $[A_1]\in \calf_1$, there exists $[A_2] \in \calf_2$ with $A_1 \subseteq A_2$. A free factor system $\calf'$ is \emph{sporadic} if either $\calf'=\{[A_1],[A_2]\}$ and $G=A_1\ast A_2$ or $\calf'=\{[A_1]\}$ and $G=A_1 \ast \ZZ$. Otherwise, the free factor system $\calf'$ is \emph{nonsporadic}. The free product $(G,\calf)$ is \emph{sporadic} (resp. \emph{nonsporadic}) if $\calf$ is.

We denote by $\Aut(G,\calf)$ the subgroup of automorphisms of $G$ preserving $\calf$ and by $\Out(G,\calf)$ the subgroup of outer automorphisms of $G$ preserving $\calf$. An automorphism $\Phi \in \Aut(G,\calf)$ is \emph{fully irreducible} if no power of $\Phi$ fixes a proper free factor system of $(G,\calf)$.

A \emph{$(G,\calf)$-tree} is a tree equipped with an action of $G$ without inversion such that, for every $[A]\in \calf$, the group $A$ is elliptic in $T$. A \emph{Grushko $(G,\calf)$-tree} is a $(G,\calf)$-tree $T$ with trivial edge stabilisers and such that, for every $v \in VT$, the conjugacy class of the stabiliser $G_v$ of $v$ is trivial or contained in $\calf$.

Let $\calf'$ be a sporadic free factor system of $(G,\calf)$. There is a unique, up to unique $G$-equivariant homeomorphism, reduced Grushko $(G,\calf')$-tree $T_{\calf'}$, which we call the \emph{Bass-Serre tree} of $(G,\calf')$. (In this context, reduced means that no vertices have trivial stabiliser.) The tree $T_{\calf'}$ has a unique orbit of edges. The tree $T_{\calf'}$ is canonical in the sense that every element $\Phi \in \Aut(G,\calf')$ induces a $G$-equivariant homeomorphism of $T_{\calf'}$. Therefore, for every $\Phi \in \Aut(G,\calf')$, the group $G \rtimes_{\Phi} \ZZ$ acts by homeomorphisms on $T_{\calf'}$.

\subsection{Growth under an automorphism of a free product}

Let $(G,\calf)$ be a free product and let $T$ be a Grushko $(G,\calf)$-tree. We turn $T$ into a metric graph by assigning length $1$ to every edge of $T$. 

Let $g\in G$. The \emph{translation length} of $g$ in $T$ is $\lVert g\rVert_T =\inf_{x\in T} d(x,gx)$. The translation length of $g$ only depends on the conjugacy class of $g$.

Let $\Phi \in \Aut(G,\calf)$. An element $g\in G$ has \emph{$\lVert. \rVert_T$-polynomial growth under iteration of $\Phi$} if there exists $P \in \ZZ[X]$ such that, for every $n\in \NN$: $$\lVert \Phi^n(g)\rVert_T \leq P(n).$$ Note that any elliptic element of $G$ in $T$ has $\lVert.\rVert_T$-polynomial growth under iteration of $\Phi$. 

A subgroup $P$ of $G$ is a \emph{$\lVert. \rVert_T$-polynomial subgroup of $\Phi$} if there exists an automorphism $\Psi \in \Aut(G,\calf)$ contained in the outer class of some power of $\Phi$ such that $\Psi(P)=P$ and every element of $P$ has $\lVert. \rVert_T$-polynomial growth under iteration of $\Psi$. 

Let $\calp_T(\Phi)$ be the set of conjugacy classes of maximal $\lVert. \rVert_T$-polynomial subgroups of $\Phi$. By for instance \cite[Theorem~1.1]{Forester2002Rididity}, the set $\calp_T(\Phi)$ does not depend on $T$. 

When $\Phi$ is fully irreducible, the set $\calp_T(\Phi)$ satisfies some additional properties. Recall that a subgroup $A$ of $G$ is \emph{malnormal} if, for every $g\in G-A$, we have $A \cap gAg^{-1}=\{e\}$.

\begin{prop}\cite[Proposition~1.13]{dahmani2022relative}\label{prop:maximalpolysubgroups}
    Let $(G,\calf)$ be a nonsporadic free product and let $\Phi \in \Aut(G,\calf)$ be fully irreducible. Let $T$ be a Grushko $(G,\calf)$-tree.
    \begin{enumerate}
        \item The set $\calp_T(\Phi)$ is finite.
        \item For every $[A]\in \calp_T(\Phi)$, the subgroup $A$ is malnormal in $G$.
    \end{enumerate}
\end{prop}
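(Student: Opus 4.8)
\begin{claimproof}
If every conjugacy class of $G$ has $\lVert\cdot\rVert_T$-polynomial growth under $\Phi$, then $G$ itself is a $\lVert\cdot\rVert_T$-polynomial subgroup, so $\calp_T(\Phi)=\{[G]\}$ and both assertions hold trivially. Otherwise, by the dichotomy between polynomial and exponential growth, $\Phi$ has an exponentially growing conjugacy class, and the plan is to run the argument through relative train track representatives for automorphisms of free products, in the sense of Francaviglia--Martino and refined to completely split representatives in the style of Feighn--Handel. Since $\calp_T(\Phi)$ depends only on the outer classes of the positive powers of $\Phi$, I may replace $\Phi$ by a suitable power and fix a representative $f\colon\Gamma\to\Gamma$ of it, where $\Gamma$ is a marked graph of groups realising $(G,\calf)$ with trivial edge groups and $f$ respects a filtration by invariant subgraphs of groups. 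Full irreducibility forces this filtration to be as simple as possible: the only strata are the vertex groups, carrying conjugates of the $G_i$, and a single top, exponentially growing stratum $H$, whose attracting lamination $\Lambda$ is \emph{filling}, i.e.\ carried by no proper $\Phi$-invariant free factor system of $(G,\calf)$.

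The crux of part (1) is an intrinsic description of the $\lVert\cdot\rVert_T$-polynomial subgroups: I would show that $g\in G$ has $\lVert\cdot\rVert_T$-polynomial growth under $\Phi$ precisely when its conjugacy class is \emph{not weakly attracted} to $\Lambda$. Indeed, a path crossing $H$ legally has exponentially growing $H$-length, which by bounded cancellation is comparable to $\lVert\cdot\rVert_T$, while non-attraction bounds the number of illegal turns and hence the cancellation, leaving only polynomial growth. It follows that a maximal $\lVert\cdot\rVert_T$-polynomial subgroup is, up to conjugacy and $\Phi$-periodicity, a maximal element of the \emph{non-attracting subgroup system} $\mathcal{A}_{na}(H)$ of $H$. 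This system is finite --- its members are the conjugates of the $G_i$ together with the cyclic subgroups generated by the finitely many closed periodic Nielsen paths of $H$, the latter accounting for periodic conjugacy classes (such as surface boundary curves in the geometric case) visible only inside $H$ --- so $\calp_T(\Phi)$ is finite, which is (1). One must also check that this description is independent of the chosen power and representative, which holds because $\Lambda$, and with it $\mathcal{A}_{na}(H)$, is attached to the outer class itself.

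For part (2) the members of $\mathcal{A}_{na}(H)$ are of two kinds. A conjugate of some $G_i$ is the stabiliser of a vertex in a Grushko $(G,\calf)$-tree; since that tree has trivial edge stabilisers, a nontrivial element fixing two distinct such vertices would fix an edge, so these subgroups are malnormal. A member of the second kind is generated by a closed periodic Nielsen path and, after absorbing a possible finite flip, is realised as a finite index subgroup of the full $G$-stabiliser $A$ of a line $L$ in a Grushko tree; if $h\in A\cap gAg^{-1}$ is nontrivial then $h$ stabilises both $L$ and $gL$, and a short argument --- using full irreducibility, so that distinct periodic axes cannot be interchanged by a group element without coinciding --- forces $gL=L$ and hence $g\in A$. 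This yields malnormality in all cases.

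The step I expect to be the real obstacle is the equivalence used in (1): matching the \emph{a priori} metric and dynamical notion of $\lVert\cdot\rVert_T$-polynomial growth --- quantified over representatives of \emph{all} powers of $\Phi$ --- with the non-attracting subgroup system of a \emph{single} completely split relative train track representative. This is precisely where one needs the train track machinery in force (bounded cancellation, legality of complete splittings, finiteness of periodic Nielsen paths, the weak attraction theorem). A shortcut via the relative hyperbolicity of the suspension $G\rtimes_\Phi\ZZ$ --- reading malnormality off from almost malnormality of its peripheral subgroups --- would be circular, since that relatively hyperbolic structure is itself built on the finiteness and malnormality of $\calp_T(\Phi)$; so the train track argument should remain the backbone.
\end{claimproof}
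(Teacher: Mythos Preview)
The paper does not prove this proposition at all: it is quoted verbatim from Dahmani--Li \cite[Proposition~1.13]{dahmani2022relative} and used as a black box. So there is no ``paper's own proof'' to compare your attempt against.

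That said, your sketch is broadly along the lines of what Dahmani--Li actually do: identify the maximal polynomial subgroups with the non-attracting subgroup system of the (unique, filling) lamination via a relative train track representative, and read off finiteness and malnormality from the explicit description of that system as peripheral factors together with finitely many cyclic groups coming from indivisible periodic Nielsen paths. Your caveat about the hard step --- matching the metric notion of $\lVert\cdot\rVert_T$-polynomial growth with non-attraction to $\Lambda$ uniformly over powers and representatives --- is well placed; this is exactly where Dahmani--Li invest their effort, and your outline of why it should hold (legality implies exponential $H$-length, bounded cancellation, finiteness of illegal turns under non-attraction) is the right shape but would need the full weak attraction theorem in the free product setting to be made rigorous. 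One small point: your opening case ``every conjugacy class has polynomial growth'' cannot occur here, since full irreducibility on a nonsporadic free product forces $\lambda>1$ in \Cref{Lem:existenceRtree}; you may simply drop that paragraph. Your malnormality argument for the Nielsen-path cyclic groups is also a bit thin --- the assertion that distinct periodic axes cannot be exchanged uses more than full irreducibility alone, and in Dahmani--Li this is handled via the malnormality of the non-attracting system as a whole rather than axis-by-axis.
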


Let $\calp=\{[P_1],\ldots,[P_{\ell}]\}$ be a finite set of conjugacy classes of malnormal subgroups of $G$. Let $\Phi \in \Aut(G)$ be an automorphism such that, for every $i \in \{1,\ldots,\ell\}$, there exists $g_i \in G$ such that $\mathrm{ad}_{g_i} \circ \Phi(P_i)=P_i$. The \emph{suspension of $\calp$} is the set $\{[P_i \rtimes_{\mathrm{ad}_{g_i} \circ \Phi} \ZZ]\}$ considered as a set of conjugacy classes of subgroups of $G\rtimes_\Phi\ZZ$. 

The following result is due to Dahmani--Li~\cite{dahmani2022relative}.

\begin{thm}\cite[Corollary~2.3]{dahmani2022relative}\label{thm:relativehyperbolicityfullyirred}
    Let $(G,\calf)$ be a nonsporadic free product and let $\Phi \in \Aut(G,\calf)$ be fully irreducible. Let $T$ be a Grushko $(G,\calf)$-tree and let  $\calp_T(\Phi)$ be the set of conjugacy classes of maximal $\lVert. \rVert_T$-polynomial subgroups of $\Phi$. There exists $n\in \NN$ such that the group $G \rtimes_{\Phi^n}\ZZ$ is hyperbolic relative to the suspension of $\calp_T(\Phi)$.
\end{thm}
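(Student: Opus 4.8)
The plan is to realise $\Phi$ — after passing to a power — by an expanding relative train track map on a graph of groups underlying the free product $(G,\calf)$, to identify the suspensions of the maximal polynomial subgroups with the non-hyperbolic pieces of the associated mapping torus, and then to feed a flaring estimate into a relative version of the Bestvina--Feighn combination theorem. This is the free-product analogue of the Gautero--Lustig argument for $F_{n}$, carried out relative to the peripheral structure $\calf$.

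First I would set up a train track representative. Using relative train track theory for free products, after replacing $\Phi$ by a power $\Phi^{n}$ I may assume that $\Phi^{n}$ fixes each $[G_{i}]$ and is represented by a map $f\colon\mathcal{G}\to\mathcal{G}$ on a graph of groups $\mathcal{G}$ carrying $(G,\calf)$ whose top stratum is an irreducible, genuinely expanding train track (if no such stratum exists, then $G$ itself grows polynomially, $\calp_T(\Phi)=\{[G]\}$, and the statement is vacuous). Since $\Phi$ is fully irreducible there is no proper $\Phi^{n}$-invariant free factor system, so the only failure of expansion is concentrated in a family of vertex groups; after blowing up $\mathcal{G}$ I may arrange that $\Phi^{n}$ leaves invariant a subsystem $\calp'$ of vertex groups whose conjugacy classes are exactly the maximal $\lVert\cdot\rVert_{T}$-polynomial subgroups. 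By \Cref{prop:maximalpolysubgroups} the family $\calp'$ is finite and malnormal; together with the fact that $\mathcal{G}$ has trivial edge groups this is also what makes $G$ itself hyperbolic relative to $\calp_T(\Phi)$, which is the algebraic shape the mapping-torus conclusion must respect.

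Next I would pass to the mapping torus $M_{f}$, with $\pi_{1}(M_{f})=G\rtimes_{\Phi^{n}}\ZZ$. The graph-of-groups decomposition of $\mathcal{G}$ together with the $f$-invariance of $\calp'$ presents the universal cover $\widetilde{M_{f}}$ as a tree of spaces in which the peripheral vertex spaces are the mapping tori of the polynomial vertex groups — precisely the factors $P_{i}\rtimes_{\ad_{g_{i}}\circ\Phi^{n}}\ZZ$ making up the suspension of $\calp_T(\Phi)$ — and the remaining pieces are built from the expanding stratum. The heart of the argument is then a relative flaring estimate: any combinatorial quasigeodesic in $\widetilde{M_{f}}$ that is not carried by a coset of a peripheral subgroup satisfies an exponential annulus-flaring condition. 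Forward flaring follows from expansion of $f$ on the top stratum together with a bounded-cancellation lemma for graphs of groups; backward flaring follows by running the same argument for a train track representative of $\Phi^{-n}$, which is again fully irreducible with the same polynomial subgroups. With flaring in hand I would invoke a relative combination theorem (Mj--Reeves, or the version of Gautero/Dahmani for trees of relatively hyperbolic spaces): its hypotheses — quasi-isometrically embedded edge spaces, uniform hyperbolicity of all vertex spaces other than the peripheral ones, and flaring away from the peripheral pieces — all hold here, and the conclusion is that $G\rtimes_{\Phi^{n}}\ZZ$ is hyperbolic relative to the suspensions of $\calp_T(\Phi)$.

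The main obstacle is the relative flaring estimate. One must show, uniformly, that a path making definite progress through the expanding stratum cannot be shortened too much after applying $f$ or $f^{-1}$, while allowing unbounded excursions into the polynomial vertex groups; this forces one to control the bounded-cancellation constants and the train track structure simultaneously for $\Phi^{n}$ and $\Phi^{-n}$, and is the reason a power cannot be avoided. Verifying the remaining hypotheses of the combination theorem — qi-embeddedness of the edge spaces and uniform hyperbolicity of the non-peripheral vertex spaces — is routine once the train track map is in place, but it still uses the malnormality from \Cref{prop:maximalpolysubgroups} to guarantee that distinct peripheral cosets diverge.
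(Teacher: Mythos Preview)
The paper does not prove this statement; it is quoted verbatim from Dahmani--Li \cite[Corollary~2.3]{dahmani2022relative} and used as a black box in the proof of \Cref{Lem:fullyirredcase}. There is therefore no proof in the present paper to compare your proposal against.

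That said, your outline is broadly the strategy Dahmani--Li themselves follow: represent a power of $\Phi$ by an expanding (relative) train track map on a graph of groups for $(G,\calf)$, identify the polynomially growing pieces, establish a flaring/annuli-flare condition on the mapping torus away from those pieces, and conclude via a relative combination theorem of Mj--Reeves/Gautero type. One point in your sketch deserves correction: you cannot arrange that the maximal polynomial subgroups are all vertex groups of a blown-up Grushko graph. As \Cref{Prop:descriptionpolysubgroups} records, for a fully irreducible $\Phi$ the classes in $\calp_T(\Phi)$ are either peripheral (the $[G_i]$) or \emph{nonperipheral infinite cyclic}; the latter arise from periodic Nielsen paths/indivisible orbits in the train track and are not vertex groups of any $(G,\calf)$-tree. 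They must be handled as extra peripheral pieces in the combination step, not absorbed into the graph-of-groups structure. Similarly, the parenthetical ``if no expanding stratum exists then $\calp_T(\Phi)=\{[G]\}$'' cannot occur: full irreducibility in the nonsporadic case forces $\lambda>1$ (this is exactly the content of \Cref{Lem:existenceRtree}), so the top stratum is always genuinely expanding.
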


In the rest of the section, we give a precise description of the set $\calp_T(\Phi)$ for a fully irreducible automorphism.

We first need a result, which can be found for instance in the work of Francaviglia--Martino--Syrigos~\cite{francaviglia2021action} concerning the existence of a limiting tree of a fully irreducible automorphism.

\begin{lemma}\cite[Lemma~2.14.1]{francaviglia2021action}\label{Lem:existenceRtree}
    Let $(G,\calf)$ be a nonsporadic free product and let $\Phi\in \Aut(G,\calf)$ be a fully irreducible automorphism. There exist a Grushko $(G,\calf)$-tree $S$, an $\RR$-tree $T$ equipped with an isometric action of $G$ and a constant $\lambda>1$ such that, for every $g \in G$, we have 
    \begin{equation}\label{Eq:limitingtree}
    \lim_{n\to \infty} \frac{1}{\lambda^n}\lVert \Phi^n g \rVert_S=\lVert g \rVert_T.    
    \end{equation}
    
\end{lemma}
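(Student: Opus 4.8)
The plan is to realise $T$ as the attracting $\RR$-tree of $\Phi$ in the compactified (projectivised) outer space of the free product $(G,\calf)$, using the train track technology for free products of Francaviglia--Martino \cite{francaviglia2021action}. I would start by passing to a positive power: their relative train track theorem provides a $k\ge 1$, a Grushko $(G,\calf)$-tree $S$ (viewed as a metric graph of groups with all edges of length $1$), and a train track representative $f\colon S\to S$ of the outer class of $\Phi^k$. Because $\Phi$ is fully irreducible and $(G,\calf)$ is nonsporadic, the transition matrix $M$ of $f$ is irreducible and its Perron--Frobenius eigenvalue $\mu$ is strictly larger than $1$: if $\mu=1$ then each column of $M$ sums to $1$, so $f$ maps edges to edges and a power of $\Phi$ fixes a proper free factor system (the free splitting obtained by collapsing all but one edge orbit of $S$), contradicting full irreducibility. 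Put $\lambda:=\mu^{1/k}$.

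Next I would prove convergence of $\lambda^{-n}\lVert\Phi^n g\rVert_S$. The two ingredients are (i) under $f$ a legal edge path is multiplied in combinatorial length by $\mu$, and (ii) a bounded cancellation lemma: tightening $f$ of a reduced path deletes at most a uniformly bounded amount at each illegal turn. Together these show that for a conjugacy class represented by a loop $\gamma$ the sequence $\mu^{-m}\lVert\Phi^{km}\gamma\rVert_S$ is Cauchy, hence converges to some $c(\gamma)\ge 0$; for $g$ elliptic in $S$ (in particular peripheral) the lengths $\lVert\Phi^n g\rVert_S$ stay bounded, so the rescaled sequence tends to $0$. Writing $n=km+j$ with $0\le j<k$ and using that $\Phi$ commutes with $\Phi^k$ and so fixes the (unique) attracting projective class of $\Phi^k$ --- which forces $c(\Phi g)=\lambda\, c(g)$ --- one deduces that the full sequence $\ell(g):=\lim_n\lambda^{-n}\lVert\Phi^n g\rVert_S$ exists and defines a conjugacy-invariant function $G\to\RR_{\ge 0}$.

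Finally I would identify $\ell$ as a translation length function. Each truncation $\ell_n:=\lambda^{-n}\lVert\Phi^n(\cdot)\rVert_S$ is, up to rescaling, the translation length function of the Grushko tree $\Phi^{-n}S$ on which $G$ acts minimally, hence a hyperbolic length function in the sense of Culler--Morgan; the set of hyperbolic length functions is closed under pointwise convergence provided the limit is not identically zero, and $\ell$ is nonzero because a legal loop $\gamma$ satisfies $\lVert\Phi^{km}\gamma\rVert_S\ge\mu^m\cdot(\text{legal length of }\gamma)$, whence $\ell(\gamma)>0$. By the Culler--Morgan realisation theorem, $\ell$ is the translation length function of a minimal isometric action of $G$ on an $\RR$-tree $T$, and this $T$, together with $S$ and $\lambda$, satisfies \eqref{Eq:limitingtree}. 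I expect the main obstacle to be the convergence step: making the bounded cancellation estimate precise and carrying out the interpolation from the power $\Phi^k$ back to $\Phi$ --- the remaining steps are either a direct appeal to the train track package of \cite{francaviglia2021action} or standard $\RR$-tree formalism.
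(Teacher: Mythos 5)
The paper does not prove this lemma at all: it is stated as a citation to \cite[Lemma~2.14.1]{francaviglia2021action}, so there is no in-paper argument to compare against. Your sketch is, in outline, the standard train-track construction of the attracting tree and is broadly the route taken in the cited source, so it is a reasonable reconstruction rather than an alternative approach.

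That said, a few points in your write-up are looser than you acknowledge. (i) The claim ``under $f$ a legal edge path is multiplied in combinatorial length by $\mu$'' is only exactly true if you metrize $S$ using the left Perron--Frobenius eigenvector as edge lengths, not unit lengths as you set up at the start; with unit lengths you only get two-sided bounds $c\mu^n \le \lVert f^n(\gamma)\rVert \le C\mu^n$ for a legal loop, which is still enough but should be said. (ii) The Perron--Frobenius step needs the observation that an irreducible nonnegative integer matrix with spectral radius $1$ is a permutation matrix; your sentence gestures at ``columns sum to $1$'' but that is not how the implication runs --- you need to rule out any entry $\ge 2$ and any vertex of out-degree $\ge 2$ in the transition graph, e.g.\ by noting these would force unbounded entries in $M^k$. (iii) The interpolation step from $\Phi^k$ back to $\Phi$ is the one place where simply invoking ``uniqueness of the attracting projective class'' is circular if you have not already established that $\Phi^k$ has a unique attracting fixed point in the boundary of the relative outer space; the cited work of Guirardel--Horbez and Francaviglia--Martino--Syrigos supplies this (it is a north--south dynamics statement for fully irreducible automorphisms relative to a free factor system), but it deserves an explicit citation rather than being presented as obvious. (iv) Finally, when you appeal to Culler--Morgan to realize the limit length function by an $\RR$-tree, you should note that the approximating length functions come from \emph{small} (in fact Grushko) actions, so the limit is small and Culler--Morgan's realization theorem applies. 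None of these gaps is fatal; they are the kind of detail the cited reference handles carefully and which your sketch correctly flags as ``the main obstacle.''
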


Let $(G,\calf)$ be a free product. Note that, for any subgroup $A$ of $G$, the free factor system $\calf$ induces a free factor system $\calf|_A$ of $A$. Following the terminology of for instance Guirardel-Horbez~\cite[Definition~3.2]{Guirardelhorbez19}, an $\RR$-tree equipped with an isometric action of $G$ is an \emph{arational} $(G,\calf)$-tree if the following holds:

\begin{enumerate}
    \item the tree $T$ is not a Grushko $(G,\calf)$-tree;
    \item  for every $[A]\in \calf$, the group $A$ is elliptic in $T$;
    \item for every free factor system $\calf < \calf'$ and every $[A]\in \calf'$ such that $A$ is nonperipheral, the action of $A$ on its minimal tree in $T$ is a Grushko $(A,\calf|_A)$-tree.
\end{enumerate}

\begin{prop}\label{Prop:descriptionpolysubgroups}
    Let $(G,\calf)$ be a nonsporadic free product, let $\Phi\in \Aut(G,\calf)$ be a fully irreducible automorphism and let $S$ be a Grushko $(G,\calf)$-tree given by Lemma~\ref{Lem:existenceRtree}. For every $[P] \in \calp_S(\Phi)$, either $[P]\in \calf$ or $P$ is nonperipheral and infinite cyclic.
\end{prop}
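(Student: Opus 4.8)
The plan is to exploit the limiting tree $T$ produced by \cref{Lem:existenceRtree} together with the structure theory of arational trees. Fix $[P]\in\calp_S(\Phi)$, and choose a witnessing automorphism $\Psi\in\Aut(G,\calf)$ in the outer class of some power $\Phi^{m}$ (with $m\geq 1$), so that $\Psi(P)=P$ and every $g\in P$ has $\lVert.\rVert_S$-polynomial growth under $\Psi$; fix also $S$, the $\RR$-tree $T$ and the constant $\lambda>1$ of \cref{Lem:existenceRtree}. The first step is to show that every $g\in P$ is elliptic in $T$. Since $\Psi^{n}$ differs from $\Phi^{mn}$ by an inner automorphism and translation length is a conjugacy invariant, $\lVert\Psi^{n}(g)\rVert_S=\lVert\Phi^{mn}(g)\rVert_S$ for all $n$; polynomial growth of $g$ under $\Psi$ therefore gives $\lVert\Phi^{mn}(g)\rVert_S/\lambda^{mn}\to 0$, and by \eqref{Eq:limitingtree} this limit equals $\lVert g\rVert_T$, so $\lVert g\rVert_T=0$.

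Next I would bring in the structure theory. The limiting tree of a fully irreducible automorphism of a nonsporadic free product is an arational $(G,\calf)$-tree — a fact I would quote from the literature on fully irreducible automorphisms of free products — and for such a tree (Reynolds in the free case, Guirardel--Horbez in general) the arc stabilisers are trivial, every element with zero translation length is in fact elliptic, and every point stabiliser is either trivial, conjugate to one of the peripheral subgroups $G_i$, or — when $T$ is of surface type — an infinite cyclic subgroup attached to a boundary leaf of the dual foliation; in particular every point stabiliser is either cyclic or conjugate into $\calf$. Combining this with the first step: for $g,g'\in P$ the fixed-point sets $\Fix(g)$ and $\Fix(g')$ must meet, since a disjoint pair would make the product $gg'$ hyperbolic, contradicting that $gg'\in P$ is elliptic. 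Hence $\langle g,g'\rangle$ fixes a common point of $T$ and so lies inside a point stabiliser. Thus for every pair $g,g'\in P$ the subgroup $\langle g,g'\rangle$ is cyclic or is contained in a conjugate of some $G_i$.

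To conclude, I would split on whether some pair from $P$ generates a subgroup lying in no conjugate of any $G_i$. If so, then in fact every pair from $P$ generates a cyclic subgroup: for $h\in P$ the subgroup $\langle g,h\rangle$ is cyclic or lies in a conjugate of some $G_j$, and in the latter case $g$ would be peripheral, which — using malnormality of the $G_j$ in $G$ (via normalisers) — would force the cyclic group $\langle g,g'\rangle$ into a conjugate of $G_j$ too, contrary to assumption. So in this case $P$ is locally cyclic; by the Kurosh subgroup theorem a locally cyclic subgroup of $G$ not contained in any conjugate of a $G_i$ must be infinite cyclic, and it is non-peripheral because a non-trivial power of a non-peripheral element is non-peripheral (again by malnormality of the $G_i$) — this is the second alternative. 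Otherwise every pair from $P$ lies in a conjugate of some $G_i$; pairing all elements of $P$ with a fixed non-trivial $g_0\in P$ and using that a non-trivial peripheral element lies in a unique conjugate of a unique $G_i$ forces $P\subseteq xG_ix^{-1}$ for a single such conjugate. Finally $xG_ix^{-1}$ is itself a $\lVert.\rVert_S$-polynomial subgroup of $\Phi$: a power $\Phi^{m'}$ preserves $[G_i]$, so an inner correction yields $\Psi'\in\Aut(G,\calf)$ in its outer class fixing $xG_ix^{-1}$, and peripheral elements are elliptic in $S$, hence of bounded growth; maximality of $P$ then gives $P=xG_ix^{-1}$, so $[P]\in\calf$. (Here $P$ is non-trivial, being maximal, so exactly one of these alternatives occurs.)

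The first step and the final bookkeeping are routine; the real content sits in the structural input used in the second paragraph — reading off from the Guirardel--Horbez classification of arational $(G,\calf)$-trees that point stabilisers are cyclic or peripheral. The relatively free case of that input is essentially combinatorial, but the surface-type case requires a careful translation between $T$, the arational measured foliation on the underlying surface (or $2$-orbifold), and the cone-point and boundary structure — and one must avoid assuming $P$ is finitely generated, which is why I have organised everything through pairs of elements rather than through global fixed points. That translation is where I expect the main difficulty to lie.
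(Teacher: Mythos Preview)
Your approach is the paper's: pass to the limiting arational $\RR$-tree $T$, quote that it has trivial arc stabilisers and that point stabilisers are peripheral or nonperipheral infinite cyclic, and finish by maximality. You work harder than necessary in the final paragraph: since arc stabilisers in $T$ are trivial, every nontrivial elliptic element fixes a \emph{unique} point, so the ``disjoint fixed sets make the product hyperbolic'' step already forces all of $P$ to fix one common point of $T$; hence $P$ sits inside a single point stabiliser and maximality finishes at once, with no pairwise case split needed (and sidestepping the small slip where you assert every pair from $P$ is cyclic but only verify pairs containing your distinguished element $g$).
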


\begin{proof}
    Let $T$ be the $\RR$-tree associated with $S$ given by Lemma~\ref{Lem:existenceRtree}. Note that, by Equation~\eqref{Eq:limitingtree}, for every $\lVert.\rVert_S$-polynomially growing element $g\in G$, we have $\lVert g \rVert_T=0$. 

    By~\cite[Theorems~3.4,4.1]{Guirardelhorbez19} the $\RR$-tree $T$ is an \emph{arational} $(G,\calf)$-tree. It has trivial arc stabilisers (because it is mixing~\cite[Lemma~4.9]{horbez2014tits}). Thus, for every $[P]\in \calp_S(\Phi)$, the group $P$ fixes a point in $T$.

    By~\cite[Lemma~4.6]{horbez2014tits}, using the fact that $T$ is arational, for every point $x \in T$, the stabiliser $G_x$ of $x$ is either peripheral or nonperipheral and infinite cyclic. Thus, for every $[P]\in \calp_S(\Phi)$, the elliptic subgroup $P$ is either peripheral or nonperipheral and infinite cyclic. By maximality of $P$, either $[P]\in \calf$ or $P$ is nonperipheral and infinite cyclic.
\end{proof}

\section{Actions on trees and JSJ decompositions}
\label{sec:JSJ}
\subsection{Tree of cylinders}
\label{subsec:trees_of_cylinders}

Let $G$ be a group acting on a tree $T$. In order to construct an acylindrical action of $G$ on a tree, we will modify the tree $T$ using the technology of \emph{tree of cylinders} introduced by Guirardel and Levitt~\cite{guirardel2011trees}.

Let $\cale$ be a class of subgroups of $G$, stable under conjugation. An \emph{$\cale$-tree} is a tree $T$ equipped with an action of $G$ without edge inversion and such that the stabiliser of any edge is contained in $\cale$. An equivalence relation $\sim$ on $\cale$ is \emph{admissible} if, for any $A,B \in \cale$, the following holds:

\begin{enumerate}
    \item for any $g \in G$, if $A \sim B$, then $gAg^{-1} \sim gBg^{-1}$;
    \item if $A \subseteq B$, then $A \sim B$;
    \item for every $\cale$-tree $T$, if $A \sim B$ and $A$ and $B$ are elliptic in $T$, then $\langle A,B\rangle$ is elliptic in $T$.
\end{enumerate}

The equivalence relation generated by inclusion is an admissible relation for every class of groups $\cale$ (see~\cite[Lemma~3.8]{guirardel2011trees}). If $\cale$ is the class of virtually infinite cyclic groups, then commensurability is an admissible equivalence relation, where two groups $A,B \in \cale$ are commensurable if $A \cap B$ has finite index in both $A$ and $B$.

Let $T$ be an $\cale$-tree and let $\sim$ be an admissible equivalence relation on $\cale$. If $e$ is an edge of $T$, we denote by $G_e$ its stabiliser in $T$. We define an equivalence relation $\sim_T$ on the set of edges of $T$ by setting, for all edges $e,e' \in ET$, $e \sim_T e'$ if and only if $G_e \sim G_{e'}$. A \emph{cylinder} $Y$ of $T$ is a $\sim_T$-equivalence class, seen as a subforest of $T$. A cylinder is in fact a subtree of $T$ (see~\cite[Lemma~4.2]{guirardel2011trees}).

\begin{defn}
Let $T$ be an $\cale$-tree. The \emph{tree of cylinders of $T$} is the bipartite tree $T_c$ whose vertex set $VT_c=V_0T_c \coprod V_1T_c$ is defined as follows:
\begin{enumerate}
    \item $V_0T_c$ is the set of vertices of $T$ belonging to at least two distinct cylinders;
    \item $V_1T_c$ is the set of cylinders of $T$;
    \item there is an edge between $v_0 \in V_0T$ and $v_1\in V_1T$ if the vertex in $T$ corresponding to $v_0$ belongs to the cylinder corresponding to $v_1$.
\end{enumerate}    
\end{defn}

The tree of cylinders of $T$ is a tree equipped with an action of $G$ without edge inversion.

\subsection{JSJ decompositions of one-ended relatively hyperbolic groups}

We now let $G$ be a hyperbolic group relative to a family $\calp=\{[P_1],\ldots,[P_n]\}$ of conjugacy classes of groups. Let $\calh$ be a set of conjugacy classes of subgroups of $G$ such that $G$ is one-ended relative to $\calp \cup \calh$ and let $\Phi \in \Aut(G,\calp \cup \calh)$. Let $G_\Phi$ be the suspension $G \rtimes_\Phi \ZZ$. In time, we will also assume the suspensions of the $P_i$ belong to $\FJCX$, and want to apply Theorem~\ref{Thm:FJCacylindrical} to $G_\Phi$ in order to prove that $G_\Phi \in \FJCX$. That is, we will construct a simplicial tree $T$ on which $G_\Phi$ acts acylindrically. The construction of the tree $T$ uses the theory of \emph{JSJ decomposition of groups}, which we now discuss, following the work of Guirardel-Levitt~\cite{guirardel2011trees,GuirardelLevitt2015, guirardellevitt2017jsj}.

A subgroup of $G$ is \emph{elementary} if it is finite, or virtually cyclic or conjugate into some $P_i$ with $i \in \{1,\ldots,n\}$. Let $\cala$ be the family of all \emph{infinite} elementary subgroups of $G$.  

Let $\sim_\cala$ be the equivalence relation on $\cala$ given by $A \sim_\cala B$ if $\langle A,B \rangle$ is elementary. The equivalence relation $\sim_\cala$ defines an admissible equivalence relation called \emph{coelementarity} (see~\cite[Lemma~3.4]{guirardel2011trees}). 

Let $\calh$ be any set of conjugacy classes of subgroups of $G$. Recall that an \emph{$(\cala,\calp \cup \calh)$-tree} is an $\cala$-tree $T$ such that, for every $[A]\in \calp \cup \calh$, the group $A$ is elliptic in $T$. We denote by $\Out(G,\calp \cup \calh^{(t)})$ the subgroup of $\Out(G,\calp \cup \calh)$ consisting of every $\psi \in \Out(G,\calp \cup \calh)$ such that, for every $[A] \in \calh$, there exists $\Psi \in \psi$ with $\Psi(A)=A$ and $\Psi|_A=\id_A$.

Let $T$ be a tree equipped with an action of $G$ by isometries with a finite number of orbits of edges. If $H$ is a subgroup of $\Out(G,\calp \cup \calh)$ preserving the $G$-equivariant homeomorphism class of a tree $T$, we denote by $H^0$ the finite index subgroup of $H$ acting trivially on $G \backslash T$. Note that, for every $v \in VT$, we have a homomorphism $H^0 \to \Out(G_v)$. 

Using \cite[Theorem~9.18]{guirardellevitt2017jsj}, \cite[Theorem~3.9]{GuirardelLevitt2015} and \cite[Proposition~6.1]{guirardel2011trees}, we have the following theorem.

\begin{thm}\cite{guirardel2011trees,GuirardelLevitt2015,guirardellevitt2017jsj}\label{Thm:existenceJSJ}
Let $G$ be a hyperbolic group relative to a family $\calp$ of non virtually cyclic groups. Let $\calh$ be any family of conjugacy classes of subgroups of $G$. Suppose that $G$ is one-ended relative to $\calp \cup \calh$. There exists a tree of cylinders $T_\calh$ for coelementarity equipped with an isometric action of $G$ such that:
\begin{enumerate}
\item the group $\Aut(G,\calp \cup \calh)$ preserves the $G$-equivariant homeomorphism class of $T_\calh$;
\item the action of $G$ on $T_\calh$ is $2$-acylindrical;
\item the number of orbits of edges is finite;
\item edge stabilisers are infinite elementary;
\item the tree $T_\calh$ is an $(\cala,\calp \cup \calh)$-tree;
\item vertex stabilisers corresponding to cylinders are elementary subgroups;
\item \label{thmJSJ:nonelementary_vertices} the vertex stabiliser $G_v$ of a noncylinder vertex $v$ satisfies one of the following:
\begin{itemize}
\item $G_v$ is nonelementary and \emph{Quadratically Hanging (QH) with finite fibre}; (see~\cite[Definition~5.13]{guirardellevitt2017jsj});
\item the vertex $v$ is nonelementary and \emph{rigid}: the stabiliser of $v$ is elliptic in every $(\cala,\calp \cup \calh)$-tree  (see~\cite[Definition~2.14]{guirardellevitt2017jsj}).
\end{itemize}
\item if $e_1,e_2$ are two distinct edges adjacent to the same nonelementary vertex, then $G_{e_1} \cap G_{e_2}$ is finite and $\langle G_{e_1},G_{e_2} \rangle$ is not elementary;
\item \label{thmJSJ:subgroups_in_H_are_rigid} if $[H] \in \calh$ is not elementary, then $H$ stabilises a unique rigid vertex. (Uniqueness follows from $(4)$. The fact that the vertex is rigid follows from~\cite[Definition~5.13(3)]{guirardellevitt2017jsj}, and the fact that QH vertices with finite fibre have virtually cyclic extended boundary subgroups).
\end{enumerate}

Moreover, if $\calh=\{[H_1],\ldots,[H_k]\}$ with every $H_i$ finitely generated: 

\begin{enumerate}[resume]
\item \label{thmJSJ:rigid_verts_finite_out} \cite[Theorem~3.9]{GuirardelLevitt2015} for every rigid vertex $v \in VT_\calh$, the homomorphism $\Out^0(G,\calp \cup \calh^{(t)}) \to \Out(G_v)$ is finite; 
\item \label{thmJSJ:edges_finite_out} for every edge $e \in ET_\calh$, the homomorphism $\Out^0(G,\calp \cup \calh^{(t)}) \to \Out(G_e)$ is finite.
\end{enumerate}
\end{thm}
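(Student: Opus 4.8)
The plan is to take $T_\calh$ to be the tree of cylinders, for the coelementarity relation $\sim_\cala$, of a canonical JSJ tree $T_J$ of $G$ over the family $\cala$ of elementary subgroups relative to $\calp\cup\calh$. First I would invoke the Guirardel--Levitt JSJ machinery: since $G$ is one-ended and hyperbolic relative to the non virtually cyclic collection $\calp$, \cite[Theorem~9.18]{guirardellevitt2017jsj} produces a canonical --- in particular $\Aut(G,\calp\cup\calh)$-invariant --- deformation space of $(\cala,\calp\cup\calh)$-trees, a representative $T_J$ of which has finitely many orbits of edges, infinite elementary edge stabilisers, and vertex groups that are either elementary, nonelementary quadratically hanging with finite fibre, or nonelementary rigid (elliptic in every $(\cala,\calp\cup\calh)$-tree). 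I would then set $T_\calh\coloneqq (T_J)_c$ and read off the listed properties.

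Items (1)--(8) follow from the general theory of trees of cylinders \cite{guirardel2011trees} (in particular \cite[Proposition~6.1]{guirardel2011trees}). Item~(1) holds because the tree of cylinders depends only on the deformation space, which is canonical. Items~(2) and~(3) follow from the $2$-acylindricity of the tree of cylinders for coelementarity together with the cocompactness of $T_J$. For item~(4), an edge of $T_\calh$ joins a vertex $v$ of $T_J$ to a cylinder $Y$ of $T_J$ containing $v$, so its stabiliser is $G_v\cap\Stab(Y)$, which is elementary and infinite since $Y$ is a maximal cylinder and $G$ is one-ended. For item~(5): a subgroup elliptic in $T_J$ fixes a vertex, which lies in a cylinder, hence is elliptic in $T_\calh$; so each $[A]\in\calp\cup\calh$ remains elliptic and the edge stabilisers lie in $\cala$. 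Items~(6) and~(7) record that in a coelementarity tree of cylinders the cylinder vertex groups are (maximal) elementary, while the non-cylinder vertex groups are exactly the nonelementary QH-with-finite-fibre and rigid vertex groups of $T_J$, the rigid ones remaining rigid because $T_\calh$ is itself an $(\cala,\calp\cup\calh)$-tree. For item~(8), two distinct edges at a nonelementary (hence non-cylinder) vertex enter distinct cylinders, so their stabilisers lie in distinct maximal elementary subgroups; since these form an almost malnormal family in a relatively hyperbolic group, the intersection is finite and the join is nonelementary.

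For item~(9), a nonelementary $[H]\in\calh$ is elliptic in $T_\calh$ by item~(5); it cannot fix a cylinder vertex (elementary stabiliser), and if it lay inside a QH-with-finite-fibre vertex group it would be forced into an extended boundary subgroup, which is virtually cyclic (\cite[Definition~5.13]{guirardellevitt2017jsj}); hence $H$ fixes a rigid vertex, and only one, since two rigid vertices are joined by an edge path all of whose edge groups are elementary and so cannot contain $H$. For the moreover part, with each $H_i$ finitely generated, items~(10) and~(11) are \cite[Theorem~3.9]{GuirardelLevitt2015}: the finite-index subgroup $\Out^0(G,\calp\cup\calh^{(t)})$ has finite image in $\Out(G_v)$ for every rigid $v$ and in $\Out(G_e)$ for every edge $e$ --- for the edge groups one also uses that an infinite elementary subgroup is either virtually cyclic (so has finite outer automorphism group) or parabolic (where the $(t)$-marking of $\calp$ together with acylindricity pins down the image) --- and these statements transfer to $T_\calh$, which lies in the same canonical deformation space and whose rigid vertex groups and edge groups are built from those of $T_J$.

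The main difficulty here is bookkeeping rather than any single new idea: one must check that \cite[Theorem~9.18]{guirardellevitt2017jsj} and \cite[Theorem~3.9]{GuirardelLevitt2015}, stated for somewhat different peripheral data and tracking the $(t)$-marking with care, specialise correctly to the situation at hand; that coelementarity is genuinely admissible for $\cala$, so that the tree of cylinders is well behaved; and --- the real content behind items~(4), (6) and~(8) --- that every cylinder and edge stabiliser of $T_\calh$ is an honest elementary subgroup and not merely a member of $\cala$, which rests on the structure of maximal elementary subgroups of a relatively hyperbolic group, in particular their almost malnormality.
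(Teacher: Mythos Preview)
Your proposal is correct and matches the paper's own treatment: the paper does not give an independent proof but simply cites \cite[Theorem~9.18]{guirardellevitt2017jsj}, \cite[Theorem~3.9]{GuirardelLevitt2015} and \cite[Proposition~6.1]{guirardel2011trees}, which is exactly the route you outline (canonical JSJ tree, then tree of cylinders for coelementarity, then the finiteness results for $\Out^0$). Your write-up is in fact more detailed than the paper's, which leaves the verification of items (1)--(11) to the reader.
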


When the family $\calh$ is explicit from context, we will refer to $T_\calh$ as $\Tcan$. (Our superscript convention here is certainly not standard: we use it because we will shortly need to discuss minimal invariant trees for subgroups coming from multiple underlying actions. This choice lets us write $\Tcan_H$, for instance, keeping both the tree and the subgroup conveniently in the notation.)

We now prove a sequence of lemmas in order to deduce acylindrical actions of $G_\Phi$ with $\Phi \in \Aut(G)$ on trees out of acylindrical actions of $G$. If $G_\Phi$ acts on a tree $T$, we denote by $F_\Phi$ the $G$-equivariant isometry of $T$ induced by $\Phi$.

Whilst we are certain the next lemma is known to experts we did not manage to locate it in the literature and so provide a proof.

\begin{lemma}
    \label{lem:elliptic_subgp_or_loxodromic_elet}
    If a group $G$ acts acylindrically on a tree $T$, then every subgroup $H\leqslant G$ also acts acylindrically on $T$.  Moreover, if $H$ has no loxodromic elements, then $H$ has a global fixed point on $T$.
\end{lemma}

\begin{proof}
    The first assertion is easy, since if for every $(K_1,K_2) \in \NN^2$ there is a path of length at least $K_1$ stabilised by a subgroup of $H$ of size at least $K_2$ then the same is true for $G$. For the second assertion, observe that a subgroup all of whose elements are elliptic but which does not have a global fixed point will be a proper ascending union $H_1 < H_2 < \cdots < H_n < \cdots$, where each $H_i$ stabilises an infinite ray. For each $K_2$ there will be a subgroup $H_i$ with greater cardinality, and so such an action cannot be acylindrical.
\end{proof}

\begin{lemma} 
    \label{lem:element_fixing_axis}
    Suppose $T$ is a $G$-tree, preserved by $\phi \in \Out(G)$, and for $\Phi \in \phi$ let $F_\Phi$ be the induced $G$-equivariant isometry of $T$. If $g$ is loxodromic and contained in $\Fix(\Phi)$, then there are $m \in \ZZ$ and $n \in \ZZ^*$ so that $g^mF_\Phi^n$ fixes pointwise the axis of $g$.
\end{lemma}

Note that it could be true that $\Phi^n$ or $F_{\Phi^n}$ is trivial, even if $n$ is not.

\begin{proof}
    Since $g$ is contained in $\Fix(\Phi)$, the isometry $F_\Phi$ commutes with $g$ and must preserve the axis of $g$ and its orientation. This implies that the axis is contained in the characteristic set of $F_\Phi$. 
    Thus, we have a homomorphism $\Lambda \colon \ZZ^2 \to \langle g,F_{\Phi} \rangle \to \RR$ given by the translation length on the axis of $g$. Since $T$ is a simplicial tree, the image of $\Lambda$ is a discrete subset of $\RR$. Thus, the image of $\Lambda$ is cyclic and the kernel of $\Lambda$ is nontrivial. Since it cannot be contained in $\langle g \rangle$, there are $m \in \ZZ$ and $n \in \ZZ^*$ such that $g^mF_{\Phi^n}$ fixes pointwise the axis of $g$.
\end{proof}

\begin{lemma}\label{lem:PromoteAcylindricalactionsv2}
    Let $K\geq 1$, and let $\phi=[\Phi] \in \Out(G)$. Suppose that $G_\Phi$ acts on a tree $T$ with finitely many orbits of edges and that the action of $G$ on $T$ is $K$-acylindrical. Suppose that there exists $J \in \NN^*$ so that for every geodesic path $\gamma$ of length $J$ and every automorphism $\Psi \in \phi$ such that $F_\Psi$ preserves $\gamma$, there exist a vertex $v$ of $\gamma$ and $g \in G_v$ of infinite order fixed by a power of $\Psi$.

    \begin{enumerate}
        \item Let $n \in \NN^*$ and let $\Psi \in \phi^n$. Suppose that $F_\Psi$ fixes pointwise a geodesic edge path of length at least equal to $K + 2J +1$. There exists $N \in \NN^*$ such that $\Psi^N$ fixes elementwise a nonabelian free group $L\subseteq G$ consisting of loxodromic elements of $T$. Moreover, for every $g \in L$, the isometry $F_{\Psi^N}$ fixes elementwise the axis of $g$.
        \item The action of $G_\Phi$ on $T$ is acylindrical if and only if for every $n \in \NN$ and every $\Psi$ in the outer class of $\Phi^n$, the group $\Fix(\Psi)$ is elliptic in $T$.
\end{enumerate}
\end{lemma}

\begin{proof}
\noindent{$(1)$ } Suppose that $F_\Psi$ fixes pointwise a geodesic edge path $\gamma$ of length $K+2J+1$. Thus, the path $\gamma$ is not reduced to an edge and the isometry $F_\Psi$ fixes the initial and the terminal paths $\gamma_1,\gamma_2$ of $\gamma$ length $J$. Since the action of $G$ on $T$ is $K$-acylindrical, for all vertices $v_1$ of $\gamma_1$ and $v_2$ of $\gamma_2$, the intersection $G_{v_1}\cap G_{v_2}$ is finite. 

Let $i \in \{1,2\}$. Note that $F_{\Psi}$ preserves $\gamma_i$. By hypothesis, there exist $N_i \in \NN^*$, a vertex $v_i \in \gamma_i$ and an infinite order element $g_i \in G_{v_i}$ which is fixed by $\Psi^{N_i}$.

Let $N=N_1N_2$ and let $L=\langle g_1,g_2 \rangle \subseteq \Fix(\Psi^N)$. Since $G_{v_1}\cap G_{v_2}$ is finite, we have $\Fix(g_1) \cap \Fix(g_2)=\varnothing$.  By standard ping-pong arguments, the group $L$ is a non-abelian free group which contains a (non-abelian) subgroup consisting of loxodromic elements of $T$. This proves the first part of Assertion~$(1)$.

Let $g \in L$ be loxodromic. Since $\Psi^N(g)=g$, the isometry $F_{\Psi^N}$ commutes with $g$. In particular, the axis of $g$ is contained in the characteristic set of $F_{\Psi^N}$. Since $F_{\Psi^N}$ is elliptic, this implies that $F_{\Psi^N}$ fixes pointwise the axis of $g$. This concludes the proof of Assertion~$(1)$.

\medskip

\noindent{$(2)$ } By Lemma~\ref{lem:elliptic_subgp_or_loxodromic_elet}, $\Fix(\Psi)$ is either elliptic or contains an element acting loxodromically. Suppose that there exist $n \in \NN^*$, $\Psi \in \phi^n$ and $g \in \Fix(\Psi)$ such that the action of $g$ on $T$ is loxodromic. By Lemma~\ref{lem:element_fixing_axis}, there exist $m \in \ZZ$ and $n \in \ZZ^*$ such that $g^mF_{\Psi^n}$ fixes pointwise the axis of $g$. Hence the action of $G_\Phi$ is not acylindrical.

Conversely, suppose that the action of $G_\Phi=\langle G,t \rangle$ on $T$ is not acylindrical. Since the action of $G$ on $T$ is $K$-acylindrical, there exist $g \in G$ and $k \in \NN^*$ such that the element $gt^k$ fixes an edge path of length $K+2J+1$. Let $\Psi \in \phi^k$ be the automorphism corresponding to $gt^k$. By Assertion~$(1)$, some power of $\Psi$ fixes a loxodromic element of $G$. This proves Assertion~$(2)$ and concludes the proof.
\end{proof}


\section{The periodic JSJ decomposition}\label{sec:periodicJSJ}
We now specialise to the tree we will use to prove that suspensions of one-ended relatively hyperbolic groups (under reasonable assumptions on the parabolic subgroups) satisfy the Farrell--Jones conjecture. Let $G$ be a hyperbolic group relative to $\calp$. Let $\calh_0$ be a finite set of conjugacy classes of finitely generated subgroups of $G$ such that $G$ is one-ended relative to $\mathcal{P} \cup \calh_0$. Let $\Phi \in \Aut(G,\mathcal{P} \cup \calh_0^{(t)})$. We explain in the following section the construction of $G$-trees which are naturally associated with $\Phi$. Note that for the results needed for the Farrell--Jones conjecture, we take $\calh_0$ to be empty; but we will need it to be non-empty for an argument in \cref{subsec:slender_peripherals}.

\subsection{Trees associated with an automorphism of a one-ended relatively hyperbolic group}\label{section:constructiontrees}

Recall that, if $\Phi \in \Aut(G)$, we denote by $\mathrm{Per}(\Phi)$ the subgroup of $G$ consisting of all $g \in G$ such that there exists $n \in \NN^*$ with $\Phi^n(g)=g$. Let $\phi=[\Phi] \in \Out(G)$. We denote by $\mathrm{NP}(\phi)$ the set of all representatives $\Phi \in \phi$ such that $\mathrm{Per}(\Phi)$ is not an elementary subgroup. If $\phi \in \Out(G)$, we set $\mathrm{Per}_{\mathrm{NP}}(\phi)=\{[\mathrm{Per}(\Phi)]\}_{\Phi \in \mathrm{NP}(\phi)}$. 

We work with three $\phi$-invariant trees for $G$. The first is the canonical JSJ tree $\Tcan$ for the set $\calp \cup \calh_0$, the second is the tree $\Tper$ obtained by applying \cref{Thm:existenceJSJ} with $\calh = \calh_0 \cup \bigcup_{n \in \NN^*}\mathrm{Per}_{\mathrm{NP}}(\phi^n)$. Note that if we restrict to $\Phi \in \Aut(G, \calp \cup \calh_0^{(t)})$ then each $H \in \calh_0$ is contained in some periodic subgroup for $\phi$; this explains its limited effect on the proofs below. The third one is obtained from $\Tcan$ by blowing-up JSJ trees at QH with fibre vertices. The following lemmas motivate the construction.

\begin{lemma}\label{Lem:equalityautofixnonelementary}
    Let $G$ be a hyperbolic group relative to $\calp$ and let $\phi=[\Phi] \in \Out(G,\calp)$.  Let $n \in \NN^*$ and let $\Psi,\Theta \in \phi^n$ be such that $\Psi$ and $\Theta$ fix elementwise the same nonelementary subgroup $H$ of $G$. 

    There exists $N \in \NN^*$ such that, $\Psi^{N}=\Theta^{N}$.
\end{lemma}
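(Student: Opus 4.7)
The plan is to exploit that $\Psi$ and $\Theta$ represent the same outer automorphism by writing $\Theta=\ad_g\circ\Psi$ for some $g\in G$, and then to iterate this relation. Since both automorphisms fix $H$ elementwise, for every $h\in H$ we have $\Psi(h)=h=\Theta(h)=ghg^{-1}$, so $g$ centralises $H$, i.e.\ $g\in Z_G(H)$. Moreover $\Psi$ preserves $Z_G(H)$ (because $\Psi$ fixes $H$ pointwise, so conjugation by $\Psi(z)$ fixes $H$), and an immediate induction gives
\[
\Theta^j = \ad_{g_j}\circ\Psi^j, \qquad \text{where } g_j = g\,\Psi(g)\cdots\Psi^{j-1}(g).
\]
Hence the conclusion $\Theta^N=\Psi^N$ is equivalent to producing $N\in\NN^*$ with $g_N=e$.

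The key step is to prove that $Z_G(H)$ is finite. Since $H$ is nonelementary, i.e.\ neither virtually cyclic nor conjugate into a peripheral subgroup, the standard classification of subgroups of a relatively hyperbolic group provides two independent loxodromic elements $h_1, h_2 \in H$ whose pairs of fixed points on the Bowditch boundary are disjoint. The centraliser of a loxodromic element in a relatively hyperbolic group is virtually cyclic, so $Z_G(H)\subseteq Z_G(h_1)$ sits inside a virtually cyclic subgroup. Any infinite-order $z\in Z_G(H)$ would, lying in a virtually cyclic subgroup with $h_1$, share the fixed points of $h_1$ on the boundary; the same reasoning with $h_2$ in place of $h_1$ would then force $h_1$ and $h_2$ to have a common fixed point, contradicting independence. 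Therefore $Z_G(H)$ consists of torsion elements, and a torsion subgroup of a virtually cyclic group is finite.

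With $Z_G(H)$ finite, the restriction of $\Psi$ to $Z_G(H)$ is a permutation of a finite set, of finite order $k$ say, so that $\Psi^k(z)=z$ for every $z\in Z_G(H)$; in particular $\Psi^k(g)=g$. Setting $P \coloneqq g_k = g\,\Psi(g)\cdots\Psi^{k-1}(g)\in Z_G(H)$, the $k$-periodicity yields $g_{kM}=P^M$ for all $M\geq 1$. Taking $M$ to be the order of $P$ in the finite group $Z_G(H)$ gives $g_{kM}=e$, so $N=kM$ works and $\Theta^N=\Psi^N$. The main delicate point is the finiteness of $Z_G(H)$; everything else is bookkeeping. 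It is worth noting that no finite generation or torsion-freeness hypothesis on $H$ is needed, which is what makes this lemma applicable later to the periodic subgroups of $\Phi$.
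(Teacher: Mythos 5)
Your proof is correct and follows essentially the same strategy as the paper's: write the two representatives as differing by an inner automorphism $\ad_g$ with $g$ in the centraliser of $H$, observe that this centraliser is finite because $H$ is nonelementary, and then iterate to kill the twisting cocycle. The paper handles the iteration by first passing to a power of $\Theta$ that fixes $C_G(H)$ pointwise (so that $g_j = g^j$) and then using the finite order of $g$; you instead keep the general cocycle $g_j = g\,\Psi(g)\cdots\Psi^{j-1}(g)$ and combine the finite order of the permutation $\Psi|_{Z_G(H)}$ with the finite order of $P = g_k$. These are the same calculation in slightly different packaging. The only substantive difference is that the paper disposes of the finiteness of $C_G(H)$ with a single citation to Osin, whereas you sketch the argument from the classification of subgroups (two independent loxodromics, virtually cyclic centralisers of loxodromics, incompatible limit sets); your sketch is sound.
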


\begin{proof}
    Since $\Psi$ and $\Theta$ fix $H$ elementwise, $\Psi$ and $\Theta$ differ by an inner automorphism in the centraliser of $H$. Since $H$ is nonelementary, its centraliser is finite (see for instance~\cite[Theorem~4.19]{osin2006relatively}). Thus, up to taking powers of $\Psi$ and $\Theta$ fixing elementwise the centraliser of $H$, we have $\Psi=\mathrm{ad}_g \circ \Theta$ where $g \in C_G(H)$ and $g \in \Fix(\Theta)$. Thus, for every $m \geq 1$, we have $\Psi^m=\mathrm{ad}_{g^m}\circ \Theta^m$. As $g$ is finite order,  there exists $N \in \NN^*$ such that $\Psi^{N}=\Theta^{N}$.
\end{proof}

\begin{lemma}
    \label{lem:orbifold_periodic_is_fg}
    Suppose $G$ is the vertex group of a QH with fibre vertex of $\Tcan$ and let $\phi=[\Phi] \in \Out(G)$. 
    
    \begin{enumerate}
        \item The group $\Per(\Phi)$ is finitely generated, and there is some $k \in \NN$ so that $\Per(\Phi)=\Fix(\Phi^k)$.
        \item There exists $k \in \NN^*$ so that if $[g]$ is a periodic conjugacy class of $\phi$ then $[g]$ is fixed by $\phi^k$.
        \item As $\Phi$ varies over the outer classes $\phi^n$ with $n\in \NN^*$, there are only finitely many conjugacy classes of maximal (for inclusion) periodic subgroups $\Per(\Phi)$.
        \item There exists $k \in \NN^*$ so that if $[K]$ is a conjugacy class of periodic subgroups of $\phi$, then $[K]$ is fixed by $\phi^k$.
    \end{enumerate}
\end{lemma}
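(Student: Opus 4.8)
The plan is to exploit the rigid structure of a QH-with-finite-fibre vertex group. Such a $G$ fits into a short exact sequence $1\to F\to G\to\bar G\to1$ with $F$ finite (the fibre) and $\bar G\cong\pi_1^{\mathrm{orb}}(O)$ the fundamental group of a compact $2$-orbifold $O$ with $\chi(O)<0$; hence $\bar G$ is either a cocompact Fuchsian group or a free product of finitely many finite cyclic groups with a finitely generated free group, in either case $G$ is hyperbolic, and $F$ is the unique maximal finite normal subgroup of $G$ and so is characteristic. Therefore every $\Phi\in\Aut(G)$ descends to $\bar\Phi\in\Aut(\bar G)$; the kernel of $\Aut(G)\to\Aut(\bar G)$ is finite (an automorphism inducing the identity on $\bar G$ is a derivation $\bar G\to F$, of which there are finitely many); and $\Per(\Phi)$ is exactly the preimage of $\Per(\bar\Phi)$, since $\Phi^n$ permutes each finite coset $gF$ and hence a bounded power of $\Phi^n$ fixes $g$ whenever $\bar\Phi^n$ fixes $\bar g$. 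So each of the four assertions for $(G,\phi)$ follows from the corresponding assertion for $(\bar G,\bar\phi)$, up to multiplying the relevant power by a constant depending only on $|F|$; moreover inner automorphisms lift, so as $\Phi$ ranges over $\bigcup_n\phi^n$ the descent $\bar\Phi$ ranges over all of $\bigcup_n\bar\phi^n$. Finally, every automorphism of $\bar G$ permutes the conjugacy classes of its cyclic free factors, so after replacing $\bar\phi$ by a bounded power we may assume $\bar\phi\in\Out(\bar G,\calf)$ with $\calf$ the conjugacy classes of the finite cyclic free factors; this puts us in the setting of \cref{sec:free_prod_background} (and in the Fuchsian case we instead use the mapping class group of $O$).

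For part~(1), the key input is that $\Fix(\Psi)$ is finitely generated of uniformly bounded complexity for every $\Psi\in\Aut(\bar G)$ --- a consequence of the Bestvina--Handel rank bound for free groups, resp.\ its surface-group counterpart in Nielsen theory, applied to the restriction of a suitable power of $\Psi$ to a characteristic finite-index free, resp.\ surface, subgroup of $\bar G$. Applying this to the ascending chain $\Fix(\bar\Phi)\subseteq\Fix(\bar\Phi^{2!})\subseteq\Fix(\bar\Phi^{3!})\subseteq\cdots$ of subgroups of bounded complexity, Takahasi's theorem (virtually-free case) or Nielsen--Thurston normal form (Fuchsian case) shows that $\Per(\bar\Phi)=\bigcup_n\Fix(\bar\Phi^{n!})$ is finitely generated; and a finitely generated ascending union of subgroups must stabilise, so $\Per(\bar\Phi)=\Fix(\bar\Phi^{k})$ for some $k$. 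Pulling back, $\Per(\Phi)$ is finitely generated and equals $\Fix(\Phi^{k|F|!})$.

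For parts~(2)--(4), I would invoke the canonical Nielsen--Thurston, resp.\ relative-train-track, structure of $\bar\phi$: a canonical reducing system, equivalently a maximal periodic free factor system --- canonical, hence invariant under powers and independent of the representative --- cutting $O$ into pieces on which $\bar\phi$ acts either with pseudo-Anosov (exponentially growing) behaviour or through a finite group. A periodic conjugacy class is freely homotopic off the pseudo-Anosov pieces, which carry no periodic conjugacy classes besides (extended) boundary ones, so it lies on a finite-order piece or on the reducing system, and its $\bar\phi$-period divides the lcm of the orders of the finitely many finite-order pieces and the number of components of the reducing system --- a bound depending only on $\phi$, which gives~(2). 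For $\bar\Phi\in\bar\phi^{n}$ one has $\Per(\bar\Phi^n)=\Per(\bar\Phi)$, $\bar\Phi$ differs from $\bar\Phi_0^{\,n}$ by an inner automorphism, and the canonical structure of $\bar\phi^{n}$ coincides with that of $\bar\phi$; hence every such $\Per(\bar\Phi)$ is conjugate into a subgroup supported on this fixed combinatorial skeleton, and there are only finitely many conjugacy classes of these --- essentially the fixed subgroups of the finitely many finite-order pieces together with their finitely many admissible twists --- which is~(3). Then~(4) is formal: $\phi$ permutes the finite set of conjugacy classes of periodic subgroups provided by~(3), so a uniform power of $\phi$ fixes each.

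The main obstacle is part~(3) in the absence of any virtual-neatness hypothesis: one cannot just declare $\Per$ to be the fixed subgroup of a bounded power, so in part~(1) one must genuinely combine the uniform complexity bound on fixed subgroups with Takahasi's theorem, and for~(2)--(4) one must isolate a canonical, power-stable, representative-independent combinatorial skeleton for $\bar\phi$ from which all periodic data can be read off. Carrying out the reduction modulo the finite fibre $F$ in full --- that $F$ is characteristic and that periodic elements, conjugacy classes, and subgroups upstairs and downstairs correspond up to bounded powers --- is routine but should be written out.
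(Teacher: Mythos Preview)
Your overall strategy matches the paper's: reduce modulo the finite characteristic fibre $F$ to the orbifold group $\bar G$, pass to a characteristic finite-index surface or free subgroup, and invoke surface/free-group dynamics. The paper carries this out by citing the relevant results as black boxes --- Ivanov for surface groups, Bestvina--Handel and Guirardel--Levitt for free groups, Handel--Mosher for uniform periods of conjugacy classes --- rather than rederiving them from Nielsen--Thurston or train-track theory as you propose. Your ascending-chain-plus-Takahasi argument for~(1) is a legitimate alternative in the virtually-free case; in the closed case the paper simply cites Ivanov.

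The one place where your sketch is thinner than the paper's proof is part~(3). Once you know finiteness of periodic subgroups at the level of the characteristic surface/free subgroup $H$, you still have to bound the number of periodic subgroups \emph{of the orbifold group} $\bar G$ that restrict to a given one in $H$. Your phrase ``finitely many admissible twists'' hides this step. The paper handles it via Lemma~\ref{Lem:equalityautofixnonelementary}: if two representatives of the same outer class both fix a non-elementary subgroup elementwise, they have a common power, hence the same periodic subgroup --- so a non-elementary $\Per_H$ pins down $\Per_{\bar G}$ uniquely. For elementary (virtually cyclic) $\Per_H$, the paper instead passes to the unique maximal virtually cyclic subgroup $M$ containing it, observes $\Out(M)$ is finite, and shows some power of a conjugate representative fixes $M$; this reduces to finitely many possibilities. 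Your Nielsen--Thurston skeleton picture is morally the same, but you would still need an argument of this shape to rule out infinitely many distinct $\Per(\bar\Phi_0^{\,n}\ad_g)$ sitting inside a single finite-order piece as $g$ varies.
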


This result does not seem surprising, and in fact the same statement is true for all hyperbolic groups (see \cref{Coro:Slenderperiodicisfixedoneended} and \cite{guirardel2016mccool} for the torsion free case). However, this special case is necessary to begin the arguments on JSJ decompositions we use throughout this section, including to prove the general statement.

\begin{proof}
    We first prove Lemma~\ref{lem:orbifold_periodic_is_fg} when $G$ is a hyperbolic 2-orbifold. Recall that hyperbolic 2-orbifolds are good, and let $H$ be a characteristic finite index subgroup of $G$ corresponding to an orientable surface cover of the orbifold. (This can be obtained by taking the characteristic core of the subgroup corresponding to any such cover, since $G$ is finitely generated.) In particular, $\Phi$ preserves $H$. We now consider two periodic subgroups: $\Per_G(\Phi) \leq G$ and its subgroup $\Per_H(\Phi|_H) \leq H$. If $g_1$ and $g_2$ are elements of $\Per_G(\Phi)$ representing the same coset of $H$ in $G$, then in fact they represent the same coset of $\Per_H(\Phi|_H)$ in $\Per_G(\Phi)$, so this is a finite index subgroup. The restriction $\Phi|_H$ can be represented by an element of the mapping class group of the surface, and it follows from \cite{ivanov1992subgroups} that periodic subgroups here are finitely generated. Finite generation is a commensurability invariant, so the same is true of $\Per_G(\Phi)$. 
    Then taking a sufficiently high power to fix every element of a finite generating set shows that $\Per(\Phi)=\Fix(\Phi^k)$.

    Let $[g]$ be a $\phi$-periodic conjugacy class. If $g$ has finite order, then, as there exists finitely many conjugacy classes of finite order elements in $G$, some power of $\phi$ fixes $[g]$. Suppose now that $g$ has infinite order, and let $t \in \NN^*$ be such that $g^{t} \in H$. Since $G$ is hyperbolic, $g^{t}$ has finitely many $t$-th roots in $G$, the number of such roots depending only on the finite numbers of orders of the finite subgroups of $G$. Thus, if $\ell \in \NN^*$ is such that $\phi^{\ell} \in \Out(H)$ fixes the conjugacy class of $g^t$, then a power of $\phi^\ell$ fixes the conjugacy class of $g$ and this power does not depend on $g$. If $H$ is a free group, then the existence (and uniformity) of $\ell$ follows from the work of Handel--Mosher~\cite[Theorem~II.4.1]{handel2020subgroup}. If $H$ is the fundamental group of a closed orientable surface, this follows from the work of Ivanov \cite{ivanov1992subgroups}.
        

    Since the third statement is true for free and surface groups (by Ivanov~\cite{ivanov1992subgroups} for the surface case, Bestvina--Handel~\cite{bestvina1992train} for the free case with noncyclic periodic subgroups, and for instance Feighn--Handel~\cite[Corollary 11.1]{FeighnHandel2018} 
    for the general case), 
    it will suffice to bound the number of subgroups $\Per_G(\Phi)$ containing (with finite index) a given restriction $\Per_H(\Phi|_H)$. If this is non-elementary, then it follows from \cref{Lem:equalityautofixnonelementary} 
    that any two automorphisms in $\phi$ fixing it have a common power, and hence the same periodic subgroups. (First replace $\Phi^k\ad(g_1)$ and $\Phi^\ell\ad(g_2)$ with their $\ell$-th and $k$-th powers respectively, so they represent the same outer automorphism, then another power so as to fix the common non-elementary subgroup $\Per_H(\Phi|_H)$, then apply the lemma as written.)

    Now assume $\Per_H(\Phi|_H)$ is elementary, and we want to control the periodic subgroups of $\Phi$ in $G$ restricting to it. Recall that in a hyperbolic group, every virtually cyclic subgroup is contained in a unique maximal one, and let $M$ be the maximal virtually cyclic subgroup containing $\Per_H(\Phi|_H)$. Since $\Phi$ preserves $\Per_H(\Phi|_H)$, it must also preserve $M$, and we consider the induced automorphism of $M$. By for instance \cite[Lemma 6.6]{MinasyanOsin2010OutVCyc}, $\Out(M)$ is finite, and so passing to a power $\Phi^k$ the induced automorphism is inner. Composing with an inner automorphism coming from $M$, some representative $\Psi$ of $\Phi^k$ fixes $M$; in particular $M$ is itself a periodic subgroup.

    Note that this inclusion between the periodic subgroup for $\Phi$ and the one for $\Psi$ can stay proper at all powers. For instance take $\Phi$ to be the infinite order inner automorphism of $D_\infty$ and $\Psi$ to be the identity automorphism of $D_\infty$.

    Finally, as there exist only finitely many conjugacy classes of finite subgroups in $G$, there exist only finitely many conjugacy classes of finite periodic subgroups for any power of $\phi$.    

    The final property follows from taking a high enough power to fix (up to composing with appropriate inner automorphisms) the generating sets of a representative of each conjugacy class.

    Let $G_v$ be the vertex group of a QH with finite fibre vertex. Let $F$ be a finite normal subgroup of $G_v$ such that $G_v/F$ is isomorphic to the fundamental group $\pi_1(\Sigma_v)$ of a 2-orbifold $\Sigma_v$. The group $F$ is the unique maximal finite normal subgroup of the hyperbolic group $G_v$ and is in particular characteristic. Thus, every $\phi \in \Out(G_v)$ induces an element $\phi|_{\Sigma_v} \in \Out(\pi_1(\Sigma_v))$. 

    Let $\Phi \in \Aut(G_v)$. Let $\ell \in \NN^*$ be the integer associated with $\Phi|_{\pi_1(\Sigma_v)}$ which satisfies both Assertions~$(1)$ and $(2)$. Let $g \in \Per(\Phi)$. Then the image of $g$ in $\pi_1(\Sigma_v)$ is fixed by $\Phi^\ell|_{\pi_1(\Sigma_v)}$. Thus, $\Phi^\ell$ preserves the left coset $gF$. As $F$ is finite, the automorphism $\Phi^{\ell|\Aut(F)|}$ acts trivially on $F$. Thus, $\Phi^{\ell|\Aut(F)|}(g)=gf$ for some $f \in F$ fixed by $\Phi^{\ell|\Aut(F)|}$. As $f^{|F|}=1$, we see that $\Phi^{\ell|\Aut(F)||F|}$ fixes $g$. This proves Assertion~$(1)$. Similarly, suppose that $[g]$ is a periodic conjugacy class. Then $\Phi^\ell|_{\pi_1(\Sigma_v)}$ preserves the conjugacy class in $\pi_1(\Sigma_v)$ induced by $g$. Thus, $\Phi^\ell$ sends $g$ to $hk_1gk_2h^{-1}$ with $h\in G_v$ and $k_1,k_2 \in F$. Let $\Psi^\ell=\mathrm{ad}_{h^{-1}} \circ \Phi^\ell$. Then $\Psi^\ell$ sends $g$ to $k_1gk_2$. Note that $\Psi^{\ell|\Aut(F)|}$ acts trivially on $F$ and sends $g$ to $k_1'gk_2'$ with $k_1',k_2' \in F$. Thus, $\Psi^{\ell|\Aut(F)||F|}$ fixes $g$ and $\Phi^{\ell|\Aut(F)||F|}$ fixes the conjugacy class of $g$. This proves Assertion~$(2)$. 

    For the third assertion, notice that the periodic subgroups of the induced action on $\pi_1(\Sigma_v)$ contain (by passing to a finite index surface subgroup, and if necessary then an infinite index one) a preserved (and periodic) free group. This splits back to $G_v$, and since the arguments given earlier used only \cref{Lem:equalityautofixnonelementary} and properties of virtually cyclic subgroups of hyperbolic groups, they apply equally well here (note that it is enough to take \emph{any} non-elementary periodic subgroup to apply \cref{Lem:equalityautofixnonelementary}). Again, the final assertion follows by taking a high enough power to fix (up to composing with appropriate inner automorphisms) the generating sets of a representative of each conjugacy class. \qedhere


    
\end{proof}

For each non-elementary vertex $v$ of $\Tcan$, let $t(v)$ be the \emph{first return power}: the smallest positive power so that $\phi$ preserves the conjugacy class of $G_v$. Note that $G_v$ is self-normalising, and so $\phi^{t(v)}$ induces a well defined outer automorphism of $G_v$, which we denote by $\phi^{t(v)}|_{G_v}$. Then let \[\calh_{QH}=\bigcup_{\substack{v \in V\Tcan/G \\ v \text{ is QH}}} \quad\bigcup_{n \in \NN^*}\mathrm{Per}_{\mathrm{NP}}(\phi^{n t(v)}|_{G_v}).\] This set is finite by Lemma~\ref{lem:orbifold_periodic_is_fg}~$(3)$. Let $T^\phi$ be the JSJ tree of $G$ over elementary subgroups relative to $ \calp \cup \calh_0 \cup \calh_{QH}$ given by Theorem~\ref{Thm:existenceJSJ}. Note that, for every QH with fibre vertex $v \in V\Tcan$ and every $H_v \in \bigcup_{n \in \NN^*}\mathrm{Per}_{\mathrm{NP}}([\phi^n|_{G_v}])$, there exists $H \in \bigcup_{n \in \NN^*}\mathrm{Per}_{\mathrm{NP}}([\phi^n])$ such that $H_v=H \cap G_v$. The tree $T^\phi$ is $\phi$-invariant and, moreover, the tree $T^\phi$ is \emph{compatible} with $\Tper$.

We briefly recall some facts about compatibility. Two $(\cala,\calp)$-trees $T$ and $T'$ are \emph{compatible} if there exists an $(\cala,\calp)$-tree $U$ such that both $T$ and $T'$ are obtained from $U$ by collapsing some orbits of edges. By~\cite[Proposition~A.26]{guirardellevitt2017jsj}, there exists a unique such minimal tree $U$ which refines $T$ and $T'$. The tree $U$ satisfies the following properties: a subgroup $H$ of $G$ stabilises a point in $U$ if and only if $H$ stabilises a point in both $T$ and $T'$. Moreover, for every edge $e \in EU$, the image of $e$ in either $T$ or $T'$ is not reduced to a point. 

By universality of $\Tcan$ (see~\cite[Corollary~9.18(3)]{guirardellevitt2017jsj}), the trees $\Tcan$ and $\Tper$ are compatible. Since the set $\calh_{QH}$ consists of conjugacy classes of elliptic subgroups in both $\Tcan$ and $\Tper$, by universality of $T^\phi$, the trees $\Tcan$, $T^\phi$ and $\Tper$ are pairwise compatible. 

Note that, unlike $\Tcan$, the tree $T^\phi$ is not necessarily preserved by every element of $\Out(G,\calp \cup \calh_0)$. However, it is preserved by $\phi$, which is sufficient for our considerations. Moreover, the tree $T^\phi$ is not necessarily compatible with every $(\cala,\calp)$-tree, but we will only need the fact that it is compatible with $\Tper$.  The fact that we replace $\Tcan$ by $T^\phi$ is due to the following lemmas.

\begin{lemma}\label{lem:Tphiacylindrical}
    Let $v \in VT^\phi$ be either rigid or QH with fibre and let $e_1,e_2 \in ET^\phi$ be two distinct edges adjacent to $v$. Then $G_{e_1}\cap G_{e_2}$ is finite and $\langle G_{e_1},G_{e_2}\rangle$ is not elementary. 
    In particular, the tree $T^\phi$ is $2$-acylindrical. 
\end{lemma}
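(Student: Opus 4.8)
The plan is to read off the stabilisers of the edges incident to $v$ from the construction of $T^\phi$ and then quote the property of \cref{Thm:existenceJSJ} that, at any nonelementary vertex of a JSJ tree, two distinct incident edges have finite intersection of stabilisers and together generate a non-elementary subgroup. The final clause will then come for free: granting the first assertion, $2$-acylindricity is automatic, because $T^\phi$ is bipartite with every edge joining an elementary vertex to a rigid or QH-with-fibre one, so any geodesic of length $3$ contains a subpath $\gamma'$ of length $2$ whose central vertex is rigid or QH with fibre; the pointwise stabiliser of the geodesic is then contained in $G_{e'_1}\cap G_{e'_2}$ for the two edges $e'_1,e'_2$ of $\gamma'$, hence finite, and so stabilisers of all geodesics of length at least $3$ are finite, i.e.\ the action of $G$ on $T^\phi$ is $2$-acylindrical.

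So fix a rigid or QH-with-fibre vertex $v$ of $T^\phi$ and two distinct incident edges $e_1,e_2$, and recall that $v$ is either a rigid vertex of $\Tcan$ or a rigid or QH-with-fibre vertex of one of the trees $T_w$ blown up at a QH-with-fibre vertex $w$ of $\Tcan$ (written $T_v$ in the construction). I would trace $e_1$ and $e_2$, with their stabilisers, back through the chain $\Tcan,\Tref_0,\Tref_1,T^\phi_0,T^\phi$. Since $v$ is nonelementary, the final collapse of the edges with both endpoints elementary does not affect the star of $v$, so it is enough to work in $T^\phi_0$; and each earlier step blows up (and possibly subdivides) a tree at a vertex of the previous tree, so blowing up or subdividing at the far endpoint of an edge incident to $v$ leaves the stabiliser of that edge unchanged while only inserting elementary vertices, which are absorbed by the final collapse. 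The one point needing care is the re-attachment, at a QH-with-fibre vertex $w$ of $\Tcan$, of the edges of $\Tcan$ incident to $w$: such an edge is attached to the centre of the fixed point set, inside $U_w$, of its stabiliser, which is an extended boundary subgroup of $w$ — in particular virtually cyclic — whose conjugacy class lies in the family $\calh$ defining $T_w$. The claim I would prove is that this centre maps to an \emph{elementary} (cylinder) vertex of $T_w$; then the re-attached edge joins two elementary vertices and disappears in the passage to $T^\phi$, so that the edges of $T^\phi$ incident to $v$ are, with their stabilisers, exactly the edges incident to $v$ in $\Tcan$ (if $v$ comes from $\Tcan$) or in the single tree $T_w$ (if $v$ lies in $T_w$), distinct edges staying distinct.

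Granting this, the lemma follows by applying the above property of \cref{Thm:existenceJSJ} at the nonelementary vertex $v$, to the tree $\Tcan$ or to the tree $T_w$ — the latter being the canonical JSJ tree of the hyperbolic group $G_w$ relative to a family containing its boundary subgroups, and hence enjoying all the properties of \cref{Thm:existenceJSJ} — to conclude that $G_{e_1}\cap G_{e_2}$ is finite and $\langle G_{e_1},G_{e_2}\rangle$ is not elementary. The main obstacle is precisely the re-attachment claim. The argument I have in mind uses that $T_w$ is a tree of cylinders for coelementarity, so that an infinite elementary subgroup of $G_w$ that is elliptic in $T_w$ fixes the cylinder vertex of its coelementarity class; since $U_w$ is $4$-acylindrical, the fixed point set of the (virtually cyclic) boundary subgroup is then a bounded subtree which is a star about the lift of that cylinder vertex, and the centre of such a star lies on the cylinder, i.e.\ at an elementary vertex. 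Without this observation an edge of $\Tcan$ could a priori be re-attached to a rigid vertex of $T_w$ and there violate the conclusion, so it is essential that $\Tcan$ is replaced by $T^\phi$ in exactly the way carried out in the construction.
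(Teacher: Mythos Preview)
There is a genuine gap in the key ``re-attachment claim''. You assert that an infinite elementary subgroup of $G_w$ that is elliptic in $T_w$ fixes the cylinder vertex of its coelementarity class. But cylinder vertices of $T_w$ exist only for coelementarity classes that actually occur as edge-group classes of the underlying JSJ tree; the fact that $[G_{e_1}]$ lies in the family defining $T_w$ forces $G_{e_1}$ to be elliptic, not to be (commensurable with) an edge group. So $G_{e_1}$ can perfectly well fix a single nonelementary vertex $v$ of $T_w$ and nothing else, in which case the centre of its fixed set is $v$ and the re-attached $\Tcan$-edge lands at a rigid or QH-with-fibre vertex of $T_w$. Your proposed reduction to ``all incident edges come from a single JSJ tree'' therefore fails.

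The paper confronts this mixed case head-on. With $e_1$ coming from $\Tcan$ and $e_2$ from $T_w$, one first observes (using \cref{Thm:existenceJSJ}(8) for $T_w$) that the fixed set of $G_{e_1}$ in $T_w$ must be the single point $v$, since otherwise the centre would be elementary. Then one argues by contradiction: if $G_{e_1}\cap G_{e_2}$ were infinite, pick $g\in G_{e_1}$ moving the other endpoint of $e_2$; then $e_2$ and $ge_2$ are two distinct edges of $T_w$ at $v$ whose stabilisers have infinite intersection (containing the normal core of $G_{e_1}\cap G_{e_2}$ in the virtually cyclic $G_{e_1}$), contradicting \cref{Thm:existenceJSJ}(8). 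Non-elementarity of $\langle G_{e_1},G_{e_2}\rangle$ then follows because $G_w$ is word-hyperbolic, so elementary means virtually cyclic, and two infinite virtually cyclic groups with finite intersection cannot generate a virtually cyclic group. Your argument needs this case analysis; the shortcut via ``the centre is always elementary'' does not go through.
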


\begin{proof}
This is Theorem~\ref{Thm:existenceJSJ}~$(2),(8)$.
\end{proof}

Recall that we denote by $\langle \phi \rangle^0$ the finite index subgroup of $\langle \phi \rangle$ which acts trivially on the quotient graph $G\backslash T^\phi$. For every $v\in VT^\phi$, we then have a well-defined homomorphism $\langle \phi \rangle^0 \to \Out(G_v)$.

\begin{lemma}\label{lem:Tphirigidperiodic}
    Let $v \in VT^\phi$ be rigid. The map $\langle \phi \rangle^0 \to \Out(G_v)$ has finite image.
\end{lemma}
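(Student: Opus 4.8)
The plan is to reduce the statement to the two finiteness results packaged in \Cref{Thm:existenceJSJ}, namely item~\eqref{thmJSJ:rigid_verts_finite_out} for rigid vertices and item~\eqref{thmJSJ:edges_finite_out} for edges, applied both to $\Tcan$ itself and to the relative JSJ trees $T_v$ of the QH vertices $v$ of $\Tcan$. A rigid vertex $v$ of $T^\phi$ either corresponds to a rigid vertex of $\Tcan$ or to a rigid vertex of some $T_w$ with $w$ a QH-with-fibre vertex of $\Tcan$. First I would fix a representative $\Psi$ of a power $\phi^n$ and record that $\langle\phi\rangle^0$ acts trivially on $G\backslash T^\phi$ by definition, so it maps to $\Out(G_v)$; the point is that the image is finite.

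In the first case, $G_v$ is a rigid vertex group of $\Tcan$, so $v$ corresponds to a rigid vertex of the JSJ tree $T_\calh$ with $\calh=\bigcup_n\mathrm{Per}_{\mathrm{NP}}(\phi^n)$ is irrelevant here --- we simply use $\calh=\varnothing$, i.e. $\Tcan$ itself. By \Cref{Thm:existenceJSJ}~\eqref{thmJSJ:rigid_verts_finite_out} (with $\calh$ trivial, so $\Out^0(G,\calp\cup\calh^{(t)})=\Out^0(G,\calp)$), the map $\Out^0(G,\calp)\to\Out(G_v)$ has finite image. Since $\phi\in\Out(G,\calp)$ preserves $\Tcan$ and $\langle\phi\rangle^0\le\Out^0(G,\calp)$, restricting this finite-image map to $\langle\phi\rangle^0$ gives the claim. (If $v$ in $T^\phi$ arises from collapsing, its vertex group is still a rigid vertex group of $\Tcan$, since collapses only absorb elementary vertices; one should check this absorption does not change the relevant $\Out$-representation up to finite index, which follows because the collapsed edge groups are elementary and $\langle\phi\rangle^0$ has finite image in their outer automorphism groups by \eqref{thmJSJ:edges_finite_out}.)

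In the second case, $v$ comes from a rigid vertex of $T_w=$ the JSJ tree of $G_w$ relative to $\calh_w:=\bigcup_{n}\mathrm{Per}_{\mathrm{NP}}([\phi^n|_{G_w}])\cup\{[G_{e_1}],\ldots,[G_{e_k}]\}$, where the $[G_{e_i}]$ are finitely generated (they are infinite elementary, hence finitely generated since $G$ is finitely presented relative to $\calp$ and edge groups of JSJ trees are finitely generated) and the periodic subgroups $\mathrm{Per}(\Phi|_{G_w})$ are finitely generated by \Cref{lem:orbifold_periodic_is_fg}(1); moreover by \Cref{lem:orbifold_periodic_is_fg}(3)--(4) only finitely many conjugacy classes of them occur and they are all fixed by a common power $\phi^k$. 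Thus $\calh_w$ is (after passing to $\phi^k$) a finite set of conjugacy classes of finitely generated subgroups, and \Cref{Thm:existenceJSJ}~\eqref{thmJSJ:rigid_verts_finite_out} applies to $T_v=T_w$: the map $\Out^0(G_w,\calp|_{G_w}\cup\calh_w^{(t)})\to\Out(G_v)$ has finite image. It remains to connect $\langle\phi\rangle^0$, acting on $T^\phi$, to this group: the outer automorphism $\phi$ induces (after a power) an outer automorphism of $G_w$ preserving $\calh_w$ by construction, and, since $\Tper$ requires the periodic subgroups to be elliptic hence pointwise-fixable, a power of it lies in the ``$t$''-decorated subgroup $\Out^0(G_w,\calp|_{G_w}\cup\calh_w^{(t)})$; composing the resulting homomorphism $\langle\phi\rangle^0\to\Out(G_w)$ with the above finite-image map yields finite image in $\Out(G_v)$.

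The main obstacle I anticipate is the bookkeeping in the second case: verifying that $\phi$ (a power of it) genuinely lands in $\Out^0(G_w,\calp|_{G_w}\cup\calh_w^{(t)})$ --- i.e. that it fixes each class in $\calh_w$ and acts \emph{trivially} on the relevant subgroup --- rather than merely preserving the family. This is exactly why the tree $\Tper$ was built to make periodic subgroups elliptic: a representative $\Psi\in\phi^n$ fixing $\mathrm{Per}(\Psi)$ elementwise is what puts us in the ``$(t)$'' subgroup, and \Cref{Lem:equalityautofixnonelementary} guarantees such a representative is essentially unique up to finite ambiguity, so the passage to a fixed power is harmless. A secondary technical point is handling the edge-collapsing step that produces $T^\phi$ from $T^\phi_0$: one must confirm that collapsing elementary-ended edges does not merge a rigid vertex group into something larger, which holds because $T^\phi$ is bipartite with every edge having a nonelementary endpoint, so a rigid vertex of $T^\phi$ has the same vertex group as the corresponding rigid vertex of $T^\phi_0$.
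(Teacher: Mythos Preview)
Your proposal is correct and follows essentially the same two-case approach as the paper: a rigid vertex of $T^\phi$ is either a rigid vertex of $\Tcan$ or of some $T_w$ with $w$ QH with fibre, and in each case one applies \Cref{Thm:existenceJSJ}\eqref{thmJSJ:rigid_verts_finite_out}.

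Two points of comparison. First, your parenthetical worry about edge-collapsing is unnecessary: the passage $T^\phi_0\to T^\phi$ collapses only edges whose endpoints are \emph{both} elementary, so the new vertices created are elementary and rigid vertex groups are unchanged. Second, your concern about whether a power of $\phi$ lands in the $(t)$-decorated subgroup $\Out^0(G_w,\calh_w^{(t)})$ is a legitimate detail the paper leaves implicit, but your appeal to $\Tper$ is beside the point. The right justification is more direct: since $\langle\phi\rangle^0$ is cyclic it suffices that some power lies in this subgroup; for the incident edge groups $G_{e_i}$ this holds because they are virtually cyclic (so have finite $\Out$), and for the nonelementary periodic subgroups this is precisely \Cref{lem:orbifold_periodic_is_fg}(1), which gives $\mathrm{Per}(\Psi|_{G_w})=\Fix(\Psi^k|_{G_w})$, together with parts (3)--(4) to obtain a uniform power. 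The paper compresses all of this into the single observation that, by \Cref{lem:orbifold_periodic_is_fg}, $\calh_w$ is a finite set of conjugacy classes of finitely generated subgroups.
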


\begin{proof}
By Lemma~\ref{lem:orbifold_periodic_is_fg}~$(1)$, $\calh_{QH}$ is a finite set of finitely generated groups, each periodic for some representative of a power of $\phi$, and so there exists $k>1$ such that $\phi^k \in \Out(G,\calp \cup \calh_0^{(t)} \cup \calh_{QH}^{(t)})$. The result now follows from Theorem~\ref{Thm:existenceJSJ}~$(10)$.
\end{proof}

Recall that $\Tcan$ and $T^\phi$ are compatible. Let $\Tref_0$ be their minimal refinement. Since both $\Tcan$ and $T^\phi$ are $(\cala,\calp \cup \calh_0 \cup \calh_{QH})$-trees, so is $\Tref_0$.

\begin{lemma}\label{lem:Tref0_is_JSJ}
    The tree $\Tref_0$ is a JSJ tree over elementary subgroups relative to $\calp \cup \calh_0 \cup \calh_{QH}$. In particular, $\Tref_0$ and $T^\phi$ have the same set of elliptic subgroups.
\end{lemma}

\begin{proof}
    We check~\cite[Definition~2.11]{guirardellevitt2017jsj}. We first prove that $\Tref_0$ is \emph{universally elliptic in $(\cala,\calp \cup \calh_0 \cup \calh_{QH})$-trees}, that is every edge group of $\Tref_0$ is elliptic in every $(\cala,\calp \cup \calh_0 \cup \calh_{QH})$-trees. Let $G_e$ be an edge group of $\Tref_0$. By minimality of $\Tref_0$, the group $G_e$ is an edge group of either $\Tcan$ or $T^\phi$. As $\Tcan$ is a JSJ $(\cala,\calp \cup \calh_0)$-tree, it is universally elliptic in $(\cala,\calp \cup \calh_0)$-trees, hence it is universally elliptic in $(\cala,\calp \cup \calh_0 \cup \calh_{QH})$-trees. Similarly, $T^\phi$ is universally elliptic in $(\cala,\calp \cup \calh_0 \cup \calh_{QH})$-trees. Thus, $G_e$ is  elliptic in every $(\cala,\calp \cup \calh_0 \cup \calh_{QH})$-trees and $\Tref_0$ is  universally elliptic in $(\cala,\calp \cup \calh_0 \cup \calh_{QH})$-trees.

    We now prove that $\Tref_0$ \emph{dominates every universally elliptic $(\cala,\calp \cup \calh_0 \cup \calh_{QH})$-trees}, that is, every vertex group of $\Tref_0$ is elliptic in every universally elliptic $(\cala,\calp \cup \calh_0 \cup \calh_{QH})$-tree. Since $T^\phi$ is a JSJ $(\cala,\calp \cup \calh_0 \cup \calh_{QH})$-tree, it dominates every universally elliptic $(\cala,\calp \cup \calh_0 \cup \calh_{QH})$-trees. Since $\Tref_0$ collapses onto $T^\phi$, every vertex group of $\Tref_0$ is elliptic in $T^\phi$ hence is elliptic in every universally elliptic $(\cala,\calp \cup \calh_0 \cup \calh_{QH})$-tree. This shows that $\Tref_0$ dominates every universally elliptic $(\cala,\calp \cup \calh_0 \cup \calh_{QH})$-trees. Together with the above paragraph, we obtain that $\Tref_0$ is a JSJ tree over elementary subgroups relative to $\calp \cup \calh_0 \cup \calh_{QH}$.
\end{proof}

\begin{lemma}\label{lem:additionalpropTphi}
    Let $v \in T^\phi$ and let $[H]\in \per$.
\begin{enumerate}
    \item Suppose that $v$ is QH with fibre. Then $H \cap G_v$ is elementary.
    \item Suppose that $v$ is rigid. If $H \cap G_v$ is nonelementary, then $G_v \subseteq H$.
\end{enumerate}
\end{lemma}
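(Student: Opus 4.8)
The plan is to prove both statements by the same device. \emph{If a nonelementary subgroup $K_0\le G$ is fixed elementwise by a power $\Phi^a$ and $K_0$ lies in a vertex group $G_x$ of one of the trees $\Tcan$, $T_w$, $T^\phi$, then $\Phi^a$ fixes $x$:} indeed $K_0=\Phi^a(K_0)\le\Phi^a(G_x)=G_{\Phi^a(x)}$, so $K_0\le G_x\cap G_{\Phi^a(x)}$, and since edge stabilisers of these trees are elementary the stabilisers of two distinct vertices meet in an elementary subgroup, forcing $x=\Phi^a(x)$. Accordingly I would first choose inside $K=H\cap G_v$ two elements $k_1,k_2$ generating a nonelementary subgroup $K_0$ (possible because nonelementary subgroups of relatively hyperbolic groups contain nonabelian free subgroups); since $K_0\le H=\Per(\Phi)$ there is $a\in\NN^*$ with $\Phi^a(k_1)=k_1$ and $\Phi^a(k_2)=k_2$, hence $\Phi^a$ fixes $K_0$ elementwise. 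One must work throughout with the finitely generated $K_0$ rather than with $H$, $K$ or $G_v$, since $\Per(\Phi)$ need not be finitely generated. Beyond this device the arguments use only \Cref{Thm:existenceJSJ}\,(\ref{thmJSJ:subgroups_in_H_are_rigid}), \Cref{lem:Tphirigidperiodic} and \Cref{Lem:equalityautofixnonelementary}.

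For part~(1), suppose towards a contradiction that $K=H\cap G_v$ is nonelementary. The QH-with-fibre vertex $v$ of $T^\phi$ arises as a QH-with-fibre vertex of the JSJ tree $T_w$ that was blown up at a QH-with-fibre vertex $w$ of $\Tcan$; in particular $G_v\le G_w$, so $K_0\le G_v\le G_w$. Applying the device above in $T^\phi$ and then in $\Tcan$ gives $\Phi^a(v)=v$ and $\Phi^a(w)=w$, so $\Phi^a$ preserves $G_w$ and $\Phi^a|_{G_w}$ represents the restriction to $G_w$ of the outer class $\phi^a$, i.e.\ one of the classes used to build $T_w$. Now $\Per(\Phi^a|_{G_w})=\Per(\Phi)\cap G_w\supseteq K$ is a nonelementary periodic subgroup of $\Phi^a|_{G_w}$, so its conjugacy class lies in the family $\calh_w=\bigcup_{m\in\NN^*}\mathrm{Per}_{\mathrm{NP}}([\phi^m|_{G_w}])\cup\{[G_{e_1}],\dots,[G_{e_k}]\}$ relative to which $T_w$ is constructed. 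By \Cref{Thm:existenceJSJ}\,(\ref{thmJSJ:subgroups_in_H_are_rigid}), applied to $T_w$, the subgroup $\Per(\Phi^a|_{G_w})$, and hence its subgroup $K$, fixes a unique rigid vertex $r_w$ of $T_w$; but $K$ is nonelementary and also fixes $v$, and a nonelementary subgroup fixes at most one vertex of $T_w$ (edge stabilisers being elementary), so $v=r_w$, contradicting that $v$ is QH with fibre rather than rigid. Hence $H\cap G_v$ is elementary.

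For part~(2), suppose $K=H\cap G_v$ is nonelementary and take $K_0$, $a$ as above. Since $v$ is rigid, \Cref{lem:Tphirigidperiodic} says $\langle\phi\rangle^0\to\Out(G_v)$ has finite image, so some positive power $\phi^{n_0}$ acts by an inner automorphism of $G_v$; composing a representative of $\phi^{n_0}$ preserving $G_v$ with a suitable inner automorphism of $G$ produces $\Psi\in\phi^{n_0}$ with $\Psi|_{G_v}=\id_{G_v}$, so $G_v\subseteq\Fix(\Psi)\subseteq\Per(\Psi)$. Then $\Phi^{an_0}$ and $\Psi^{a}$ both lie in $\phi^{an_0}$ and both fix the nonelementary subgroup $K_0$ elementwise ($\Phi^{an_0}$ as a power of $\Phi^a$, and $\Psi^a$ because $\Psi$ already fixes $G_v\supseteq K_0$), so \Cref{Lem:equalityautofixnonelementary} gives $N\in\NN^*$ with $\Phi^{an_0N}=\Psi^{aN}$. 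Using $\Per(\Theta)=\Per(\Theta^m)$ for every $m\ge1$ we obtain
\[
\Per(\Phi)=\Per(\Phi^{an_0N})=\Per(\Psi^{aN})=\Per(\Psi)\supseteq\Fix(\Psi)\supseteq G_v,
\]
i.e.\ $G_v\subseteq\Per(\Phi)=H$.

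The step I expect to need the most care is the reduction in part~(1): one must make sure that $\Phi^a$ genuinely fixes the vertices $w$ and $v$, so that $\Phi^a|_{G_w}$ is an honest automorphism of $G_w$ in an outer class relative to which $T_w$ was built, and that $\Per(\Phi^a|_{G_w})$ is nonelementary — only then does \Cref{Thm:existenceJSJ}\,(\ref{thmJSJ:subgroups_in_H_are_rigid}) apply; the remaining steps are formal. (This also covers the degenerate case $T_w=\{\mathrm{pt}\}$: there $v$ is QH with fibre and $T_w$ has no rigid vertex, so the mere existence of the nonelementary member $\Per(\Phi^a|_{G_w})$ of $\calh_w$ already contradicts \Cref{Thm:existenceJSJ}\,(\ref{thmJSJ:subgroups_in_H_are_rigid}).) Note that, somewhat surprisingly, neither part needs the compatibility of $T^\phi$ with $\Tper$.
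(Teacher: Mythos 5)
Your proof is correct. Part~(2) is essentially identical to the paper's argument: take a representative $\Phi$ with $\Per(\Phi)=H$, use \Cref{lem:Tphirigidperiodic} to produce an automorphism in some power of $\phi$ acting trivially on $G_v$, then apply \Cref{Lem:equalityautofixnonelementary} to a nonelementary subgroup of $H\cap G_v$ to conclude a power of $\Phi$ fixes $G_v$ elementwise. (The paper composes $\Psi^k$ with $\ad_{g^{-1}}$ directly rather than naming a fresh representative $\Psi\in\phi^{n_0}$, but the computation is the same.)

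For part~(1) you take a genuinely different route, and in fact a more careful one. The paper's proof simply invokes the definition of QH-with-fibre vertices (\cite[Definition~5.13]{guirardellevitt2017jsj}) to say that $G_v$ meets every member of the family $\calh_w$ used to build $T_w$ in an elementary subgroup; it leaves implicit the identification of $H\cap G_v$ with (a subgroup of) some member of that family, and also writes $\phi|_{G_v}$ where it means $\phi|_{G_w}$. Your ``device'' makes that identification explicit: you show that $\Phi^a$ (the power fixing a finitely generated nonelementary $K_0\le H\cap G_v$ elementwise) preserves $G_w$, so $\Phi^a|_{G_w}$ is an honest representative of a power of $[\phi|_{G_w}]$, and then $\Per(\Phi^a|_{G_w})=\Per(\Phi)\cap G_w$ is a bona fide member of $\calh_w$ containing $K_0$. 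From there you appeal to \Cref{Thm:existenceJSJ}(\ref{thmJSJ:subgroups_in_H_are_rigid}) and the uniqueness of the vertex fixed by a nonelementary subgroup to derive a contradiction; the paper instead appeals directly to the elementarity of intersections guaranteed by Definition~5.13. Both endgames are valid given the set-up you established, and yours correctly handles the degenerate case $T_w=\{\mathrm{pt}\}$. Net effect: same conclusion, but your version fills a small gap the paper leaves to the reader. One cosmetic point: where you write ``$\Phi^a$ fixes $x$'' you of course mean that the induced (twisted-equivariant) isometry $F_{\Phi^a}$ fixes $x$, which is what gives $\Phi^a(G_x)=G_x$; the argument is otherwise airtight.
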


\begin{proof}
    Suppose first that $v$ is QH with fibre. Suppose towards a contradiction that $H \cap G_v$ is nonelementary. By Lemma~\ref{lem:Tref0_is_JSJ}, the group $G_v$ is an elliptic group of the refined tree $\Tref_0$. We still denote by $v$ the vertex in $\Tref_0$ fixed by $G_v$. Let $p \colon \Tref_0 \to \Tcan$ be the natural projection. Since $G_v$ is a flexible vertex, the image $p(v)$ is not a rigid vertex of $\Tcan$ (recall that every rigid vertex stabiliser of $\Tcan$ is also elliptic in every $(\cala,\calp \cup \calh_0 \cup \calh_{QH})$-tree). Hence $p(v)$ is a flexible vertex of $\Tcan$. Therefore, if $H \cap G_v$ is nonelementary, then $H \cap G_v \in \calh_{QH}$. This contradicts Theorem~\ref{Thm:existenceJSJ}~$(9)$. Hence $H \cap G_v$ is elementary.

    Suppose now that $v$ is rigid. Let $\Psi \in \phi$ be such that $\mathrm{Per}(\Psi)=H$. Suppose that the intersection $H \cap G_v$ is nonelementary. Then $\Psi$ preserves $G_v$ since $v$ is the unique vertex in $T^\phi$ fixed by $H\cap G_v$. By~\Cref{lem:Tphirigidperiodic}, there exists $k \in \NN^*$  such that $\Psi^k$ acts as a global conjugation on $G_v$ by an element $g\in G_v$. Taking a larger $k$ if necessary, we may also assume that $\Psi^k$ acts trivially on a nonelementary subgroup $H'\subseteq H \cap G_v$. 
    
    Note that $\Theta=\mathrm{ad}_{g^{-1}}\circ \Psi^k$ acts trivially on $G_v$. Thus, $\Psi^k$ and $\Theta$ acts trivially on the same nonelementary subgroup $H'\subseteq G_v$. By~\Cref{Lem:equalityautofixnonelementary}, there exists $N \in \NN$ such that $\Psi^{kN}=\Theta^N$. 
   In particular, $\Psi$ has a power which fixes $G_v$ elementwise. Thus, we have $G_v \subseteq H$.
\end{proof}

\begin{remark}\label{Remark:Phielliptic}
Note that, for every $\Phi \in \mathrm{NP}(\phi)$, the isometry $F_\Phi$ of $T^\phi$ is elliptic. Indeed, if $F_{\Phi}$ is loxodromic, then $\Phi$ can only fix an element $g \in G$ which is loxodromic and whose axis is the same as the one of $F_{\Phi}$. Since the action of $G$ on $T^\phi$ is acylindrical, the element $g$ is contained in a unique maximal virtually cyclic subgroup. In particular, $\mathrm{Per}(\Phi)$ is a virtually cyclic group and $\Phi \notin \mathrm{NP}(\phi)$.
\end{remark}

The rest of the section is dedicated to the proof of some properties of the set $\per$ and of the action of $G_\Phi$ on $T^\phi$. To this end we prove that $\mathrm{Per}_{\mathrm{NP}}(\phi)$ is finite (see Lemma~\ref{Lemma:Perfinite}). We need the following lemmas regarding the intersection of characteristic sets of isometries in $T^\phi$.

\begin{lemma}\label{Lemautomorphismfixingadjacentedges}
Let $G$ be a hyperbolic group relative to $\calp$. Let $\calh_0$ be a finite set of conjugacy classes of finitely generated subgroups of $G$ such that G is one-ended relative to $\calp \cup \calh_0$. 

Let $v \in VT^\phi$ be non-elementary and let $e_1$ and $e_2$ be distinct edges adjacent to $v$. Let $\phi \in \Out(G,\calp)$. Suppose that there is a representative $\Phi \in \phi$ such that, for every $i=1,2$, we have $\Phi(G_{e_i})=G_{e_i}$. Then $v$ is rigid
and $G_v$ is fixed elementwise by some power of $\Phi$.
\end{lemma}

\begin{proof}

We first prove that $v$ is rigid. Indeed, suppose towards a contradiction that $v$ is QH with finite fibre. Then $G_{e_1}$ and $G_{e_2}$ are virtually cyclic. Thus, for every $i \in \{1,2\}$, the automorphism $\Phi$ has a power $\Phi^k$ fixing an infinite order element $g_i \in G_{e_i}$. By~\Cref{lem:Tphiacylindrical} and as the groups 
$G_{e_1}$ and $G_{e_2}$ are virtually cyclic, the group $\langle g_1,g_2 \rangle$ is a non-elementary subgroup. As $\langle g_1,g_2 \rangle \subseteq \mathrm{Per}(\Phi) \cap G_v$, this contradicts \Cref{lem:additionalpropTphi}. Thus, the vertex $v$ is rigid.


By~\Cref{lem:Tphirigidperiodic}, after taking a power $\Phi^\ell$, it acts on $G_v$ as global conjugation by an element $g \in G_v$. 

We claim that, after taking a further power of $\Phi$, the element $g$ is trivial. Indeed, note that, by \Cref{lem:Tphiacylindrical} the stabiliser of an edge adjacent to $v$ is almost malnormal in $G_v$: for every edge $e'$ of $T^\phi$ adjacent to $v$ and every $g' \in G_v$, if $g'G_eg'^{-1} \cap G_e$ is infinite then $g' \in G_e$. Moreover, if $e$ and $e'$ are two distinct edges adjacent to $v$, then $G_{e} \cap G_{e'}$ is finite. Since $\Phi$ preserves $G_{e_1}$ and $G_{e_2}$, the power $\Phi^\ell$ must act by conjugating by an element of the finite intersection $G_{e_1} \cap G_{e_2}$. This becomes trivial on taking a further power of $\Phi$, which concludes the proof of the claim.

Thus we have shown that $\Phi$ has a power fixing $G_v$ elementwise, which concludes the proof. \qedhere

\end{proof}

\begin{corollary}\label{Coro:stabedgeinperiodicsubgroup}
     Let $G$ be a hyperbolic group relative to $\calp$. Let $\calh_0$ be a finite set of conjugacy classes of finitely generated subgroups of $G$ such that G is one-ended relative to $\calp \cup \calh_0$. Let $\phi=[\Phi] \in \Out(G,\calp \cup \calh_0^{(t)})$. Let $[H]=[\mathrm{Per}(\Phi)]\in \per$ and let $T_H^\phi$ be the minimal $H$-invariant subtree of $T^\phi$. 
     
     If $T_H^\phi$ contains an edge $e$, then the endpoints of $e$ are respectively elementary and rigid. If $v$ is the rigid endpoint of $e$, then $G_e \subseteq G_v\subseteq H$ and $G_v$ is fixed elementwise by some power of $\Phi$.
\end{corollary}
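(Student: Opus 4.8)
The plan is to study the action of $H = \mathrm{Per}(\Phi)$ on its minimal invariant subtree $T_H^\phi$ inside $T^\phi$, using the bipartite structure of $T^\phi$ together with \Cref{Remark:Phielliptic}, \Cref{lem:additionalpropTphi}, and \Cref{Lemautomorphismfixingadjacentedges}. Recall that $T^\phi$ is bipartite: every edge has one elementary endpoint and one endpoint which is either rigid or QH with fibre. So if $T_H^\phi$ contains an edge $e$, its endpoints are (elementary, rigid) or (elementary, QH-with-fibre), and the whole work is to rule out the QH-with-fibre possibility and then extract the conclusion $G_e \subseteq G_v \subseteq H$ with $G_v$ fixed elementwise by a power of $\Phi$.

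First I would observe that since $[H] = [\mathrm{Per}(\Phi)] \in \per$ we have $\Phi \in \mathrm{NP}(\phi)$, so $H$ is nonelementary, and by \Cref{Remark:Phielliptic} the isometry $F_\Phi$ of $T^\phi$ is elliptic. Since $H = \mathrm{Per}(\Phi)$ is invariant under $\Phi$, the isometry $F_\Phi$ preserves $T_H^\phi$; as $T_H^\phi$ is the minimal $H$-invariant subtree and $H$ is nonelementary, $T_H^\phi$ is unbounded, so its intersection with the elliptic fixed-tree of $F_\Phi$ (which is nonempty, lying in the characteristic set) gives a vertex $w \in T_H^\phi$ with $F_\Phi(w)=w$. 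Now let $e$ be an edge of $T_H^\phi$; since $T_H^\phi$ has finitely many $H$-orbits of edges and $H$ acts minimally, $e$ lies on a reduced bi-infinite line in $T_H^\phi$ that (by the bipartite structure) alternates elementary and non-elementary vertices. Let $v$ be the non-elementary endpoint of $e$. Because $H$ is nonelementary, $H \cap G_v$ or at least some conjugate situation forces $v$ to carry nonelementary stabiliser intersection: more precisely, I would argue that the non-elementary vertex $v$ adjacent to $e$ in $T_H^\phi$ satisfies that $H \cap G_v$ is nonelementary, by noting that the two edges of $T_H^\phi$ adjacent to $v$ along the line have stabilisers in $H$, and by \Cref{lem:Tphiacylindrical} the group generated by two distinct edge-stabilisers at a non-elementary vertex is non-elementary; since $H$ is normalised by $\Phi$ (indeed $\Phi$-invariant) one finds a nonelementary subgroup of $H \cap G_v$. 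This is where \Cref{lem:additionalpropTphi} does the dividing: if $v$ were QH with fibre, then $H \cap G_v$ would be elementary, contradiction; hence $v$ is rigid, and the second clause of \Cref{lem:additionalpropTphi} then yields $G_v \subseteq H$.

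Once $v$ is known to be rigid with $G_v \subseteq H$, the remaining assertions follow quickly. Since $G_e \subseteq G_v$ always (edge stabilisers are contained in adjacent vertex stabilisers), we get $G_e \subseteq G_v \subseteq H$. For the "fixed elementwise by a power of $\Phi$" statement: we have a nonelementary subgroup $H' = H \cap G_v$ (in fact all of $G_v$) on which suitable powers of $\Phi$ behave nicely. Writing $\Psi \in \phi$ with $\mathrm{Per}(\Psi)=H$, the fact that $H \cap G_v$ is nonelementary and $v$ is the unique vertex fixed by it forces $\Psi$ to preserve $G_v$; then \Cref{lem:Tphirigidperiodic} gives a power $\Psi^k$ acting on $G_v$ as conjugation by some $g \in G_v$, and after enlarging $k$ it fixes elementwise a nonelementary subgroup of $G_v$; comparing with $\mathrm{ad}_{g^{-1}}\circ \Psi^k$ (which acts trivially on $G_v$) via \Cref{Lem:equalityautofixnonelementary} shows a further power of $\Psi$ fixes $G_v$ elementwise. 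Since $\Psi$ and $\Phi$ differ by an inner automorphism and $G_v \subseteq H = \mathrm{Per}(\Phi)$ consists of $\Phi$-periodic elements, one upgrades this to a power of $\Phi$ fixing $G_v$ elementwise (the conjugating element lies in $G_v$, and fixing a nonelementary subgroup pins it down up to finite order as in the proof of \Cref{Lemautomorphismfixingadjacentedges}). This is essentially a repackaging of the argument already given for \Cref{Lemautomorphismfixingadjacentedges}.

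The main obstacle I anticipate is the step showing that the non-elementary vertex $v$ adjacent to an edge $e \in ET_H^\phi$ actually has $H \cap G_v$ nonelementary — i.e. that $H$ does not merely run along the line contributing only small (elementary) pieces of each vertex stabiliser. The cleanest route is to use that $H$ acts minimally on $T_H^\phi$ with finitely many edge orbits so that every vertex of $T_H^\phi$ has valence $\geq 2$ in $T_H^\phi$, giving two distinct $H$-edges $e_1, e_2$ at $v$ with $G_{e_i} \cap H$ infinite (edge stabilisers in $T^\phi$ are infinite elementary by \Cref{Thm:existenceJSJ}(4), and the minimal subtree argument shows the relevant intersections with $H$ are infinite), and then \Cref{lem:Tphiacylindrical} forces $\langle G_{e_1}\cap H, G_{e_2}\cap H\rangle$ to be non-elementary (it cannot be virtually cyclic as $G_{e_1}\cap G_{e_2}$ is finite). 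Handling the bookkeeping so that one genuinely lands inside $H$, rather than in $G$ only, is the delicate point; everything after "$v$ is rigid and $G_v\subseteq H$" is then routine given the earlier lemmas.
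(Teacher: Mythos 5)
Your overall plan — use the bipartite structure of $T^\phi$, the ellipticity from \Cref{Remark:Phielliptic}, and \Cref{Lemautomorphismfixingadjacentedges} — is the right collection of tools, and your final paragraph does eventually reach the correct lemma. But the route you take through \Cref{lem:additionalpropTphi} has a genuine gap, and it is the one you yourself flagged as delicate.

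The problematic step is the claim that the non-elementary vertex $v$ of $e$ in $T_H^\phi$ has $H\cap G_v$ nonelementary, justified by the assertion that the two edges $e_1,e_2$ of $T_H^\phi$ at $v$ have $G_{e_i}\cap H$ infinite, because ``the minimal subtree argument shows the relevant intersections with $H$ are infinite.'' That is not a consequence of minimality. Minimality of the $H$-action only tells you that $T_H^\phi$ is a union of axes of loxodromic elements of $H$ and that $T_H^\phi$ has no valence-one vertices; it says nothing about $H$-stabilisers of edges, which could perfectly well be trivial (think of a nonabelian free subgroup of $H$ acting freely). The fact that $G_{e_i}$ is infinite elementary in $G$ (from \Cref{Thm:existenceJSJ}) gives no control on $G_{e_i}\cap H$. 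In fact $G_e\subseteq H$ is precisely part of what the corollary is proving, so building the argument on $G_{e_i}\cap H$ being infinite is circular at this stage.

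The paper avoids all of this and does not pass through \Cref{lem:additionalpropTphi} at all. Since $H$ is not elliptic in $T^\phi$ (as $T_H^\phi$ contains an edge), the minimal subtree is the union of axes of loxodromic elements of $H$; so $e$ lies on the axis of some $g\in H$. Pick $N$ with $\Phi^N(g)=g$. By \Cref{Remark:Phielliptic}, $F_{\Phi^N}$ is elliptic, and since it commutes with $g$ it fixes the axis of $g$ pointwise — in particular it fixes $e$ and the next edge $e'$ of the axis at $v$. Therefore $\Phi^N$ preserves both $G_e$ and $G_{e'}$, and \Cref{Lemautomorphismfixingadjacentedges} immediately gives that $v$ is rigid and that some power of $\Phi$ fixes $G_v$ elementwise. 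This last point yields $G_v\subseteq\mathrm{Per}(\Phi)=H$, and $G_e\subseteq G_v$ is automatic. Note that this argument never needs to know anything about $H\cap G_{e_i}$: what matters is that $\Phi^N$ \emph{preserves} the edge groups (a conclusion about $F_{\Phi^N}$), not that they meet $H$. If you reorganise your proof to lead with ``$e$ lies on the axis of some $g\in H$, and $F_{\Phi^N}$ fixes that axis pointwise'' and then invoke \Cref{Lemautomorphismfixingadjacentedges} directly, the gap disappears and \Cref{lem:additionalpropTphi} becomes unnecessary.
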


\begin{proof}
Note that, since $T^\phi$ is bipartite, the endpoints of $e$ are respectively elementary and rigid or QH with fibre. It suffices to prove that an endpoint $v$ of $e$ cannot be QH with fibre.

Since $T_{H}^\phi$ contains an edge, it follows that $H$ is not elliptic in $T^\phi$. Thus, $T^\phi_H$ is the union of the axes of elements of $H$. Then, $e$ is contained in the axis of an element $g$ of $H$. Recalling the bipartite structure of $T^\phi$, let $v$ be the rigid or QH vertex adjacent to $e$. Then there exists an edge $e' \neq e$ adjacent to $v$ and contained in the axis of $g$. Let $\Phi \in \phi$ be such that $\mathrm{Per}(\Phi)=H$ and let $N \in \NN^*$ be such that $\Phi^N(g)=g$. Since $\Phi^N$ is elliptic in $T^\phi$ by \Cref{Remark:Phielliptic}, it fixes pointwise the axis of $g$. In particular, it fixes $e$ and $e'$. By Lemma~\ref{Lemautomorphismfixingadjacentedges}, we see that $v$ is rigid and that $G_v$ is fixed elementwise by a power of $\Phi$. 
\end{proof}

\begin{corollary}\label{coro:noQHwithfibrevertex}
     Let $G$ be a hyperbolic group relative to $\calp$. Let $\calh_0$ be a finite set of conjugacy classes of finitely generated subgroups of $G$ such that G is one-ended relative to $\calp \cup \calh_0$. Let $\phi=[\Phi] \in \Out(G,\calp \cup \calh_0^{(t)})$. Let $[H]\in \per$ and let $T_H^\phi$ be the minimal $H$-invariant subtree of $T^\phi$. 

     The tree $T_H^\phi$ does not contain a QH with fibre vertex. 
\end{corollary}

\begin{proof}
    Suppose first that $T_H^\phi$ is reduced to a point $v$. Then $H \subseteq G_v$ and $v$ is not QH with fibre by \Cref{lem:additionalpropTphi}. Suppose now that $T_H^\phi$ is not reduced to a point. Then any vertex $v$ of $T_H^\phi$ is adjacent to an edge and the result follows from \Cref{Coro:stabedgeinperiodicsubgroup}.
\end{proof}


\begin{lemma}\label{Lem:equalityautomorphismfixingaxis}
Let $G$ be a hyperbolic group relative to $\calp$. Let $\calh_0$ be a finite set of conjugacy classes of finitely generated subgroups of $G$ such that G is one-ended relative to $\calp \cup \calh_0$. Let $\phi=[\Phi] \in \Out(G,\calp \cup \calh_0^{(t)})$. 

Let $n \in \NN^*$ and let $\Psi,\Theta \in \phi^n$ be such that $F_\Psi$ and $F_\Theta$ are elliptic isometries of $T^\phi$. Suppose that there exist $g,h \in G$ loxodromic in $T^\phi$, such that $\Psi(g)=g$, $\Theta(h)=h$ and $\mathrm{Ax}(g) \cap \mathrm{Ax}(h)$ contains an edge $e$.

There exists $N \in \NN^*$ such that $\Psi^{N}=\Theta^{N}$.
\end{lemma}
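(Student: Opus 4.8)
The plan is to manufacture a non-elementary vertex group $G_v$ of $T^\phi$ which is fixed elementwise by a common power of $\Psi$ and $\Theta$, and then to quote \Cref{Lem:equalityautofixnonelementary}. First I would observe that, since $\Psi(g)=g$, the isometry $F_\Psi$ commutes with the loxodromic isometry $g$ of $T^\phi$ (using the $\Psi$-twisted equivariance $F_\Psi(a\cdot x)=\Psi(a)\cdot F_\Psi(x)$), so $F_\Psi$ preserves $\mathrm{Ax}(g)$; as $\|g\|_{T^\phi}>0$ it cannot reverse the orientation of $\mathrm{Ax}(g)$ (else $F_\Psi g F_\Psi^{-1}=g^{-1}\neq g$), and an orientation-preserving elliptic isometry of a line acts as the identity on it. Hence $F_\Psi$ fixes $\mathrm{Ax}(g)$ pointwise — this is exactly the argument used in the proof of \Cref{lem:PromoteAcylindricalactions}(1), now with ellipticity of $F_\Psi$ supplied by hypothesis — and symmetrically $F_\Theta$ fixes $\mathrm{Ax}(h)$ pointwise. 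In particular both $F_\Psi$ and $F_\Theta$ fix the edge $e$ pointwise.

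Next, since $T^\phi$ is bipartite, one endpoint $v$ of $e$ is rigid or QH with fibre, hence non-elementary. Because $\mathrm{Ax}(g)$ is a bi-infinite geodesic passing through $v$, it contains an edge $e'\neq e$ adjacent to $v$, and $F_\Psi$ fixes $e'$ pointwise as well; from $F_\Psi(e)=e$, $F_\Psi(e')=e'$ and the twisted equivariance we deduce $\Psi(G_e)=G_e$ and $\Psi(G_{e'})=G_{e'}$. Likewise $\mathrm{Ax}(h)$ contains an edge $e''\neq e$ adjacent to $v$ with $\Theta(G_e)=G_e$ and $\Theta(G_{e''})=G_{e''}$. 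Now \Cref{Lemautomorphismfixingadjacentedges}, applied to the representative $\Psi$ of $\phi^n\in\Out(G,\calp)$ and the distinct edges $e,e'$ at the non-elementary vertex $v$, shows that $v$ is rigid and that $\Psi^a$ fixes $G_v$ elementwise for some $a\in\NN^*$; applying the same lemma to $\Theta$ and the pair $e,e''$ gives $b\in\NN^*$ with $\Theta^b$ fixing $G_v$ elementwise.

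To finish, note that $\Psi^{ab}$ and $\Theta^{ab}$ both represent the outer class $\phi^{abn}$ and both fix elementwise the non-elementary subgroup $G_v$. By \Cref{Lem:equalityautofixnonelementary} there is $M\in\NN^*$ with $(\Psi^{ab})^M=(\Theta^{ab})^M$, and taking $N=abM$ yields $\Psi^N=\Theta^N$, as required.

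The crux of the argument is the first step combined with the passage to edge stabilisers: the key realisation is that ellipticity of $F_\Psi$ forces it to fix the \emph{entire} axis of $g$ pointwise, not merely setwise, and therefore to normalise the edge stabilisers along that axis — this is precisely the input needed to trigger the rigidity statement \Cref{Lemautomorphismfixingadjacentedges} at the shared non-elementary vertex of $\mathrm{Ax}(g)$ and $\mathrm{Ax}(h)$. Once the common non-elementary fixed subgroup $G_v$ is in hand, everything else is bookkeeping with powers and a direct appeal to \Cref{Lem:equalityautofixnonelementary}.
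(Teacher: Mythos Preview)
Your proof is correct and follows essentially the same route as the paper's: both argue that ellipticity of $F_\Psi$ and $F_\Theta$ forces them to fix the respective axes pointwise, then use the bipartite structure of $T^\phi$ to locate a non-elementary endpoint $v$ of $e$, apply \Cref{Lemautomorphismfixingadjacentedges} at $v$ to get powers of $\Psi$ and $\Theta$ fixing $G_v$ elementwise, and conclude via \Cref{Lem:equalityautofixnonelementary}. Your write-up is in fact more detailed than the paper's (which simply asserts the pointwise fixing of the axes and the existence of the second adjacent edges without justification), and your bookkeeping with the powers $a,b,M$ is cleaner.
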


\begin{proof}
First note that $F_{\Psi}$ (resp. $F_\Theta$) fixes pointwise the axis of $g$ (resp. $h$). Therefore, both $\Psi$ and $\Theta$ preserves the stabilisers of the endpoints of $e$. By construction of $T^\phi$, one of the endpoints $v$ of $e$ is either a rigid or a QH with fibre vertex. Moreover, both $\Psi$ and $\Theta$ preserve the subgroup associated with an edge adjacent to $v$ distinct from $e$.

Therefore, we can apply Lemma~\ref{Lemautomorphismfixingadjacentedges}: there exists $N \in \NN^*$ such that both $\Psi^N$ and $\Theta^N$ act as the identity on the nonelementary subgroup $G_v$. By \Cref{Lem:equalityautofixnonelementary}, up to taking powers of $\Psi$ and $\Theta$, we have $\Psi^N=\Theta^N$. 
\end{proof}

\begin{lemma}\label{Lemma:Perfinite}
Let $G$ be a hyperbolic group relative to $\calp$. Let $\calh_0$ be a finite set of conjugacy classes of finitely generated subgroups of $G$ such that G is one-ended relative to $\calp \cup \calh_0$. Let $\phi=[\Phi] \in \Out(G,\calp \cup \calh_0^{(t)})$. The set $\bigcup_{n \in \NN} \mathrm{Per}_{\mathrm{NP}}(\phi^n)$ is finite. Consequently, there exists $N\in \NN^*$ such that any subgroup whose conjugacy class is in $\bigcup_{n \in \NN^*} \mathrm{Per}_{\mathrm{NP}}(\phi^n)$ belongs to a subgroup whose conjugacy class is in $\mathrm{Per}_{\mathrm{NP}}(\phi^N)$.
\end{lemma}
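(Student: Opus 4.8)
The plan is to read the conjugacy classes of periodic non-elementary subgroups of all powers of $\phi$ off the rigid vertices of the tree $T^\phi$ constructed above. Set $\calh=\bigcup_{n\in\NN^*}\mathrm{Per}_{\mathrm{NP}}(\phi^n)$; this is the family relative to which $\Tper$, and hence $T^\phi$, was built, so every member of $\calh$ is non-elementary. First I observe that \cref{Remark:Phielliptic,Lemautomorphismfixingadjacentedges,Coro:stabedgeinperiodicsubgroup,lem:additionalpropTphi}, although phrased for representatives of $\phi$, hold with the same proofs for representatives of any power $\phi^n$ acting on the same tree $T^\phi$: the tree $T^\phi$ is $\phi$-invariant, hence $\phi^n$-invariant, and $[\mathrm{Per}(\Psi)]$ belongs to the family $\calh$ used in the construction of $T^\phi$ whenever $\Psi\in\phi^n$ has non-elementary periodic subgroup. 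Second, I record that $T^\phi$ has only finitely many $G$-orbits of vertices: it is obtained from $\Tcan$ — which has finitely many orbits of edges by \cref{Thm:existenceJSJ} — by blowing up the trees $T_v$ at the QH-with-fibre vertices $v$, each $T_v$ having finitely many orbits of edges (the relevant family is finite and finitely generated by \cref{lem:orbifold_periodic_is_fg}, so \cref{Thm:existenceJSJ} applies), followed by edge collapses.

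Now fix $[H]\in\calh$ and write $H=\mathrm{Per}(\Phi)$ with $\Phi\in\phi^n$, so $H$ is non-elementary. Consider the minimal $H$-invariant subtree $T^\phi_H$ of $T^\phi$. If $T^\phi_H$ is a single vertex $v$, then $H\subseteq G_v$, so $G_v$ is non-elementary; by \cref{lem:additionalpropTphi}(1) the vertex $v$ is not QH with fibre (else $H=H\cap G_v$ would be elementary), so $v$ is rigid, and then \cref{lem:additionalpropTphi}(2) gives $G_v\subseteq H$, whence $H=G_v$. If instead $T^\phi_H$ contains an edge $e$, then \cref{Coro:stabedgeinperiodicsubgroup} applies: its endpoints are elementary and rigid, and writing $v$ for the rigid endpoint, $v$ is a rigid vertex of $T^\phi$ with $G_v\subseteq H$ and $G_v$ fixed elementwise by some power of $\Phi$.

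So each $[H]\in\calh$ falls into one of two cases. In the first, $[H]=[G_v]$ for a rigid vertex $v$ of $T^\phi$, and there are finitely many such classes. In the second, I assign to $[H]$ the $G$-orbit of the rigid vertex $v$ produced by \cref{Coro:stabedgeinperiodicsubgroup}, and I claim this assignment is injective; since $T^\phi$ has finitely many orbits of rigid vertices, this finishes the proof of finiteness. Suppose $[H_1],[H_2]\in\calh$ are assigned the same orbit, say $H_i=\mathrm{Per}(\Phi_i)$ with $\Phi_i\in\phi^{n_i}$ and with $v_i$ a rigid vertex in that orbit some power of which $\Phi_i$ fixes $G_{v_i}$ elementwise. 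Conjugating $\Phi_2$ by an element of $G$ (which replaces $H_2$ by a conjugate and does not change its outer power class), we may take $v_1=v_2=:v$. Passing to suitable powers then yields automorphisms $\Psi_1,\Psi_2$ lying in a common power $\phi^L$, each fixing the non-elementary subgroup $G_v$ elementwise and with $\mathrm{Per}(\Psi_i)$ conjugate to $H_i$. By \cref{Lem:equalityautofixnonelementary} there is $N\in\NN^*$ with $\Psi_1^N=\Psi_2^N$; since the periodic subgroup of an automorphism coincides with that of each of its positive powers, taking periodic subgroups gives that $H_1$ and $H_2$ are conjugate, i.e.\ $[H_1]=[H_2]$. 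Hence $\calh$ is finite.

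For the final clause, note that $\mathrm{Per}_{\mathrm{NP}}(\phi^n)\subseteq\mathrm{Per}_{\mathrm{NP}}(\phi^{n!})$ for every $n$ — if $\Phi\in\phi^n$ has non-elementary periodic subgroup then $\Phi^{n!/n}\in\phi^{n!}$ has the same periodic subgroup — and $\mathrm{Per}_{\mathrm{NP}}(\phi^{n!})\subseteq\mathrm{Per}_{\mathrm{NP}}(\phi^{(n+1)!})$, so $\calh=\bigcup_{n}\mathrm{Per}_{\mathrm{NP}}(\phi^{n!})$ is an increasing union; being finite it stabilises, and for $N=n_0!$ with $n_0$ large we get $\mathrm{Per}_{\mathrm{NP}}(\phi^N)=\calh$, which gives the stated conclusion (any such subgroup is even equal to, hence contained in, one whose class lies in $\mathrm{Per}_{\mathrm{NP}}(\phi^N)$). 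The main obstacle is the injectivity claim: it requires that a rigid vertex of $T^\phi$, once it is pinned to $H$ via \cref{Coro:stabedgeinperiodicsubgroup} and fixed elementwise by a power of the defining automorphism, actually determines $\mathrm{Per}(\Phi)$ independently of $n$ and of the chosen representative — this is where \cref{Lem:equalityautofixnonelementary} and the bookkeeping of common powers do the work, and the argument would break if one only knew $G_v\subseteq H$ without the elementwise-fixing.
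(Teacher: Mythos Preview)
Your argument is correct and follows essentially the same strategy as the paper: work in the $\phi$-invariant tree $T^\phi$, split according to whether $H$ is elliptic or contains a loxodromic element, and in the elliptic case identify $H$ with a rigid vertex stabiliser via \cref{lem:additionalpropTphi}. In the loxodromic case the paper indexes the classes by edges of a fundamental domain (invoking \cref{Lem:equalityautomorphismfixingaxis}), whereas you index them by orbits of rigid vertices obtained from \cref{Coro:stabedgeinperiodicsubgroup} and appeal directly to \cref{Lem:equalityautofixnonelementary}; this is only a repackaging, since \cref{Lem:equalityautomorphismfixingaxis} is itself proved by passing to the adjacent rigid vertex and invoking \cref{Lem:equalityautofixnonelementary}.
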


\begin{proof}
Note that, for every $N \in \NN^*$ and every $\Psi \in \phi$, we have $\mathrm{Per}(\Psi^N)=\mathrm{Per}(\Psi)$. Thus, we will generally take a power of the considered automorphisms if needed.

Let $T^\phi$ be the above described tree associated with $\phi$. Up to taking a power of $\phi$, we may suppose that $\phi$ acts trivially on $G \backslash T^\phi$, that the homomorphism $\langle \phi \rangle \to \Out(G_e)$ is trivial for every $e \in ET^\phi$ and that the homomorphism $\langle \phi \rangle \to \Out(G_v)$ is trivial for every rigid vertex $v$.

\textbf{Counting elliptic subgroups:} Let $[H]\in \bigcup_{n \in \NN^*} \mathrm{Per}_{\mathrm{NP}}(\phi^n)$. Suppose first that every element of $H$ is elliptic in $T^\phi$. By \Cref{lem:elliptic_subgp_or_loxodromic_elet}, $H$ is elliptic in $T^\phi$. Since $[H] \in \bigcup_{n \in \NN^*} \mathrm{Per}_{\mathrm{NP}}(\phi^n)$, the group $H$ is not contained in the stabiliser of a vertex which is elementary. By \Cref{Coro:stabedgeinperiodicsubgroup}, it is also not contained in the stabiliser of a QH with finite fibre vertex. Therefore, the group $H$ is contained in the stabiliser of a vertex $v$ which is rigid. By \Cref{lem:additionalpropTphi}, we have in fact $H=G_v$.

In particular, the stabiliser of any vertex of $T^\phi$ contains at most one conjugacy class of elliptic subgroups in $\bigcup_{n \in \NN^*} \mathrm{Per}_{\mathrm{NP}}(\phi^n)$. Since the action of $G$ on $T^\phi$ has finitely many orbits of vertices, the set $\bigcup_{n \in \NN} \mathrm{Per}_{\mathrm{NP}}(\phi^n)$ contains only finitely many conjugacy classes of elliptic subgroups of $T^\phi$.

\textbf{Counting subgroups containing loxodromic elements:} Suppose now that $H$ contains a loxodromic element $h$. Let $n \in \NN^*$, let $\Psi \in \phi^n$ be such that $\mathrm{Per}(\Psi)=H$ and let $N \in \NN^*$ be such that $\Psi^N(h)=h$. By Remark~\ref{Remark:Phielliptic}, the isometry $F_{\Psi^N}$ is elliptic. Let $D$ be a finite fundamental domain for the action of $G$ on $T^\phi$. Up to taking a conjugate of $H$, we may suppose that $D$ contains an edge $e$ of the axis of $h$. 

Let $\Theta \in \phi^n$ with $[\mathrm{Per}(\Theta)] \in \mathrm{Per}_{\mathrm{NP}}(\phi^n)$ and let $h' \in G$ loxodromic be such that $\Theta^N(h')=h'$ and that $\mathrm{Ax}(h')$ contains $e$. As above the isometry $F_{\Theta^N}$ is elliptic in $T^\phi$. By Lemma~\ref{Lem:equalityautomorphismfixingaxis}, there exists $m \in \NN^*$ such that $\Psi^m=\Theta^m$. Therefore, we see that $\mathrm{Per}(\Psi)=\mathrm{Per}(\Theta)$. Hence the conjugacy class of $H$ in $\mathrm{Per}_{\mathrm{NP}}(\phi^n)$ is entirely determined by the edges of the fundamental domain $D$ contained in translates of axes of elements of $H$. Since $D$ is finite, and since the natural map $\mathrm{Per}_{\mathrm{NP}}(\phi^{n}) \to \mathrm{Per}_{\mathrm{NP}}(\phi^{(n+1)!})$ is injective, there exist only finitely many $[H] \in \bigcup_{n \in \NN^*} \mathrm{Per}_{\mathrm{NP}}(\phi^n)$ containing the conjugacy class of a loxodromic element. (In fact, their number is bounded above by the number of edges in the fundamental domain.)

As we have ruled out every case, we see that the set $\bigcup_{n \in \NN^*} \mathrm{Per}_{\mathrm{NP}}(\phi^n)$ is finite. 

The second assertion follows from the first since the set $\bigcup_{n \in \NN^*} \mathrm{Per}_{\mathrm{NP}}(\phi^{n!})$ is a nondecreasing sequence of sets exhausting $\bigcup_{n \in \NN^*} \mathrm{Per}_{\mathrm{NP}}(\phi^{n})$.
\end{proof} 

\begin{defn}
    Let $\phi \in \Out(G,\calp \cup \calh_0^{(t)})$ and let $N \in \NN^*$ be the integer given by \Cref{Lemma:Perfinite}. If $N=1$, we say that $\phi$ is \emph{almost rotationless}.
\end{defn}

Note that \Cref{Lemma:Perfinite} implies that every element of $\Out(G,\calp \cup \calh_0^{(t)})$ has an almost rotationless power.

\Cref{Lemma:Perfinite} has also the following corollary regarding \emph{isogredience classes} of automorphisms. Recall that two automorphisms $\Phi,\Psi$ of a group $G$ are in the same isogredience class if there exists $g \in G$ such that $\Phi=\mathrm{ad}_g\Psi\mathrm{ad}_g^{-1}$. The isogredience class of $\Phi$ is contained in its outer class. \Cref{Lemma:Perfinite} immediately implies the following.

\begin{corollary}
    Let $G$ be a hyperbolic group relative to $\calp$. Let $\calh_0$ be a finite set of conjugacy classes of finitely generated subgroups of $G$ such that G is one-ended relative to $\calp \cup \calh_0$. Let $\phi=[\Phi] \in \Out(G,\calp \cup \calh_0^{(t)})$. 
    
    There exist only finitely many isogredience classes of automorphisms in $\phi$ with a nonelementary periodic subgroup.
\end{corollary}

Notice that \Cref{Lemma:Perfinite} implies that there exist only finitely many isomorphism classes of nonelementary periodic subgroups for automorphisms contained in a given outer class. We ask for the following generalisation when the peripherals are virtually polycyclic.

\begin{question}
    Let $G$ be a hyperbolic group relative to virtually polycyclic groups. Do there exist only finitely many isomorphism classes of nonelementary periodic subgroups for automorphisms of $G$?
\end{question}

The case where the peripherals are abelian and $G$ is torsion free follows from the work of Guirardel--Levitt~\cite{guirardel2016mccool,guirardel2016vertex}.

\subsection{The Periodic JSJ tree}
We now need to understand vertex stabilisers of a JSJ tree given by \cref{Thm:existenceJSJ} and its acylindricity in order to apply Theorem~\ref{Thm:FJCacylindrical}. We set $\calh=\bigcup_{n \in \NN^*}\mathrm{Per}_{\mathrm{NP}}(\phi^n)$, and refer to the JSJ tree over elementary subgroups relative to $\calp \cup \calh_0 \cup \calh$ as $\Tper$. (Again, while this notation is neither standard nor entirely unambiguous, we use it consistently through our proofs.)

We highlight the fact that, if the subgroups in $\calh$ are finitely generated, then the acylindricity and the understanding of the vertex stabilisers of $\Tper$ mostly follow from \Cref{Thm:existenceJSJ}~$(10)$ (using our \cref{Lemma:Perfinite} for the other assumption). The main technical difficulty is thus to show that, when the periodic subgroups are not finitely generated, we still have a complete understanding of the stabilisers of rigid vertices.

\begin{remark}\label{rmk:AlmostRotationlessJSJ}
 Note that, by Lemma~\ref{Lemma:Perfinite}, there exists an almost rotationless power $\phi^N$ of $\phi$ such that every subgroup of $G$ whose conjugacy class is in $\calh$ is contained in a subgroup whose conjugacy class is in $\mathrm{Per}_{\mathrm{NP}}(\Phi^N)$. Thus, every $(\cala,\calp \cup \calh_0 \cup \calh)$-tree is an $(\cala,\calp \cup \calh_0 \cup \mathrm{Per}_{\mathrm{NP}}(\Phi^N))$-tree, and conversely. Thus the JSJ tree of $G$ relative to $\calp \cup \calh_0 \cup \calh$ is also the JSJ tree relative to $\calp \cup \calh_0 \cup \mathrm{Per}_{\mathrm{NP}}(\Phi^N)$ (see~\cite[Definition~2.12]{guirardellevitt2017jsj}). Therefore, we only need to work with almost rotationless automorphisms and we will still get results regarding the periodic JSJ tree associated with an arbitrary automorphism.
\end{remark}

Let $\phi^N$ be an almost rotationless power of $\phi$. Let $\mathrm{Per}_{\mathrm{NP}}(\phi^N)=\{[H_1],\ldots,[H_k]\}$ where, for every $i \in \{1,\ldots,k\}$, the group $H_i$ is not elementary and there exist $\Phi_i \in \phi^N$ such that $H_i=\mathrm{Per}(\Phi_i)$. Note that, for every $i \in \{1,\ldots,k\}$, \cref{Thm:existenceJSJ}~(\ref{thmJSJ:subgroups_in_H_are_rigid}) gives that the group $H_i$ fixes a unique rigid vertex $v_{i}$ in $\Tper$ since $H_i$ is nonelementary. 

Recall the construction of $T^\phi$ at the beginning of \Cref{section:constructiontrees}. Note that the trees $T^\phi$ and $\Tper$ are compatible. Let $\Tref$ be the unique minimal tree  which refines $T^\phi$ and $\Tper$. We denote by $p_\phi \colon \Tref \to T^\phi$ and $p_{\Per} \colon \Tref \to \Tper$ the associated $G$-equivariant projections. The tree $\Tref$ satisfies the following properties: a subgroup $H$ of $G$ stabilises a point in $\Tref$ if and only if $H$ stabilises a point in both $T^\phi$ and $\Tper$. Moreover, for every edge $e \in E\Tref$, at least one of the images $p_\phi(e)$ or $p_{\Per}(e)$ is not reduced to a point. 

Note that, since the actions of $G$ on $T^\phi$ and $\Tper$ are acylindrical, by minimality of $\Tref$, the action of $G$ on $\Tref$ is also acylindrical. (Any sufficiently long path in $\Tref$ will project to a path of length at least $3$ in at least one of $T^\phi$ and $\Tref$; edge stabilisers are not changed by the projection map, and so the stabiliser of the path must have been finite to begin with.) Moreover, by uniqueness of $\Tref$, and since the outer automorphism $\phi$ preserves $T^\phi$ and $\Tper$, we see that $\phi$ also preserves $\Tref$.

For $i \in \{1,\ldots,k\}$, let $F_{\Phi_i}$ be the isometry of $\Tref$ induced by $\Phi_i$. As in Remark~\ref{Remark:Phielliptic}, one can show that $F_{\Phi_i}$ is elliptic in $\Tref$ (this uses the acylindricity of the action of $G$ on $\Tref$). For every $i \in \{1,\ldots,k\}$, let $\Tref_{H_i}$ be the minimal tree of $H_i$ in $\Tref$. It might be that every element of some $H_i$ is elliptic in $\Tref$: in this case \Cref{lem:elliptic_subgp_or_loxodromic_elet} implies that the whole subgroup $H_i$ is also elliptic. Then $H_i$ stabilises a vertex in both $\Tper$ and $T^\phi$; since $H_i$ is non-elementary these vertices are unique. But since every edge of $\Tref$ survives in the projection to at least one of $T^\phi$ and $\Tper$, there cannot be an edge of $\Tref$ stabilised by $H_i$, and we may take the unique fixed vertex as the minimal invariant tree in this case.

\begin{lemma}\label{Lem:everypointfixedminimaltree}
Suppose that $\phi$ is almost rotationless. Let $\Phi \in \mathrm{NP}(\phi)$, $H = \Per(\Phi)$, $v$ a vertex in its minimal invariant tree $\Tref_H$, 
and $F_\Phi$ the isometry of $\Tref$ induced by $\Phi$. There exists $n \in \NN^*$ such that $F_{\Phi^n}$ fixes v.
\end{lemma}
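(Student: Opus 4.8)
The plan is to reduce everything to the elementary geometry of an \emph{elliptic} isometry of a tree, once one knows that the induced isometry $F_\Phi$ is elliptic in $\Tref$. I would first record two facts used throughout. Since $\Phi^m(g)=g$ forces $\Phi^m(\Phi g)=\Phi g$, the automorphism $\Phi$ preserves $H=\Per(\Phi)$, so $F_\Phi$ preserves the fixed-point set of $H$ in $\Tref$. And exactly as for the isometries $F_{\Phi_i}$ discussed just before the statement --- the argument of \cref{Remark:Phielliptic}, now run on $\Tref$ and using the acylindricity of the $G$-action on $\Tref$ established above --- the isometry $F_\Phi$ is elliptic in $\Tref$, hence so is every power $F_{\Phi^n}$.

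If $H$ is elliptic in $\Tref$, I would observe that its fixed-point set is a single vertex $v$: a non-elementary subgroup cannot fix an edge of $\Tref$, since every edge of $\Tref$ projects to a genuine edge of $T^\phi$ or of $\Tper$, and edge stabilisers in both of these trees are elementary (by \cref{Thm:existenceJSJ}~$(4)$ for $\Tper$, and for $T^\phi$ because every edge has an elementary endpoint). Hence $\Tref_H=\{v\}$, and since $F_\Phi$ preserves $\Fix_{\Tref}(H)$ we obtain $F_\Phi(v)=v$, so $n=1$ works in this case.

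If $H$ is not elliptic in $\Tref$, then $\Tref_H$ is the union of the translation axes of the loxodromic elements of $H$, so a given vertex $v\in\Tref_H$ lies on $\mathrm{Ax}(h)$ for some loxodromic $h\in H$. I would pick $N\in\NN^*$ with $\Phi^N(h)=h$ (possible because $h\in\Per(\Phi)$). Then $F_{\Phi^N}$ commutes with $h$ and hence preserves $\mathrm{Ax}(h)$; restricted to the line $\mathrm{Ax}(h)$ it is an isometry of $\RR$ commuting with the non-trivial translation induced by $h$, so it is a translation or the identity. But $F_{\Phi^N}$ is elliptic, so it fixes a point of $\Tref$, and the nearest-point projection of that point to $\mathrm{Ax}(h)$ is then fixed as well, so $F_{\Phi^N}$ fixes a point of $\mathrm{Ax}(h)$; a non-trivial translation fixes nothing, so $F_{\Phi^N}$ restricts to the identity on $\mathrm{Ax}(h)$ and in particular fixes $v$. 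Thus $n=N$ works.

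The one step that is not pure tree combinatorics is the ellipticity of $F_\Phi$ in the refined tree $\Tref$, and I expect that to be the point demanding the most care; it should go through as the $\Tref$-analogue of \cref{Remark:Phielliptic}: if $F_\Phi$ were loxodromic, any element of $G$ fixed by a power of $\Phi$ would be loxodromic with the same axis as $F_\Phi$, forcing $\Per(\Phi)=H$ into the stabiliser of that axis --- a virtually cyclic group by acylindricity --- contradicting that $H$ is non-elementary. Everything after that is the standard dichotomy between elliptic and loxodromic isometries of trees.
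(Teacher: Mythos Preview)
Your argument is correct and follows essentially the same route as the paper's proof: split into the cases where $H$ is elliptic in $\Tref$ (unique fixed vertex, preserved by $F_\Phi$) or not (every vertex of $\Tref_H$ lies on the axis of some periodic loxodromic $h$, and ellipticity of $F_{\Phi^N}$ forces it to fix that axis pointwise). You have simply made explicit a few points the paper states more tersely or establishes in the paragraph preceding the lemma.
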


\begin{proof}
If $\Tref_H$ consists of a single vertex $v$, then this is the unique vertex stabilised by $H$. Since $\Phi$ preserves $H$, $F_\Phi$ must also fix $v$. Otherwise, the vertex $v$ is contained in the axis of some $g \in H$. In particular, since $g$ is periodic, there exists $n \in \NN^*$ with $\Phi^n(g)=g$, and hence $F_{\Phi^n}$ preserves this axis. Since $F_{\Phi}$ is elliptic in $\Tref$, the isometry $F_{\Phi^n}$ fixes elementwise the axis of $g$. In particular, $F_{\Phi^n}$ fixes the vertex $v$.
\end{proof}

\begin{prop}
    \label{Lem:intersectionminimaltreestrivial}
Let $G$ be a hyperbolic group relative to $\calp$. Let $\calh_0$ be a finite set of conjugacy classes of finitely generated subgroups of $G$ such that G is one-ended relative to $\calp \cup \calh_0$. Let $\Phi$ and $\Psi$ be two representatives of an almost rotationless $\phi \in \Out(G,\calp \cup \calh_0^{(t)})$. 
Let $H=\Per(\Phi)$ and $K=\Per(\Psi)$ be two non-elementary periodic subgroups of $\phi$, perhaps conjugate. Then their minimal invariant trees $\Tref_H$ and $\Tref_K$ have non-empty intersection if and only if $H=K$.
\end{prop}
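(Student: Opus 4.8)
The plan is to prove both directions, with the forward direction being where the real content lies. For the ``if'' direction, suppose $H = K$; then trivially $\Tref_H = \Tref_K$, so their intersection is the whole tree $\Tref_H$, which is non-empty. So the substance is entirely in the ``only if'' direction: assuming $\Tref_H \cap \Tref_K \neq \varnothing$, we must conclude $H = K$.

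First I would pick a vertex $v \in \Tref_H \cap \Tref_K$. By \Cref{Lem:everypointfixedminimaltree}, applied to both $\Phi$ and $\Psi$ (both representing the same almost rotationless $\phi$, hence $\phi^1$), there exist $n, m \in \NN$ such that $F_{\Phi^n}$ and $F_{\Psi^m}$ both fix $v$; replacing by a common power we may assume $F_{\Phi^n}$ and $F_{\Psi^n}$ both fix $v$. I would then split into cases according to whether $\Tref_H$ (resp.\ $\Tref_K$) is a single vertex or contains an edge. The most delicate case is when both minimal trees contain edges, so that $v$ lies on the axis of some loxodromic $g \in H$ and on the axis of some loxodromic $h \in K$. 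Since $\Phi^n(g) = g$ and $F_{\Phi^n}$ is elliptic in $\Tref$ (as noted in the text, by the argument of \Cref{Remark:Phielliptic} using acylindricity), $F_{\Phi^n}$ fixes $\mathrm{Ax}(g)$ pointwise; similarly $F_{\Psi^n}$ fixes $\mathrm{Ax}(h)$ pointwise. The key geometric point is that $\mathrm{Ax}(g) \cap \mathrm{Ax}(h)$ contains $v$, and since both axes pass through $v$ in a tree, their intersection is a subtree containing $v$. I would argue that this intersection must in fact contain an \emph{edge}: this is where acylindricity of $\Tref$ enters crucially --- if the intersection were just the single vertex $v$, then $g$ and $h$ would have disjoint half-axes and one could produce, as in the ping-pong arguments of \Cref{lem:PromoteAcylindricalactions}(1) or directly, an element whose translation-length behaviour contradicts the acylindrical bound, OR alternatively $v$ would be a vertex where many axes meet transversally which forces its stabiliser to be too big. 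Once we know $\mathrm{Ax}(g) \cap \mathrm{Ax}(h)$ contains an edge $e$, I would invoke \Cref{Lem:equalityautomorphismfixingaxis}: its hypotheses are exactly that $\Phi^n, \Psi^n \in \phi^n$, that $F_{\Phi^n}, F_{\Psi^n}$ are elliptic on $T^\phi$, and that there are loxodromic $g, h$ (loxodromic on $T^\phi$ --- one needs that loxodromic on $\Tref$ projects appropriately, or one works on $T^\phi$ directly via $p_\phi$) with $\mathrm{Ax}(g) \cap \mathrm{Ax}(h)$ containing an edge. That lemma yields $N$ with $\Phi^{nN} = \Psi^{nN}$, hence $\mathrm{Per}(\Phi) = \mathrm{Per}(\Phi^{nN}) = \mathrm{Per}(\Psi^{nN}) = \mathrm{Per}(\Psi)$, i.e.\ $H = K$.

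For the remaining cases: if $\Tref_H$ is a single vertex $v$ (so $H = G_v$, the unique vertex fixed by the non-elementary group $H$) and $\Tref_K$ contains $v$, then either $\Tref_K = \{v\}$ too, giving $K = G_v = H$ directly, or $v$ lies on the axis of a loxodromic $h \in K$; but then $F_{\Psi^n}$ fixes that axis pointwise, and I would use \Cref{Lemautomorphismfixingadjacentedges} together with \Cref{Coro:stabedgeinperiodicsubgroup} to see that $v$ is rigid with $G_v \subseteq K$ and $G_v$ fixed elementwise by a power of $\Psi$; combined with $H = G_v$ non-elementary and fixed elementwise by a power of $\Phi$, \Cref{Lem:equalityautofixnonelementary} again forces $\Phi$ and $\Psi$ to agree on a power, whence $H = K$. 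The symmetric subcase is identical.

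I expect the main obstacle to be the geometric step showing that the intersection of the two axes contains an edge rather than merely the common vertex $v$ --- that is, ruling out the configuration where the two axes cross transversally at a single vertex. The right tool is acylindricity of $\Tref$ (established by minimality from the acylindricity of $T^\phi$ and $\Tper$): transverse crossing at $v$ would mean $g$ and $h$ generate a subgroup acting on $\Tref$ with $v$ a ``branch point'' fixed by $F_{\Phi^n}$ and $F_{\Psi^n}$, and one leverages that $\langle g^a, h^b\rangle$ for suitable $a,b$ contains elements fixing arbitrarily long segments --- contradicting the uniform acylindricity constant. Once past that point, everything is an application of the already-established lemmas \Cref{Lem:everypointfixedminimaltree}, \Cref{Lem:equalityautomorphismfixingaxis}, \Cref{Lemautomorphismfixingadjacentedges}, \Cref{Coro:stabedgeinperiodicsubgroup}, and \Cref{Lem:equalityautofixnonelementary}, plus the standing fact $\mathrm{Per}(\Phi) = \mathrm{Per}(\Phi^N)$.
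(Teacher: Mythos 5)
The forward direction is where your argument breaks down, precisely at the step you flag as the ``main obstacle.'' You want to show that if $\Tref_H$ and $\Tref_K$ share a vertex $v$ lying on the axes of loxodromic $g\in H$ and $h\in K$, then $\mathrm{Ax}(g)\cap\mathrm{Ax}(h)$ must contain an edge, and you propose to extract this from acylindricity of $\Tref$. This is not true. Two axes crossing transversally at a single vertex is an entirely ordinary configuration in an acylindrical (even free) action on a tree --- nothing about $\langle g^a, h^b\rangle$ produces elements fixing long segments; generically such a group is free and acts with short pointwise stabilisers. So the claimed contradiction does not materialise, and the case $\Tref_H\cap\Tref_K=\{v\}$ is genuinely left open.

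The paper resolves this case by a fundamentally different mechanism. It first projects to $T^\phi$ and considers $T^\phi_H$ and $T^\phi_K$. If these share an edge, \Cref{Lem:equalityautomorphismfixingaxis} applies as you say. If they share only a vertex $v$, the paper splits on the nature of $p_\phi(v)$ (QH-with-fibre is excluded by \Cref{coro:noQHwithfibrevertex}): when $v$ is rigid, your argument via \Cref{lem:additionalpropTphi}/\Cref{Coro:stabedgeinperiodicsubgroup} and \Cref{Lem:equalityautofixnonelementary} is essentially correct and matches the paper. But when $v$ is \emph{elementary}, the paper does \emph{not} try to force an edge of intersection. Instead it blows $v$ up into a star-shaped tree $S_v$ separating the various minimal subtrees $T_{K'}^\phi$ meeting at $v$, obtains a new $(\cala,\calp)$-tree $T'$, collapses the periodic minimal trees to get an $(\cala,\calp\cup\calh)$-tree $U$ in which $H$ and $K$ fix \emph{distinct} vertices, and then invokes \Cref{Thm:existenceJSJ}(\ref{thmJSJ:subgroups_in_H_are_rigid}) to conclude that $H$ and $K$ stabilise distinct rigid vertices of $\Tper$ --- whence $\Tref_H$ and $\Tref_K$ are in fact disjoint in $\Tref$. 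In other words, the conclusion in the elementary subcase is not ``$H=K$'' but rather ``the minimal trees did not intersect in the first place.'' This detour through the universality of the JSJ tree $\Tper$ is the idea your proposal is missing, and without it the proof does not close. Your treatment of the case where one minimal tree is a single vertex has the same issue, since you again need the rigid/elementary dichotomy on the projected vertex rather than an argument purely in $\Tref$.
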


\begin{proof}
If $H=K$ then the minimal invariant trees $\Tref_H$ and $\Tref_K$ are equal, so only one direction needs proof. Consider the minimal invariant trees of $H$ and $K$ in $T^\phi$. If $T^\phi_H$ and $T^\phi_K$ do not intersect, then neither do $\Tref_H$ and $\Tref_K$, so for the remainder of the proof we assume there is an intersection here. If the intersection contains an edge, \cref{Lem:equalityautomorphismfixingaxis} implies that $H=K$, so from now on assume the intersection is a single vertex $v=T^\phi_H \cap T^\phi_K$. 

Each of $H$ and $K$ stabilise a unique rigid vertex in $\Tper$, and whenever $H \neq K$ we will construct an $(\cala, \calp \cup \calh_0 \cup \per)$-tree where $H$ and $K$ stabilise different vertices. This prevents $\langle H, K \rangle$ being contained in a rigid vertex group, by \cref{Thm:existenceJSJ}~(\ref{thmJSJ:subgroups_in_H_are_rigid}). But then the rigid vertices stabilised by $H$ and $K$ are distinct, and the trees in $\Tref$ (containing $\Tref_H$ and $\Tref_K$ as subtrees) collapsing to them must be disjoint.

We distinguish two cases, according to the nature of $v$. Note that $v$ is not QH with fibre by \Cref{coro:noQHwithfibrevertex}.

\textbf{Case 1: $v$ is a rigid vertex.} 
By~\Cref{lem:additionalpropTphi} if $H$ or $K$ is elliptic and~\Cref{Coro:stabedgeinperiodicsubgroup} otherwise, there exists $N \in \NN^*$ such that $G_v \subseteq \Fix(\Phi^N)$ and $G_v \subseteq \Fix(\Psi^N)$. 

Hence both $\Psi^N$ and $\Phi^N$ fix elementwise the same nonelementary subgroup. By \Cref{Lem:equalityautofixnonelementary}, we see that $H=K$.

\textbf{Case 2: $v$ is elementary.} Let $\calt_v$ be the set of minimal trees $T_K^\phi$ with $[K] \in \mathrm{Per}_{\mathrm{NP}}(\phi)$ which contain $v$. Since, for every $[K] \in \mathrm{Per}_{\mathrm{NP}}(\phi)$, the group $K$ is nonelementary, no tree $S \in \calt_v$ is reduced to a point. By Lemma~\ref{Lem:equalityautomorphismfixingaxis}, for all distinct $S,S' \in \calt_v$, the intersection $S \cap S'$ is reduced to $v$. Let $E(v)$ be the set of edges adjacent to $v$. We have a partition $$E(v)=E \coprod (E_S)_{S \in\calt_v} $$ where for every $e \in E$, the edge $e$ is not contained in any $S \in \calt_v$ and, for every $S \in \calt_v$ and every $e \in E_S$, the edge $e$ is contained in $S$. 

Let $S_v$ be the tree with one central vertex $v_0$ adjacent to all the other vertices and that the leaves $v_S$ are indexed by the trees $S \in \calt_v$. We suppose that the stabiliser of $v_0$ is equal to $G_v$ and, for every $S \in \calt_v$, that the stabiliser of $v_S$ is equal to the setwise stabiliser $\Stab(E_S)$ of $E_S$. Let $T'$ be the tree obtained from $T$ by blowing up $S_v$ at $v$ and attaching for every $e \in E$, the edge $e$ to $v_0$ and for every $S \in \calt_v$ and every $e \in E_S$, by attaching the edge $e$ to $v_S$. 

Note that the tree $T'$ obtained is an $(\cala,\calp \cup \calh_0)$-tree. Moreover, for any $[K_1],[K_2] \in \mathrm{Per}_{\mathrm{NP}}(\phi)$ with $T_{K_1}^\phi \neq T_{K_2}^\phi \in \calt_v$ the minimal trees of $T_{K_1}'$ and $T_{K_2}'$ of $K_1$ and $K_2$ in $T'$ are disjoint.

Let $U$ be the $(\cala,\calp \cup \calh_0 \cup \calh)$-tree obtained from $T'$ by collapsing the minimal tree of every $H$ with $[H] \in \mathrm{Per}_{\mathrm{NP}}(\phi)$. Then $K$ and $H$ fix distinct points in $U$. Thus, $U$ is an $(\cala,\calp \cup \calh_0 \cup \calh)$-tree where $H$ and $K$ fix distinct points. By Theorem~\ref{Thm:existenceJSJ}~$(7)$, the groups $H$ and $K$ fix distinct rigid vertices in $\Tper$. Therefore, the minimal trees $T_H^\phi$ and $T_K^\phi$ of $H$ and $K$ in $\Tref$ are disjoint.
\end{proof}

\begin{lemma}\label{Lem:preimagevertices}
Suppose $\phi$ is almost rotationless. For every $H \in \per$, the stabiliser of the vertex $v_H$ of $\Tper$ is equal to the global stabiliser $G_{\Tref_H}$ of the minimal tree $\Tref_H$ of $H$ in $\Tref$.
\end{lemma}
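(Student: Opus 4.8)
The plan is to prove the two inclusions $\Stab_\Tper(v_H) \subseteq G_{\Tref_H}$ and $G_{\Tref_H} \subseteq \Stab_\Tper(v_H)$ separately, using that $\Tref$ refines both $T^\phi$ and $\Tper$ and carries the natural projections $p_\phi$ and $p_{\Per}$.

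First I would establish $G_{\Tref_H} \subseteq \Stab_\Tper(v_H)$. Since $\Tref_H$ is $H$-invariant and $H$ is nonelementary, $\Tref_H$ is the unique minimal $H$-invariant subtree, so any $g \in G_{\Tref_H}$ conjugates $H$ to a subgroup of $G$ still leaving $\Tref_H$ invariant, i.e.\ $gHg^{-1}$ also has $\Tref_H$ as minimal invariant tree; in particular $gHg^{-1}$ and $H$ both preserve the image $p_{\Per}(\Tref_H) \subseteq \Tper$. Because $\Tper$ is an $(\cala, \calp \cup \calh)$-tree and $H \in \calh$ is nonelementary, $H$ fixes the unique rigid vertex $v_H$ (Theorem~\ref{Thm:existenceJSJ}(\ref{thmJSJ:subgroups_in_H_are_rigid})); applying the same to $gHg^{-1}$ (whose conjugacy class is also in $\calh$) gives that $gHg^{-1}$ fixes the unique rigid vertex $gv_H$. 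But $gHg^{-1}$ is itself a periodic subgroup with the same minimal tree $\Tref_H$, so the vertex of $\Tper$ it stabilises is determined by $p_{\Per}(\Tref_H)$ exactly as $v_H$ is; uniqueness of the rigid fixed vertex then forces $gv_H = v_H$, so $g \in \Stab_\Tper(v_H)$.

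Next I would prove $\Stab_\Tper(v_H) \subseteq G_{\Tref_H}$. The key structural input is that $p_{\Per}\colon \Tref \to \Tper$ collapses, over the vertex $v_H$, precisely a subtree $\Tref_{v_H}$ which is $\Stab_\Tper(v_H)$-invariant, and that $\Tref_H \subseteq \Tref_{v_H}$ — this is because $H$ is elliptic in $\Tper$, fixing $v_H$, so $\Tref_H$ maps to the point $v_H$ under $p_{\Per}$. It then suffices to show that $\Tref_H$ is exactly the minimal $\Stab_\Tper(v_H)$-invariant subtree of $\Tref_{v_H}$, equivalently that $\Stab_\Tper(v_H)$ acts on $\Tref_{v_H}$ with minimal tree $\Tref_H$. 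By compatibility, over $v_H$ the refinement $\Tref$ records the blow-up data of both $T^\phi$ and $\Tper$ at $v_H$; since $H = G_v$ for the rigid vertex $v$ of $T^\phi$ underneath (by \Cref{lem:additionalpropTphi} when $H$ is elliptic in $T^\phi$, and by \Cref{Coro:stabedgeinperiodicsubgroup} via the rigid endpoint when it is not), the subgroup $H$ is precisely the stabiliser in $\Stab_\Tper(v_H)$ of the relevant vertex (or minimal subtree) of $\Tref_{v_H}$ coming from the $T^\phi$ side. Any $g \in \Stab_\Tper(v_H)$ permutes the $T^\phi$-part of the blow-up over $v_H$; but $v$ is the \emph{unique} vertex of $T^\phi$ fixed by the nonelementary $H$, and since $gHg^{-1}$ also fixes $v_H \in \Tper$ and hence (by the first paragraph's argument applied to $g$) stabilises $\Tref_H$, we get $g\Tref_H = \Tref_H$, i.e.\ $g \in G_{\Tref_H}$.

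I expect the main obstacle to be the bookkeeping in the second inclusion: making precise how the refinement $\Tref$ decomposes over the vertex $v_H$ of $\Tper$ and how the $T^\phi$-side of that local blow-up is controlled, so that the uniqueness of the $T^\phi$-vertex fixed by $H$ can be leveraged. This requires carefully combining \Cref{Lem:everypointfixedminimaltree}, \Cref{Lem:intersectionminimaltreestrivial}, and the description of $T^\phi$ vs.\ $\Tper$ at the start of \Cref{section:constructiontrees}, together with the fact (from \Cref{Thm:existenceJSJ}(\ref{thmJSJ:subgroups_in_H_are_rigid})) that distinct nonelementary periodic subgroups fix distinct rigid vertices of $\Tper$. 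Once the local picture over $v_H$ is pinned down, both inclusions follow from uniqueness of minimal invariant subtrees for nonelementary groups acting on trees.
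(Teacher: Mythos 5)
Your first inclusion ($G_{\Tref_H} \subseteq \Stab(v_H)$) is more elaborate than necessary, but is essentially the same idea: the paper simply observes that $p_\Per$ is equivariant and collapses $\Tref_H$ to the single point $v_H$, so stabilising $\Tref_H$ forces stabilising $v_H$. Your conjugation detour works, but it implicitly relies on exactly the same fact (that $p_\Per(\Tref_H) = \{v_H\}$, because every edge of $\Tref_H$ lies on the axis of a loxodromic $h \in H$ that becomes elliptic in $\Tper$) without establishing it.

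The second inclusion, however, contains two genuine gaps. First, you write ``$H = G_v$ for the rigid vertex $v$ of $T^\phi$ underneath $\ldots$ by \Cref{Coro:stabedgeinperiodicsubgroup} via the rigid endpoint when it is not [elliptic].'' When $H$ is not elliptic in $T^\phi$, \Cref{Coro:stabedgeinperiodicsubgroup} only yields the inclusion $G_v \subseteq H$; in fact $H \neq G_v$ there, since $H$ has no global fixed point in $T^\phi$ while $G_v$ is a vertex stabiliser. Second, and more seriously, the pivotal step ``since $gHg^{-1}$ also fixes $v_H$ $\ldots$ and hence (by the first paragraph's argument applied to $g$) stabilises $\Tref_H$'' reverses the implication you just established: the first paragraph gives $G_{\Tref_H} \subseteq \Stab(v_H)$, not the converse. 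What you actually need at this point --- that a conjugate $gHg^{-1}$ fixing the same rigid vertex as $H$ must equal $H$, so that $g\Tref_H = \Tref_{gHg^{-1}} = \Tref_H$ --- is in effect the content of the next lemma, \cref{Lem:Rigidverticesareperiodic}, whose proof depends on the present statement. As written the argument is circular, and ``bookkeeping'' is not the issue.

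The paper's route for the hard direction is genuinely different and avoids all of this: it collapses every $\Tref_K$ with $[K]\in\per$ (this is a well-defined equivariant collapse precisely because the trees $\Tref_K$ are pairwise disjoint by \cref{Lem:intersectionminimaltreestrivial}), obtaining an auxiliary $(\cala,\calp\cup\calh)$-tree $U$ in which the stabiliser of the new vertex $w_H$ is exactly $G_{\Tref_H}$. It then invokes rigidity of $v_H$ via \cref{Thm:existenceJSJ}(7): $\Stab(v_H)$ is elliptic in $U$, contains $H$, and $H$ fixes the unique vertex $w_H$ (uniqueness since $H$ is nonelementary while edge stabilisers of $U$ are elementary), so $\Stab(v_H) \subseteq \Stab(w_H) = G_{\Tref_H}$. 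The auxiliary-tree construction replaces the fibre-by-fibre analysis of $p_\Per$ over $v_H$ that your sketch attempts and gets stuck on.
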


\begin{proof}
First note that, since the projection $p_{\Per} \colon \Tref \to \Tper$ is $G$-equivariant, the tree $\Tref_H$ collapses onto $v_H$. Thus, we have $G_{\Tref_H} \subseteq G_{v_H}$.

Conversely, let $U$ be the $(\cala,\calp \cup \calh_0 \cup \calh)$-tree obtained from $\Tref$ by collapsing, for every $H \in \per$, the tree $\Tref_H$. For every $H \in \per$, let $w_H$ be the vertex of $U$ fixed by $H$. Note that, by Lemma~\ref{Lem:intersectionminimaltreestrivial}, for any $H,K \in \per$ and every $g \in G$ such that $gHg^{-1} \neq K$, the trees $g\Tref_H$ and $\Tref_K$ are disjoint. Thus, the stabiliser of $w_H$ is equal to $G_{\Tref_H}$. Since the vertex $v_H$ of $\Tper$ is rigid in every $(\cala,\calp \cup \calh_0 \cup \calh)$-tree (see~Theorem~\ref{Thm:existenceJSJ}~$(7)$), the group $G_{v_H}$ is elliptic in $U$ and contains $H$. As $H$ fixes a unique point in $U$, which is $w_H$, we see that $G_{v_H} \subseteq G_{\Tref_H}$.
\end{proof}

\begin{lemma}\label{Lem:Rigidverticesareperiodic}
Suppose that $\phi$ is almost rotationless. For every $H \in \per$, and the unique vertex $v_H \in V\Tper$ it stabilises, we have $G_{v_H}=H$.
\end{lemma}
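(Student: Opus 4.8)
The plan is to build directly on \cref{Lem:preimagevertices}, which already identifies $\Stab(v_H)$ with $G_{\Tref_H}$, the global stabiliser of the minimal $H$-invariant subtree $\Tref_H$ of $\Tref$. Since $H$ visibly preserves $\Tref_H$ we have $H \subseteq G_{\Tref_H} = \Stab(v_H)$, so everything reduces to the reverse inclusion $\Stab(v_H) \subseteq H$. First I would note that $\Tref_H$ is in fact the minimal invariant subtree of $G_{\Tref_H}$ as well (it is $G_{\Tref_H}$-invariant by construction and already minimal for the subgroup $H$), so $G_{\Tref_H}$ acts minimally on $\Tref_H$, with elementary edge stabilisers since every edge of $\Tref$ survives in $T^\phi$ or $\Tper$. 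If $\Tref_H$ is a single vertex $v$, then $G_{\Tref_H} = \Stab(v) \supseteq H$, and applying \cref{Lem:Existencesplitting} at $v$ forces $\Stab(v) \subseteq H$: the alternative $H \cap \Stab(v) = H$ elementary is absurd since $H$ is non-elementary. So from now on $\Tref_H$ is not a point; then $H$, and a fortiori $G_{\Tref_H}$, acts without global fixed point, and — exactly as in \cref{Remark:Phielliptic} and the proof of \cref{Lemma:Perfinite} via acylindricity — $H$ contains elements acting loxodromically on $\Tref$, with $\Tref_H$ the union of their axes.

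The key intermediate step I would establish is that $G_{\Tref_H} = N_G(H)$. Indeed, for $\gamma \in G$ the tree $\gamma\Tref_H$ is the minimal invariant tree of the conjugate $\gamma H\gamma^{-1} = \Per(\mathrm{ad}_{\gamma\Phi_H(\gamma)^{-1}} \circ \Phi_H)$, which is again a non-elementary periodic subgroup of the (almost rotationless) $\phi$; so by \cref{Lem:intersectionminimaltreestrivial} the trees $\Tref_H$ and $\gamma\Tref_H$ meet if and only if $\gamma H\gamma^{-1} = H$, i.e.\ $\gamma \in N_G(H)$. Now fix $\gamma \in G_{\Tref_H} = N_G(H)$; I must produce a power of $\Phi_H$ fixing $\gamma$. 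If $\gamma$ acts loxodromically on $\Tref_H$, its axis $A_\gamma \subseteq \Tref_H$ shares an edge $e$ with the axis of some loxodromic $h \in H$; a power of $\Phi_H$ fixes $h$, hence fixes $A_h \supseteq e$ pointwise (as $F_{\Phi_H}$ is elliptic on $\Tref$), and since $e \subseteq \Tref_H$ its image in $T^\phi$ is an edge with a rigid endpoint (by bipartiteness of $T^\phi$ and \cref{coro:noQHwithfibrevertex}), so \cref{Coro:stabedgeinperiodicsubgroup} applies and the relevant endpoint $w$ of $e$ in $\Tref$ has $\Stab(w) \subseteq H$, reducing the analysis to elliptic behaviour along $A_\gamma$ (and, using acylindricity of $\Tref$, to the fact that the virtually cyclic group generated by $\gamma$ and an $h\in H$ with common axis forces a power of $\gamma$, and then $\gamma$ itself, into $H$). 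For elliptic $\gamma$ fixing $v \in \Tref_H$: by \cref{Lem:everypointfixedminimaltree} a power of $\Phi_H$ fixes $v$, and by \cref{Lem:Existencesplitting} either $\Stab(v) \subseteq H$, whence $\gamma \in H$, or $H \cap \Stab(v)$ is elementary. In the latter case — where, by the proof of \cref{Lem:Existencesplitting}, $\Phi_H$ acts on $\Stab(v)$ as conjugation by an infinite-order $g_0 \in H$, or $\Stab(v)$ is itself elementary — one uses that $\gamma \in N_G(H)$ fixing $v$ normalises $H \cap \Stab(v)$, hence lies in its unique maximal elementary overgroup (maximal elementary subgroups being almost malnormal, \cite[Corollary~3.2]{GuirardelLevitt2015}), so $\gamma$ commutes with a power of $g_0$ and is therefore fixed by a power of $\Phi_H$; an application of \cref{Lem:equalityautofixnonelementary} after passing to that power closes the argument.

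The main obstacle is precisely this last case: even though $\Stab(v)$ can be far larger than $H \cap \Stab(v)$, one must show that its intersection with $G_{\Tref_H} = N_G(H)$ is not — that is, any element fixing $v$ and preserving $\Tref_H$ already lies in $H$. This is the point where almost rotationlessness (through \cref{Lem:intersectionminimaltreestrivial}), the acylindricity of $\Tref$, and the almost malnormality of maximal elementary subgroups of a relatively hyperbolic group all have to be used together; by contrast, the reduction of the loxodromic case to the elliptic one, which hinges on every edge of $\Tref_H$ lying on the axis of some loxodromic periodic element, is comparatively routine once the elliptic case is settled.
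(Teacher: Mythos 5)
Your opening moves are fine: reducing to showing $G_{\Tref_H}\subseteq H$ via \cref{Lem:preimagevertices}, and the observation that $G_{\Tref_H}=N_G(H)$ — which you derive correctly from \cref{Lem:intersectionminimaltreestrivial} after checking that $\gamma H\gamma^{-1}$ is again a non-elementary periodic subgroup of $\phi$ — is a clean way to phrase what needs to be proved. But from that point on your argument is genuinely different from the paper's, and it has gaps that I do not think can be closed with the tools you invoke.

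First, you never establish the analogue of the paper's Claim~2, namely that $\Stab(v)\subseteq H$ for \emph{every} vertex $v$ of $\Tref_H$. The paper proves this by contradiction: assuming $\Stab(v)\not\subseteq H$, it blows up the splitting of $\Stab(v)$ provided by \cref{Lem:Existencesplitting} at $v$, attaches the edges of $\Tref_H$ to the new vertex $w$ using Claim~1, and collapses all $\Tref_K$ to produce an $(\cala,\calp\cup\per)$-tree in which $\Stab(v_H)$ is not elliptic, contradicting \cref{Thm:existenceJSJ}~(7). You replace this with a local centraliser/normaliser argument, which does not go through. In the subcase where $\Stab(v)$ is elementary (the first branch of \cref{Lem:Existencesplitting}), an elliptic $\gamma\in N_G(H)$ fixing $v$ need not fix any edge of $\Tref_H$ adjacent to $v$, so there is no route from "edge stabilisers lie in $H$" to $\gamma\in H$. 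In the subcase where $p_\phi(v)$ is rigid and $\Phi_H$ acts by conjugation by an infinite-order $g_0$, the maximal elementary overgroup $M$ of $H\cap\Stab(v)$ need not be virtually cyclic — it may be parabolic and highly non-abelian — so "$\gamma$ commutes with a power of $g_0$" does not follow from $\gamma\in M$, and even if it did, being fixed by a power of $\Phi_H$ is not a consequence of commuting with a periodic element. Finally, the appeal to \cref{Lem:equalityautofixnonelementary} at the end of that paragraph requires two automorphisms fixing a \emph{non-elementary} subgroup elementwise, which is precisely what you do not have when $H\cap\Stab(v)$ is elementary.

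Second, your loxodromic case quietly assumes the existence of a loxodromic $h\in H$ whose entire axis coincides with $A_\gamma$. What you actually know is that $\Tref_H$ is a union of axes of elements of $H$, so $A_\gamma$ is covered by possibly many such axes overlapping in segments — no single $h\in H$ need share the full line with $\gamma$, so the "virtually cyclic group generated by $\gamma$ and $h$" step has no basis. Even supposing one obtained $\gamma^a\in H$ for some $a$, passing from $\gamma^a\in H$ to $\gamma\in H$ is not automatic (roots are not unique, and $\Phi_H^n(\gamma)^a=\gamma^a$ does not force $\Phi_H^n(\gamma)=\gamma$). The paper handles this case quite differently: it uses acylindricity to run a ping-pong argument producing a finitely generated non-abelian free subgroup $L$ of loxodromics inside $H$, considers the canonical JSJ tree relative to the conjugacy class of $\langle L,gLg^{-1}\rangle$, shows $g$ fixes the unique rigid vertex $w$ fixed by that subgroup, and then invokes \cref{Thm:existenceJSJ}~(\ref{thmJSJ:rigid_verts_finite_out}) and \cref{Lem:equalityautofixnonelementary} to conclude $g\in H$. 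That rigidity argument, not a centraliser computation, is what closes the loxodromic case.

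In short: the identity $G_{\Tref_H}=N_G(H)$ is a nice reformulation, but it is not in itself progress — showing $N_G(H)\subseteq H$ still requires the paper's two non-trivial steps (vertex-stabiliser containment via a tree blow-up and rigidity, then the loxodromic case via ping-pong and a relative JSJ tree), neither of which your sketch supplies.
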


\begin{proof}
Let $H \in \per$, let $\Phi \in \phi$ be the automorphism satisfying $\Per(\Phi)=H$, and $F_\Phi$ the associated isometry of $\Tref$. By Lemma~\ref{Lem:preimagevertices}, it suffices to show that the stabiliser $G_{\Tref_H}$ of $\Tref_H$ is equal to $H$. By~\Cref{Lem:intersectionminimaltreestrivial} and the fact that $g\Tref_H=\Tref_{gHg^{-1}}$ for every $g \in G$, the group $G_{\Tref_H}$ is equal to the normaliser $N(H)$ of $H$ in $G$. 

Suppose first that $H$ fixes a (necessarily unique) vertex $v \in V\Tref$. Then $v$ has nonelementary stabiliser. By \Cref{lem:additionalpropTphi}, the point $p_\phi(v)$ is the unique rigid vertex fixed by $H$ in $T^\phi$. Thus, $N(H)$ fixes $p_\phi(v)$. By~\Cref{lem:additionalpropTphi}, we have $N(H) \subseteq G_{p_\phi(v)} \subseteq H$ and $H=N(H)$.

Suppose now that $H$ does not fix a vertex. By \Cref{lem:elliptic_subgp_or_loxodromic_elet}, $H$ contains a loxodromic element, and so
$\Tref_H$ is an infinite subtree of $\Tref$. Let $g \in N(H)$. 
Let $v_1$ and $v_2$ be vertices in $\Tref_H$, at distance greater than the acylindricity constant $K$. Observe that the acylindricity assumption implies that there are only finitely many elements $g'$ with the property that $gv_i= g'v_i$ for $i=1,2$ (since they will be contained in the coset $g(G_{v_1} \cap G_{v_2})$, which is finite). Call this set $F$.

By \Cref{Lem:everypointfixedminimaltree}, up to taking a power of $\Phi$, the isometry $F_\Phi$ fixes $v_1,gv_1,v_2$ and $gv_2$.  
Thus, for $i=1,2$ we have that $\Phi$ preserves the set of elements sending $v_i$ to $gv_i$, and in particular their finite intersection $F$. 
As $F$ is finite and contains $g$, a power of $\Phi$ fixes $g$. This shows that $g \in H = \Per(\Phi)$ (recalling that the periodic subgroup is invariant under taking powers) and $N(H)=H$. \qedhere

\end{proof}

\begin{thm}\label{Coro:rigidvertexperiodic}
    Let $G$ be a hyperbolic group relative to $\calp$. Let $\calh_0$ be a finite set of conjugacy classes of finitely generated subgroups of $G$ such that G is one-ended relative to $\calp \cup \calh_0$. Let $\phi=[\Phi] \in \Out(G,\calp \cup \calh_0^{(t)})$. Let $\phi^N$ be an almost rotationless power of $\phi$.   

    For every rigid vertex $v\in V\Tper$, there exists $[H]\in \mathrm{Per}_{\mathrm{NP}}(\phi^N)$ such that $G_v=H$. 
    Conversely, for every $[H]\in \mathrm{Per}_{\mathrm{NP}}(\phi^N)$, there exists a rigid vertex $v\in V\Tper$ with $G_v=H$.
\end{thm}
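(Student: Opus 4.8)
The plan is to work with the given almost rotationless power directly, writing $\psi=\phi^{N}$. By \Cref{Lemma:Perfinite} and \Cref{rmk:AlmostRotationlessJSJ} one may take $\calh=\mathrm{Per}_{\mathrm{NP}}(\psi)$, a finite collection of conjugacy classes of nonelementary subgroups, so that $\Tper$ is the JSJ tree of $G$ relative to $\calp\cup\mathrm{Per}_{\mathrm{NP}}(\psi)$. A short bookkeeping argument (using almost rotationlessness together with \Cref{Lem:Rigidverticesareperiodic}) gives $\mathrm{Per}_{\mathrm{NP}}(\psi^{k})=\mathrm{Per}_{\mathrm{NP}}(\psi)$ for every $k\geq1$, so one may freely replace $\psi$ by a power --- in particular by one acting trivially on $G\backslash\Tper$, $G\backslash T^{\phi}$ and $G\backslash\Tref$ --- without altering $\Tper$, the set $\mathrm{Per}_{\mathrm{NP}}(\psi)$, or the assertion. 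Throughout I would use that in each of these $2$-acylindrical trees, whose edge stabilisers are infinite elementary, a nonelementary subgroup fixes at most one vertex. The backward implication is then immediate: given $[H]\in\mathrm{Per}_{\mathrm{NP}}(\psi)$, \Cref{Thm:existenceJSJ}(\ref{thmJSJ:subgroups_in_H_are_rigid}) yields the unique rigid vertex $v_{H}$ stabilised by $H$, and \Cref{Lem:Rigidverticesareperiodic} gives $\Stab(v_{H})=H$.

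For the forward implication I would fix a rigid vertex $v$ of $\Tper$, put $G_{v}=\Stab(v)$ (nonelementary), and, using that $\psi$ preserves the orbit of $v$, choose $\Psi\in\psi$ with $F_{\Psi}(v)=v$, so $\Psi(G_{v})=G_{v}$. Recall that $T^{\phi}$ and $\Tper$ are compatible with minimal common refinement $\Tref$; set $Y=p_{\Per}^{-1}(v)\subseteq\Tref$, so $G_{v}=G_{Y}$ is the global stabiliser of the subtree $Y$, and note that $G_{v}$ fixes a point of $\Tref$ exactly when it fixes a point of $T^{\phi}$. I would then split into two cases according to the action of $G_{v}$ on $T^{\phi}$.

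In the first case $G_{v}$ fixes a vertex $w$ of $T^{\phi}$; then $w$ is nonelementary, and --- this is the crux --- $w$ is not QH with fibre, hence rigid in $T^{\phi}$, and since $G_{v}$ fixes a unique vertex of $T^{\phi}$ and $\Psi(G_{v})=G_{v}$ the isometry $F_{\Psi}$ fixes $w$ as well. By \Cref{lem:Tphirigidperiodic} some power $\Psi^{k}$ acts on $G_{w}$ as conjugation by an element $g\in G_{w}$, so $\Theta=\mathrm{ad}_{g^{-1}}\circ\Psi^{k}\in\psi^{k}$ fixes $G_{w}$ elementwise and $G_{v}\subseteq G_{w}\subseteq\Fix(\Theta)\subseteq\mathrm{Per}(\Theta)$. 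Since $\mathrm{Per}(\Theta)$ is nonelementary, $[\mathrm{Per}(\Theta)]\in\mathrm{Per}_{\mathrm{NP}}(\psi^{k})=\mathrm{Per}_{\mathrm{NP}}(\psi)$, so $\mathrm{Per}(\Theta)$ fixes a unique rigid vertex of $\Tper$; as $G_{v}\subseteq\mathrm{Per}(\Theta)$ fixes $v$ this vertex is $v$, whence $\mathrm{Per}(\Theta)\subseteq\Stab(v)=G_{v}$ and therefore $G_{v}=\mathrm{Per}(\Theta)$, giving $[G_{v}]\in\mathrm{Per}_{\mathrm{NP}}(\psi)$. In the second case $G_{v}$ fixes no vertex of $T^{\phi}$; then $G_{v}=G_{Y}$ acts on $Y$ with no global fixed point, so $Y$ realises a splitting $\sigma$ of $G_{v}$ over infinite elementary subgroups, relative to the $\Tper$-edge groups adjacent to $v$ and to the peripheral subgroups contained in $G_{v}$. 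Blowing up $\Tper$ at $v$ along $\sigma$ produces a $G$-tree $\bar T$ refining $\Tper$, with elementary edge stabilisers and with the subgroups of $\calp$ elliptic. If some nonelementary $[K]\in\mathrm{Per}_{\mathrm{NP}}(\psi)$ fixed $v$ in $\Tper$, then $K\subseteq G_{v}$, and \Cref{Lem:Rigidverticesareperiodic} with uniqueness of fixed rigid vertices forces $K=G_{v}$, so again $[G_{v}]\in\mathrm{Per}_{\mathrm{NP}}(\psi)$; otherwise $\bar T$ is a genuine $(\cala,\calp\cup\mathrm{Per}_{\mathrm{NP}}(\psi))$-tree in which $G_{v}$ is not elliptic, contradicting the rigidity of $v$.

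The hard part will be the excluded alternative in the first case: showing $G_{v}$ cannot fix a QH-with-fibre vertex $w$ of $T^{\phi}$ (and, along the same lines, verifying that $\bar T$ and the other auxiliary trees really are $(\cala,\calp\cup\mathrm{Per}_{\mathrm{NP}}(\psi))$-trees). For this I would exploit that $G_{w}$ is a virtually hyperbolic $2$-orbifold group in which, by \Cref{lem:additionalpropTphi}(1), every periodic subgroup meets $G_{w}$ in an elementary subgroup, and in which the adjacent edge groups are the extended boundary subgroups; an essential simple closed curve on the orbifold disjoint from the finitely many curves carrying this data then yields an $(\cala,\calp\cup\mathrm{Per}_{\mathrm{NP}}(\psi))$-splitting in which the nonelementary subgroup $G_{v}\subseteq G_{w}$ acts loxodromically, contradicting that $v$ is rigid. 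Finally, combining the two implications with \Cref{Lem:Rigidverticesareperiodic} shows that the correspondence $[H]\leftrightarrow v_{H}$ is in fact a bijection, no proper inclusions among the classes of $\mathrm{Per}_{\mathrm{NP}}(\psi)$ being possible.
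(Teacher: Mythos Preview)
Your overall architecture matches the paper's: the same case division on whether $G_v$ is elliptic in $T^\phi$, the same use of $\Tref$ as a common refinement, and the same backward direction via \Cref{Thm:existenceJSJ}(\ref{thmJSJ:subgroups_in_H_are_rigid}) and \Cref{Lem:Rigidverticesareperiodic}. Your Case~1 with $w$ rigid is essentially the paper's argument, and your Case~2 (blowing up $\Tper$ at $v$ along the action on $Y$) is a correct variant of what the paper does (the paper instead collapses all the $\Tref_H$ inside $\Tref$, but both constructions produce an $(\cala,\calp\cup\calh)$-tree in which $G_v$ fails to be elliptic).

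The genuine gap is in Case~1 when $w$ is QH with fibre. Your curve argument does not go through as written, for two reasons. First, refining $T^\phi$ at $w$ along a simple closed curve produces an $(\cala,\calp)$-tree, not an $(\cala,\calp\cup\calh)$-tree: the subgroups $[H]\in\calh$ are in general \emph{not} elliptic in $T^\phi$, so ensuring that $H\cap G_w$ is carried by a curve disjoint from $c$ says nothing about ellipticity of $H$ itself in the refined tree. Second, even setting this aside, you give no reason why some admissible curve makes the specific nonelementary subgroup $G_v\subseteq G_w$ act loxodromically; a priori $G_v$ could be elliptic for every curve disjoint from your finite family.

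The paper handles this subcase quite differently and does not try to exclude it. Since $G_w$ is virtually a hyperbolic $2$-orbifold group, all elementary subgroups of $G_w$ are virtually cyclic; hence the $\Tper$-edge groups incident to $v$ are finitely generated, so $G_v$ is finitely generated, and by local quasiconvexity of $G_w$ the subgroup $G_v$ is itself hyperbolic. One can then apply \cite[Theorem~3.9]{GuirardelLevitt2015}: because $v$ is rigid there is no $(\cala,\calp|_{G_v}\cup\mathrm{Per}_{\mathrm{NP}}(\phi)|_{G_v}\cup\mathrm{Inc}_v)$-splitting of $G_v$, and therefore the relevant relative outer automorphism group of $G_v$ is finite. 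It follows that a power of $\phi$ acts on $G_v$ by an inner automorphism, so $G_v$ is contained in a nonelementary periodic subgroup, and your own uniqueness argument then gives $[G_v]\in\mathrm{Per}_{\mathrm{NP}}(\psi)$. If you want to salvage your exclusion strategy, note that once you know $[G_v]\in\calh$ you may invoke \Cref{lem:additionalpropTphi}(1) to see $G_v=G_v\cap G_w$ would have to be elementary, a contradiction; but obtaining periodicity first is exactly the step that requires the Guirardel--Levitt finiteness input rather than a bare curve count.
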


\begin{proof}
    Up to taking a power of $\phi$, we may assume that $\phi$ is almost rotationless (see~\Cref{rmk:AlmostRotationlessJSJ}). Let $v \in V\Tper$ be rigid. We claim that there exists $[H] \in \per$ with $H \leq G_v$. Consider the action of $G_v$ in $\Tref$. If $G_v$ fixes a vertex in $\Tref$, then $G_v$ also stabilises a non-elementary vertex $w$ of $T^\phi$. If $w$ is rigid, then $\langle \phi \rangle^0 \to \Out(G_w)$ has finite image, so $G_v$ is periodic for some $\phi^k$, and so for $\phi$ (since we assume it is almost rotationless).

    Otherwise $w$ is a QH with fibre vertex. Note that all elementary subgroups of $G_w$ are virtually cyclic. In particular, the edges adjacent to $v \in V\Tper$ are all virtually cyclic, hence finitely generated. Thus, $G_v$ is a finitely generated subgroup of $G_w$. Since $w$ is a QH with fibre vertex, it is locally quasi-convex. Thus, $G_v$ is hyperbolic. 

    Note that, since $G_v$ is hyperbolic and a subgroup of $\Stab(w)$, the restriction $\calp\cup\calh_0 \cup \mathrm{Per}_{\mathrm{NP}}(\phi)|_{G_v}$ of $\calp\cup\calh_0 \cup \mathrm{Per}_{\mathrm{NP}}(\phi)$ to $G_v$ is a finite family of virtually cyclic subgroups of $G_v$ by \Cref{lem:orbifold_periodic_is_fg}. Thus, we can apply~\cite[Theorem~3.9]{GuirardelLevitt2015} to show that  $\Out(G_v,\calp\cup\calh_0^{(t)} \cup \mathrm{Per}_{\mathrm{NP}}(\phi)|_{G_v} \cup \mathrm{Inc}_v)$ (which has $\Out(G_v,\calp\cup(\calh_0 \cup \mathrm{Per}_{\mathrm{NP}}(\phi)|_{G_v}\cup \mathrm{Inc}_v)^{(t)})$ as a finite index subgroup) is infinite if and only if $G_v$ has an  $(\cala,\calp\cup \calh_0 \cup \mathrm{Per}_{\mathrm{NP}}(\phi)|_{G_v} \cup \mathrm{Inc}_v)$-splitting. Since $v$ is rigid, no such splitting of $G_v$ exists. Therefore, $\Out(G_v,\calp\cup \calh_0^{(t)} \cup \mathrm{Per}_{\mathrm{NP}}(\phi)|_{G_v}\cup \mathrm{Inc}_v)$ is finite. In particular, the group $G_v$ is a nonelementary periodic subgroup of some power of $\phi$.   
    
    Now suppose that $G_v$ is not elliptic in $\Tref$. By \Cref{lem:elliptic_subgp_or_loxodromic_elet}, $G_v$ contains a loxodromic element in $\Tref$ and the minimal invariant tree $\Tref_{G_v} \subseteq p_{\Per}^{-1}(v)$ of $G_v$ in $\Tref$ is infinite. If this is contained in $\Tref_H$ for some $H \in \per$, then $v$ is the unique vertex of $\Tper$ stabilised by $H$, and by \cref{Lem:Rigidverticesareperiodic} $G_v$ (as the stabiliser of this vertex) is equal to $H$.

    So suppose $\Tref_{G_v}$ is not contained in any $\Tref_H$. Since the $\Tref_H$ are disjoint by \Cref{Lem:intersectionminimaltreestrivial}, there is some edge of $\Tref_{G_v}$ contained in no $\Tref_H$, and hence collapsing all the $\Tref_H$ will give an $(\cala, \calp \cup \calh_0 \cup \per)$-tree where $G_v$ is not elliptic, contradicting \cref{Thm:existenceJSJ}~(7).

    Conversely, let $[H]\in \mathrm{Per}_{\mathrm{NP}}(\phi)$. By construction of $\Tper$, the group $H$ fixes a vertex $v$ of $\Tper$. Since $H$ is nonelementary, such a vertex is unique and is rigid by \Cref{Thm:existenceJSJ}~$(10)$. By Lemma~\ref{Lem:Rigidverticesareperiodic}, we have $G_v=H$.
\end{proof}

\begin{corollary}\label{Coro:edgestabelementary}
Let $G$ be a hyperbolic group relative to $\calp$. Let $\calh_0$ be a finite set of conjugacy classes of finitely generated subgroups of $G$ such that G is one-ended relative to $\calp \cup \calh_0$. Let $\phi=[\Phi] \in \Out(G,\calp \cup \calh_0^{(t)})$. Let $\calh=\bigcup_{n\in \NN^*} \mathrm{Per}_{\mathrm{NP}}(\phi^n)$.

\begin{enumerate}
    \item For every geodesic edge path $\gamma$ of $\Tper$ of length $3$ and every automorphism $\Psi \in \phi$ preserving $\gamma$, there exist a vertex $v$ of $\gamma$ and $g \in G_v$ of infinite order fixed by a power of $\Psi$.
    \item The group $G_\Phi$ acts acylindrically on $\Tper$.
\end{enumerate}

\end{corollary}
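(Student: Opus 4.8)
The plan is to deduce (2) from (1) by feeding (1) into \cref{lem:PromoteAcylindricalactions}, whose standing hypothesis on geodesics of length $3$ is exactly statement (1); so essentially all of the work goes into (1). For (1), I would start from a geodesic $\gamma=(v_0,v_1,v_2,v_3)$ of length $3$ in $\Tper$ with $F_\Psi(\gamma)=\gamma$. Since $\Tper$ is a tree of cylinders it is bipartite, one part being the cylinder vertices (elementary stabilisers, \cref{Thm:existenceJSJ}~$(6)$) and the other the original vertices (nonelementary, being QH with fibre or rigid, \cref{Thm:existenceJSJ}~$(7)$); as $F_\Psi$ is $G$-equivariant and $\Psi$ is an automorphism, $F_\Psi$ sends a vertex with stabiliser $G_w$ to one with stabiliser $\Psi(G_w)$, so it preserves this bipartition and therefore cannot reverse $\gamma$ (reversal would swap the adjacent, hence differently-coloured, vertices $v_1,v_2$). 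Thus $F_\Psi$ fixes $\gamma$ pointwise; in particular one of $v_1,v_2$, say $v$, is a nonelementary original vertex, and for the two distinct edges $e_1,e_2$ of $\gamma$ at $v$ we get $\Psi(G_{e_i})=G_{e_i}$ and $\Psi(G_v)=G_v$. If $v$ is QH with fibre, then each $G_{e_i}$ is an infinite virtually cyclic extended boundary subgroup (\cref{Thm:existenceJSJ}~$(4)$ and the parenthetical in \cref{Thm:existenceJSJ}~$(9)$); a power of $\Psi$ preserves its characteristic maximal cyclic subgroup $\langle z\rangle$ and acts on it by $\pm1$, so a further power fixes $z\in G_{e_1}\le G_v$, giving the required infinite-order element.

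For the case that $v$ is rigid, I would replace $\phi$ by an almost rotationless power $\phi^N$ (\cref{rmk:AlmostRotationlessJSJ}, which leaves $\Tper$ unchanged) and use \cref{Coro:rigidvertexperiodic} to write $G_v=H=\Per(\Theta)$ with $\Theta\in\phi^N$ and $H$ nonelementary. A nonelementary rigid vertex stabiliser fixes a unique vertex (otherwise it would fix an edge, but edge stabilisers are elementary by \cref{Thm:existenceJSJ}~$(4)$), so $N_G(H)=H$; since $[\Psi^N]=[\Theta]=\phi^N$ and both automorphisms preserve $H$, one may write $\Psi^N=\mathrm{ad}_g\circ\Theta$ with $g\in N_G(H)=H=\Per(\Theta)$, say $\Theta^q(g)=g$. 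An induction then gives $(\Psi^N)^{nq}=\mathrm{ad}_{c^n}\circ\Theta^{nq}$ for all $n$, where $c=g\Theta(g)\cdots\Theta^{q-1}(g)\in H$ satisfies $\Theta^q(c)=c$. If $c$ has infinite order, then $(\Psi^N)^q(c)=c\,\Theta^q(c)\,c^{-1}=c$, so $c\in G_v$ is an infinite-order element fixed by a power of $\Psi$. If $c$ has finite order $d$, I would pick any infinite-order $z\in H$ (which exists as $H$ is nonelementary) with $\Theta^m(z)=z$ (valid since $z\in H=\Per(\Theta)$), and take $n=\lcm(d,m)$: then $c^n=1$ and $m\mid nq$, so $(\Psi^N)^{nq}(z)=\Theta^{nq}(z)=z$, and again $z\in G_v$ is fixed by a power of $\Psi$. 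This proves (1).

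For (2), I would invoke \cref{lem:PromoteAcylindricalactions} with $T=\Tper$ and $K=2$ --- legitimate because $G$ acts $2$-acylindrically on $\Tper$ with finitely many edge orbits (\cref{Thm:existenceJSJ}~$(2)$ and $(3)$) and because its length-$3$ hypothesis is (1) --- and argue by contradiction. If $G_\Phi$ failed to act acylindrically on $\Tper$, then the ``converse'' half of the proof of \cref{lem:PromoteAcylindricalactions}~$(3)$ would give $g\in G$ and $k\in\NN^*$ with $gt^k$ fixing a geodesic of length $2K+7$, and \cref{lem:PromoteAcylindricalactions}~$(1)$ applied to the associated $\Psi\in\phi^k$ would produce $N\in\NN^*$ and a nonabelian free subgroup $L\subseteq\Fix(\Psi^N)\subseteq\Per(\Psi^N)=\Per(\Psi)$ consisting of elements loxodromic in $\Tper$. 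Then $\Per(\Psi)$ would be neither virtually cyclic (it contains $L$) nor parabolic (parabolic subgroups are elliptic in the $(\cala,\calp)$-tree $\Tper$, whereas $\Per(\Psi)$ contains loxodromic elements), hence nonelementary; so $[\Per(\Psi^N)]\in\mathrm{Per}_{\mathrm{NP}}(\phi^{kN})\subseteq\calh$, and $\Per(\Psi^N)$ would be elliptic in the $(\cala,\calp\cup\calh)$-tree $\Tper$ by \cref{Thm:existenceJSJ}~$(5)$ --- contradicting the fact that $L\subseteq\Per(\Psi^N)$ consists of loxodromic elements. Hence $G_\Phi$ acts acylindrically on $\Tper$.

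The main obstacle is the rigid case of (1): it is precisely there that the absence of any virtual-neatness or finite-generation hypothesis on the periodic subgroups forces the bookkeeping with $\Theta^q(g)=g$ and the split on the order of $c$, in place of a direct appeal to the $T^\phi$-analogue \cref{Lemautomorphismfixingadjacentedges}. A minor point to check carefully is that the ``reversal'' case of $F_\Psi$ on $\gamma$ really is excluded by the colouring, i.e.\ that every vertex of $\Tper$ is unambiguously either elementary (a cylinder) or nonelementary (an original vertex), which is exactly the content of \cref{Thm:existenceJSJ}~$(6)$ and $(7)$.
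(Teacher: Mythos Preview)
Your proof is correct, and for part~(2) it follows essentially the same contradiction-via-\cref{lem:PromoteAcylindricalactions} route as the paper (the paper first passes to an almost rotationless power, but your direct use of $\calh=\bigcup_n\mathrm{Per}_{\mathrm{NP}}(\phi^n)$ makes this unnecessary). The QH case of~(1) is also the same.

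The genuine difference is in the rigid case of~(1). You write $\Psi^N=\mathrm{ad}_g\circ\Theta$ with $g\in H=\Per(\Theta)$, iterate to get $(\Psi^N)^{nq}=\mathrm{ad}_{c^n}\circ\Theta^{nq}$, and then split on whether the accumulated conjugator $c$ has finite or infinite order. The paper instead exploits the edge structure at~$v$: since $\Psi^N$ preserves each $G_{e_i}$, the conjugating element $h$ (your~$g$) must lie in $N_{G_v}(G_{e_1})\cap N_{G_v}(G_{e_2})$, and by \cref{Thm:existenceJSJ}~$(8)$ this intersection is finite (the $G_{e_i}$ are almost malnormal in $G_v$ with finite pairwise intersection). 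So $h$ has finite order from the outset, and a common power $\Psi^M=\Phi_H^M$ drops out immediately. This is shorter and avoids your case split entirely; the cost is that one has to remember the almost-malnormality of edge groups at nonelementary vertices. Your approach, by contrast, uses only that $g\in\Per(\Theta)$ and would work in settings where the edge-group structure is less favourable.
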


\begin{proof}
Let $\gamma$ be a geodesic edge path of length $3$ in $\Tper$ preserved by an automorphism $\Psi \in \phi$. Suppose that there exists an edge $e$ of $\gamma$ whose stabiliser is virtually cyclic (this applies in particular when one of the vertices of $\gamma$ is QH with fibre).
Hence $\Out(G_e)$ is finite. Thus, $\Psi$ has a power acting as the identity on an infinite cyclic subgroup of $G_e$.

Thus, we may suppose that $\gamma$ only contains elementary and rigid vertices. Since $\gamma$ has length $3$ and since $\Tper$ is bipartite, it contains an interior vertex $v$ which is rigid. Let $e_1,e_2$ be the two edges of $\gamma$ adjacent to $v$. Then $G_{e_1}\cap G_{e_2}$ is finite by \Cref{Thm:existenceJSJ}~$(8)$ and for every $i \in \{1,2\}$ the group $G_{e_i}$ is its own normaliser in $G_v$. 

Let $\phi^N$ be an almost rotationless power of $\phi$. By~\Cref{Coro:rigidvertexperiodic}, there exists $[H]\in \mathrm{Per}_{\mathrm{NP}}(\phi^N)$ such that $G_v=H$. Let $\Phi_H \in \phi^N$ be such that $\mathrm{Per}(\Phi_H)=H$. Since $\Psi$ preserves $H$, and since $H$ is its own normaliser by \Cref{Lem:Rigidverticesareperiodic}, there exists $h \in H$ such that $\Psi^N=\mathrm{ad}_h\Phi_H$. As $\Psi^N$ preserves both $G_{e_1}$ and $G_{e_2}$, we see that $h\in N(G_{e_1}) \cap N(G_{e_2})$, which is finite as explained above. Up to taking a power of $\Phi_H$ and $\Psi^N$, we may assume that both $\Phi_H$ and $\Psi^N$ act trivially on $N(G_{e_1}) \cap N(G_{e_2})$. Taking further powers of $\Psi^N$ and $\Phi_H$ shows that there exists $M \in \NN$ such that $\Psi^M=\Phi_H^M$. 

Let $g \in H$ be of infinite order, which exists since $H$ is nonelementary. Then $\Psi$ has a power which fixes $g$. This proves Assertion~$(1)$.

We now prove that the action of $G_\Phi$ on $\Tper$ is acylindrical. Note that, if a finite index subgroup of $G_{\Phi}$ acts acylindrically on $\Tper$, so does $G_{\Phi}$. Thus, we may assume that $\phi$ is almost rotationless. 

By Theorem~\ref{Thm:existenceJSJ}~$(2)$, the action of $G$ on $\Tper$ is acylindrical. Thus, by Lemma~\ref{lem:PromoteAcylindricalactionsv2}~$(2)$ (which we can apply by Assertion~$(1)$), it suffices to prove that, for every $n \in \NN^*$, every $\Psi \in \phi^n$ and every $g \in \Fix(\Psi)$, the element $g$ is elliptic in $T$. 


Let $g$ be as above. If $g$ is peripheral, then $g$ fixes a point by construction of $\Tper$. 

Suppose now that $g$ is nonperipheral. Suppose towards a contradiction that $g$ is loxodromic in $\Tper$. Since $\Psi(g)=g$, the characteristic set of the isometry $F_{\Psi}$ contains the axis of $g$. By \Cref{lem:element_fixing_axis}, up to taking a power of $\phi$ and changing the representative $\Psi$, the isometry $F_{\Psi}$ fixes pointwise the axis of $g$. By Lemma~\ref{lem:PromoteAcylindricalactionsv2}~$(1)$, up to taking a power of $\Psi$, the automorphism $\Psi$ fixes elementwise a nonabelian free group of loxodromic elements. In particular, since every peripheral element fixes a point in $\Tper$, we see that $\Psi$ fixes a nonabelian free group of nonperipheral elements, hence a nonelementary subgroup. Since $\phi$ is almost rotationless, for every $n \geq 1$, we have $\mathrm{Per}_{\mathrm{NP}}(\phi)=\mathrm{Per}_{\mathrm{NP}}(\phi^{n})$, so that $[\mathrm{Per}(\Psi)]\in \per$. Thus, there exists $[H] \in \mathrm{Per}_{\mathrm{NP}}(\phi)$ such that $g \in \Fix(\Psi) \subseteq H$. In that case, the element $g$ is elliptic in $\Tper$ by construction of $\Tper$, a contradiction. 

Therefore, the element $g$ is elliptic in $\Tper$ and we can apply Lemma~\ref{lem:PromoteAcylindricalactionsv2}~$(2)$ to prove that the action of $G_\Phi$ on $\Tper$ is acylindrical.
\end{proof}

Let $G$ be a finitely generated group and let $ \Phi \in \Aut(G)$. Suppose that $\Phi$ has a power which preserves the conjugacy class of a malnormal subgroup $F$ of $G$. We then denote by $F_\Phi$ the group $F \rtimes_{\mathrm{ad}_g\circ \Phi^{n_F}}\ZZ$, where $n_F$ is the minimal positive integer such that $\Phi^{n_F}$ preserves the conjugacy class of $F$ and $g\in G$ is such that $\mathrm{ad}_g\circ \Phi^{n_F}(F)=F$. Since $F$ is malnormal, the group $F_\Phi$ does not depend on $g$. Note that the group $F_\Phi$ only depends on the outer class of $\Phi$.

\begin{corollary}\label{Cor:OneEndedFJCrelhyp}
Let $G$ be a hyperbolic group relative to $\calp$. Let $\calh_0$ be a finite set of conjugacy classes of finitely generated subgroups of $G$ such that G is one-ended relative to $\calp \cup \calh_0$. Let $\phi=[\Phi] \in \Out(G,\calp \cup \calh_0^{(t)})$. 

If for every $[P] \in \calp$ the group $P_\Phi$ is in $\FJCX$, then $G_\Phi$ is in $\FJCX$.
\end{corollary}

\begin{proof}
Consider the action of $G_\Phi$ on $\Tper$. By Corollary~\ref{Coro:edgestabelementary}, the action of $G_\Phi$ on $\Tper$ is acylindrical. Up to taking a power of $\Phi$ (which does not change the conclusion by \Cref{Thm:FJCclosedsubgroups}), we may assume that $\phi=[\Phi]$ is almost rotationless.

By Theorem~\ref{Thm:FJCacylindrical}, it suffices to show that, for every $v \in V\Tper$, the stabiliser $(G_\Phi)_v$ of $v$ in $G_\Phi$ belongs to $\FJCX$. Note that, for every $v \in V\Tper$, the group $(G_\Phi)_v$ can be written as a semi direct product $G_v \rtimes \ZZ$, where $G_v$ is the stabiliser of $v$ in $G$.

Suppose first that $G_v$ is elementary. If $G_v \subseteq P$ for some $[P] \in \calp$, then $(G_\Phi)_v$ is a subgroup of $P_\Phi$. In particular, it belongs to $\FJCX$ by Theorem~\ref{Thm:FJCclosedsubgroups}~$(1)$. If $G_v$ is infinite virtually cyclic, then $(G_\Phi)_v$ belongs to $\FJCX$. 

Suppose now that $G_v$ is QH with fibre. Then $G_v$ fits in a short exact sequence $$1 \to F \to G_v \to \pi_1(\Sigma_v) \to 1,$$ where $\Sigma_v$ is a hyperbolic $2$-orbifold and $F$ is finite. Moreover, up to taking a power of $\Phi$ (which is possible by Theorem~\ref{Thm:FJCclosedsubgroups}~$(2)$), since $\Out(G_v)$ has a finite index subgroup acting as the identity on $F$, there exists $\Psi \in \phi$ preserving $G_v$ and fixing $F$ elementwise. Thus, we have a short exact sequence $$1 \to F \to (G_\Phi)_v \to \pi_1(\Sigma_v) \rtimes \ZZ \to 1. $$  The groups $F$ and $\pi_1(\Sigma_v) \rtimes \ZZ$ belong to $\FJCX$.  Indeed, the Farrell--Jones Conjecture holds for all fundamental groups of (not necessarily compact) connected manifolds (possibly with boundary) of dimension at most $3$, by \cite[Theorem~16.1(i)(e)]{LuckBookProj}.  As explained in loc. cit. the result is deduced from Corollary 1.3 and Remark 1.4 of \cite{BartelsFarrellLuck2014} where \cite{Roushon2008a,Roushon2008b} is used. Moreover, for every virtually cyclic group $Q \subseteq \pi_1(\Sigma_v) \rtimes \ZZ$, the preimage of $Q$ in $(G_\Phi)_v$ is virtually cyclic, and hence belongs to $\FJCX$. Thus, the group $(G_\Phi)_v$ belongs to $\FJCX$.

Suppose that $G_v$ is rigid. By~\Cref{Coro:rigidvertexperiodic}, since $\phi$ is almost rotationless, there exists $[H] \in \mathrm{Per}_{\mathrm{NP}}(\phi)$ such that $G_v=H$. 

Suppose that $\Phi \in \phi$ is such that $\mathrm{Per}(\Phi)=H$. Then $(G_\Phi)_v$ is isomorphic to $\mathrm{Per}(\Phi) \rtimes_\Phi \ZZ$. By \Cref{Lem:PerFJC}, we have $(G_{\Phi})_v \in \FJCX$.

As we have ruled out every case, for every $v \in V\Tper$, the group $(G_\Phi)_v$ belongs to $\FJCX$. By Theorem~\ref{Thm:FJCacylindrical}, the group $G_\Phi$ belongs to $\FJCX$.
\end{proof}

\subsection{An aside on slender peripherals} \label{subsec:slender_peripherals}
We also isolate here an interesting consequence of \Cref{Coro:rigidvertexperiodic} for automorphisms of groups hyperbolic relative to slender groups. Recall that a group is \emph{slender} if all its subgroups are finitely generated.

\begin{thm}\label{Coro:Slenderperiodicisfixedoneended}
    Let $G$ be a hyperbolic group relative to a collection $\calp$ of slender groups and let $\Phi \in \Aut(G)$. There exists $N \in \NN^*$ such that $\mathrm{Per}(\Phi)=\Fix(\Phi^N)$ and $\mathrm{Per}(\Phi)$ is finitely generated.
\end{thm}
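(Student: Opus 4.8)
The plan is to reduce the statement to the one-ended case via a Stallings--Dunwoody decomposition, and in the one-ended case to exploit \Cref{Coro:rigidvertexperiodic} together with the structure of $\Tper$. First I would observe that, since $\calp$ consists of slender groups and slender groups are finitely generated, peripheral subgroups are themselves finitely generated, so $G$ is finitely generated, and indeed finitely presented (relative hyperbolicity with finitely presented peripherals gives this). Next I would pass to $\Aut(G,\calp)$: because the peripheral subgroups of a relatively hyperbolic group are, up to conjugacy and finite index, permuted by any automorphism, some power $\Phi^{N_0}$ lies in $\Aut(G,\calp)$; since $\mathrm{Per}(\Phi)=\mathrm{Per}(\Phi^{N_0})$ it is harmless to replace $\Phi$ by this power and assume $\Phi\in\Aut(G,\calp)$, with $\phi=[\Phi]\in\Out(G,\calp)$.

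The heart of the argument is the one-ended case. Here I would take $\phi^N$ an almost rotationless power of $\phi$ (which exists by \Cref{Lemma:Perfinite}), and — again using $\mathrm{Per}(\Phi)=\mathrm{Per}(\Phi^N)$ — reduce to the case $\phi$ almost rotationless. Now $\mathrm{Per}(\Phi)$ is either elementary or not. If $[\mathrm{Per}(\Phi)]\in\per$, then by \Cref{Coro:rigidvertexperiodic} it equals $\Stab(v_H)$ for a rigid vertex $v_H$ of $\Tper$; but rigid vertex stabilisers of the JSJ tree $\Tper$ are finitely generated (this is part of the structure theory — rigid vertices are finitely presented, being quasi-convex in a relatively hyperbolic group with slender peripherals), so $\mathrm{Per}(\Phi)$ is finitely generated, and then taking a power of $\Phi$ fixing a finite generating set gives $\mathrm{Per}(\Phi)=\Fix(\Phi^N)$. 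If instead $\mathrm{Per}(\Phi)$ is elementary, it is either finite or contained in a maximal elementary (virtually cyclic or peripheral) subgroup; since peripherals are slender, every elementary subgroup is slender, hence finitely generated, and the same power trick applies. The remaining task in the one-ended case is to handle the QH-with-fibre vertices uniformly, but this is already done in \Cref{lem:orbifold_periodic_is_fg}, and the bookkeeping over the finitely many orbits of vertices of $\Tper$ is routine.

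For the infinitely ended case I would use the Stallings--Dunwoody decomposition of $G$ over finite subgroups, which exists since $G$ is finitely presented (even accessible). This gives a $G$-tree $T$ with finite edge stabilisers and finitely many orbits of edges whose vertex groups are finite or one-ended. A power of $\Phi$ preserves this decomposition (the Stallings--Dunwoody tree is canonical up to the usual deformation moves, so the outer automorphism group acts on the finite set of such decompositions); replacing $\Phi$ by that power, $\Phi$ permutes the conjugacy classes of vertex groups, and a further power stabilises each one. The one-ended vertex groups $G_w$ are hyperbolic relative to the restriction of $\calp$ (intersections of the $P_i$ with $G_w$ are again slender), so the one-ended case applies to each, giving $\mathrm{Per}(\Phi|_{G_w})$ finitely generated and equal to a fixed subgroup of a power. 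Finally I would assemble these: $\mathrm{Per}(\Phi)$ acts on $T$ (after the reductions above $F_\Phi$ is elliptic, or one argues directly that a periodic element is elliptic since its conjugacy class is periodic and $T$ has finitely many orbits of edges with finite stabilisers), its intersection with each vertex group is a periodic subgroup of the restricted automorphism, hence finitely generated, and a standard argument — $\mathrm{Per}(\Phi)$ acts on its minimal subtree with finitely generated vertex stabilisers and finitely many orbits of edges with finite stabilisers — shows $\mathrm{Per}(\Phi)$ is finitely generated; one last power fixes a finite generating set.

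The main obstacle I expect is the step in the one-ended case identifying $\mathrm{Per}(\Phi)$ with a rigid vertex stabiliser and then arguing finite generation of that stabiliser without circularity: \Cref{Coro:rigidvertexperiodic} is stated for $\mathrm{Per}_{\mathrm{NP}}(\phi^N)$, i.e. the non-elementary periodic subgroups, so I must separately treat the case where $\mathrm{Per}(\Phi)$ is elementary (where finite generation comes from slenderness of elementary subgroups, itself a consequence of slender peripherals plus hyperbolicity of virtually cyclic groups), and I must be sure the finite generation of rigid vertices of $\Tper$ is available — this should follow because such vertices are quasi-convex (relatively, with respect to their induced peripheral structure) in $G$ and $G$ is finitely presented, but it is worth spelling out. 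A secondary obstacle is making the passage to a power that simultaneously lands in $\Aut(G,\calp)$, is almost rotationless, and fixes all relevant generating sets — but since all these are ``take a further power'' conditions and $\mathrm{Per}$ is power-invariant, they compose without difficulty.
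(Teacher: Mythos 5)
Your overall strategy matches the paper's (reduce to the one-ended case, then apply \Cref{Coro:rigidvertexperiodic} to identify $\mathrm{Per}(\Phi)$ with a rigid vertex stabiliser of $\Tper$), but there are two genuine points of divergence, one of which is a gap.

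In the one-ended case, for finite generation of rigid vertex stabilisers you reach for relative quasi-convexity and finite presentability. The paper does something much more elementary: the edge stabilisers of $\Tper$ are elementary, hence (since peripherals are slender) finitely generated, and a finitely generated group acting cocompactly on a tree with finitely generated edge stabilisers has finitely generated vertex stabilisers. Your quasi-convexity route is plausible but introduces material (relative quasi-convexity of rigid vertex groups, finite presentability of $G$) that is neither needed nor readily cited, and risks circularity if one is not careful. Also, you reduce to $\Aut(G,\calp)$ by passing to a power; the paper notes instead that slender groups are NRH, so $\Aut(G)=\Aut(G,\calp)$ outright, with no power needed.

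The real gap is in the infinitely ended case. You take a Stallings--Dunwoody decomposition over finite subgroups without requiring $\mathrm{Per}_{\mathrm{NP}}(\phi)$ to be elliptic, and then try to assemble $\mathrm{Per}(\Phi)$ from its intersections with vertex groups, appealing to the idea that ``a periodic element is elliptic since its conjugacy class is periodic.'' This claim is false in general: take $\Phi=\mathrm{id}$ on $F_2$, so every element is fixed, yet the free generators are loxodromic in the Bass--Serre tree for $F_2 \cong \ZZ \ast \ZZ$. Without ellipticity you have no control on the minimal invariant subtree of $\mathrm{Per}(\Phi)$ — in particular no cocompactness — so the assembly argument does not go through. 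The paper avoids this entirely by choosing the Stallings--Dunwoody decomposition $S$ \emph{relative to} $\calp \cup \calh$ with $\calh=\mathrm{Per}_{\mathrm{NP}}(\phi)$, which forces the (non-elementary) group $\mathrm{Per}(\Phi)$ to fix a unique vertex $v$; invariance of the deformation space of $S$ under $\Phi$ then gives $\Phi(G_v)=G_v$, and one applies the one-ended case to $(G_v, \calp|_{G_v})$ directly. Your approach can be repaired by making the same choice of decomposition, at which point the assembly step disappears.
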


\begin{proof}
    See also the proof of~\cite[Theorem~8.2]{GuirardelLevitt2015}. We claim that it suffices to prove that $\mathrm{Per}(\Phi)$ is finitely generated. Indeed, suppose that $\mathrm{Per}(\Phi)$ is generated by $a_1,\ldots,a_n$. For every $i \in \{1,\ldots,n\}$, let $k_i$ be such that $\Phi^{k_i}(a_i)=a_i$. Let $N=k_1\ldots k_n$. Then $\mathrm{Per}(\Phi)=\Fix(\Phi^N)$. 
    
    So we prove that $\mathrm{Per}(\Phi)$ is finitely generated. Note that slender groups are NRH groups, so that $\Aut(G)=\Aut(G,\calp)$. Note also that, since $\calp$ is a set of conjugacy classes of slender groups, every elementary subgroup of $G$ is finitely generated. Thus, we may suppose that $\mathrm{Per}(\Phi)$ is not elementary.
    
    Let $\phi=[\Phi]\in \Out(G)$ and let $\calh=\mathrm{Per}_{\mathrm{NP}}(\phi)$. Let $(\calh_i)_{i \ge 1}$ be an increasing sequence of finite set of conjugacy classes of finitely generated subgroups of $G$ such that $\calh=\bigcup_{i \ge 1} \calh_i$. By~\cite{osin2006relatively}, for every $i \geq 1$, the group $G$ is finitely presented relative to $\calp \cup \calh_i$. Thus, the Stallings--Dunwoody JSJ deformation space $D_i$ of $G$ over finite groups relative to $\calp \cup \calh_i$ exists (see~\cite{guirardel2007deformation}). 
    
    We claim that there exists $i_0$ such that, for every $i\geq i_0$, we have $D_i=D_{i_0}$. The proof is identical to \cite[Lemma~8.2]{guirardellevitt2017jsj} replacing Linnell's accessibility Theorem by \cite[Proposition~4.2]{guirardel2007deformation}. We sketch the argument. Let $i \ge 1$. By~\cite[Lemma~2.15]{guirardellevitt2017jsj}, since edge groups are all finite and hence all universally elliptic, for every $T_i \in D_i$, there exists a sequence of refinements of trees $T_i^1 \to \ldots \to T_i^{i-1} \to T_i^i=T_i$ such that, for every $j\leq i$, we have $T_i^j \in D_j$. Let $j \le i$. Notice that, since edge groups are finite, $T_i^{j-1}$ and $T_i^j$ are not in the same deformation space if and only if they do not have the same infinite elliptic subgroups if and only if the number of orbits of vertices with infinite stabiliser is greater in $T_i^{j-1}$ than in $T_i^j$. By \cite[Proposition~4.2]{guirardel2007deformation}, the number of orbits of vertices of $T_i^1$ with infinite stabiliser is bounded by a constant $J$ depending only on $D_1$. Thus, the number of $j \le i$ such that $T_i^{j-1}$ and $T_i^j$ are not in the same deformation space is bounded by $J$. This forces the spaces $D_i$ to eventually stabilise, hence the claim. Let $D_{i_0}$ be the stabilised deformation space. As $\calp \cup \calh=\calp \cup \bigcup_{i \ge 1} \calh_i$, this shows that any tree in $D_{i_0}$ is a $(\calp \cup \calh)$-tree with finite edge groups. 
    
    Suppose first that $G$ is one-ended relative to $\calp \cup \calh$. Then $D_{i_0}$ consists of the trivial tree and $G$ also one-ended relative to $\calp \cup \calh_{i_0}$. Moreover, since $\calh_{i_0}$ is a finite set and all groups whose conjugacy class is in $\calh_{i_0}$ are finitely generated, up to taking a power of $\phi$, we have $\phi \in \Out(G,\calp \cup \calh_{i_0}^{(t)})$. Let $\Tper$ be the JSJ tree associated with $\calp \cup \calh_{i_0} \cup \calh$. 
    
     Since edge stabilisers of $\Tper$ are elementary, they are all finitely generated. Thus, every vertex stabiliser of $\Tper$ is also finitely generated.
    
    By \Cref{Coro:rigidvertexperiodic}, the group $\mathrm{Per}(\Phi)$ is equal to the stabiliser of a vertex of $\Tper$, hence is finitely generated.

    Suppose now that $G$ is not one-ended relative to $\calp \cup \calh$ and consider a tree $S$ in $D_{i_0}$. Recall that edge stabilisers in $S$ are all finite. Since $\mathrm{Per}(\Phi)$ is infinite, it fixes a unique vertex $v$. Since the deformation space of $S$ is invariant by $\Phi$, the group $\Phi(G_v)$ also fixes a unique vertex $w$ in $S$. As $\mathrm{Per}(\Phi) \subseteq \Phi(G_v)$ and as $\mathrm{Per}(\Phi)$ only fixes $v$, we see that $v=w$. This shows that $\Phi(G_v)=G_v$. 
    
    Note that, by minimality of $S$, the group $G_v$ is hyperbolic relative to the restriction $\calp_v$ of $\calp$ in $G_v$ and one-ended relative to $\calp_v \cup (\calh_{i_0})_v \cup \calh_v$. Since $\calp$ is a set of conjugacy classes of slender groups, so is $\calp_v$. Thus, the conclusion follows from the one-ended case applied to the restriction $\Phi|_{G_v}$. 
\end{proof}

We remark that Minasyan--Osin~\cite[Corollary~1.3]{minasyan2012fixed} also proved that the fixed subgroup of the automorphism of any hyperbolic group relative to slender groups is finitely generated.

\section{A combination theorem for the Farrell--Jones conjecture}
\label{sec:infinite_ended}

Let $G$ be a finitely generated group and let $\Phi \in \Aut(G)$. If $F$ is a malnormal subgroup of $G$ whose conjugacy class is $\Phi$-periodic, recall the definition of $F_\Phi$ from just above \Cref{Cor:OneEndedFJCrelhyp}. In this section, we prove the following combination theorem. 

\setcounter{thmx}{3}
\begin{thmx}\label{Thm:combinationthm}
    Let $G=G_1 \ast \ldots \ast G_k \ast F_N$ be a free product of finitely generated groups, let $\calf'=\{[G_1],\ldots,[G_k]\}$ and let $ \Phi \in \Aut(G,\calf')$. If for each $i \in \{1,\ldots,k\}$, the group $(G_i)_\Phi$ is in $\FJCX$, then $G \rtimes_\Phi \ZZ$ is in $\FJCX$.
\end{thmx}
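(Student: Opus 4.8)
The plan is to argue by induction on the Grushko rank $r=k+N$ of $(G,\calf')$. Throughout I would use freely that $\FJCX$ is closed under subgroups and finite-index overgroups (\Cref{Thm:FJCclosedsubgroups}): since $G\rtimes_{\Phi^n}\ZZ$ has finite index in $G\rtimes_\Phi\ZZ$, this lets me replace $\Phi$ by any positive power, and (using that free factors are malnormal, so the suspensions $(A)_\Phi$ are well-defined) the hypothesis $(G_i)_\Phi\in\FJCX$ then passes to the suspensions of the $G_i$ by any power of $\Phi$. After such a replacement I may assume $\Phi$ fixes each $[G_i]$. The base cases $r\le 1$ are immediate: for $r=0$ there is nothing to do; if $G=G_1$ then $G\rtimes_\Phi\ZZ=(G_1)_\Phi\in\FJCX$; and if $G\cong\ZZ$ then $G\rtimes_\Phi\ZZ$ is virtually $\ZZ^2$, hence in $\FJCX$.

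\textbf{The dichotomy.} For the inductive step I would replace $\Phi$ by a power and fix a free factor system $\calf_{\max}$ with $\calf'\le\calf_{\max}$ that is maximal among those preserved by some power of $\Phi$ (these form a poset with chains of length $\le r$, so such a system exists); after a further power I may assume $\Phi$ preserves $\calf_{\max}$, and then $\Phi$ is fully irreducible relative to $\calf_{\max}$. A short bookkeeping computation with Grushko ranks shows that whenever $\calf_{\max}>\calf'$, every free factor $A$ occurring in the $\calf_{\max}$-decomposition, equipped with $\calf'|_A$, has Grushko rank strictly less than $r$; so the inductive hypothesis applies to $(A,\calf'|_A,\Phi|_A)$ (after conjugating $\Phi$ within its outer class so that it preserves $A$) and yields $(A)_\Phi\in\FJCX$. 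When $\calf_{\max}=\calf'$, the relevant factors are the $G_i$ and $(G_i)_\Phi\in\FJCX$ by hypothesis. I split into two cases according to whether $(G,\calf_{\max})$ is non-sporadic or sporadic.

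\textbf{Non-sporadic case.} Here $\Phi$ is fully irreducible relative to $\calf_{\max}$, so \Cref{thm:relativehyperbolicityfullyirred} provides $n\in\NN$ with $G\rtimes_{\Phi^n}\ZZ$ hyperbolic relative to the suspension of $\calp_T(\Phi)$ for a Grushko $(G,\calf_{\max})$-tree $T$ as in \Cref{Lem:existenceRtree}. By \Cref{Prop:descriptionpolysubgroups} every $[P]\in\calp_T(\Phi)$ is either a class in $\calf_{\max}$ or a nonperipheral infinite cyclic group; the suspension of the latter is virtually $\ZZ^2\in\FJCX$, and the suspension of the former is (a conjugate of) $(A)_\Phi$ for a free factor $A$ of the $\calf_{\max}$-decomposition, which lies in $\FJCX$ by the previous paragraph. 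Thus all peripheral subgroups of $G\rtimes_{\Phi^n}\ZZ$ are in $\FJCX$, so $G\rtimes_{\Phi^n}\ZZ\in\FJCX$ by \Cref{Thm:FJCrelativelyhyp}, whence $G\rtimes_\Phi\ZZ\in\FJCX$.

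\textbf{Sporadic case.} Now $G=A_1\ast A_2$ with $\calf_{\max}=\{[A_1],[A_2]\}$, or $G=A_1\ast\ZZ$ with $\calf_{\max}=\{[A_1]\}$; in either case each $A_i$ has Grushko rank $<r$, so $(A_i)_\Phi\in\FJCX$. The reduced $(G,\calf_{\max})$-tree $T_{\calf_{\max}}$ is canonical, so $\Gamma:=G\rtimes_\Phi\ZZ$ acts on it cocompactly, the stabiliser of an $A_i$-vertex being isomorphic to $(A_i)_\Phi$ (using malnormality of $A_i$ so this is independent of choices), while edge stabilisers meet $G$ trivially and hence embed into $\Gamma/G\cong\ZZ$, so are trivial or infinite cyclic. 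If all edge stabilisers are trivial, $\Gamma$ is a free product (resp.\ an HNN extension with trivial edge group, i.e.\ a free product with $\ZZ$) of $\FJCX$ groups, hence in $\FJCX$ by \Cref{Thm:FJCclosedsubgroups}. If the edge stabilisers are infinite cyclic, the $\Gamma$-action on $T_{\calf_{\max}}$ need not be acylindrical — already $\Gamma\cong F_2\times\ZZ$ can occur — so I would pass (after a further power of $\Phi$, and replacing it by an outer-equivalent automorphism of the form $\psi_1\ast\psi_2$ preserving both factors exactly) to the tree of cylinders $T_c$ of $T_{\calf_{\max}}$ for the commensurability relation on its virtually cyclic edge stabilisers (\cite{guirardel2011trees}; see \Cref{subsec:trees_of_cylinders}). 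The $\Gamma$-action on $T_c$ is acylindrical and cocompact, and its vertex stabilisers are either conjugates of the $(A_i)_\Phi$ or cylinder stabilisers; the latter are commensurable with $\Per(\psi_1\ast\psi_2)\rtimes_{\psi_1\ast\psi_2}\ZZ$, where $\Per(\psi_1\ast\psi_2)=\Per(\psi_1)\ast\Per(\psi_2)$ is a free product of subgroups of the $A_i$ and so lies in $\FJCX$, whence the cylinder stabilisers lie in $\FJCX$ by \Cref{Lem:PerFJC}. Since every vertex stabiliser of the acylindrical action on $T_c$ is in $\FJCX$, \Cref{Thm:FJCacylindrical} gives $\Gamma\in\FJCX$, completing the induction.

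\textbf{Main obstacle.} The delicate point is the sporadic case: the obvious action of $G\rtimes_\Phi\ZZ$ on the canonical Bass--Serre tree is genuinely non-acylindrical in general, so one must detour through the tree of cylinders and pin down its vertex stabilisers — in particular identifying the cylinder stabilisers as (virtual) suspensions of periodic subgroups, so that \Cref{Lem:PerFJC} and \Cref{Thm:FJCacylindrical} can be applied. A secondary difficulty is the Grushko-rank bookkeeping needed to ensure every inductive call is to a free product of strictly smaller rank, together with the care required in juggling powers of $\Phi$ against the hypotheses on the $(G_i)_\Phi$.
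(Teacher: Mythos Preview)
Your overall strategy matches the paper's: induct on $k+N$, pass to a maximal $\Phi$-periodic free factor system, and split into the non-sporadic case (handled via Dahmani--Li relative hyperbolicity and \Cref{Thm:FJCrelativelyhyp}, exactly as in \Cref{Lem:fullyirredcase}) and the sporadic case (handled by passing to the tree of cylinders for commensurability and applying \Cref{Thm:FJCacylindrical}). The non-sporadic case and the inductive bookkeeping are fine.

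There is, however, a genuine error in your sporadic case. You claim that after a power you may replace $\Phi$ by an outer-equivalent automorphism of the form $\psi_1\ast\psi_2$ preserving both factors. This is false in general: partial conjugations already give counterexamples. For $G=A_1\ast A_2$ with $A_i$ infinite and $\Phi$ the partial conjugation fixing $A_1$ and sending $A_2$ to $aA_2a^{-1}$ for some nontrivial $a\in A_1$, no representative of any power of $[\Phi]$ preserves both $A_1$ and $A_2$ simultaneously (malnormality of the factors forces the conjugator into $A_1\cap A_2a^{-1}=\varnothing$). Consequently your description of the cylinder stabiliser as $\Per(\psi_1)\ast\Per(\psi_2)\rtimes\ZZ$ is not available.

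The paper avoids this entirely. It simply passes to a power so that the stable letter $t$ fixes an edge of $T_{\calf}$, and then computes directly (\Cref{Lem:Stabcylinder}) that the stabiliser of the cylinder containing that edge is $\Per(\Phi)\rtimes_\Phi\ZZ$ for the \emph{actual} $\Phi$, with no structural assumption on $\Phi$. This is in $\FJCX$ by \Cref{Lem:PerFJC}, which only needs $G\in\FJCX$ (and $G$ is, being a free product of $\FJCX$ groups). You also assert acylindricity of the $\Gamma$-action on $T_c$ without argument; this is the content of \Cref{Lem:Actiontreecylacylindrical} and needs a short proof (two edges in distinct cylinders have stabilisers with trivial intersection, since commensurability of infinite cyclic groups coincides with having nontrivial intersection). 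Dropping the $\psi_1\ast\psi_2$ claim and inserting these two lemmas repairs your argument and brings it in line with the paper.
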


The proof of Theorem~\ref{Thm:combinationthm} is by induction on $k+N$. Let $\calf' \leq \calf$ be a maximal proper $\Phi$-periodic free factor system. Up to taking a power of $\Phi$ (which does not change the conclusion of Theorem~\ref{Thm:combinationthm} by Theorem~\ref{Thm:FJCclosedsubgroups}~$(2)$), we may suppose that $\Phi\in \Aut(G,\calf)$. We will distinguish between two cases, according to whether $\calf$ is sporadic or not.

\subsection{The nonsporadic case}

\begin{lemma}\label{Lem:fullyirredcase}
    Let $G=G_1 \ast \ldots \ast G_k \ast F_N$ be a free product of groups, let $\calf=\{[G_1],\ldots,[G_k]\}$ and let $ \Phi \in \Aut(G,\calf)$ be fully irreducible. If for each $i \in \{1,\ldots,k\}$, the group $(G_i)_\Phi$ is in $\FJCX$, then $G \rtimes_\Phi \ZZ$ is in $\FJCX$.
\end{lemma}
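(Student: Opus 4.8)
The plan is to produce, after passing to a power of $\Phi$, a relatively hyperbolic structure on the suspension whose peripheral subgroups are finite-index subgroups of the $(G_i)_\Phi$ together with some copies of $\ZZ\rtimes\ZZ$, and then to invoke Bartels' \Cref{Thm:FJCrelativelyhyp}. (We may assume $(G,\calf)$ is nonsporadic; the sporadic case, in which $\calf$ is a maximal free factor system, is handled separately.) First I would apply \Cref{Lem:existenceRtree} to obtain a Grushko $(G,\calf)$-tree $S$, and let $\calp_S(\Phi)$ be the set of conjugacy classes of maximal $\lVert\cdot\rVert_S$-polynomial subgroups of $\Phi$. By \Cref{prop:maximalpolysubgroups}, the set $\calp_S(\Phi)$ is finite and each of its members is malnormal in $G$; by \Cref{Prop:descriptionpolysubgroups}, each $[P]\in\calp_S(\Phi)$ either equals some $[G_i]\in\calf$ or is represented by a nonperipheral infinite cyclic subgroup.

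Now \Cref{thm:relativehyperbolicityfullyirred} provides $n\in\NN$ for which $G\rtimes_{\Phi^n}\ZZ$ is hyperbolic relative to the suspension of $\calp_S(\Phi)$; up to conjugacy, its peripheral subgroups are the groups $P\rtimes_{\mathrm{ad}_g\circ\Phi^n}\ZZ$ with $[P]\in\calp_S(\Phi)$. If $P$ is nonperipheral and infinite cyclic, then $P\rtimes_{\mathrm{ad}_g\circ\Phi^n}\ZZ\cong\ZZ\rtimes\ZZ$ is either $\ZZ^2$ or the Klein bottle group, hence is $\CAT(0)$, and so lies in $\FJCX$ by \cite{Wegner2012}. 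If instead $[P]=[G_i]$, then $G_i$ is malnormal in $G$ (being a free factor); writing $n_i$ for the least positive power of $\Phi$ that preserves $[G_i]$ --- which divides $n$, since the suspension is defined --- the group $P\rtimes_{\mathrm{ad}_g\circ\Phi^n}\ZZ$ is isomorphic to the index-$(n/n_i)$ subgroup of $(G_i)_\Phi=G_i\rtimes_{\mathrm{ad}_h\circ\Phi^{n_i}}\ZZ$ generated by $G_i$ and $t^{\,n/n_i}$, and malnormality of $G_i$ is exactly what makes this identification independent of the choices of conjugating elements. Since $(G_i)_\Phi\in\FJCX$ by hypothesis, \Cref{Thm:FJCclosedsubgroups}~(1) gives $P\rtimes_{\mathrm{ad}_g\circ\Phi^n}\ZZ\in\FJCX$. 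Thus every peripheral subgroup of $G\rtimes_{\Phi^n}\ZZ$ lies in $\FJCX$, so $G\rtimes_{\Phi^n}\ZZ\in\FJCX$ by \Cref{Thm:FJCrelativelyhyp}; as this group has index $n$ in $G\rtimes_\Phi\ZZ$, an application of \Cref{Thm:FJCclosedsubgroups}~(2) completes the proof.

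The substantive point is the identification in the middle paragraph of the peripheral subgroups of the Dahmani--Li relatively hyperbolic structure on $G\rtimes_{\Phi^n}\ZZ$ with finite-index subgroups of the $(G_i)_\Phi$: the care needed is in verifying that malnormality of the free factors (equivalently \Cref{prop:maximalpolysubgroups}~(2)) makes the suspension construction insensitive to the conjugating elements, and that the non-free-factor polynomial subgroups contribute only copies of $\ZZ\rtimes\ZZ$. Once the nonsporadic hypothesis is in force, no further geometric input is required; the rest is an assembly of \Cref{Lem:existenceRtree}, \Cref{thm:relativehyperbolicityfullyirred}, \Cref{Thm:FJCrelativelyhyp}, and \Cref{Thm:FJCclosedsubgroups}.
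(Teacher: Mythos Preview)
Your proposal is correct and follows essentially the same approach as the paper's proof: apply \Cref{Lem:existenceRtree} and \Cref{Prop:descriptionpolysubgroups} to identify the maximal polynomial subgroups, invoke \Cref{thm:relativehyperbolicityfullyirred} to obtain a relatively hyperbolic structure on a power of the suspension, check that each peripheral suspension lies in $\FJCX$, and conclude via \Cref{Thm:FJCrelativelyhyp} and \Cref{Thm:FJCclosedsubgroups}. You are in fact more careful than the paper in spelling out the identification of the peripheral suspensions with finite-index subgroups of the $(G_i)_\Phi$ and in flagging the implicit nonsporadic hypothesis (needed for \Cref{thm:relativehyperbolicityfullyirred} and \Cref{Prop:descriptionpolysubgroups}), which the paper leaves tacit since the lemma sits in the nonsporadic subsection.
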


\begin{proof}
    Let $S$ be the Grushko $(G,\calf)$-tree given by Lemma~\ref{Lem:existenceRtree}. Let $\calp_S(\Phi)$ be the $\lVert. \rVert_S$-maximal polynomial subgroups of $\Phi$. By Theorem~\ref{thm:relativehyperbolicityfullyirred}, up to taking a power of $\Phi$ (which does not change the conclusion of Lemma~\ref{Lem:fullyirredcase} by Theorem~\ref{Thm:FJCclosedsubgroups}~$(2)$) the group $G \rtimes_\Phi\ZZ$ is hyperbolic relative to the suspension of $\calp_S(\Phi)$. By Proposition~\ref{Prop:descriptionpolysubgroups}, for every $[P]\in \calp_S(\Phi)$, either $[P]\in \calf$ or $P$ is infinite cyclic. In either case $P_\Phi$ is contained in $\FJCX$. 
    By Theorem~\ref{Thm:FJCrelativelyhyp}, the group $G\rtimes_\Phi \ZZ$ is contained in $\FJCX$.
\end{proof}

\subsection{The sporadic case}
\label{subsec:sporadic_case}

This section follows~\cite[Proof of Proposition~4.1]{bestvina2023farrell}. Let $(G,\calf)$ be a sporadic free product and let $\Phi \in \Aut(G,\calf)$. Since $(G,\calf)$ is sporadic, the automorphism $\Phi$ induces a $G$-equivariant homeomorphism of the Bass--Serre tree $T_\calf$ associated with $\calf$. This induces an action of $G \rtimes_\Phi \ZZ$ on $T_\calf$. However, this action is not necessarily acylindrical. In order to apply Theorem~\ref{Thm:FJCacylindrical}, we will consider the action of $G \rtimes_\Phi \ZZ$ on the tree of cylinders of $T_\calf$ associated with an admissible relation that we now describe.

 Let $t$ be a generator of the $\ZZ$-factor. Up to taking a finite index subgroup of $G\rtimes_\Phi \ZZ$ (which is possible by Theorem~\ref{Thm:FJCclosedsubgroups}~$(2)$), we may suppose that $t$ fixes an edge $e$. In that case, edge stabilisers of the action of $G \rtimes_\Phi \ZZ$ are all infinite cyclic, generated by conjugates of $t$. Therefore, the commensurability relation is an admissible equivalence relation, and we define the tree of cylinders $T_c$ of $T_\calf$ relative to this admissible relation.

 \begin{lemma}\label{Lem:Stabcylinder}
     Let $Y$ be the cylinder of $T_\calf$ containing $e$. The stabiliser of $Y$ in $G\rtimes_\Phi \ZZ$ is isomorphic to $\langle \Fix(\Phi^n) \rangle_{n \ge 1} \rtimes_\Phi \ZZ$.
 \end{lemma}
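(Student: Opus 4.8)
The plan is to unwind the definition of the tree of cylinders $T_c$ and identify the cylinder $Y$ containing $e$ explicitly. Recall that $T_\calf$ is the Bass--Serre tree of the sporadic splitting, with a single orbit of edges, and that after passing to a finite-index subgroup we have arranged that $t$ fixes $e$. So $G_e = \langle t \rangle$, and every edge stabiliser in the $G\rtimes_\Phi\ZZ$-action is a conjugate $g t^k g^{-1}$ of a power of $t$ (with $k \neq 0$, since $G$ acts with trivial edge stabilisers on $T_\calf$). The cylinder $Y$ is the $\sim_T$-class of $e$: the maximal subtree all of whose edges carry a stabiliser commensurable with $\langle t \rangle$. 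The first step is therefore to understand which edges $g e$ of $T_\calf$ have stabiliser commensurable with $\langle t\rangle$, i.e. for which $g\in G$ (or $g \in G\rtimes_\Phi\ZZ$) the group $g\langle t\rangle g^{-1} \cap \langle t\rangle$ is infinite.

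**Identifying the stabiliser.** An element $g \in G$ satisfies $g t g^{-1} = t$ in $G\rtimes_\Phi\ZZ$ if and only if $\Phi(g) = g$, i.e. $g \in \Fix(\Phi)$; more generally $gt^ng^{-1} = t^n$ iff $\Phi^n(g)=g$, so $g\langle t\rangle g^{-1}$ meets $\langle t\rangle$ infinitely iff $g \in \langle \Fix(\Phi^n)\rangle_{n\in\NN} = \Per(\Phi)$. So I would argue: the stabiliser of $Y$ contains $\langle t \rangle$ and contains $\Per(\Phi)$ (since each such $g$ fixes $e$ up to an edge in the same cylinder, hence preserves the cylinder), and conversely any element of $G\rtimes_\Phi\ZZ$ preserving $Y$ either fixes a vertex/edge in $Y$ whose analysis forces it into $\Per(\Phi)\rtimes_\Phi\ZZ$, or is of the form $g t^m$ with $g$ such that conjugation by $gt^m$ carries $\langle t\rangle$ to a commensurable subgroup — again forcing $g \in \Per(\Phi)$. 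Putting these together, $\Stab(Y) = \langle \Per(\Phi), t\rangle = \Per(\Phi)\rtimes_\Phi\ZZ$, using that $t$ normalises $\Per(\Phi)$ (because $\Phi$ does: $\Phi(\Fix(\Phi^n)) = \Fix(\Phi^n)$). This is exactly $\langle \Fix(\Phi^n)\rangle_{n\in\NN}\rtimes_\Phi\ZZ$ as claimed.

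**The main obstacle.** The delicate point is the reverse inclusion: showing that nothing \emph{outside} $\Per(\Phi)\rtimes_\Phi\ZZ$ stabilises $Y$. An element $h$ stabilising $Y$ acts on the subtree $Y$; I need to rule out the possibility that $h$ is $\langle t\rangle$-loxodromic-looking but with $h \notin \Per(\Phi)\rtimes_\Phi\ZZ$. Writing $h = g t^m$, preserving $Y$ means $h$ conjugates the edge-stabiliser class $\{g' t^{k} g'^{-1} : g'\in G\}\cap(\text{edges of }Y)$ to itself; the key is that the edges of $Y$ are precisely $\{g' e : g' \in \Per(\Phi)\}$ (possibly only up to the subgroup $\Per(\Phi)$ acting — this needs the structure of $T_\calf$ as having one orbit of edges and trivial $G$-edge-stabilisers, so distinct cosets $g'\Per(\Phi)\cap G_e$... ). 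Making the "edges of $Y$" description precise, and checking it is stable exactly under $\Per(\Phi)$ and not more, is where care is required; I expect to lean on malnormality-type properties of the vertex groups in the sporadic splitting, or simply on the fact that $G$ acts on $T_\calf$ with a single edge orbit and trivial edge stabilisers so the stabiliser of the cylinder inside $G$ is $\{g : \Phi^n(g) = g\text{ some }n\} = \Per(\Phi)$. Once the $G$-part is pinned down, adding back $t$ is immediate since $t\in\Stab(Y)$ by construction and $t$ together with $\Stab_G(Y)=\Per(\Phi)$ generate $\Per(\Phi)\rtimes_\Phi\ZZ$.

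**Write-up plan.** Concretely I would: (i) note $t\in\Stab(Y)$ by choice of $e$; (ii) show $\Stab_G(Y) = \Per(\Phi)$ by the conjugation computation above, using one edge orbit and trivial $G$-edge stabilisers of $T_\calf$; (iii) observe $\Stab(Y) = \Stab_G(Y)\rtimes\langle t\rangle$ since $\Stab(Y)\to\ZZ$ is onto (it contains $t$) with kernel $\Stab(Y)\cap G = \Stab_G(Y)$; (iv) identify the resulting semidirect product, with $t$ acting by $\Phi|_{\Per(\Phi)}$, as $\langle\Fix(\Phi^n)\rangle_{n\in\NN}\rtimes_\Phi\ZZ$. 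The bulk of the work is step (ii), and within it the claim that the cylinder of $e$ consists exactly of the $\Per(\Phi)$-translates of edges sharing the commensurability class of $\langle t\rangle$.
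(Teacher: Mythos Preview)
Your approach is correct and essentially identical to the paper's, but you are manufacturing a difficulty in the ``main obstacle'' paragraph that is not there. You do \emph{not} need to characterise the edges of $Y$ or invoke malnormality of vertex groups. The reverse inclusion follows immediately from what you already computed: if $h = w^{-1}t^{j} \in \Stab(Y)$, then $he \in Y$ (simply because $e \in Y$ and $h$ preserves $Y$), so $G_{he} = w^{-1}\langle t\rangle w$ is commensurable with $G_e = \langle t\rangle$. That means $w^{-1}t^{m}w = t^{n}$ for some $m,n$, and expanding in the semidirect product gives $w^{-1}\Phi^{m}(w)\,t^{m} = t^{n}$, forcing $m=n$ and $\Phi^{m}(w)=w$. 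So $w \in \Per(\Phi)$ and $h \in \Per(\Phi)\rtimes_{\Phi}\ZZ$. This is exactly the paper's argument, done in three lines; your step~(ii) is already complete once you observe that an element of $\Stab(Y)$ sends $e$ to an edge of $Y$.

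The forward inclusion is also easier than you suggest: cylinders are equivalence classes for a $G\rtimes_{\Phi}\ZZ$-equivariant relation, so any group element permutes cylinders. Hence if $g \in \Per(\Phi)$ sends $e$ to an edge with commensurable stabiliser (which it does, by your computation), then $g$ sends $Y$ to itself. No description of \emph{all} edges of $Y$ is required at any point.
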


\begin{proof}
    Note that any element $h \in G \rtimes_\Phi \ZZ$ can be written uniquely as $w^{-1}t^j$, where $w\in G$ and $j \in \ZZ$. Let $w^{-1}t^j \in \Stab(Y)$ with $w\in G$ and $j \in \ZZ$ and let $e'=w^{-1}t^je$. Then we have $G_{e'}=\langle w^{-1}tw \rangle$. Moreover, since $e,e' \in EY$, by definition of the commensurability relation, there exist $n,m \in \NN$ such that $$t^n=w^{-1}t^mw=w^{-1}\Phi^m(w)t^m.$$ In particular, we see that $n=m$ and $\Phi^m(w)=w$.
\end{proof}

\begin{lemma}\label{Lem:Actiontreecylacylindrical}
    The action of $G\rtimes_\Phi \ZZ$ on $T_c$ is acylindrical.
\end{lemma}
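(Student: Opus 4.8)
The plan is to work with the tree of cylinders $T_c$ of the Bass--Serre tree $T_\calf$, and to show that long geodesic segments in $T_c$ have finite stabiliser. First I would recall that vertices of $T_c$ are of two types: the \emph{cylinder vertices} (one for each commensurability class of edge-groups of $T_\calf$, equivalently for each cylinder $Y$), and the vertices of $T_\calf$ lying in at least two cylinders; since all edge stabilisers in $T_\calf$ are conjugates of $\langle t\rangle$ and distinct conjugates $w^{-1}\langle t\rangle w$ are commensurable only when $\Phi^m(w)=w$ for some $m$, the cylinder containing $e$ has stabiliser $\langle\Fix(\Phi^n)\rangle_{n\in\NN}\rtimes_\Phi\ZZ$ by \Cref{Lem:Stabcylinder}, and in general cylinder stabilisers are conjugates of this. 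A path in $T_c$ alternates between cylinder vertices and ``branch'' vertices; the key observation is that the stabiliser of an edge of $T_c$ joining a branch vertex $x$ (a vertex of $T_\calf$) to a cylinder vertex $Y$ is exactly $G_x\cap\Stab(Y)$, and since $x\in Y$ this equals the stabiliser in $\Stab(Y)$ of the vertex $x$ of $T_\calf$.

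Next I would bound how much of such a path a given element $h$ can fix. Suppose $h$ fixes a geodesic in $T_c$ passing through two distinct cylinder vertices $Y_1$ and $Y_2$ (so the geodesic has combinatorial length at least $4$, say). Then $h\in\Stab(Y_1)\cap\Stab(Y_2)$. Two distinct cylinders of $T_\calf$ meet in at most one vertex of $T_\calf$ (this is part of the structure of trees of cylinders, cf.\ \cite[Lemma~4.2]{guirardel2011trees} and the bipartite description), so there is a vertex $x$ of $T_\calf$ with $Y_1\cap Y_2=\{x\}$, and $h$ fixes $x$. Writing $h=w^{-1}t^j$, ellipticity in $T_\calf$ forces $j=0$ (no nontrivial power of $t$, nor any $w^{-1}t^j$ with $j\neq 0$, is elliptic, since such an element acts as a nontrivial translation coming from the $\ZZ$-direction — here one uses that the $\ZZ$-factor acts on $T_\calf$ with $t$ fixing only the single orbit of edges through $e$ after passing to the finite-index subgroup, hence $t^j$ for $j\neq0$ translates). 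So $h\in G$, and $h$ fixes the vertex $x$ of $T_\calf$. But now I would use that $h$ additionally lies in $\Stab(Y_1)$, which intersects $G$ in (a conjugate of) $\langle\Fix(\Phi^n)\rangle_{n\in\NN}$: refining, $h$ stabilises a vertex $x$ sitting in the cylinder $Y_1$ whose stabiliser is $\Fix(\Phi^{n_1})$ for the relevant $n_1$; so $h$ lies in the stabiliser, inside $G$, of a vertex of $T_\calf$ fixed by some $t$-conjugate. Since $G$ acts on $T_\calf$ with the given structure — it is the Bass--Serre tree of a sporadic splitting, so edge groups are trivial — wait: here I must be careful. In the sporadic free-product case edge stabilisers of $T_\calf$ \emph{in $G$} are trivial, so $G_x\cap G_{x'}$ is trivial whenever $x\neq x'$; combined with $h$ fixing $x$ and (via $\Stab(Y_1)$) being forced to fix also another vertex of the cylinder, we conclude $h=1$.

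More precisely, the clean argument is: it suffices to show there is $K$ such that any $h$ fixing a geodesic of length $K$ in $T_c$ is trivial. Take $K$ large enough that such a geodesic meets three distinct cylinder vertices $Y_0,Y_1,Y_2$. Then $h$ fixes, in $T_\calf$, the vertex $Y_0\cap Y_1$ and the vertex $Y_1\cap Y_2$, which are distinct vertices of $T_\calf$; as just noted $h\in G$ (by ellipticity) and $G_x\cap G_{x'}=\{1\}$ for distinct vertices $x\neq x'$ of the trivial-edge-group tree $T_\calf$, so $h=1$. I would then record that this gives $K$-acylindricity of the action of $G\rtimes_\Phi\ZZ$ on $T_c$. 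The main obstacle I expect is bookkeeping around the two edge cases — first, justifying that an elliptic element of $T_\calf$ in $G\rtimes_\Phi\ZZ$ must actually lie in $G$ (i.e.\ the $\ZZ$-direction contributes genuine translation), which relies on the reduction to the finite-index subgroup where $t$ fixes only one orbit of edges; and second, handling the possibility that a short fixed geodesic meets only one cylinder vertex, where one instead needs that the stabiliser of an edge of $T_c$ incident to a cylinder $Y$ is the $\Stab(Y)$-stabiliser of a vertex of $T_\calf$ and that two such vertices in one cylinder have trivial common $G$-stabiliser, so again a path crossing two branch vertices around a single cylinder vertex also has finite (indeed trivial, after removing torsion concerns) stabiliser. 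Once these local facts are in place the acylindricity constant is explicit in terms of the bipartite structure, and \Cref{Thm:FJCacylindrical} applies.
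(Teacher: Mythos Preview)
Your proof has a genuine error at its core. You claim that if $h=w^{-1}t^j$ is elliptic in $T_\calf$ then $j=0$, arguing that ``the $\ZZ$-direction contributes genuine translation.'' This is false: the setup explicitly arranges (after passing to a finite-index subgroup) that $t$ \emph{fixes} an edge $e$ of $T_\calf$, so $t$ and all of its conjugates are elliptic. Indeed the whole point of building $T_c$ is that the $G\rtimes_\Phi\ZZ$--edge-stabilisers in $T_\calf$ are the infinite cyclic groups $\langle w^{-1}tw\rangle$. Your ``more precisely'' paragraph still relies on this reduction to $h\in G$, so the argument does not close; knowing that $h$ fixes two distinct vertices of $T_\calf$ is not enough once you allow $h$ to have nontrivial $t$-component, since such $h$ can certainly fix edges of $T_\calf$.

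The paper's argument avoids this entirely and is both shorter and more robust: take $v,v'$ in $T_c$ at distance $\geq 6$, replace them by adjacent $V_0$-vertices $w,w'$ (still at distance $\geq 4$), and observe that any $g$ fixing both also fixes the geodesic from $w$ to $w'$ in $T_\calf$. Because $d_{T_c}(w,w')\geq 4$, this geodesic must contain edges lying in two \emph{distinct} cylinders. Now use the defining property of the commensurability cylinders directly: two edges of $T_\calf$ lie in the same cylinder if and only if their (infinite cyclic) stabilisers have nontrivial intersection. Hence $g$ lies in the intersection of two edge stabilisers from different cylinders, which is trivial. No reduction to $G$ is needed, and no claim about the ellipticity/loxodromy dichotomy for $t$ is required.
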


\begin{proof}
    The proof is identical to~\cite[Lemma~4.6]{bestvina2023farrell}. Let $v,v' \in VT_c$ with $d_{T_c}(v,v') \geq 6$. We may suppose that $v$ and $v'$ correspond to vertices $w$ and $w'$ in $T_\calf$ up to considering adjacent vertices in the path between them. Even after this operation, we have $d_{T_c}(v,v') \geq 4$. 

    Let $g\in G_v \cap G_{v'}$. Then $g$ fixes the path in $T_\calf$ between $w$ and $w'$. Since $d_{T_c}(v,v') \geq 4$, the path in $T_\calf$ between $w$ and $w'$ must contain two edges in distinct cylinders. Hence $g$ fixes two edges in distinct cylinders. Since edge stabilisers in $T_\calf$ are infinite cyclic and since we are considering the commensurability relation, two edges in $T_\calf$ are in the same cylinder if and only if the intersection of their stabilisers is nontrivial. In particular, this shows that $g$ is trivial and that the action of $G\rtimes_\Phi \ZZ$ on $T_c$ is acylindrical.
\end{proof}

\begin{lemma}\label{Lem:sporadiccase}
    Let $(G,\calf)$ be a sporadic free product of groups and let $\Phi \in \Aut(G,\calf)$. If for each $[A]\in \calf$, the group $A_\Phi$ is in $\FJCX$, then $G \rtimes_\Phi \ZZ$ is in $\FJCX$.
\end{lemma}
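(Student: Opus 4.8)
The plan is to apply \Cref{Thm:FJCacylindrical} to the action of $\Gamma := G \rtimes_\Phi \ZZ$ on the tree of cylinders $T_c$ of $T_\calf$ set up above. Recall that, since $(G,\calf)$ is sporadic, $\Phi$ induces a $G$-equivariant homeomorphism of the Bass--Serre tree $T_\calf$, which has trivial edge groups and a single orbit of edges; after passing to a finite index subgroup of $\Gamma$ (harmless by \Cref{Thm:FJCclosedsubgroups}~$(2)$) we may assume the generator $t$ fixes an edge $e$, so that $T_c$ is defined for the commensurability relation. By \Cref{Lem:Actiontreecylacylindrical} the action of $\Gamma$ on $T_c$ is acylindrical, so by \Cref{Thm:FJCacylindrical} it suffices to show that every vertex stabiliser of $T_c$ lies in $\FJCX$. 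As $T_c$ is bipartite, there are two kinds of vertices: the cylinders ($V_1 T_c$) and the vertices of $T_\calf$ lying in at least two cylinders ($V_0 T_c$).

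For a cylinder, I would first observe that, $T_\calf$ having a single orbit of edges, every cylinder is a $G$-translate of the cylinder $Y$ containing $e$; hence every cylinder stabiliser is conjugate in $\Gamma$ to $\Stab_\Gamma(Y)$, which by \Cref{Lem:Stabcylinder} is isomorphic to $\mathrm{Per}(\Phi) \rtimes_\Phi \ZZ$. Now each $A$ with $[A] \in \calf$ embeds into $A_\Phi \in \FJCX$, hence lies in $\FJCX$ by \Cref{Thm:FJCclosedsubgroups}~$(1)$; together with $\ZZ \in \FJCX$ and the fact that $\FJCX$ is closed under finite free products (\Cref{Thm:FJCclosedsubgroups}~$(4)$), this gives $G \in \FJCX$, and then \Cref{Lem:PerFJC} yields $\mathrm{Per}(\Phi) \rtimes_\Phi \ZZ \in \FJCX$. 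So every cylinder stabiliser lies in $\FJCX$.

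For a vertex $v \in V_0 T_c$, note that $v$ is a vertex of $T_\calf$, so $G_v := \Stab_G(v)$ is a conjugate of some $A_i$ with $[A_i] \in \calf$; in particular $A_i$ is a free factor of $G$, hence malnormal, so $(A_i)_\Phi$ is well defined. Let $m\ZZ$ be the image of $\Stab_\Gamma(v)$ under the projection $\Gamma \to \ZZ$. If $m = 0$ then $\Stab_\Gamma(v) = G_v \cong A_i \leq (A_i)_\Phi$. If $m > 0$, choose $\gamma = w t^m \in \Stab_\Gamma(v)$; conjugation by $\gamma$ normalises $G_v$ and restricts to $\ad_w \circ \Phi^m$ on it, which forces $\Phi^m$ to preserve $[A_i]$ (so $n_{A_i} \mid m$) and, since $\langle \gamma \rangle \cap G_v = 1$, shows $\Stab_\Gamma(v) \cong A_i \rtimes_\Psi \ZZ$ for some $\Psi$ of outer class $[\Phi^m|_{A_i}] \in \Out(A_i)$. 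Since the isomorphism type of a mapping torus depends only on the outer class of its defining automorphism, $\Stab_\Gamma(v)$ is then isomorphic to the index-$(m/n_{A_i})$ subgroup $A_i \rtimes_{\psi_0^{m/n_{A_i}}} \ZZ$ of $(A_i)_\Phi = A_i \rtimes_{\psi_0} \ZZ$. In either case $\Stab_\Gamma(v)$ is isomorphic to a subgroup of $(A_i)_\Phi \in \FJCX$, hence lies in $\FJCX$ by \Cref{Thm:FJCclosedsubgroups}~$(1)$. Having dealt with both kinds of vertex, \Cref{Thm:FJCacylindrical} completes the proof.

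The substantive inputs --- acylindricity of the action on $T_c$ and the identification of the cylinder stabilisers --- are exactly \Cref{Lem:Actiontreecylacylindrical} and \Cref{Lem:Stabcylinder}, so the one point in this lemma that needs care is the bookkeeping for the $V_0 T_c$-vertices: keeping track of the power $m$ and the inner twist $\ad_w$, and using malnormality of free factors to control normalisers and to make $(A_i)_\Phi$ unambiguous, so as to recognise each such stabiliser as a subgroup of the relevant $A_\Phi$.
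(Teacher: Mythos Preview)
Your proof is correct and follows essentially the same approach as the paper: apply \Cref{Thm:FJCacylindrical} to the action on $T_c$, using \Cref{Lem:Actiontreecylacylindrical} for acylindricity, \Cref{Lem:Stabcylinder} together with \Cref{Lem:PerFJC} for the cylinder stabilisers, and the hypothesis for the $V_0 T_c$ stabilisers. You are more careful than the paper in two places --- explicitly verifying $G \in \FJCX$ before invoking \Cref{Lem:PerFJC}, and working out the bookkeeping that identifies each $V_0T_c$-stabiliser as a subgroup of the relevant $A_\Phi$ --- but the strategy is identical.
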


\begin{proof}
    Let $T_\calf$ be the Bass--Serre tree associated with $\calf$ and let $T_c$ be its tree of cylinders relative to the commensurability relation. We want to apply Theorem~\ref{Thm:FJCacylindrical} to the action of $G\rtimes_\Phi \ZZ$ on $T_c$. This action is acylindrical by Lemma~\ref{Lem:Actiontreecylacylindrical}. Thus, it suffices to prove that every vertex stabiliser belongs to $\FJCX$. 
    
    Recall that we have a bipartition of $VT_c=V_0T_c \coprod V_1V_c$, where vertices in $V_0T_c$ correspond to vertices of $T_\calf$ and vertices in $V_1T_c$ correspond to cylinders of $T_c$. 
    
    If $v \in V_0T_c$, then its stabiliser is isomorphic to $A_\Phi$.  In that case, the stabiliser of $v$ belongs to $\FJCX$ by hypothesis. 

    Suppose now that $v\in V_1 T_c$. By Lemma~\ref{Lem:Stabcylinder}, the stabiliser of $v$ is isomorphic to $\mathrm{Per}(\Phi) \rtimes_\Phi \ZZ$. By \Cref{Lem:PerFJC}, the group $\mathrm{Per}(\Phi) \rtimes_\Phi \ZZ$ belongs to $\FJCX$.

    Thus, every vertex stabiliser of the action of $G\rtimes_\Phi \ZZ$ on $T_c$ belongs to $\FJCX$. By Theorem~\ref{Thm:FJCacylindrical}, the group $G\rtimes_\Phi \ZZ$ belongs to $\FJCX$.
\end{proof}

\subsection{End of the proof of Theorem~\ref{Thm:combinationthm} }

Let $G=G_1 \ast \ldots \ast G_k \ast F_N$ be a free product of groups, let $\calf'=\{[G_1],\ldots,[G_k]\}$ and let $ \Phi \in \Aut(G,\calf')$. We prove by induction on $k+N$ that $G\in \FJCX$. 

Suppose first that $k+N=1$. If $N=0$, then $G=G_1$ and $G \rtimes_\Phi \ZZ \in \FJCX$ by hypothesis. If $k=0$, then $G=\ZZ$, $G \rtimes_\Phi \ZZ$ is solvable and the result follows from \cite{wegner2015farrell}. This proves the base case.

Suppose now that $k+N \geq 2$ and let $\calf$ be a maximal $\Phi$-periodic free factor system. We may assume, up to taking a power of $\Phi$, that $\calf$ is $\Phi$-invariant, so that we can view $\Phi$ as an element of $\Aut(G,\calf)$. This is possible by Theorem~\ref{Thm:FJCclosedsubgroups}~$(2)$ as, for every $n\in \NN$, the group $G \rtimes_{\Phi^n} \ZZ$ is a finite index subgroup of $G \rtimes_\Phi \ZZ$. 

By induction hypothesis, for every $[A]\in \calf$, the group $A_\Phi$ belongs to $\FJCX$. Combining the nonsporadic case (Lemma~\ref{Lem:fullyirredcase}) and the sporadic case (Lemma~\ref{Lem:sporadiccase}), we conclude that $G \rtimes_\Phi \ZZ$ belongs to $\FJCX$. This concludes the proof.
\hfill\qedsymbol

\subsection{Proving \texorpdfstring{\Cref{thmx:main}}{the main theorem}}

We first record a corollary of \Cref{Thm:combinationthm}.

\begin{corollary}\label{cor:NRH}
    Let $(G,\calp)$ be a virtually torsion-free relatively hyperbolic group with $\calp$ finite and let $\Phi\in\Aut(G,\calp)$.  If for every $[P]\in \calp$ we have $P_\Phi \in \FJCX$, then $G_\Phi \in \FJCX$. 
\end{corollary}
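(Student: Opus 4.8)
The plan is to pass to a torsion-free finite-index subgroup, decompose it as a free product relative to its induced peripheral structure, and then apply the combination theorem \Cref{Thm:combinationthm} with the one-ended case \Cref{Cor:OneEndedFJCrelhyp} supplying the hypothesis on the free factors.

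First I would use that $G$, being finitely generated and virtually torsion-free, has a characteristic finite-index torsion-free subgroup $G_0$ (for instance the intersection of all subgroups of some fixed finite index realised by a torsion-free one). Since $G_0$ is characteristic it is $\Phi$-invariant, and $G_0 \rtimes_\Phi \ZZ$ has finite index in $G_\Phi$, so by \Cref{Thm:FJCclosedsubgroups}(2) it suffices to show $G_0 \rtimes_\Phi \ZZ \in \FJCX$. The subgroup $G_0$ is hyperbolic relative to the induced finite family $\calp_0$, each member of which is a finite-index subgroup of a conjugate of some $P \in \calp$, and $\Phi$ preserves $\calp_0$. A bookkeeping step — using that peripheral subgroups of $(G,\calp)$ are malnormal, so that the inner twists appearing in the suspension notation can be reabsorbed, together with the fact that $Q_\Phi$ depends only on the outer class of $\Phi$ — shows that each suspension $Q_\Phi$ with $[Q] \in \calp_0$ embeds, up to finite index, into the corresponding $P_\Phi$; hence $Q_\Phi \in \FJCX$ by \Cref{Thm:FJCclosedsubgroups}(1)--(2).

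Next I would take a Grushko decomposition $G_0 = G_1 \ast \ldots \ast G_k \ast F_N$ of $G_0$ relative to $\calp_0$: every member of $\calp_0$ is elliptic, each $G_i$ is freely indecomposable relative to the restricted family $\calp_0|_{G_i}$ and not infinite cyclic, and each non-cyclic member of $\calp_0$ is conjugate into some $G_i$ (cyclic ones are absorbed into $F_N$, where they are harmless); this is accessibility of relatively hyperbolic groups, cf. \Cref{prop:reduce_to_free_prod}. The set $\calf' = \{[G_1], \ldots, [G_k]\}$ is the canonical collection of freely-indecomposable-relative factors, hence is preserved by any automorphism preserving $\calp_0$; in particular $\Phi \in \Aut(G_0, \calf')$. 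By \Cref{Thm:combinationthm} it now suffices to prove $(G_i)_\Phi \in \FJCX$ for every $i$.

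Finally, fix $i$. If $G_i$ is conjugate to a member of $\calp_0$ then $(G_i)_\Phi \in \FJCX$ by the second paragraph. Otherwise $G_i$ is a free factor of $G_0$ carrying its peripherals, hence hyperbolic relative to the finite family $\calp_0|_{G_i}$, and being freely indecomposable relative to it (and not cyclic, nor a single peripheral) it is one-ended relative to $\calp_0|_{G_i}$; after replacing $\Phi$ by the power preserving $[G_i]$ and conjugating so that $G_i$ is preserved, we obtain an automorphism in $\Aut(G_i, \calp_0|_{G_i})$. The members of $\calp_0|_{G_i}$ are again finite-index subgroups of conjugates of the $P \in \calp$, so their suspensions lie in $\FJCX$, and \Cref{Cor:OneEndedFJCrelhyp} gives $(G_i)_\Phi \in \FJCX$. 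This completes the argument. The main obstacle I anticipate is the second paragraph: correctly setting up the induced peripheral structure $\calp_0$ and its restrictions $\calp_0|_{G_i}$, and checking that the hypothesis ``$P_\Phi \in \FJCX$'' is inherited by finite-index subgroups of peripherals and under passage to powers of $\Phi$ — the structural input, namely the $\Phi$-invariant relative Grushko decomposition, being standard.
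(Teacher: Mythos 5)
Your proof is correct and follows essentially the same route as the paper's: reduce to a torsion-free finite-index subgroup, take the minimal free factor system (relative Grushko decomposition) rel the induced peripheral structure, and conclude via \Cref{Thm:combinationthm} with \Cref{Cor:OneEndedFJCrelhyp} supplying the hypothesis on the factors. You simply make explicit two steps the paper leaves implicit — choosing a characteristic torsion-free finite-index subgroup so that $\Phi$ restricts, and verifying that the suspensions of the induced peripherals remain in $\FJCX$ — and separate out the degenerate case where a factor is itself a peripheral.
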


\begin{proof}
By \Cref{Thm:FJCclosedsubgroups} we may assume $G$ is torsion-free.  Let $\calf$ be the minimal free factor system of $G$ such that, for every $[P] \in \calp$, there exists $[A] \in \calf$ with $P \subseteq A$. Since $\Phi \in \Aut(G,\calp)$, by minimality of $\calf$, we have $\Phi \in \Aut(G,\calf)$. Let $[A] \in \calf$. We denote by $\calp_A$ the peripheral structure of $A$ induced by $\calp$. Since $G$ is torsion-free, the group $A$ is one-ended hyperbolic relative to $\calp_A$. By \Cref{Cor:OneEndedFJCrelhyp} the group $A_\Phi$ belongs to $\FJCX$. By \Cref{Thm:combinationthm}, the group $G_{\Phi}$ belongs to $\FJCX$.
\end{proof}

Finally, combining \Cref{Cor:OneEndedFJCrelhyp} and \Cref{cor:NRH} proves our first theorem from the introduction.

\setcounter{thmx}{0}
\begin{thmx}\label{thmx:main}
    Let $(G,\calp)$ be a virtually torsion-free or one-ended relatively hyperbolic group with $\calp$ finite and let $\Phi\in\Aut(G,\calp)$.  If for every $[P]\in \calp$ we have $P_\Phi \in \FJCX$, then $G_\Phi \in \FJCX$. 
\end{thmx}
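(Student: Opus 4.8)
The plan is to reduce the statement immediately to the two cases already handled above. The hypothesis on $(G,\calp)$ is a disjunction: either $G$ is one-ended, or $G$ is virtually torsion-free (these are not mutually exclusive, but that is harmless). I would simply dispatch each case to the appropriate corollary.

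If $G$ is one-ended, then $(G,\calp)$ is a one-ended hyperbolic group relative to $\calp$ and $\Phi\in\Aut(G,\calp)$, so since $P_\Phi\in\FJCX$ for every $[P]\in\calp$, \Cref{Cor:OneEndedFJCrelhyp} gives $G_\Phi\in\FJCX$ at once. This is where the entire machinery of \Cref{sec:JSJ} and \Cref{sec:periodicJSJ} is consumed: the periodic JSJ tree $\Tper$, the acylindricity of the induced $G_\Phi$-action furnished by \Cref{Coro:edgestabelementary}, and the identification of its rigid vertex stabilisers with periodic subgroups via \Cref{Coro:rigidvertexperiodic} (together with \Cref{Lem:PerFJC} to handle those rigid stabilisers, and known results to handle the elementary and QH-with-fibre vertex groups).

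If instead $G$ is virtually torsion-free, then \Cref{cor:NRH} applies verbatim and yields $G_\Phi\in\FJCX$; internally this passes to a torsion-free finite-index subgroup, takes the minimal free factor system containing the peripherals, feeds each free factor to \Cref{Cor:OneEndedFJCrelhyp}, and closes up with the combination theorem \Cref{Thm:combinationthm}. Since these two cases exhaust the hypothesis, the proof is complete. There is no genuine obstacle left at this final step — all the difficulty has already been front-loaded into \Cref{Cor:OneEndedFJCrelhyp}, \Cref{cor:NRH}, and \Cref{Thm:combinationthm}; the only thing to verify is that the disjunction in the hypothesis is covered, which is immediate.
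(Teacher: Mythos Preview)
Your proposal is correct and matches the paper's own proof exactly: the paper simply states that combining \Cref{Cor:OneEndedFJCrelhyp} (the one-ended case) and \Cref{cor:NRH} (the virtually torsion-free case) proves \Cref{thmx:main}. There is nothing more to add at this final step.
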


We now discuss the (minor) changes to the proof used to prove the following theorem.

\setcounter{thmx}{4}        
\begin{thmx} \label{thmx:acvnil}
    Suppose $(G, \calp)$ is one-ended or virtually torsion free, and hyperbolic relative to finitely many conjugacy classes of virtually polycyclic subgroups. Then for every automorphism $\Phi$ of $G$, $\Gamma \coloneqq G \rtimes_{\Phi} \Z$ is in $\AC(\VNil)$.
\end{thmx}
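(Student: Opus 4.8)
The plan is to rerun the proof of \Cref{thmx:main} with $\FJCX$ replaced throughout by $\AC(\VNil)$, repairing the single step where this substitution breaks. First I would record that $\AC(\VNil)$ has all the closure and membership properties of $\FJCX$ used in \Cref{sec:periodicJSJ,sec:infinite_ended}: it is closed under subgroups, finite-index overgroups, finite direct products, finite free products, and finite-kernel extensions; it contains hyperbolic groups, $\CAT(0)$ groups, and surface-by-cyclic groups; it contains a group hyperbolic relative to peripheral subgroups that lie (together with their suspensions) in $\AC(\VNil)$; and it is closed under passing to a group acting acylindrically on a tree with vertex groups in $\AC(\VNil)$. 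In other words the $\AC(\VNil)$-analogues of \Cref{Thm:FJCclosedsubgroups}(1)--(4), \Cref{thm:extensions}, \Cref{lem:finite_extensions}, \Cref{Thm:FJCrelativelyhyp}, \Cref{Thm:FJCacylindrical}, and of Wegner's $\CAT(0)$ theorem all hold, and these are available in the literature. Granting this, the proofs of \Cref{Cor:OneEndedFJCrelhyp}, \Cref{Lem:fullyirredcase}, \Cref{Lem:sporadiccase}, \Cref{Thm:combinationthm}, and \Cref{cor:NRH} go through word for word, with one exception: the two appeals to \Cref{Lem:PerFJC} --- for the rigid vertex stabilisers of $\Tper$, and for the cylinder stabilisers in the sporadic case --- whose proof is the unique place closure under directed colimits is used.

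The repair replaces \Cref{Lem:PerFJC} by the following neat variant. The group $G$, and likewise every free factor arising when \Cref{Thm:combinationthm} is applied to prove \Cref{thmx:acvnil} (these remain hyperbolic relative to slender subgroups), satisfies the hypotheses of \Cref{Coro:Slenderperiodicisfixedoneended}, so there is $N \in \NN^*$ with $\mathrm{Per}(\Phi) = \Fix(\Phi^N)$, and this group is finitely generated. Consequently $\Fix(\Phi^N) \rtimes_{\Phi^N} \ZZ \cong \Fix(\Phi^N) \times \ZZ$ is a finite-index subgroup of $\mathrm{Per}(\Phi) \rtimes_\Phi \ZZ$. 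Since $G$ is itself in $\AC(\VNil)$ (being hyperbolic relative to subgroups which lie in $\AC(\VNil)$), so is its subgroup $\Fix(\Phi^N)$; hence $\Fix(\Phi^N) \times \ZZ \in \AC(\VNil)$ by closure under finite direct products, and $\mathrm{Per}(\Phi) \rtimes_\Phi \ZZ \in \AC(\VNil)$ by closure under finite-index overgroups --- no directed colimit is needed. Inserting this in place of \Cref{Lem:PerFJC}, the otherwise verbatim arguments of \Cref{Cor:OneEndedFJCrelhyp} (one-ended case) and \Cref{Thm:combinationthm} together with \Cref{cor:NRH} (virtually torsion-free case) yield $\Gamma = G \rtimes_\Phi \ZZ \in \AC(\VNil)$.

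I expect the genuine work to lie in the first step: checking that each external input used for $\FJCX$ --- Knopf's acylindrical tree theorem, Bartels' relatively hyperbolic theorem, the Bartels--Farrell--L\"uck extension theorem, Wegner's theorem for surface bundles over the circle, and the elementary closure facts --- is indeed known for $\AC(\VNil)$ and not merely for $\FJCX$; this is true, but should be cited carefully. The colimit replacement itself is immediate once \Cref{Coro:Slenderperiodicisfixedoneended} is available. It is worth emphasising that, unlike in \Cref{Lem:PerFJC}, the argument here genuinely uses that $\mathrm{Per}(\Phi)$ sits inside a group already known to be in $\AC(\VNil)$, and it is precisely the slender (\emph{virtual neatness}) hypothesis --- through the identity $\mathrm{Per}(\Phi) = \Fix(\Phi^N)$ --- that eliminates the colimit; this is why \Cref{thmx:acvnil} requires it and is not stated in the generality of \Cref{thmx:main}.
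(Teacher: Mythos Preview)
Your proposal is correct and follows essentially the same approach as the paper: rerun the proof of \Cref{thmx:main} with $\AC(\VNil)$ in place of $\FJCX$, and at the two points where \Cref{Lem:PerFJC} is invoked (rigid vertices of $\Tper$ and cylinder stabilisers in the sporadic case) use \Cref{Coro:Slenderperiodicisfixedoneended} to replace $\mathrm{Per}(\Phi)$ by $\Fix(\Phi^N)$, so that $\mathrm{Per}(\Phi)\rtimes_\Phi\ZZ$ has the finite-index subgroup $\Fix(\Phi^N)\times\ZZ$ and no colimit is needed. You are in fact more explicit than the paper in itemising which external inputs (Knopf, Bartels, the $\CAT(0)$ case for surface-by-cyclic groups, the elementary closure properties) must be checked to hold for $\AC(\VNil)$; the paper only cites Knopf's result and \cite{bestvina2023farrell} for this and leaves the rest implicit.
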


\begin{proof}
    Knopf's work on acylindrical actions of trees applies equally well in the setting of $\AC(\VNil)$ (see \cite[Corollary~4.2]{knopf2019acylindrical} and \cite[Theorem~2.4]{bestvina2023farrell}, note that Knopf does not state this but it is implicit in her work).  We use the same trees as every step of the proof of \cref{thmx:main}. Whenever a vertex group is identified as $\Per(\Phi) \rtimes_\Phi \ZZ$, use \cref{Coro:Slenderperiodicisfixedoneended} to further identify it as some $\Fix(\Phi^k) \rtimes_\Phi \ZZ$. As this has a finite index subgroup isomorphic to $\Fix(\Phi^k) \times \ZZ$, and $\AC(\VNil)$ passes to both subgroups and direct products, this vertex group lies in $\AC(\VNil)$.
\end{proof}

\begin{remark}
    Note that in both \Cref{thmx:main} and \Cref{thmx:acvnil} we may replace the hypothesis `virtually torsion free' with the more general assumption that $G$ has a finite index subgroup whose Dunwoody decomposition has no non-trivial edge groups.  We do not pursue this here.
\end{remark}

\section{Proofs of the applications}\label{sec:applications}
Our first application is to extensions of groups with relatively hyperbolic kernel.

\setcounter{thmx}{1}
\begin{corx}\label{Corx:Extensions}
    Let $(N,\calp)$ be a virtually torsion-free or one-ended relatively hyperbolic group such that $\calp$ consists of finitely many conjugacy classes of groups which are NRH and whose suspensions $P \rtimes_\Psi \ZZ$ are in $\FJCX$ for all automorphisms $\Psi$ of $P$.  Let $1\to N\to \Gamma\to Q\to 1 $ be a short exact sequence.  If $Q$ is in $\FJCX$, then $\Gamma$ is in $\FJCX$.
\end{corx}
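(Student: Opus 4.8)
The plan is to bootstrap from \Cref{thmx:main} using the extension result \Cref{thm:extensions}. First I would record that $N$ itself lies in $\FJCX$: applying the hypothesis with $\Psi=\id_P$ shows $P\times\ZZ\in\FJCX$, hence $P\in\FJCX$ by \Cref{Thm:FJCclosedsubgroups}(1), for every $[P]\in\calp$; then $N\in\FJCX$ by Bartels' \Cref{Thm:FJCrelativelyhyp}. By \Cref{thm:extensions} it now suffices to prove that for every infinite cyclic subgroup $C\leq Q$ the preimage $\Gamma_C\leq\Gamma$ lies in $\FJCX$.

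Fix such a $C$. Since $C$ is free, the extension $1\to N\to\Gamma_C\to C\to1$ splits, and conjugation by a lift of a generator of $C$ identifies $\Gamma_C$ with a suspension $N\rtimes_\Phi\ZZ$ for some $\Phi\in\Aut(N)$. A priori $\Phi\notin\Aut(N,\calp)$, so \Cref{thmx:main} does not apply directly; this is exactly where the hypothesis that each $P\in\calp$ is NRH enters. By standard rigidity results for relatively hyperbolic structures with NRH peripheral subgroups, the finite collection of peripheral conjugacy classes $\{[P_1],\dots,[P_n]\}$ is canonical, so every automorphism of $N$ merely permutes it; hence $\Aut(N,\calp)$, being the stabiliser of each $[P_i]$, has index at most $n!$ in $\Aut(N)$. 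Choose $m\geq1$ with $\Phi^m\in\Aut(N,\calp)$. Then $N\rtimes_{\Phi^m}\ZZ$ is a finite-index subgroup of $\Gamma_C=N\rtimes_\Phi\ZZ$, so by \Cref{Thm:FJCclosedsubgroups}(2) it is enough to show $N\rtimes_{\Phi^m}\ZZ\in\FJCX$. For each $[P]\in\calp$ the group $P_{\Phi^m}$ is of the form $P\rtimes_\Psi\ZZ$ for an automorphism $\Psi$ of $P$ (a suitable $\ad_g\circ\Phi^m$ restricted to $P$), and every such suspension lies in $\FJCX$ by hypothesis; since $(N,\calp)$ is one-ended or virtually torsion-free, \Cref{thmx:main} applies to $(N,\calp)$ and $\Phi^m$ and yields $N\rtimes_{\Phi^m}\ZZ\in\FJCX$. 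Hence $\Gamma_C\in\FJCX$, and a final application of \Cref{thm:extensions} gives $\Gamma\in\FJCX$.

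The main obstacle is the single step invoking the NRH hypothesis, namely establishing that $\Aut(N,\calp)$ has finite index in $\Aut(N)$; everything else is a short assembly of \Cref{thmx:main}, \Cref{thm:extensions}, \Cref{Thm:FJCrelativelyhyp} and the closure properties of \Cref{Thm:FJCclosedsubgroups}. A minor point to handle with care is that $P_{\Phi^m}$ -- defined via the conventions recorded just before \Cref{Cor:OneEndedFJCrelhyp}, which involve a choice of conjugating element and almost-malnormality of $P$ -- is covered by the blanket hypothesis ``$P\rtimes_\Psi\ZZ\in\FJCX$ for all $\Psi\in\Aut(P)$'': since that hypothesis ranges over all automorphisms of $P$, any ambiguity in the choice of conjugating element is irrelevant.
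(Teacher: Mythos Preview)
Your proof is correct and follows essentially the same route as the paper: use the NRH hypothesis to get $\Aut(N,\calp)$ of finite index in $\Aut(N)$, pass to a power $\Phi^m\in\Aut(N,\calp)$, apply \Cref{thmx:main}, promote via \Cref{Thm:FJCclosedsubgroups}(2), and finish with \Cref{thm:extensions}. Your write-up is in fact more careful than the paper's own proof, which omits the verification that $N\in\FJCX$ (needed as a hypothesis of \Cref{thm:extensions}) and does not make the splitting $\Gamma_C\cong N\rtimes_\Phi\ZZ$ explicit.
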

\begin{proof}
    Since for all $[P]\in\calp$ the group $P$ is NRH we have that $\Aut(G,\calp)$ is a finite index subgroup of $\Aut(G)$.  Let $\Phi\in\Aut(G)$.  The suspension $G_\Phi$ has a finite index subgroup $G_{\Phi^n}$ such that $\Phi^n\in\Aut(G,\calp)$.  Now, \Cref{thmx:main} implies that $G_{\Phi^n}$ is in $\FJCX$.  It follows from \Cref{Thm:FJCclosedsubgroups} that $G_\Phi$ is in $\FJCX$.  The result now follows from \Cref{thm:extensions}.
\end{proof}

Our other application is that $\Aut(G)$ is in $\FJCX$ for $G$ a one-ended group hyperbolic relative to finitely many conjugacy classes of polycyclic subgroups.  Before we prove this, we collect some results.

\begin{thm}[{\cite[Theorem~4.3]{GuirardelLevitt2015}}]
\label{thm:GLses}
    Let $(G,\calp)$ be a relatively hyperbolic group.  Suppose for every $[P]\in\calp$, the group $P$ is finitely generated.  If $G$ is one-ended relative to $\calp$, then there is a short exact sequence
    \[
        1 \to \mathfrak T \to  \Out_0(G,\calp) \to \prod_{i=1}^p \MCG_0(S_i) \times \prod_j \Out(P_j,\mathrm{Inc}_{P_j}^{(t)}) \to 1
    \]
    where
    \begin{enumerate}
        \item $\mathfrak T$ is a quotient of a finite direct product where each factor is virtually cyclic or contained in some $P$ for $P\in\calp$;
        \item $\MCG_0(S_i)$ maps onto a finite index subgroup of the extended mapping class group $\MCG^\ast(S_i)$ with finite kernel (they are virtually isomorphic).
    \end{enumerate}
\end{thm}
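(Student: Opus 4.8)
The plan is to deduce this statement from the canonical JSJ machinery of Guirardel--Levitt; indeed it is essentially a restatement of~\cite[Theorem~4.3]{GuirardelLevitt2015}, and I would present the proof as an application of that result together with facts already recorded in \Cref{Thm:existenceJSJ}. First I would take the canonical JSJ tree $\Tcan$ of $G$ relative to $\calp$ over elementary subgroups. Since every $[P]\in\calp$ is finitely generated, we are in the setting where the finiteness conclusions \Cref{Thm:existenceJSJ}~(\ref{thmJSJ:rigid_verts_finite_out})--(\ref{thmJSJ:edges_finite_out}) apply. Passing to the finite-index subgroup $\Out_0(G,\calp)$ acting trivially on the finite graph $G\backslash\Tcan$, the standard exact sequence associated with a graph-of-groups decomposition gives
\[
    1 \to \mathfrak T \to \Out_0(G,\calp) \to \prod_{[v]} \Out(G_v;\mathrm{Inc}_v^{(t)}) \to 1,
\]
where the product runs over a set of representatives of the vertex orbits and $\mathfrak T$ is the group of twists (generated by Dehn twists along the edges and by twists supported at vertices).

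Next I would run through the four kinds of vertices appearing in $\Tcan$ according to \Cref{Thm:existenceJSJ}. Cylinder vertices are elementary, hence virtually cyclic or conjugate into some $P\in\calp$; together with the edge groups (which are infinite elementary) these account for the twist generators, and each such generator lies in a quotient of a virtually cyclic group or of a conjugate of some $P\in\calp$ --- this is what will give the description of $\mathfrak T$ in item~(1). Rigid vertices $v$ contribute only finite groups to the right-hand product: by \Cref{Thm:existenceJSJ}~(\ref{thmJSJ:rigid_verts_finite_out}) the map $\Out_0(G,\calp)\to\Out(G_v)$ has finite image, so these factors can be absorbed after passing to a further finite-index subgroup if necessary. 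The QH-with-finite-fibre vertices give, for each, a group $\Out(G_v;\mathrm{Inc}_v^{(t)})$; writing $1\to F\to G_v\to\pi_1(\Sigma_v)\to 1$ with $F$ finite, this group is virtually isomorphic to the boundary-preserving mapping class group $\MCG_0(\Sigma_v)$, and standard facts about mapping class groups of $2$-orbifolds with boundary identify the latter with a finite-index subgroup of the extended mapping class group $\MCG^\ast(S_i)$ up to finite kernel, which is item~(2). Finally, the peripheral vertices are exactly the conjugates of the $P_j$ and contribute the factors $\Out(P_j;\mathrm{Inc}_{P_j}^{(t)})$.

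The main obstacle is the precise bookkeeping in item~(1): one has to verify that, once the finite rigid contributions are disposed of, the remaining twist group really is a quotient of a \emph{finite} direct product whose factors are virtually cyclic or peripheral, and that the kernel of the quotient map is exactly $\prod\MCG_0(S_i)\times\prod\Out(P_j;\mathrm{Inc}_{P_j}^{(t)})$. Since this is carried out in full in~\cite[Section~4]{GuirardelLevitt2015}, I would not reproduce it; the proof reduces to citing~\cite[Theorem~4.3]{GuirardelLevitt2015} for the exact sequence and the structure of $\mathfrak T$, \cite[Theorem~3.9]{GuirardelLevitt2015} (equivalently \Cref{Thm:existenceJSJ}~(\ref{thmJSJ:rigid_verts_finite_out})) for the finiteness of the rigid factors, and the description of QH-with-finite-fibre vertices in \Cref{Thm:existenceJSJ}~(\ref{thmJSJ:nonelementary_vertices}) together with the orbifold mapping class group facts for item~(2).
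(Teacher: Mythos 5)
The paper states this result as a citation to~\cite[Theorem~4.3]{GuirardelLevitt2015} with no proof given, and you do essentially the same thing: your sketch correctly outlines the Guirardel--Levitt argument (apply the canonical JSJ $\Tcan$ relative to $\calp$, pass to the finite-index subgroup acting trivially on the quotient graph, use the twist exact sequence, absorb the finite rigid-vertex contributions, and identify the QH and peripheral factors), and then defers the careful bookkeeping to the original reference, which is exactly what the authors do by citing it outright. The sketch is accurate in substance; the only very minor looseness is in calling the vertices carrying the $P_j$ ``peripheral vertices'' --- in the tree of cylinders they are the elementary cylinder vertices containing the parabolics --- but this does not affect the argument, since the proof ultimately rests on the citation in any case.
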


\begin{prop}\label{prop:OutAutpolycyclic}
    If $G$ is a virtually polycyclic group, then $G$, $\Out(G)$ and $\Aut(G)$ are in $\FJCX$.
\end{prop}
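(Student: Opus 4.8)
The plan is to treat the three groups in turn, in each case reducing to results already available. For $G$ itself: a virtually polycyclic group contains a finite-index polycyclic, hence soluble, subgroup, so $G \in \FJCX$ by Wegner's theorem for soluble groups \cite{wegner2015farrell} together with closure of $\FJCX$ under finite-index overgroups (\Cref{Thm:FJCclosedsubgroups}(2)).

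For $\Aut(G)$ and $\Out(G)$ the essential external input is the structure theory of automorphism groups of polycyclic-by-finite groups, due to Baues and Grunewald: each of $\Aut(G)$ and $\Out(G)$ has a finite-index subgroup $A$ that fits into a short exact sequence
\[ 1 \to K \to A \to L \to 1 \]
in which $K$ is finitely generated virtually nilpotent (in particular virtually polycyclic) and $L$ is an arithmetic group. One must apply this directly to $\Out(G)$, rather than deducing it from the case of $\Aut(G)$, since $\FJCX$ is not known to be inherited by quotients. I would then feed this extension into \Cref{thm:extensions}: the kernel $K$ is virtually soluble, hence in $\FJCX$ by \cite{wegner2015farrell} and \Cref{Thm:FJCclosedsubgroups}(2); the quotient $L$ is arithmetic, hence a lattice in the (virtually connected) Lie group of real points of its ambient algebraic group, so $L \in \FJCX$ by \cite{BartelsFarrellLuck2014} (or, if an $S$-arithmetic group appears, \cite{Ruping2-16}); and for every infinite cyclic $C \leq L$, the preimage of $C$ in $A$ is an extension of the virtually polycyclic group $K$ by $\ZZ$, hence again virtually polycyclic, hence in $\FJCX$ by \cite{wegner2015farrell} and \Cref{Thm:FJCclosedsubgroups}. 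Therefore $A \in \FJCX$ by \Cref{thm:extensions}, and \Cref{Thm:FJCclosedsubgroups}(2) promotes this to $\Aut(G) \in \FJCX$ and $\Out(G) \in \FJCX$.

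The only substantive ingredient is the Baues--Grunewald structural result; everything else is bookkeeping with the closure properties of \Cref{Thm:FJCclosedsubgroups}, the transitivity principle \Cref{thm:extensions}, Wegner's theorem, and the Farrell--Jones conjecture for arithmetic groups (e.g.\ \cite{BartelsLuckReichRuping2014,BartelsFarrellLuck2014}). The point to be careful about is the shape of the extension: we need the (virtually) polycyclic part as the \emph{normal} subgroup and the arithmetic part as the quotient, so that preimages of infinite cyclic subgroups remain virtually polycyclic and \Cref{thm:extensions} applies. In the Baues--Grunewald description the abelian (``unipotent'') part is indeed normal, so this is precisely the situation that arises; the reversed extension (arithmetic-by-polycyclic) would instead land us back among cyclic extensions of arithmetic groups, which is exactly the kind of problem this paper is otherwise at pains to avoid.
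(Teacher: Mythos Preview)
Your proposal is correct and essentially matches the paper's approach. The paper streamlines one step: Baues--Grunewald's Theorem~1.1 states that $\Out(G)$ is itself an arithmetic group, so no extension argument is needed there and $\Out(G)\in\FJCX$ follows directly from \cite{BartelsFarrellLuck2014}; the extension argument via \Cref{thm:extensions} is then used only for $\Aut(G)$, applied to the natural sequence $1\to\Inn(G)\to\Aut(G)\to\Out(G)\to1$ with $\Inn(G)\cong G/Z(G)$ virtually polycyclic.
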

\begin{proof}
    By \cite[Theorem~1.1]{BauesGrunewald2006} we see that $\Out(G)$ is an arithmetic group.  Hence, $\Out(G)$ is in $\FJCX$ by \cite{BartelsFarrellLuck2014}.  Technically they only prove the conjecture for $K$- and $L$-theory but it follows for $A$-theory by \cite{Ruping2-16}, \cite[Proof of Theorem~1.8(a)]{knopf2019acylindrical}, and \cite[Theorem~6.19]{EnkelmannLuckPieperUllmannWinges}.  Alternatively, one may use \cite{EnkelmannLuckPieperUllmannWinges} and \cite{KaprowskiUllmannWegnerWinges2018}.  
    
    Since $G$ is virtually soluble it is in $\FJCX$ by \cite{wegner2015farrell} (for $K$- and $L$-theory), \cite{KaprowskiUllmannWegnerWinges2018} (for $A$-theory), and \Cref{Thm:FJCclosedsubgroups}.  Now, a virtually polycyclic group is poly-\{virtually cyclic\}, so any extension $G\rtimes\ZZ$ is also virtually polycyclic.  Thus, $G\rtimes\ZZ$ is in $\FJCX$.  Further, note that $G/Z(G)$ is virtually polycyclic and so in $\FJCX$.  Combining these observations with \Cref{thm:extensions} shows that $\Aut(G)$ is in $\FJCX$.
\end{proof}

\begin{prop}
\label{prop:MCG(orb)}
    The mapping class group of a hyperbolic 2-orbifold is in $\FJCX$.
\end{prop}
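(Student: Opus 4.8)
The plan is to deduce this from the Farrell--Jones Conjecture for mapping class groups of punctured orientable surfaces \cite{BartelsBestvina2019Farrell}, together with the closure properties of \Cref{Thm:FJCclosedsubgroups} and \Cref{lem:finite_extensions}. The key point is that the (extended) mapping class group of a compact hyperbolic $2$-orbifold is virtually isomorphic to, or at worst a finite extension of a finite-index subgroup of, the mapping class group of an orientable surface of finite type.

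Concretely, I would proceed in two steps. First, reduce to the orientable case: if $\Sigma$ is non-orientable one passes to the orientation double cover $\widehat\Sigma$, and if $\Sigma$ carries reflector boundary or corner reflectors one ``unfolds the mirrors'' to write $\Sigma$ as the quotient of an orientable $2$-orbifold $\widehat\Sigma$ (without mirrors) by a finite group $F$ of orientation-reversing symmetries. In all cases $\widehat\Sigma\to\Sigma$ is a canonical finite cover, so by the Birman--Hilden theory (applicable because the relevant actions are either free or arise from a sufficiently complicated hyperbolic branched structure) the symmetric mapping class group $\mathrm{SMod}(\widehat\Sigma)\leq\MCG(\widehat\Sigma)$ surjects onto a finite-index subgroup of $\MCG(\Sigma)$ with finite kernel contained in $F$. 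Second, handle the orientable case: an orientable hyperbolic $2$-orbifold is a compact orientable surface $S$ of finite type with finitely many cone points $p_1,\dots,p_k$ of orders $n_1,\dots,n_k$, and $\MCG(\Sigma)$ is the subgroup of the (extended) mapping class group $\MCG(S,\{p_1,\dots,p_k\})$ of $S$ with the $p_i$ marked consisting of those classes whose permutation of the marked points preserves the function $p_i\mapsto n_i$; this subgroup has index at most $k!$ in $\MCG(S,\{p_1,\dots,p_k\})$, which lies in $\FJCX$ by \cite{BartelsBestvina2019Farrell}, so $\MCG(\Sigma)\in\FJCX$ by \Cref{Thm:FJCclosedsubgroups}~$(1)$ and $(2)$ (the latter to absorb orientation-reversing classes). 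Combining the two steps, $\mathrm{SMod}(\widehat\Sigma)\in\FJCX$ as a subgroup of $\MCG(\widehat\Sigma)\in\FJCX$, it is residually finite (being a subgroup of the residually finite group $\MCG(\widehat\Sigma)$), so $\mathrm{SMod}(\widehat\Sigma)/F\in\FJCX$ by \Cref{lem:finite_extensions}, and finally $\MCG(\Sigma)\in\FJCX$ since it contains $\mathrm{SMod}(\widehat\Sigma)/F$ with finite index (\Cref{Thm:FJCclosedsubgroups}~$(2)$).

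The main obstacle is the orbifold-to-surface reduction itself: one must either cite cleanly that the mapping class group of a compact $2$-orbifold is virtually isomorphic to that of a surface, or else verify the Birman--Hilden property for the canonical cover $\widehat\Sigma\to\Sigma$ and carry the residual-finiteness bookkeeping needed to apply \Cref{lem:finite_extensions}. Once that identification is in place, everything else is a formal consequence of the closure properties recorded in \Cref{Thm:FJCclosedsubgroups} and the Bartels--Bestvina theorem \cite{BartelsBestvina2019Farrell}.
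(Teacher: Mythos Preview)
Your proposal is correct and follows the same overall strategy as the paper: exhibit the orbifold mapping class group as a finite quotient of a subgroup of a surface mapping class group, invoke Bartels--Bestvina for the surface, and use residual finiteness of mapping class groups together with \Cref{lem:finite_extensions} and \Cref{Thm:FJCclosedsubgroups} to transfer the conclusion.

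The technical route differs slightly. The paper works uniformly: it takes any finite orientable surface cover whose fundamental group is \emph{characteristic} in $\pi_1(S)$, and then cites \cite{KolbeEvans2021Orbifold} for an injection $\Aut_{\text{geom}}(\pi_1(S)) \hookrightarrow \Aut_{\text{geom}}(\pi_1(\Sigma))$ at the level of geometric automorphism groups, from which the desired short exact sequence $1 \to F \to C \to \MCG(S) \to 1$ (with $C \leq \MCG(\Sigma)$ and $F$ finite) drops out by the third isomorphism theorem. Your two-step reduction (orientation/mirror cover plus Birman--Hilden, then marked-points identification for the orientable cone-point case) reaches the same conclusion and is arguably more concrete, but requires the case analysis and the Birman--Hilden verification you flag as the main obstacle. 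The paper's single citation to \cite{KolbeEvans2021Orbifold} sidesteps that bookkeeping; your orientable-with-cone-points step, on the other hand, is cleaner than anything in the paper for that special case, since it avoids covers entirely.
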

\begin{proof}
    This follows from the result for (orientable) surfaces \cite{BartelsBestvina2019Farrell} and assembling results in the literature.  Note that Bartels--Bestvina only prove FJC for $K$- and $L$-theory but the result for $A$-theory follows (as usual) from \cite[Proof of Theorem~1.8(a)]{knopf2019acylindrical}, and \cite[Theorem~6.19]{EnkelmannLuckPieperUllmannWinges}.
    
    Let $S$ be a hyperbolic orbifold, and let $\Sigma$ be an orientable surface covering $S$ with finite degree so that $\pi_1(\Sigma)$ is characteristic in $\pi_1(S)$. (This can be achieved by taking any covering surface, passing to its orientation cover if necessary, and then taking the characteristic core of the corresponding subgroup and realising the covering surface.) By \cite{KolbeEvans2021Orbifold} there is an injective map from $\Aut_{\text{geom}}(\pi_1(S))$ to $\Aut_{\text{geom}}(\pi_1(\Sigma))$, where these \emph{geometric} automorphism groups are exactly the lifts of the mapping class groups.
    
    Restricting to the image, $\Inn(\pi_1(S))$ will be normal, and by the third isomorphism theorem the quotient is isomorphic to $C/(\Inn(\pi_1(S))/\Inn(\pi_1(\Sigma)))$, where $C$ is a subgroup of $\Aut_{\text{geom}}(\pi_1(\Sigma))/\Inn(\pi_1(\Sigma))$, the mapping class group of $\Sigma$. The quotient of inner automorphism groups is finite (in fact isomorphic to the deck transformations $\pi_1(S)/\pi_1(\Sigma)$), so we have realised $\MCG(S)$ as an extension \[1 \to F \to C \to MCG(S) \to 1.\]
    Since mapping class groups of surfaces are residually finite by \cite{Grossman1974MCGrf} (and residual finiteness passes to subgroups), we may apply \cref{lem:finite_extensions} to obtain the conclusion.
\end{proof}

\begin{prop}
    \label{prop:out_in_fjc}
    If $G$ is a one-ended group hyperbolic relative to finitely many conjugacy classes of virtually polycyclic groups, then $\Out(G)$ is in $\FJCX$.
\end{prop}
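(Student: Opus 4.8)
The plan is to reduce $\Out(G)$ to the Guirardel--Levitt exact sequence of \Cref{thm:GLses} and then recognise each of its three types of building blocks as lying in $\FJCX$. Since virtually polycyclic groups are slender, and hence NRH, the peripheral structure $\calp$ is canonical up to finitely many permutations of its conjugacy classes, so $\Aut(G,\calp)$ has finite index in $\Aut(G)$ and therefore $\Out(G,\calp)$ has finite index in $\Out(G)$; by \Cref{Thm:FJCclosedsubgroups}~$(2)$ it suffices to prove $\Out(G,\calp) \in \FJCX$. By \Cref{Thm:existenceJSJ}~$(1),(3)$ the group $\Out(G,\calp)$ preserves the canonical JSJ tree $\Tcan$, which has finitely many orbits of edges, so the subgroup $\Out_0(G,\calp)$ acting trivially on the finite graph $G\backslash\Tcan$ has finite index in $\Out(G,\calp)$; applying \Cref{Thm:FJCclosedsubgroups}~$(2)$ again, it is enough to prove $\Out_0(G,\calp) \in \FJCX$.

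Now I would apply \Cref{thm:GLses} to obtain a short exact sequence $1 \to \mathfrak{T} \to \Out_0(G,\calp) \to Q \to 1$ with $Q = \prod_{i=1}^{p} \MCG_0(S_i) \times \prod_j \Out(P_j,\mathrm{Inc}_{P_j}^{(t)})$, and check that $Q \in \FJCX$. First, $\mathfrak{T}$ is a quotient of a finite direct product each of whose factors is virtually cyclic or a subgroup of some $P \in \calp$; all of these are virtually polycyclic, and the class of virtually polycyclic groups is closed under subgroups, finite direct products, and quotients, so $\mathfrak{T}$ is virtually polycyclic and hence in $\FJCX$ by \Cref{prop:OutAutpolycyclic}. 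Second, each $\MCG_0(S_i)$ admits a surjection with finite kernel onto a finite-index subgroup of the extended mapping class group of the hyperbolic $2$-orbifold $S_i$, which lies in $\FJCX$ by \Cref{prop:MCG(orb)} together with \Cref{Thm:FJCclosedsubgroups}~$(2)$; feeding this through \Cref{Thm:FJCclosedsubgroups}~$(1)$ and \Cref{lem:finite_extensions} gives $\MCG_0(S_i) \in \FJCX$. Third, $\Out(P_j,\mathrm{Inc}_{P_j}^{(t)})$ is a subgroup of $\Out(P_j)$ with $P_j$ virtually polycyclic, hence in $\FJCX$ by \Cref{prop:OutAutpolycyclic} and \Cref{Thm:FJCclosedsubgroups}~$(1)$. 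By \Cref{Thm:FJCclosedsubgroups}~$(3)$, $Q \in \FJCX$.

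Finally, I would promote this to $\Out_0(G,\calp) \in \FJCX$ using \Cref{thm:extensions}. We already have $\mathfrak{T} \in \FJCX$, so it remains to check that the preimage of every infinite cyclic subgroup $C \leq Q$ lies in $\FJCX$. Such a preimage is an extension $1 \to \mathfrak{T} \to \pi^{-1}(C) \to C \to 1$ with $C \cong \ZZ$ free, hence it splits and $\pi^{-1}(C) \cong \mathfrak{T} \rtimes \ZZ$; since virtually polycyclic groups are poly-(virtually cyclic), this extension is again virtually polycyclic and therefore in $\FJCX$. Thus \Cref{thm:extensions} yields $\Out_0(G,\calp) \in \FJCX$, and unwinding the finite-index reductions gives $\Out(G) \in \FJCX$. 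I do not expect a deep obstacle here: the step requiring the most care is the structural bookkeeping, namely confirming that the virtual polycyclicity of the peripheral subgroups propagates to $\mathfrak{T}$ (and survives the extension by $\ZZ$), and that the mapping class group factors produced by \Cref{thm:GLses} are exactly the hyperbolic $2$-orbifold mapping class groups handled by \Cref{prop:MCG(orb)}; everything else is an application of the closure properties of $\FJCX$ recorded in \Cref{Thm:FJCclosedsubgroups}.
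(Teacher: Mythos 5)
Your proof is correct and follows essentially the same route as the paper's: reduce to the finite-index subgroup $\Out_0(G,\calp)$, invoke the Guirardel--Levitt exact sequence (\cref{thm:GLses}), recognise $\mathfrak{T}$ as virtually polycyclic, handle the mapping class group factors via \cref{prop:MCG(orb)} and \cref{lem:finite_extensions} and the $\Out(P_j)$ factors via \cref{prop:OutAutpolycyclic}, and finish with \cref{thm:extensions} using that preimages of infinite cyclic subgroups are virtually polycyclic. You are somewhat more explicit than the paper about the initial passage from $\Out(G)$ to $\Out_0(G,\calp)$ and about which closure properties justify each step, but the argument is the same.
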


\begin{proof}
    By \Cref{thm:GLses} there is a finite index subgroup $\Out_0(G)$ fitting into a short exact sequence \[ 1 \to \mathfrak T \to \Out_0(G) \to \prod_{i=1}^p \MCG_0(S_i) \times \prod_j \Out(P_j,\mathrm{Inc}_{P_j}^{(t)}) \to 1.\]

    We want to apply \cref{thm:extensions} to this short exact sequence. First we check the kernel $\mathfrak{T}$: this is a quotient of a direct product of virtually polycyclic groups, and hence is itself virtually polycyclic, and so in $\FJCX$ by \Cref{prop:OutAutpolycyclic}.
    
    Now consider the image. The subgroups $\Out(P_j,\mathrm{Inc}_{P_j}^{(t)})$ are subgroups of $\Out(P_j)$ for a virtually polycyclic $P_j$, and hence are in $\FJCX$ by \cref{prop:OutAutpolycyclic}. Each $\MCG(S_i)$ maps with finite kernel onto the mapping class group of a hyperbolic 2-orbifold. By \cref{lem:finite_extensions} it is enough to consider these mapping class groups. These are in $\FJCX$ by \cref{prop:MCG(orb)}. Then the product is in $\FJCX$ by \cref{Thm:FJCclosedsubgroups}. 

    To apply \cref{thm:extensions} it remains to check the preimages of elements. These are of the form $\mathfrak{T} \rtimes \ZZ$, which are in $\FJCX$ by \Cref{prop:OutAutpolycyclic} since they are again virtually polycyclic. \qedhere

\end{proof}

\begin{thmx}\label{thmx:Aut_FJC}
    If $G$ is a one-ended group hyperbolic relative to finitely many conjugacy classes of virtually polycyclic groups, then $\Aut(G)$ and $\Out(G)$ are in $\FJCX$.
\end{thmx}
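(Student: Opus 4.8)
The plan is to derive \Cref{thmx:Aut_FJC} from \Cref{prop:out_in_fjc} together with the standard extension
\[
    1 \to \Inn(G) \to \Aut(G) \to \Out(G) \to 1.
\]
Since $G$ is one-ended relative to $\calp$, it is not virtually cyclic and (being infinite and one-ended) has trivial centre, so $\Inn(G) \cong G$. Thus we are looking at an extension with kernel $G$ itself and quotient $\Out(G)$, and we already know $\Out(G)$ is in $\FJCX$ by \Cref{prop:out_in_fjc}.

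First I would invoke \Cref{thm:extensions}: it suffices to check that $G$ is in $\FJCX$ (which it is, by \Cref{Thm:FJCrelativelyhyp}, since each virtually polycyclic $P_i$ is in $\FJCX$ by \Cref{prop:OutAutpolycyclic}) and that for every infinite cyclic subgroup $C \leq \Out(G)$, the preimage of $C$ in $\Aut(G)$ lies in $\FJCX$. Fix such a $C = \langle \phi \rangle$, and let $\Phi \in \Aut(G)$ be a representative of a generator $\phi$. The preimage of $C$ in $\Aut(G)$ is then exactly $G \rtimes_\Phi \ZZ$: the subgroup of $\Aut(G)$ generated by $\Inn(G) = G$ and $\Phi$, which is a suspension of $G$ since $G$ has trivial centre (so $\Inn(G) \cap \langle \Phi \rangle = 1$). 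Now $\Phi$ lies in some $\Aut(G,\calp)$ after passing to a power: since each $P_i$ is virtually polycyclic it is slender, hence NRH, so $\Aut(G,\calp)$ has finite index in $\Aut(G)$, and some $\Phi^m \in \Aut(G,\calp)$. Then $G \rtimes_{\Phi^m} \ZZ$ has finite index in $G \rtimes_\Phi \ZZ$, and by \Cref{Cor:OneEndedFJCrelhyp} (applied with the peripheral structure $\calp$ and the observation that for each $[P]\in\calp$ the suspension $P_{\Phi^m}$ is virtually polycyclic, hence in $\FJCX$ by \Cref{prop:OutAutpolycyclic}) we get $G \rtimes_{\Phi^m} \ZZ \in \FJCX$. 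By closure under finite-index overgroups (\Cref{Thm:FJCclosedsubgroups}~$(2)$) this gives $G \rtimes_\Phi \ZZ \in \FJCX$, i.e. the preimage of $C$ is in $\FJCX$.

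Having verified both hypotheses of \Cref{thm:extensions}, we conclude $\Aut(G) \in \FJCX$, and $\Out(G) \in \FJCX$ is \Cref{prop:out_in_fjc}. The one point requiring a little care — and the likeliest source of a gap — is the identification of preimages of cyclic subgroups of $\Out(G)$ with honest suspensions $G \rtimes_\Phi \ZZ$; this is clean here because centreless-ness of the one-ended relatively hyperbolic group $G$ forces $\Inn(G) \cong G$ and makes the extension $1 \to G \to p^{-1}(C) \to C \to 1$ split. (If one worried about whether $G$ has trivial centre, note a one-ended group hyperbolic relative to $\calp$ with $\calp$ a proper collection has at most one peripheral subgroup in each conjugacy class containing the centraliser of any infinite-order element, and in fact standard relative hyperbolicity arguments give that $Z(G)$ is finite; combined with one-endedness this forces $Z(G) = 1$.) Everything else is a direct citation of results already in the excerpt.

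\begin{proof}
    Combine \Cref{prop:out_in_fjc} with \Cref{thm:extensions} applied to the extension $1 \to \Inn(G) \to \Aut(G) \to \Out(G) \to 1$. Since $G$ is one-ended and infinite it has trivial centre, so $\Inn(G) \cong G$, which lies in $\FJCX$ by \Cref{Thm:FJCrelativelyhyp} and \Cref{prop:OutAutpolycyclic}. For an infinite cyclic $C = \langle \phi \rangle \leq \Out(G)$ with representative $\Phi$, the preimage of $C$ in $\Aut(G)$ is $G \rtimes_\Phi \ZZ$. Each peripheral $P_i$ is slender hence NRH, so $\Aut(G,\calp)$ has finite index in $\Aut(G)$ and $\Phi^m \in \Aut(G,\calp)$ for some $m$; as each $P_i$ is virtually polycyclic, so is each suspension $(P_i)_{\Phi^m}$, hence $(P_i)_{\Phi^m} \in \FJCX$ by \Cref{prop:OutAutpolycyclic}. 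By \Cref{Cor:OneEndedFJCrelhyp}, $G \rtimes_{\Phi^m} \ZZ \in \FJCX$, and by \Cref{Thm:FJCclosedsubgroups}~$(2)$ so is its finite-index overgroup $G \rtimes_\Phi \ZZ$. Thus \Cref{thm:extensions} gives $\Aut(G) \in \FJCX$, and $\Out(G) \in \FJCX$ by \Cref{prop:out_in_fjc}.
\end{proof}
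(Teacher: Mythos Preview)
Your overall strategy --- apply \Cref{thm:extensions} to $1 \to \Inn(G) \to \Aut(G) \to \Out(G) \to 1$, with $\Out(G) \in \FJCX$ from \Cref{prop:out_in_fjc} and preimages of infinite cyclic subgroups handled via \Cref{Cor:OneEndedFJCrelhyp} after passing to a power landing in $\Aut(G,\calp)$ --- is exactly the paper's approach. The paper packages the preimage step as \Cref{Corx:Extensions} applied with $N=\Inn(G)$, but the content is identical.

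There is, however, a genuine error in one step. Your claim that ``$G$ one-ended and infinite forces $Z(G)=1$'' (and the later elaboration that ``$Z(G)$ finite combined with one-endedness forces $Z(G)=1$'') is false. Take $G=\ZZ/2 \times \pi_1(\Sigma_2)$ for $\Sigma_2$ a closed genus-$2$ surface: this is one-ended, hyperbolic (hence hyperbolic relative to the empty family of virtually polycyclic groups), and has centre $\ZZ/2$. So in general $\Inn(G)\cong G/Z(G)$ is only a proper quotient of $G$, and the preimage of $C=\langle\phi\rangle$ in $\Aut(G)$ is $(G/Z(G))\rtimes\ZZ$, not $G\rtimes_\Phi\ZZ$. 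Your identification of the preimage, and hence your invocation of \Cref{Cor:OneEndedFJCrelhyp} for $G$, does not go through as written.

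The repair is exactly what the paper does: observe only that $Z(G)$ is \emph{finite}, so $\Inn(G)=G/Z(G)$ is quasi-isometric to $G$ and therefore (by \cite{BehrstockDrutuMosher2009}) is itself one-ended and hyperbolic relative to finitely many conjugacy classes of NRH virtually polycyclic subgroups (the images of the original peripherals). Now run your argument with $\Inn(G)$ in place of $G$: the preimage of $C$ is a suspension of $\Inn(G)$, and \Cref{Cor:OneEndedFJCrelhyp} applies to $\Inn(G)$ for the same reasons it would have applied to $G$. With this correction your proof is complete and agrees with the paper's.
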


\begin{proof}
    Since $\calp$ contains only finitely many conjugacy classes of virtually polycyclic groups, by \cite[Corollary~1.14]{DrutuSapir2005}, we may modify $\calp$ such that that for every $[P]\in\calp$, the group $P$ is NRH.  Observe that since $G$ is hyperbolic relative to finitely many conjugacy classes of virtually polycyclic subgroups its centre is finite.  Hence, $\Inn(G)$ is quasi-isometric to $G$ and again hyperbolic relative to finitely many conjugacy classes of NRH virtually polycyclic subgroups by \cite{BehrstockDrutuMosher2009}. By Proposition~\ref{prop:out_in_fjc}, the group $\Out(G)$ is in $\FJCX$. The result now follows from applying \Cref{Corx:Extensions} to the short exact sequence \[ 1 \to \Inn(G) \to \Aut(G) \to \Out(G) \to 1. \qedhere\]
\end{proof}

\Cref{thmx:Aut_FJC} allows us to prove that the outer automorphism groups of some small complexity relatively hyperbolic groups also belong to $\FJCX$.

\begin{corollary}\label{Cor:AutofreeproductFJC}
    Let $G=A \ast_C B$, where $A$ and $B$ are one-ended hyperbolic groups relative  to finitely many conjugacy classes of virtually polycyclic groups and $C$ is a finite group. The groups $\Out(G)$ and $\Aut(G)$ are in $\FJCX$. 
\end{corollary}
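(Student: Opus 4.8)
\textbf{Proof proposal for \Cref{Cor:AutofreeproductFJC}.}

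The plan is to reduce to \Cref{thmx:Aut_FJC} by understanding $\Out(G)$ and $\Aut(G)$ for $G = A \ast_C B$ with $C$ finite. First I would record the structural input: since $C$ is finite, the amalgam $G$ acts on its Bass--Serre tree $T$ with finite edge stabilisers, and $G$ is hyperbolic relative to $\{[A],[B]\}$ (a finite amalgam of relatively hyperbolic groups over a finite subgroup is again relatively hyperbolic, with peripheral structure the union of the peripheral structures of the factors). In particular $G$ itself is hyperbolic relative to finitely many conjugacy classes of virtually polycyclic groups, but it need not be one-ended, so \Cref{thmx:Aut_FJC} does not apply directly to $G$. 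Instead the idea is to analyse the action of $\Out(G)$ on the canonical (JSJ / Stallings--Dunwoody-type) decomposition and on the deformation space of splittings over finite subgroups, in the spirit of \cite{GuirardelLevitt2015} and \Cref{thm:GLses}, to obtain a short exact sequence relating $\Out(G)$ to automorphism data of the one-ended vertex groups $A$ and $B$.

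Concretely, I would proceed as follows. Since $C$ is finite and $A, B$ are one-ended, the splitting $G = A \ast_C B$ is (after possibly passing to a canonical refinement) the Stallings--Dunwoody decomposition of $G$ relative to the peripheral structure, and it is invariant under a finite-index subgroup $\Out_1(G) \le \Out(G)$ that preserves the $G$-equivariant homeomorphism type of $T$ and acts trivially on the quotient graph. For this subgroup, restriction to vertex groups gives a homomorphism $\Out_1(G) \to \Out(A, \mathrm{Inc}_A) \times \Out(B, \mathrm{Inc}_B)$ where $\mathrm{Inc}_\bullet$ records the conjugacy class of the edge group $C$; by the standard Bass--Serre/deformation-space machinery (\cite[Theorem~3.9, Theorem~4.3]{GuirardelLevitt2015}, and the fact that $C$ is finite so $\Aut(C)$ and $N_A(C)/C$, $N_B(C)/C$ are finite or again virtually polycyclic sitting inside $A,B$) the kernel of this map is virtually (a product of pieces inside $A$, $B$, together with a twist/transvection part) — in any case a group built from subgroups of $A$ and $B$ and finite groups, which lies in $\FJCX$ by \Cref{Thm:FJCclosedsubgroups} and \Cref{thmx:Aut_FJC}. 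Each factor $\Out(A, \mathrm{Inc}_A)$ is a subgroup of $\Out(A)$, which is in $\FJCX$ by \Cref{thmx:Aut_FJC}; likewise for $B$. Hence the image lies in $\FJCX$, and to conclude $\Out_1(G) \in \FJCX$ via \Cref{thm:extensions} I would check that preimages of infinite cyclic subgroups of the image are in $\FJCX$ — these are extensions of the (virtually polycyclic / $\FJCX$) kernel by $\ZZ$, handled either by \Cref{prop:OutAutpolycyclic}-type arguments when the kernel is virtually polycyclic, or by observing the preimage is again an extension with $\FJCX$ kernel whose cyclic-subgroup preimages one analyses one more step (using that $C$ finite keeps everything controlled). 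Then \Cref{Thm:FJCclosedsubgroups}(2) upgrades $\Out_1(G) \in \FJCX$ to $\Out(G) \in \FJCX$. For $\Aut(G)$, since $Z(G)$ is finite (as $G$ is relatively hyperbolic with proper peripherals and non-elementary, or more directly because a finite amalgam of groups with finite centre over a finite subgroup has finite centre), $\Inn(G)$ is finite-index-free and quasi-isometric to $G$; applying \Cref{thm:extensions} (or \Cref{lem:finite_extensions} together with \Cref{Corx:Extensions}) to $1 \to \Inn(G) \to \Aut(G) \to \Out(G) \to 1$, using that $\Inn(G) \cong G/Z(G)$ is relatively hyperbolic with NRH virtually polycyclic peripherals and that $\Out(G) \in \FJCX$, gives $\Aut(G) \in \FJCX$.

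The main obstacle I anticipate is the careful bookkeeping in the $\Out$ step: namely verifying that the relevant finite-index subgroup of $\Out(G)$ genuinely fits into a short exact sequence of the stated shape, with kernel in $\FJCX$ and with image inside a product of $\FJCX$ groups, and — crucially — that the preimages of cyclic subgroups required by \Cref{thm:extensions} are themselves in $\FJCX$ rather than merely virtually polycyclic-ish. In the pure amalgam-over-finite case this is cleaner than the general relatively hyperbolic JSJ situation (edge groups are finite, so there are no loxodromic twists to worry about and the "bending" subgroups are finite), so I expect this to go through, but it still requires invoking \cite[Theorem~4.3]{GuirardelLevitt2015} (or the more elementary Bass--Serre/Levitt description of $\Out$ of a graph of groups with finite edge groups) in the form needed and checking the extension hypotheses at each stage. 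An alternative, possibly slicker route — if one can arrange $A \ast_C B$ to be hyperbolic relative to $\{[A],[B]\}$ with $A,B$ NRH (which we may, by \cite{DrutuSapir2005}, since they are one-ended with virtually polycyclic peripherals) — is to quote \Cref{Corx:Extensions} directly for the sequence $1 \to \Inn(G) \to \Aut(G) \to \Out(G) \to 1$ and reduce $\Out(G) \in \FJCX$ to the JSJ analysis above; either way the genuine content is the $\Out(G)$ computation.
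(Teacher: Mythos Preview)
Your overall strategy is in the right spirit, but there is one concrete gap and one substantial simplification you are missing compared to the paper.

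\textbf{The gap (for $\Aut(G)$).} You propose to handle $\Aut(G)$ via the sequence $1 \to \Inn(G) \to \Aut(G) \to \Out(G) \to 1$ using \Cref{Corx:Extensions} or \Cref{thm:extensions}. For \Cref{thm:extensions} you must show that every suspension $G_\Phi$ lies in $\FJCX$; for \Cref{Corx:Extensions} you need $N=\Inn(G)\cong G/Z(G)$ to be one-ended or virtually torsion-free. But when $C$ is nontrivial, $G=A\ast_C B$ is infinitely ended and there is no reason for it (or $G/Z(G)$) to be virtually torsion-free, so neither \Cref{thmx:main} nor \Cref{Corx:Extensions} applies to $G_\Phi$. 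The paper plugs exactly this hole with \Cref{prop:sporadic_stallings_dunwoody}, which handles suspensions of a one-edge amalgam over a finite group directly. Without that ingredient (or an equivalent), your $\Aut(G)$ argument does not close.

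\textbf{The simplification (for $\Out(G)$).} The paper avoids your entire kernel/extension analysis. By Forester's rigidity \cite{Forester2002Rididity}, the Bass--Serre tree of $A\ast_C B$ is the unique reduced tree in its deformation space, so the index-$\leq 2$ subgroup $\Out^0(G)$ preserving $[A]$ and $[B]$ acts on it, and every $\phi\in\Out^0(G)$ has a representative $\Phi$ with $\Phi(A)=A$ and $\Phi(B)=B$. The assignment $\phi\mapsto(\Phi|_A,\Phi|_B)$ is then an \emph{isomorphism} $\Out^0(G)\cong \Aut(A,C)\times\Aut(B,C)$ --- there is no kernel to study, because mapping to $\Aut$ (not $\Out$) of the vertex groups already determines $\Phi$. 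Since $\Aut(A,C)\leq\Aut(A)$ and $\Aut(B,C)\leq\Aut(B)$ are in $\FJCX$ by \Cref{thmx:Aut_FJC}, one concludes immediately via \Cref{Thm:FJCclosedsubgroups}. Your route through $\Out(A,\mathrm{Inc}_A)\times\Out(B,\mathrm{Inc}_B)$ forces you to identify a kernel and then verify the cyclic-preimage hypothesis of \Cref{thm:extensions}; this is doable (the kernel consists of the inner pieces from $A$ and $B$, and since $C$ is finite there are no infinite twists), but it is unnecessary work once you observe the isomorphism with the $\Aut$-product.
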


\begin{proof}
    We prove the result for $\Out(G)$, the proof for $\Aut(G)$ being identical to the proof of \Cref{thmx:Aut_FJC} (this uses \Cref{prop:sporadic_stallings_dunwoody} when $C$ is nontrivial and \Cref{thmx:main} otherwise). Let 
    $\Out^0(G)$ be the index (at most) $2$ subgroup of $\Out(G)$ preserving the conjugacy classes of $A$ and $B$. By~\cite{Forester2002Rididity}, every element $\phi \in \Out^0(G)$ has a representative $\Phi\in \phi$ such that $\Phi(A)=A$ and $\Phi(B)=B$. Moreover, the map sending $\phi$ to $\Phi$ defines an isomorphism between $\Out^0(G)$ and $\Aut(A,C) \times \Aut(B,C)$. By \Cref{thmx:Aut_FJC}, the groups $\Aut(A,C)$ and $\Aut(B,C)$ belong to $\FJCX$. By \Cref{Thm:FJCclosedsubgroups}, the groups $\Out^0(G)$ and $\Out(G)$ belong to $\FJCX$.
\end{proof}

\begin{corollary}\label{Cor:AutHNNFJC}
   Let $G=A \ast_C$, where $A$ is a one-ended hyperbolic group relative  to finitely many conjugacy classes of virtually polycyclic groups and $C$ is a finite group. The groups $\Out(G)$ and $\Aut(G)$ are in $\FJCX$.
\end{corollary}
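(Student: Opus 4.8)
The plan is to adapt the proof of \Cref{Cor:AutofreeproductFJC}, replacing the amalgam rigidity of Forester by the corresponding statement for HNN extensions over finite groups. Since $A$ is one-ended it is elliptic with a unique fixed vertex in every $G$-tree with finite edge stabilisers, so the stable HNN splitting $G=A\ast_C$ is rigid: its Bass--Serre tree is the unique reduced $G$-tree in its deformation space. By the structure theory for automorphisms of such graphs of groups, a finite-index subgroup $\Out^0(G)\le\Out(G)$ fits into a short exact sequence
\[ 1 \longrightarrow \calt \longrightarrow \Out^0(G) \longrightarrow Q \longrightarrow 1, \]
where $Q$ is a subgroup of $\Aut(A)$ (the automorphisms respecting the two finite edge subgroups $C_1,C_2\le A$ and the gluing), and $\calt$, the group of twists of the loop edge, is a quotient by a finite central subgroup of the direct product $Z_A(C_1)\times Z_A(C_2)$ of the centralisers in $A$ of the edge groups. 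I would then obtain $\Out(G)\in\FJCX$ by feeding this sequence into \Cref{thm:extensions}, and $\Aut(G)\in\FJCX$ from the sequence $1\to\Inn(G)\to\Aut(G)\to\Out(G)\to 1$ as in the proof of \Cref{thmx:Aut_FJC}.

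For the pieces of the first sequence: $A$ is one-ended and hyperbolic relative to finitely many conjugacy classes of virtually polycyclic groups, so $\Aut(A)\in\FJCX$ by \Cref{thmx:Aut_FJC} and hence $Q\in\FJCX$ by \Cref{Thm:FJCclosedsubgroups}(1). Each $C_i$ is finite, so its centraliser $Z_A(C_i)$ is relatively quasiconvex in $A$ and therefore itself hyperbolic relative to a finite collection of (subgroups of conjugates of) the peripherals of $A$ --- in particular relative to virtually polycyclic subgroups --- whence $Z_A(C_i)\in\FJCX$ by \Cref{Thm:FJCrelativelyhyp}; then $Z_A(C_1)\times Z_A(C_2)\in\FJCX$ by \Cref{Thm:FJCclosedsubgroups}(3) and, since $\calt$ is its quotient by a finite central subgroup, $\calt\in\FJCX$ via \Cref{lem:finite_extensions}. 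The remaining hypothesis of \Cref{thm:extensions} concerns the preimage of each infinite cyclic subgroup of $Q$: this is a suspension $\calt\rtimes\ZZ$ whose $\ZZ$-action is induced by an automorphism of $A$ preserving the $C_i$, hence by a pair of automorphisms of the $Z_A(C_i)$; up to finite index and a finite central subgroup it sits inside $(Z_A(C_1)\rtimes\ZZ)\times(Z_A(C_2)\rtimes\ZZ)$, and each factor lies in $\FJCX$ by \Cref{thmx:main} --- reducing, should $Z_A(C_i)$ fail to be one-ended, to the one-ended vertex groups of a Stallings--Dunwoody decomposition and invoking \Cref{prop:sporadic_stallings_dunwoody} --- using that suspensions of the virtually polycyclic peripherals are again virtually polycyclic. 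This gives $\Out^0(G)\in\FJCX$, hence $\Out(G)\in\FJCX$ by \Cref{Thm:FJCclosedsubgroups}(2).

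For $\Aut(G)$, the centre $Z(G)$ is finite and contained in $C$, so $\Inn(G)\cong G/Z(G)\cong(A/Z(G))\ast_{C/Z(G)}$ is again a stable HNN extension over a finite group with one-ended base hyperbolic relative to virtually polycyclic subgroups; in particular $\Inn(G)$ is hyperbolic relative to virtually polycyclic subgroups and so lies in $\FJCX$ by \Cref{Thm:FJCrelativelyhyp}. The preimage in $\Aut(G)$ of an infinite cyclic subgroup of $\Out(G)$ is, up to the finite centre, a suspension of $\Inn(G)$, which lies in $\FJCX$ by \Cref{prop:sporadic_stallings_dunwoody} applied to the sporadic Stallings--Dunwoody splitting $\Inn(G)=(A/Z(G))\ast_{C/Z(G)}$ together with the fact that the induced suspension of its vertex group $A/Z(G)$ lies in $\FJCX$ by \Cref{thmx:main}. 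Feeding these into \Cref{thm:extensions} yields $\Aut(G)\in\FJCX$. When $C$ is trivial, $G=A\ast\ZZ$ and the corresponding steps use \Cref{Thm:combinationthm} in place of \Cref{prop:sporadic_stallings_dunwoody}.

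The main obstacle is the first step: extracting the correct structure of $\Out(A\ast_C)$ --- the HNN analogue of Forester's rigidity --- and, within it, controlling the twist subgroup $\calt$. One must confirm that the centralisers $Z_A(C_i)$ of the finite edge groups are relatively quasiconvex (so that \Cref{Thm:FJCrelativelyhyp} applies), verify the hypotheses needed to put the finite-central quotient $\calt$ into $\FJCX$, and check that the reduction to one-ended Stallings--Dunwoody vertex groups required for the suspensions $\calt\rtimes\ZZ$ and $\Inn(G)\rtimes_\Phi\ZZ$ genuinely goes through via \Cref{prop:sporadic_stallings_dunwoody}. Everything downstream is bookkeeping with \Cref{thmx:Aut_FJC}, \Cref{thmx:main}, \Cref{Thm:FJCrelativelyhyp}, \Cref{lem:finite_extensions}, and the closure properties of \Cref{Thm:FJCclosedsubgroups}.
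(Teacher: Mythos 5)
Your overall strategy is the same as the paper's: use the rigidity of the single-edge splitting (Levitt/Forester) to get a short exact sequence with $\Out^0(G)$ in the middle, put the kernel and the preimages of cyclic subgroups in $\FJCX$, and conclude via \Cref{thm:extensions}; then bootstrap to $\Aut(G)$ through the $\Inn$--$\Out$ sequence. Where the two arguments diverge is in the description of the kernel. The paper cites \cite{Levitt2005Rigidity} to assert that $\Out^0(G) \cong A \rtimes \Aut(A,C)$, so the short exact sequence is
\[ 1 \to A \to \Out^0(G) \to \Aut(A,C) \to 1, \]
and the preimage of each cyclic subgroup is a suspension $A\rtimes\ZZ$. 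Since $A$ is one-ended and hyperbolic relative to virtually polycyclic subgroups, \Cref{thmx:main} applies directly and the argument closes in three lines. You instead describe the kernel as the twist group $\calt$, a finite central quotient of $Z_A(C_1)\times Z_A(C_2)$. This is the shape Levitt's general formalism produces, and it is defensible, but it shifts the load: you now have to put the centralisers $Z_A(C_i)$ and their suspensions into $\FJCX$, rather than $A$ itself.

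That shift opens a genuine gap. To feed the preimages $\calt\rtimes\ZZ$ into \Cref{thmx:main}, you need each $Z_A(C_i)$ to be one-ended or virtually torsion-free. Neither is guaranteed: the centraliser of a finite subgroup of a one-ended relatively hyperbolic group is relatively quasiconvex (so relatively hyperbolic, and in $\FJCX$ by \Cref{Thm:FJCrelativelyhyp}), but it can be infinitely ended and can have torsion. Your proposed fallback is to pass to a Stallings--Dunwoody decomposition of $Z_A(C_i)$ and invoke \Cref{prop:sporadic_stallings_dunwoody}. But that proposition only covers the \emph{sporadic} case, namely a single-edge splitting $A\ast_C B$ or $A\ast_C$; if the Stallings--Dunwoody decomposition of $Z_A(C_i)$ has more than one edge or the maximal periodic splitting becomes non-sporadic in the induction, neither \Cref{prop:sporadic_stallings_dunwoody} nor \Cref{Thm:combinationthm} applies (the latter needs a genuine free product), and there is no substitute in the paper for this situation. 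The paper explicitly leaves that regime open (see the discussion and conjecture at the end of \Cref{sec:torsion}). So as written your argument does not close; you are applying \Cref{prop:sporadic_stallings_dunwoody} outside the scope of its hypotheses and calling it bookkeeping when it is exactly the open part. The fix is to use the paper's identification of the kernel with $A$ (or a finite quotient of it), so that \Cref{thmx:main} applies directly to the one-ended group $A$ rather than to the possibly ill-behaved centralisers.
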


\begin{proof}
    As above we only prove the result for $\Out(G)$. Let $t$ be a stable letter for the HNN extension $A \ast_C$. By~\cite{Levitt2005Rigidity}, the group $\Out(G)$ has an index $2$ subgroup $\Out^0(G)$ such that any element $\phi \in \Out^0(G)$ has a representative $\Phi \in\phi$ such that $\Phi(A)=A$ and $\Phi(t)=ta$ for some $a \in A$. Moreover, the map sending $\phi$ to $\Phi$ induces an isomorphism between $\Out^0(G)$ and $A \rtimes \Aut(A,C)$. Thus, $\Out^0(G)$ fits in a short exact sequence \[1 \to A \to \Out^0(G) \to \Aut(A,C) \to 1.\]
    The group $\Aut(A,C)$ belongs to $\FJCX$ by \Cref{thmx:Aut_FJC}. Moreover, for every infinite cyclic subgroup $Q \in \Aut(A,C)$, the preimage of $Q$ in $\Out^0(G)$ belongs to $\FJCX$ by \Cref{thmx:main}. Thus, by \Cref{thm:extensions}, the groups $\Out^0(G)$ and $\Out(G)$ belong to $\FJCX$.
\end{proof}

One more infinitely ended case is known, since $\Out(F_2) \cong \GL(2,\ZZ)$ is an arithmetic group, and the same extension arguments as above will give the result for $\Aut(F_2)$. Our techniques do not seem to extend to $\Out(F_n)$, which will be necessary to make any further progress.

\section{Torsion in the infinitely ended case}\label{sec:torsion}

The aim in this section is to prove as much of \cref{sec:infinite_ended} as possible without the assumption that $G$ is virtually torsion free.  To this end we prove a generalisation of the sporadic case (of \cref{subsec:sporadic_case}) to graphs of groups with one finitely stabilised edge. 

\begin{prop}
\label{prop:sporadic_stallings_dunwoody}
    Suppose $G \cong A \ast_C B$ or $G \cong A \ast_C$ with $C$ a finite group. Further suppose that $\Phi \in \Aut(G)$ preserves the conjugacy classes of $A$ and $B$, and that the restrictions of $\Phi$ to $A$ is such that $A \rtimes_{\Phi|_A} \ZZ$ belongs to $\FJCX$ (and similarly for $B$, if applicable). Then $G \rtimes_\Phi \ZZ$ belongs to $\FJCX$.
\end{prop}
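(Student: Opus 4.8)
The plan is to mimic the sporadic case of \Cref{subsec:sporadic_case}, replacing the Bass--Serre tree $T_\calf$ of a free factor system with the Bass--Serre tree $T$ of the given splitting $G \cong A \ast_C B$ or $G \cong A \ast_C$, and then passing to its tree of cylinders for the commensurability relation. Concretely: since $\Phi$ preserves the conjugacy classes of the vertex groups, it induces (after composing with an inner automorphism, and passing to the appropriate HNN-normalised representative in the ascending case) a $G$-equivariant homeomorphism of $T$, hence an action of $\Gamma \coloneqq G \rtimes_\Phi \ZZ$ on $T$ extending the action of $G$. As in \Cref{subsec:sporadic_case}, after passing to the finite index subgroup of $\Gamma$ in which the stable letter $t$ fixes an edge $e$ (permissible by \Cref{Thm:FJCclosedsubgroups}(2)), every edge stabiliser of the $\Gamma$-action is infinite cyclic generated by a conjugate of $t$; so commensurability is an admissible equivalence relation on this class of edge groups, and we form the tree of cylinders $T_c$ of $T$.

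The two things to check are then exactly the analogues of \Cref{Lem:Stabcylinder,Lem:Actiontreecylacylindrical}. First, the acylindricity argument of \Cref{Lem:Actiontreecylacylindrical} goes through verbatim: if $v,v' \in VT_c$ are far apart, any common stabiliser element fixes a path in $T$ crossing two edges in distinct cylinders, hence fixes two edges whose stabilisers intersect trivially (because edge stabilisers in $T$ are now \emph{finite}, equal to conjugates of $C$, rather than infinite cyclic — but the key point, that distinct cylinders means trivial intersection of stabilisers, still holds once one works with conjugates of $t$ in $\Gamma$ rather than in $G$), so that element is trivial. Second, the cylinder stabiliser computation of \Cref{Lem:Stabcylinder} again identifies the stabiliser of the cylinder containing $e$ with $\mathrm{Per}(\Phi) \rtimes_\Phi \ZZ$: writing an element of $\Gamma$ as $w^{-1} t^j$ with $w \in G$, the relation forcing $w^{-1} t^j e$ into the same cylinder as $e$ forces $\Phi^m(w) = w$, exactly as before. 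By \Cref{Lem:PerFJC}, $\mathrm{Per}(\Phi) \rtimes_\Phi \ZZ \in \FJCX$.

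It then remains to handle the vertex stabilisers of $T_c$ lying in $V_0T_c$, which are (conjugates of) $A \rtimes_{\Phi|_A}\ZZ$ and $B\rtimes_{\Phi|_B}\ZZ$ — these are in $\FJCX$ by hypothesis — together with a subtlety that does not arise in \Cref{subsec:sporadic_case}: the edge group $C$ is finite, so an edge of $T$ need not be in a nontrivial cylinder at all, and one must be slightly careful setting up which edges are coned off. I expect the main obstacle to be precisely this bookkeeping: since $C$ is finite, edge stabilisers in $T$ are finite, and the infinite-cyclic edge stabilisers of $T_c$ come entirely from the $\ZZ$-direction, so one has to argue that after passing to the finite index subgroup where $t$ stabilises an edge, the commensurability classes behave well and the vertex groups of $T_c$ are as claimed. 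Once $T_c$ is correctly described, \Cref{Thm:FJCacylindrical} applies: $\Gamma$ acts acylindrically on $T_c$ with all vertex stabilisers in $\FJCX$, hence $\Gamma \in \FJCX$; and the passage back from the finite index subgroup to $\Gamma$ itself is \Cref{Thm:FJCclosedsubgroups}(2).
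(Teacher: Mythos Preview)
Your overall strategy matches the paper's, but there is a real error in the details. The edge stabilisers of the $\Gamma$-action on $T$ are \emph{not} infinite cyclic. Since the $G$-stabiliser of an edge is a conjugate of the finite group $C$, and (after passing to a finite index subgroup) $t$ fixes $e$, the full stabiliser $\Stab_\Gamma(e)$ is an extension of $\ZZ$ by a conjugate of $C$ --- virtually cyclic, but not infinite cyclic unless $C$ is trivial. Commensurability is still admissible for the class of infinite virtually cyclic groups, so the tree of cylinders exists; the cylinder stabiliser computation of \Cref{Lem:Stabcylinder} then adapts with $\langle t \rangle$ sitting as a finite index subgroup of $\Stab_\Gamma(e)$ rather than being all of it, and the conclusion $w \in \Per(\Phi)$ follows as before.

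The consequence you miss is that the acylindricity argument does \emph{not} go through verbatim. If $e$ and $e'$ lie in distinct cylinders, their $\Gamma$-stabilisers are non-commensurable virtually cyclic groups, and the intersection of two such groups is \emph{finite}, not trivial --- it can contain nontrivial elements of $G$ coming from $C$. So stabilisers of long geodesics in $T_c$ are finite rather than trivial; your assertion ``so that element is trivial'' is false. This is still acylindricity in the sense required for \Cref{Thm:FJCacylindrical}, and the paper makes precisely this correction, bounding the intersection by the order of the unique maximal finite subgroup of the edge stabiliser.

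Two smaller points. First, $\Phi$-invariance of $T$ is not automatic from preserving the conjugacy classes of $A$ and $B$; the paper invokes rigidity (Forester for the amalgam, Levitt for the HNN case) to know that this one-edge splitting is the unique reduced tree in its deformation space. Second, your concern that an edge ``need not be in a nontrivial cylinder'' is misplaced: by definition every edge lies in exactly one cylinder, and the edge stabilisers in the $\Gamma$-action are all infinite, so the cylinders behave as expected.
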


The proof being largely an elaboration of the arguments in \cref{subsec:sporadic_case}, here we indicate the necessary changes and references.

\begin{proof}
    First, we have to argue that $\Phi$ preserves the action on the Bass--Serre tree $T$ for this splitting. This follows from \cite{Levitt2005Rigidity}, or for the two vertex case already from \cite{Forester2002Rididity}, which give that these one edge splittings are \emph{rigid}: the unique reduced tree in their deformation spaces, together with the hypothesis that $\Phi$ preserves the vertex groups.

    So we may consider the action of $G \rtimes_\Phi \ZZ$ on $T$. Use $t$ to denote the generator of the $\ZZ$ factor. After possibly passing to a finite index subgroup (by taking the square of $\Phi$, if necessary) we may suppose the quotient graphs are the same for both actions, and that $t$ stabilises an edge $e$. Edge stabilisers are virtually cyclic, and admit a map to $\ZZ$ with a conjugate of $t$ is mapped to the generator. For $G_e$, we may take this preimage to be $t$. 

    Note that by work of Wall~\cite[Lemma 4.1]{Wall1967VirtuallyCyclic} virtually cyclic groups act on the line and have a unique maximal finite normal subgroup which is the kernel of this action. Since $G_e$ surjects onto $\ZZ$, in fact this is the unique maximal finite subgroup.

    Following the proof in the free splitting case, we need to take a tree of cylinders to ensure that we have an acylindrical action. Just as in that case, the commensurablilty relation is admissible, and we take the tree of cylinders $T_c$ relative to this relation.

    We need to adapt the proofs of \cref{Lem:Stabcylinder} and \cref{Lem:Actiontreecylacylindrical} to the new situation. In the first case, we assume that $e$ and $w^{-1}t^je=w^{-1}e$ are edges in the same cylinder. Then $\langle t \rangle$ and $\langle w^{-1} t w \rangle$ are finite index subgroups of the respective stabilisers, and again the commensurability relation implies that there are powers $n,m \in \NN$ so that \[t^n=w^{-1}t^mw=w^{-1}\Phi^m(w)t^m.\] As before, we see that $n=m$ and $w$ is periodic.

    To adapt the proof of \cref{Lem:Actiontreecylacylindrical}, make the adjustments of the first paragraph of that proof and then consider the whole intersection $G_v \cap G_{v'}$. This group fixes edges in two distinct cylinders, so is contained inside some $G_e \cap G_{e'}$, where this intersection is between two virtually cyclic subgroups that are not commensurable. In particular, this means the intersection is finite; in fact its cardinality is bounded by the size of the unique maximal finite subgroup (in either -- they are conjugate).  This means the action of $G \rtimes_\Phi \ZZ$ on $T_c$ is acylindrical.

    To finish the proof, we recall the bipartite nature of $T_c$, and observe that vertex stabilisers are either isomorphic to $A \rtimes_{\Phi|_A} \ZZ$ (or the same for $B$ -- the original vertex stabilisers), or a cylinder stabiliser $\Per(\Phi) \rtimes_\Phi \ZZ$. The first kind is in $\FJCX$ by hypothesis; the second by \cref{Lem:PerFJC}.
\end{proof}

With this in hand, one can begin to try and run the induction argument of \cref{sec:infinite_ended} on a Stallings--Dunwoody decomposition of a more general infinitely ended group. However, there seems as yet to be no analogy for the relative hyperbolicity argument used in the non-sporadic case, and so the induction will not be able to proceed if at some stage we encounter a maximal periodic ``Stallings--Dunwoody type splitting'' that has more than one edge, and at least one edge with non-trivial stabiliser.

A proof of the following conjecture, the analogy of \cref{thm:relativehyperbolicityfullyirred} for general infinite ended groups, should complete the proof of \Cref{thmx:main} with no assumption on torsion. 

\begin{conjecture}
    Suppose $G$ is the fundamental group of a non-sporadic graph of groups with finite edge stabilisers, and $\Phi \in \Aut(G)$ is fully irreducible relative to this splitting. Then $G \rtimes_{\Phi^N} \ZZ$ is hyperbolic relative to the suspensions of polynomially growing subgroups of $\Phi$.
\end{conjecture}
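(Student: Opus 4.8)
The plan is to follow the template of Dahmani--Li's proof of \Cref{thm:relativehyperbolicityfullyirred}, replacing the free-product framework (trivial edge groups) by that of a graph of groups with finite edge stabilisers, i.e.\ a Stallings--Dunwoody-type decomposition. Write $S$ for the associated Bass--Serre tree, let $\mathcal{F}$ be the set of conjugacy classes of infinite vertex groups, and equip $S$ with the edge-length metric; this plays the role of a Grushko $(G,\mathcal{F})$-tree, now with finite rather than trivial edge stabilisers. The first task is to show that full irreducibility of $\Phi$ relative to this splitting yields north--south dynamics on the corresponding relative outer space (the analogue of \Cref{Lem:existenceRtree} and of the input of \cite{francaviglia2021action}): there exist $\lambda>1$ and a minimal $\RR$-tree $T$ with isometric $G$-action such that $\lambda^{-n}\lVert \Phi^n g\rVert_S \to \lVert g\rVert_T$ for every $g\in G$. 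The existence of $T$ and of this dynamics is known in the free-product case and should transfer, since a graph of finite groups is virtually free and finite edge groups do not change the qualitative picture.

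Next I would identify the $\lVert\cdot\rVert_S$-polynomially growing subgroups of $\Phi$ with the point stabilisers of $T$: an element grows polynomially precisely when it has zero translation length in $T$, hence fixes a point. Using that $T$ is \emph{arational} in the sense of Guirardel--Horbez (the finite-edge analogue of \cite{Guirardelhorbez19}) and has finite arc stabilisers, a Tits-alternative argument in the style of \cite{horbez2014tits} shows every point stabiliser of $T$ is either peripheral (conjugate into some vertex group), finite, or infinite cyclic and nonperipheral. This yields the analogues of \Cref{prop:maximalpolysubgroups} and \Cref{Prop:descriptionpolysubgroups}: the set $\mathcal{P}$ of conjugacy classes of maximal polynomially growing subgroups is finite, independent of $S$, and each member is \emph{almost malnormal relative to finite subgroups} (genuine malnormality fails once torsion is present --- this is the first place where care is needed). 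A power $\Phi^N$ then preserves the conjugacy class of each member of $\mathcal{P}$, so that the suspensions of $\mathcal{P}$ form a legitimate peripheral structure on $G\rtimes_{\Phi^N}\ZZ$.

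The core of the argument is a combination step: represent $\Phi^N$ by a (relative) train-track map on the graph of groups underlying $S$, form its mapping-torus complex, and apply a relatively hyperbolic combination theorem of Bestvina--Feighn / Mj--Reeves type (as used in \cite{dahmani2022relative}) to conclude that $G\rtimes_{\Phi^N}\ZZ$ is hyperbolic relative to the suspensions of the polynomial subgroups. The finite edge groups contribute only suspensions of finite groups, which are virtually cyclic and hence harmless for hyperbolicity; the non-hyperbolic directions are exactly the suspensions of $\mathcal{P}$. The technical engine behind the combination theorem is the \emph{annuli-flare} (exponential separation) condition for the action of $\Phi^{\pm N}$ on the hyperbolic part: along any direction not carried by a member of $\mathcal{P}$, the $S$-length flares exponentially. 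Establishing this uniformly, relative to the peripheral subgroups, is the substantive work and is where I expect the main obstacle to lie: it requires extending the relative-train-track and flaring estimates of the $\Out(F_n)$ and free-product theory (in the style of \cite{francaviglia2021action} and \cite{dahmani2022relative}) to graphs of groups with finite edge groups, together with the bookkeeping forced by the torsion.

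Finally, a remark on an alternative route. When the infinite vertex groups are separable in the sense of \Cref{prop:reduce_to_free_prod}, one can pass to a finite-index subgroup $K\le G$ that is a genuine free product of one-ended groups and a free group, with $K\rtimes_{\Phi^k}\ZZ$ finite-index in $G\rtimes_{\Phi}\ZZ$, and hope to invoke \Cref{thm:relativehyperbolicityfullyirred} directly, using that relative hyperbolicity is inherited by finite-index overgroups. This does not settle the conjecture, for two reasons: it needs the separability hypothesis, and it needs the restriction of a power of $\Phi$ to $K$ to remain fully irreducible relative to the \emph{induced} free factor system, which can genuinely fail. So the direct generalisation of Dahmani--Li, with the relative flaring condition as its crux, seems unavoidable, and its proof would --- via the combination step above --- complete the proof of \Cref{thmx:main} with no hypothesis on torsion.
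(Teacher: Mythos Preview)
The statement you are attempting to prove is stated in the paper as a \emph{Conjecture}, not a theorem: the paper does not prove it and explicitly presents it as an open problem whose resolution ``should complete the proof of \Cref{thmx:main} with no assumption on torsion.'' There is therefore no proof in the paper to compare your proposal against.

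Your proposal is a plausible outline of how one might attack the conjecture, and you correctly identify the decisive obstacle --- extending the relative train-track and flaring estimates of \cite{dahmani2022relative,francaviglia2021action} to graphs of groups with finite (rather than trivial) edge groups --- as ``the substantive work.'' But none of the ingredients you invoke (north--south dynamics on the relative outer space for finite edge groups, arationality of the limit tree in this setting, the analogue of \cite{Guirardelhorbez19,horbez2014tits} for point stabilisers, uniform flaring relative to the torsion) are established in the literature in this generality; they are themselves open. Your write-up is thus a research programme rather than a proof, and the paper's authors agree: their closing remarks note that even the correct definition of ``fully irreducible'' in this context is not settled, and that accessibility may need to be assumed. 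So there is no error to point to, but there is also no proof here --- only a sketch of the expected shape of one, which aligns with the paper's own speculation.
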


It may be necessary to assume accessibility in the previous conjecture but for now we do not.  As in the free product case, the correct notion of growth should be with respect to the translation length function for the action of $G$ on the Bass--Serre tree. The correct definition of \emph{fully irreducible} appears to be that in any splitting (strictly) dominated by ours every power of $\Phi$ does not preserve the set of elliptic subgroups.

\section*{Statements and declarations}

\noindent{\bf Availability of data and material:} \textit{Not applicable}

\noindent{\bf Financial interests:}
\textit{The authors declare they have no financial interests.}

\bibliographystyle{halpha}
\bibliography{refs.bib}

\end{document}